\documentclass[a4paper, 11pt, twoside]{article}

\usepackage{amsmath, amscd, amsfonts, amssymb, amsthm, latexsym, url, color, todonotes, bm, framed, enumerate, afterpage, rotating}
\usepackage{graphicx}
\usepackage[left=1.2in, right=1.2in, top=1.3in, bottom=0.8in, includefoot, headheight=13.6pt]{geometry}
\input{xy}
\xyoption{all}

% PDF hyper-linking (set colors to black for printing)
\usepackage[ps2pdf=true,colorlinks,breaklinks=true]{hyperref}
\usepackage[figure,table]{hypcap}
\hypersetup{
	bookmarksnumbered,
	pdfstartview={FitH},
	citecolor={black},%{blue},
	linkcolor={black},%{red},
	urlcolor={black},
	pdfpagemode={UseOutlines}
}
\makeatletter
\newcommand\org@hypertarget{}
\let\org@hypertarget\hypertarget
\renewcommand\hypertarget[2]{%
  \Hy@raisedlink{\org@hypertarget{#1}{}}#2%
} 
\makeatother 

\setlength{\marginparwidth}{1in}

%BEGIN MY COMMANDS
\newtheorem{theorem}{Theorem}[section]
\newtheorem{lemma}[theorem]{Lemma}
\newtheorem{corollary}[theorem]{Corollary}
\newtheorem{proposition}[theorem]{Proposition}

\theoremstyle{definition}
\newtheorem{definition}[theorem]{Definition}
\newtheorem{remark}[theorem]{Remark}

\newcommand{\xysquare}[8]{
\[\xymatrix{
#1 \ar@{#5}[r] \ar@{#6}[d] & #2 \ar@{#7}[d]\\
#3 \ar@{#8}[r] & #4
}\]
}

\DeclareMathOperator*{\holim}{\operatorname*{holim}}

\newcommand{\al}{\alpha}
\newcommand{\bb}{\mathbb}

\newcommand{\blob}{\bullet}

\newcommand{\cofib}{\rightarrowtail}
\newcommand{\comment}[1]{}

\newcommand{\comp}{{\hat{\phantom{o}}}}

\newcommand{\into}{\hookrightarrow}
\newcommand{\isoto}{\stackrel{\simeq}{\to}}

\newcommand{\onto}{\twoheadrightarrow}
\newcommand{\op}{\operatorname}
\newcommand{\pid}[1]{\langle #1 \rangle}
\newcommand{\quis}{\stackrel{\sim}{\to}}

\newcommand{\roi}{\mathcal{O}}

\newcommand{\sub}[1]{{\mbox{\scriptsize #1}}}

\newcommand{\To}{\longrightarrow}
\newcommand{\ul}[1]{\underline{#1}}

\newcommand{\xto}{\xrightarrow}

\newcommand{\THH}{T\!H\!H}
\newcommand{\HH}{H\!H}
\newcommand{\TR}{T\!R}
\newcommand{\TC}{T\!C}

\renewcommand{\cal}{\mathcal}
\renewcommand{\hat}{\widehat}
\renewcommand{\frak}{\mathfrak}
\newcommand{\indlim}{\varinjlim}

\renewcommand{\projlim}{\varprojlim}

\DeclareMathOperator{\Ext}{Ext}

\DeclareMathOperator{\Hom}{Hom}

\DeclareMathOperator{\Spec}{Spec}

\DeclareMathOperator{\Tor}{Tor}

%END MY COMMANDS

%\newcommand{\dotimes}{\stackrel{\sub{\tiny I\hspace{-1.3mm}L}}{\otimes}}
\newcommand{\dotimes}{\otimes^{\sub{\tiny I\hspace{-1.3mm}L}}}
\newcommand{\xTo}[1]{\stackrel{#1}{\To}}

\usepackage{fancyhdr}

\pagestyle{fancy}
\fancyhead{}
\fancyfoot[C]{\thepage}
\fancyhead[EL, OR]{}
\fancyhead[EC]{\sc Bj\o rn Ian Dundas and Matthew Morrow}
\fancyhead[OC]{\sc Finite generation and continuity of topological cyclic homology}

\usepackage{sectsty}
\sectionfont{\Large\sc\centering}
\chapterfont{\large\sc\centering}
\chaptertitlefont{\LARGE\centering}
\partfont{\centering}

\begin{document}

\title{Finite generation and continuity of topological Hochschild and cyclic homology}

\author{Bj\o rn Ian Dundas \& Matthew Morrow}

%\classno{}
\date{}
%\extraline{}

\maketitle

\begin{abstract}
The goal of this paper is to establish fundamental properties of the Hochschild, topological Hochschild, and topological cyclic homologies of commutative, Noetherian rings, which are assumed only to be F-finite in the majority of our results. This mild hypothesis is satisfied in all cases of interest in finite and mixed characteristic algebraic geometry. We prove firstly that the topological Hochschild homology groups, and the homotopy groups of the fixed point spectra $\TR^r$, are finitely generated modules. We use this to establish the continuity of these homology theories for any given ideal. A consequence of such continuity results is the pro Hochschild--Kostant--Rosenberg theorem for topological Hochschild and cyclic homology. Finally, we show more generally that the aforementioned finite generation and continuity properties remain true for any proper scheme over such a ring.
\end{abstract}

\setcounter{tocdepth}{2}
\tableofcontents

\setcounter{section}{-1}

\newpage
\section{Introduction}
The aim of this paper is to prove fundamental finite generation, continuity, and pro Hochschild--Kostant--Rosenberg theorems for the Hochschild, topological Hochschild, and topological cyclic homologies of commutative, Noetherian rings. As far as we are aware, these are the first general results on finite generation and continuity of topological \linebreak Hochschild and cyclic homology, despite the obvious foundational importance of such problems in the subject.

The fundamental hypothesis for the majority of our theorems is the classical notion of F-finiteness:

\begin{definition}\label{definition_F-finite_intro}
A $\bb Z_{(p)}$-algebra (always commutative) is said to be {\em F-finite} if and only if the $\bb F_p$-algebra $A/pA$ is finitely generated over its subring of $p^\sub{th}$-powers.
\end{definition}

This is a mild condition: it is satisfied as soon as $A/pA$ is obtained from a perfect field by iterating the following constructions any finite number of times: passing to finitely generated algebras, localising, completing, or Henselising; see Lemma \ref{lemma_conditions_for_F-finite}.

To state our main finite generation result, we first remark that the Hochschild homology $\HH_n(A)$ of a ring $A$ is always understood in the derived sense (see Section \ref{subsection_AQ_HH}). Secondly, $\THH_n(A)$ denotes the topological Hochschild homology groups of a ring $A$, while $\TR^r_n(A;p)$ denotes, as is now usual, the homotopy groups of the fixed point spectrum for the action of the cyclic group $C_{p^{r-1}}$ on the topological Hochschild homology spectrum $\THH(A)$. It is known that $\TR^r_n(A;p)$ is naturally a module over the $p$-typical Witt ring $W_r(A)$. The obvious notation will be used for the $p$-completed, or finite coefficient, versions of these theories, and for their extensions to quasi-separated, quasi-compact schemes following \cite{GeisserHesselholt1999}.

Our main finite generation result is the following, where $A_p^\comp=\projlim_sA/p^sA$ denotes the $p$-completion of a ring $A$:

\begin{theorem}[see Corol.~\ref{corollary_finite_gen_of_THH_TR_p_complete}]\label{theorem_intro_finite_gen_of_THH_TR}
Let $A$ be a Noetherian, F-finite $\bb Z_{(p)}$-algebra. Then:
\begin{enumerate}
\item $\HH_n(A;\bb Z_p)$ and $\THH_n(A;\bb Z_p)$ are finitely generated $A_p^\comp$-modules for all $n\ge0$.
\item $\TR_n^r(A;p,\bb Z_p)$ is a finitely generated $W_r(A_p^\comp)$-module for all $n\ge 0$, $r\ge1$.
\end{enumerate}
\end{theorem}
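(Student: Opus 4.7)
The plan is first to replace $A$ by its $p$-completion, since $A_p^\comp$ remains Noetherian and F-finite (F-finiteness propagates through $p$-completion for Noetherian $\bb Z_{(p)}$-algebras by a Gabber-type argument) and the $p$-adic homology theories under consideration are insensitive to this replacement. For (i) applied to $\HH$, I would invoke the Hochschild--Kostant--Rosenberg filtration on the derived Hochschild complex $\HH(A/\bb Z)$, whose graded pieces are $(\Lambda_A^i L_{A/\bb Z})[i]$. This reduces the finite generation of $\HH_n(A;\bb Z_p)$ to the analogous finite generation for the $p$-complete homotopy of $L_{A/\bb Z}$ and its exterior powers. Using the transitivity triangle for $\bb Z\to A\to A/p$ and then for $\bb F_p\to A/p$, one can further reduce to finite generation of each homotopy group of $L_{(A/p)/\bb F_p}$. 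This last point is precisely where F-finiteness enters: since $A/p$ is finitely generated over its subring $(A/p)^p$ of $p$-th powers, it admits a presentation from which finite generation of the cotangent complex is immediate.

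\textbf{The THH case.} For $\THH$ in part (i), I would use the standard comparison of $\THH(A)$ with the ordinary Hochschild homology $\HH^{\bb Z}(A)$ provided by the linearization map, in the form of a spectral sequence
\[
E^2_{s,t}=\HH_s(A;\,\pi_t\THH(\bb Z))\Longrightarrow\THH_{s+t}(A).
\]
By Bökstedt's classical computation $\pi_*\THH(\bb Z)$ is $\bb Z$ in degree $0$, cyclic of order $i$ in degree $2i-1$, and zero otherwise, hence finitely generated in each degree. The $E^2$-terms are then finitely generated over $A_p^\comp$ by the HH result with coefficients, and Noetherianity of $A_p^\comp$ together with strong convergence after $p$-completion yields finite generation of $\THH_n(A;\bb Z_p)$.

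\textbf{The TR case, by induction on $r$.} For (ii) I would induct on $r$, starting from $\TR^1=\THH$ handled above. For $r\ge 2$ I would apply the fundamental cofibration
\[
\THH(A)_{hC_{p^{r-1}}}\xrightarrow{N}\TR^r(A;p)\xrightarrow{R}\TR^{r-1}(A;p).
\]
The right-hand term has finitely generated homotopy over $W_{r-1}(A_p^\comp)$ by induction, hence over $W_r(A_p^\comp)$ through the restriction $W_r\onto W_{r-1}$. For the homotopy orbits on the left, the group-homology spectral sequence
\[
E^2_{s,t}=H_s(C_{p^{r-1}};\,\THH_t(A;\bb Z_p))\Longrightarrow\pi_{s+t}(\THH(A)_{hC_{p^{r-1}}};\bb Z_p)
\]
has finitely generated $E^2$ entries because $C_{p^{r-1}}$ is finite and $\THH_t(A;\bb Z_p)$ is finitely generated by part (i); Noetherianity of $W_r(A_p^\comp)$, which is finite over $A_p^\comp$, upgrades this to finite generation of the abutment. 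Chasing the long exact sequence on homotopy then closes the induction.

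\textbf{Main obstacle.} The principal technical difficulty will be controlling $p$-completion alongside these three filtrations/spectral sequences: one must ensure that the HKR filtration is compatible with $p$-completion, and that the Bökstedt and homotopy-orbit spectral sequences converge strongly after $p$-completion. A further subtlety is verifying that the $W_r(A_p^\comp)$-module structures on the three terms of the fundamental cofibration agree, so that the long exact sequence is one of $W_r$-modules and not merely of abelian groups. By contrast, the F-finiteness hypothesis is only essential at the very base of the argument, in the finite generation of $L_{(A/p)/\bb F_p}$, but its consequences then propagate through the whole inductive structure.
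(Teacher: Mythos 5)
The decisive gap is your claim that finite generation of the homotopy of $\bb L_{(A/pA)/\bb F_p}$ (and of its exterior powers) is ``immediate'' from the presentation coming from F-finiteness. It is not: F-finiteness gives a finite-type morphism $(A/pA)^p\to A/pA$, hence finite generation for $\bb L_{(A/pA)/(A/pA)^p}$, but the transitivity triangle for $\bb F_p\to (A/pA)^p\to A/pA$ only relates $\bb L_{(A/pA)/\bb F_p}$ to a Frobenius twist of itself, so this reasoning is circular; moreover a Noetherian F-finite $\bb F_p$-algebra need not be essentially of finite type over $\bb F_p$ (think of complete local rings with perfect residue field), so no presentation yields the statement for free. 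In the paper this is precisely the main technical result, Theorem \ref{theorem_F_finite_implies_AQ_finite}: one factors the absolute Frobenius as $A\to A^p\to A$, applies the Kassel--Sletsj\o e filtration to $\bb L^i$ for both composites, and uses that Frobenius is nullhomotopic on $\bb L^i_{A/\bb F_p}$ for $i\ge1$ to break the circularity by induction on $i$. Your proposal contains no substitute for this idea, so the base of your entire induction is unproved.

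Two further points. The opening reduction---replacing $A$ by $A_p^\comp$ because ``the $p$-adic theories are insensitive to this replacement''---is circular in this context: that insensitivity (e.g.\ $\THH(A;\bb Z_p)\simeq\THH(A_p^\comp;\bb Z_p)$) is Corollary \ref{corollary_p_I}, a continuity statement whose proof rests on the very finite generation theorem at stake. The paper avoids both this and your ``main obstacle'' about $p$-completing spectral sequences by proving finite generation with $\bb Z/p^v$-coefficients over $A$ (resp.\ $W_r(A)$) and then invoking the purely algebraic Proposition \ref{proposition_to_prove_finite_gen_p_adically} to pass to finitely generated $A_p^\comp$- (resp.\ $W_r(A_p^\comp)$-) modules. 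Also, in the $\TR^r$ step your justification ``Noetherianity of $W_r(A_p^\comp)$, which is finite over $A_p^\comp$'' is backwards: $W_r(A)$ is not finite over $A$; what is actually used, and requires F-finiteness via Langer--Zink (Theorem \ref{theorem_Langer_Zink}), is that $W_r(A)$ is Noetherian and that $A$ is a finite $W_{r+1}(A)$-module via $F^r$, which is how the $E^2$-terms of the homotopy-orbit spectral sequence (which are $A$-modules acted on through $F^r$) become finitely generated over $W_{r+1}(A)$. Apart from these issues, your skeleton for $\THH$ and $\TR^r$ (Pirashvili--Waldhausen plus B\"okstedt, then induction on $r$ via the fundamental cofibre sequence and the group-homology spectral sequence with its Witt-module structure) coincides with the paper's.
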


The key step towards proving Theorem \ref{theorem_intro_finite_gen_of_THH_TR} is the following finite generation result for the Andr\'e--Quillen homology of $\bb F_p$-algebras:

\begin{theorem}[see Thm.~\ref{theorem_F_finite_implies_AQ_finite}]
Let $A$ be a Noetherian, F-finite $\bb F_p$-algebra. Then the Andr\'e--Quillen homology groups $D_n^i(A/\bb F_p)$ are finitely generated for all $n,i\ge0$.
\end{theorem}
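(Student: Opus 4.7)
The plan is to exploit F-finiteness by viewing the absolute Frobenius $F\colon A\to A$ as a finite morphism of Noetherian $\mathbb{F}_p$-algebras, and then to transfer good properties of $\mathbb{L}_F$ to $\mathbb{L}_{A/\mathbb{F}_p}$ via transitivity. Since $F$ is finite between Noetherian rings, it is finitely presented, and standard results on cotangent complexes yield that $\pi_n\mathbb{L}_F$, and more generally $\pi_n\Lambda^i\mathbb{L}_F$ for every $i\ge 0$, is a finitely generated $A$-module.

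Next, since $F$ fixes $\mathbb{F}_p$ pointwise, the composite $\mathbb{F}_p\to A\xrightarrow{F}A$ agrees with the structure map, so the transitivity triangle reads
$$F^{*}\mathbb{L}_{A/\mathbb{F}_p}\;\longrightarrow\;\mathbb{L}_{A/\mathbb{F}_p}\;\longrightarrow\;\mathbb{L}_F$$
in the derived category of $A$-modules. The crucial claim is that the first arrow, which is induced by functoriality of $\mathbb{L}$ applied to $F$, is the \emph{zero} morphism. On $\pi_0=\Omega^1$ this is manifest: it sends $dx\otimes 1\mapsto d(x^p)=0$ in characteristic $p$. To extend the vanishing to the full cotangent complex, I would lift Frobenius simplicially: pick a free resolution $P_\bullet\twoheadrightarrow A$ of $\mathbb{F}_p$-algebras and let $\tilde F_\bullet\colon P_\bullet\to P_\bullet$ be the $x\mapsto x^p$ lift on each polynomial generator; this induces zero on $\Omega^1_{P_n/\mathbb{F}_p}$ at every simplicial degree, hence zero on the computing complex $\Omega^1_{P_\bullet/\mathbb{F}_p}\otimes_{P_\bullet}A$. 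The triangle therefore splits:
$$\mathbb{L}_F\;\simeq\;\mathbb{L}_{A/\mathbb{F}_p}\;\oplus\;F^{*}\mathbb{L}_{A/\mathbb{F}_p}[1].$$

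Consequently $\mathbb{L}_{A/\mathbb{F}_p}$ is a derived direct summand of $\mathbb{L}_F$, so its homotopy groups are finitely generated. For higher $i$, apply $\Lambda^i$ to the splitting; the standard additivity of derived exterior powers over direct sums,
$$\Lambda^i(X\oplus Y)\;\simeq\;\bigoplus_{j=0}^{i}\Lambda^j X\otimes \Lambda^{i-j}Y,$$
exhibits $\Lambda^i\mathbb{L}_{A/\mathbb{F}_p}$ (the $j=i$ term, where $\Lambda^0=A$) as a direct summand of $\Lambda^i\mathbb{L}_F$. Finite generation of $\pi_*\Lambda^i\mathbb{L}_F$ thus descends to yield finite generation of $D_n^i(A/\mathbb{F}_p)=\pi_n\Lambda^i\mathbb{L}_{A/\mathbb{F}_p}$ for all $n,i\ge 0$.

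The main obstacle is verifying that the first transitivity map is null in the derived category, not merely on $\pi_0$; this requires the simplicial Frobenius lift sketched above (or an appeal to Illusie's well-known analysis of the Frobenius action on $\mathbb{L}_{-/\mathbb{F}_p}$). The remaining steps are then a direct cofiber and direct-summand manipulation, relying only on the standard finite generation of cotangent complexes of finitely presented morphisms of Noetherian rings.
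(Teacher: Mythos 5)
Your argument is correct, and it reaches the theorem by a genuinely different route than the paper. Both proofs rest on the same two inputs: F-finiteness makes the Frobenius $F\colon A\to A$ a finite (hence finite-type) morphism of Noetherian rings, so Quillen's finite generation result (the paper's Lemma \ref{lemma_finite_gen_of_AQ}(i)) applies to $\pi_n\bb L_F^i$; and $F$ induces the zero map on cotangent complexes, which is observation (ii) in the paper's proof of Theorem \ref{theorem_F_finite_implies_AQ_finite} and which you justify correctly via the degreewise Frobenius lift on a free simplicial resolution (the absolute Frobenius commutes with all simplicial structure maps, so it is a valid lift of $F$, and it kills $\Omega^1_{P_n/\bb F_p}$ in every degree, so the functoriality map $F^*\bb L_{A/\bb F_p}\to\bb L_{A/\bb F_p}$ is zero on the nose, not merely on $\pi_0$). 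Where you diverge is in the assembly: the paper factors $F$ as $A\xto{\pi}A^p\xto{e}A$ and runs an induction on $i$ through the Kassel--Sletsj\o e filtration (Lemma \ref{lemma_Kassel_Sletjoe}) applied to both factorizations, precisely because $\bigwedge^i$ does not interact simply with the transitivity cofibre sequence; you instead split the transitivity triangle for $\bb F_p\to A\xto{F}A$ outright (legitimate, since its first map is null) and then use $\bigwedge^i(X\oplus Y)\simeq\bigoplus_{j}\bigwedge^jX\otimes\bigwedge^{i-j}Y$, computed on degreewise-free models, to exhibit $\bb L^i_{A/\bb F_p}$ as the $j=i$ summand of $\bb L^i_F$; note you never need to identify $\bigwedge^{i-j}$ of the suspension summand, only to discard it, so no d\'ecalage subtleties arise. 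This eliminates the induction on $i$ and the filtration bookkeeping, at the modest cost of invoking that the derived exterior power is a well-defined functor on the homotopy category of simplicial $A$-modules compatible with direct sums (standard), and of remarking that a direct summand of a finitely generated module over the Noetherian ring $A$ is finitely generated; with those routine points made explicit, your proof is complete, and it yields the same statement the paper proves in the $\bb F_p$-algebra case.
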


Next we turn to ``degree-wise continuity'' for the homology theories $\HH$, $\THH$, and $\TR^r$, by which we mean the following: given an ideal $I\subseteq A$, we examine when the natural map of pro $A$-modules \[\{\HH_n(A)\otimes_AA/I^s\}_s\To\{\HH_n(A/I^s)\}_s\] is an isomorphism, and similarly for $\THH$ and $\TR^r$. This question was first raised by L.~Hesselholt in 2001 \cite{Carlsson2001}, who later proved with T.~Geisser the $\THH$ isomorphism in the special case that $A=R[X_1,\dots,X_d]$ and $R=\pid{X_1,\dots,X_s}$ for any ring $R$ \cite[\S1]{GeisserHesselholt2006b}.

In Section \ref{subsection_continuity} we prove the following:

\begin{theorem}[see Thm.~\ref{theorem_continuity}]\label{theorem_intro_degreewise_continuity}
Let $A$ satisfy the same conditions as Theorem \ref{theorem_intro_finite_gen_of_THH_TR}, and let $I\subseteq A$ be an ideal. Then the following canonical maps of pro $A$-modules
\begin{itemize}
\item[] $\{\HH_n(A;\bb Z/p^v)\otimes_AA/I^s\}_s\To\{\HH_n(A/I^s;\bb Z/p^v)\}_s$
\item[] $\{\THH_n(A;\bb Z/p^v)\otimes_AA/I^s\}_s\To\{\THH_n(A/I^s;\bb Z/p^v)\}_s$
\end{itemize}
and the following canonical map of pro $W_r(A)$-modules
\begin{itemize}
\item[] $\{\TR_n^r(A;p,\bb Z/p^v)\otimes_{W_r(A)}W_r(A/I^s)\}_s\To\{\TR_n^r(A/I^s;p,\bb Z/p^v)\}_s$
\end{itemize}
are isomorphisms for all $n\ge0$ and $r,v\ge1$.
\end{theorem}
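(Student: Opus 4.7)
My strategy is to establish the three continuity statements in order---first for $\HH$, then for $\THH$, and finally for $\TR^r$---by bootstrapping along the standard spectral sequences and cofiber sequences that relate these theories. Throughout, the essential leverage is provided by the finite generation results of Theorems \ref{theorem_intro_finite_gen_of_THH_TR} and \ref{theorem_F_finite_implies_AQ_finite}, which make the Artin--Rees lemma available: for any short exact sequence of finitely generated $A$-modules, the associated pro-system obtained by tensoring with $\{A/I^s\}_s$ remains short exact, so pro-isomorphisms propagate cleanly through long exact sequences and through spectral sequences whose entries are finitely generated.

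The crux is to first establish the analogous continuity for Andr\'e--Quillen homology. Reducing mod $p$, the desired statement becomes that for the F-finite Noetherian $\bb F_p$-algebra $A/pA$ and each $n,i\ge 0$,
\[\{D_n^i((A/pA)/\bb F_p)\otimes_A A/I^s\}_s\To\{D_n^i((A/(pA+I^s))/\bb F_p)\}_s\]
is a pro-isomorphism. I would deduce this from the Jacobi--Zariski transitivity sequence attached to $\bb F_p\to A/pA\to A/(pA+I^s)$, using two facts: (a) the higher Tor terms by which $D_*((A/pA)/\bb F_p)\otimes^L_A A/I^s$ differs from $D_*((A/pA)/\bb F_p)\otimes_A A/I^s$ are pro-zero, by Artin--Rees applied to the finitely generated modules furnished by Theorem \ref{theorem_F_finite_implies_AQ_finite}; and (b) the pro-system of relative AQ groups $\{D_n^i((A/(pA+I^s))/(A/pA))\}_s$ is pro-zero, the prototype case being $D_1 = I^s/I^{2s}$ modulo $p$, for which the transition map $I^{s'}/I^{2s'}\to I^s/I^{2s}$ is zero whenever $s'\ge 2s$; the higher pieces of $L_{(A/(pA+I^s))/(A/pA)}$ are controlled by similar pro-vanishing arguments on successive conormal filtrations.

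Given AQ-continuity, I would pass to $\HH$-continuity via Quillen's fundamental spectral sequence expressing $\HH_*$ in terms of derived (divided) powers of the cotangent complex: naturality in $A$ together with finite generation of all terms (Theorem \ref{theorem_intro_finite_gen_of_THH_TR}) ensures that a pro-isomorphism on $E^2$ is inherited by the abutment. Reduction to $\bb Z/p^v$-coefficients is then handled by induction on $v$ via the Bockstein short exact sequence $0\to\bb Z/p^{v-1}\to\bb Z/p^v\to\bb F_p\to 0$ and the five-lemma in the pro-category. The same mechanism then transfers $\HH$-continuity to $\THH$-continuity via the B\"okstedt spectral sequence $E^2_{p,q}=\HH_p(A;\pi_q\THH(\bb Z))\Rightarrow\THH_{p+q}(A)$, once again using finite generation to secure compatibility with the pro-tensor.

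For $\TR^r$-continuity, the argument proceeds by induction on $r$, the base case $r=1$ being the $\THH$-continuity just established; the induction step uses the fundamental cofiber sequence
\[\THH(A)_{hC_{p^{r-1}}}\To\TR^r(A;p)\To\TR^{r-1}(A;p)\]
together with the obvious compatibility $W_r(A)\otimes_{W_r(A)}W_r(A/I^s)=W_r(A/I^s)$ of the $W_r$-module structures. The main obstacle I expect is the AQ step (b) above: while the first approximation via the conormal module $I^s/I^{2s}$ is elementary, the full derived cotangent complex $L_{(A/(pA+I^s))/(A/pA)}$ carries higher simplicial structure whose pro-behaviour requires careful bookkeeping; once this is established, the bootstrapping through the Quillen, B\"okstedt, and TR-fibre sequences is essentially formal given the finite generation theorems.
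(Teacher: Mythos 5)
Your overall architecture (finite generation plus Artin--Rees, then a bootstrap $\HH\to\THH\to\TR^r$) resembles the paper's in outline, but the step you yourself flag as the crux is a genuine gap, and it is exactly the step the paper is engineered to avoid. You reduce everything to the claim that the relative Andr\'e--Quillen groups $\{D_n^j((A/I^s)/A)\}_s$ (equivalently the homotopy of $\bb L^j_{(A/I^s)/A}$ for $j\ge1$, with coefficients) are pro-zero. The case you verify, $D_1=I^s/I^{2s}$, is elementary but unrepresentative: for $n\ge2$ these groups record syzygies of $I^s$ modulo Koszul relations, are nonzero in general, and their pro-triviality is not delivered by ``successive conormal filtrations'' in any straightforward way; no argument is offered, so the central step is unproved. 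The paper never needs this: Lemma \ref{lemma_continuity}(i) compares $\{\HH_n(A,M/I^sM)\}_s$ with $\{\HH_n(A/I^s,M/I^sM)\}_s$ via the restriction spectral sequence of Proposition \ref{proposition_Bjorns_SS} (an algebraic analogue of Brun's theorem), whose only pro-vanishing input is $\{\Tor_j^A(A/I^s,M/I^sM)\}_s=0$, i.e.\ classical Artin--Rees (Corollary \ref{corollary_Artin_Rees}); the AQ-finiteness of Theorem \ref{theorem_F_finite_implies_AQ_finite} is used only for the coefficient change $\{\HH_n(A,M)\otimes_AA/I^s\}_s\to\{\HH_n(A,M/I^sM)\}_s$ through universal coefficients. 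A further inaccuracy: ``reducing mod $p$'' replaces $\HH_n(A;\bb Z/p^v)$, which is Hochschild homology of $A$ over $\bb Z$ and is governed by $\bb L_{A/\bb Z}$, by data built from $\bb L_{(A/pA)/\bb F_p}$; these differ (e.g.\ by contributions of $\bb L_{(A/pA)/A}$), and the paper instead uses the long exact sequence in the coefficients $A[p^v]$ and $A/p^vA$ for $A$ itself, never passing to the $\bb F_p$-algebra $A/pA$ at this stage.

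For the $\TR^r$ induction your shape is right (and is the paper's), but ``the obvious compatibility $W_r(A)\otimes_{W_r(A)}W_r(A/I^s)=W_r(A/I^s)$'' is not where the work lies. To base-change the fundamental long exact sequence in the pro-category one needs exactness of $-\otimes_{W_{r+1}(A)}W_{r+1}(A/I^\infty)$ on finitely generated modules, which requires $W_{r+1}(A)$ Noetherian (Langer--Zink, Theorem \ref{theorem_Langer_Zink}, where F-finiteness enters) and the intertwining of $W_{r+1}(I^s)$ with $W_{r+1}(I)^s$ (Lemma \ref{lemma_witt_1}), i.e.\ Theorem \ref{theorem_base_change_by_Witt_rings}(i), together with the finite generation of Theorem \ref{theorem_finite_gen_of_THH_TR}. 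More seriously, the homotopy-orbit term carries its $W_{r+1}(A)$-structure through the Frobenius $F^r:W_{r+1}(A)\to A$ on the group-homology $E^2$-page, so identifying its Witt base change with $-\otimes_AA/I^\infty$ is exactly Theorem \ref{theorem_base_change_by_Witt_rings}(ii)/Lemma \ref{lemma_witt_2}, and one still needs the compatibility of $H_i(C_{p^r},-)$ with pro-completion (Corollary \ref{corollary_Artin_Rees_group_homology}). None of this is formal, and without it your inductive step does not close.
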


Applying Theorem \ref{theorem_intro_degreewise_continuity} simultaneously to $A$ and its completion $\hat A=\projlim_sA/I^s$ with respect to the ideal $I$, we obtain the following corollary:

\begin{corollary}[see Corol.~\ref{corollary_pre_continuity}]\label{corollary_intro_completing}
Let $A$ be a Noetherian, F-finite $\bb Z_{(p)}$-algebra, and $I\subseteq A$ an ideal. Then all of the following canonical maps (not just the compositions) are isomorphisms for all $n\ge0$ and $r,v\ge1$:
\begin{align*}
&\HH_n(A;\bb Z/p^v)\otimes_A\hat A\To \HH_n(\hat A;\bb Z/p^v)\To \projlim_s\HH_n(A/I^s;\bb Z/p^v)\\
&\THH_n(A)\otimes_A\hat A\To \THH_n(\hat A)\To \projlim_s\THH_n(A/I^s;\bb Z/p^v)\\
&\TR^r_n(A;p,\bb Z/p^v)\otimes_{W_r(A)}W_r(\hat A)\To \TR^r_n(\hat A;p,\bb Z/p^v)\To\projlim_s\TR^r_n(A/I^s;p,\bb Z/p^v)
\end{align*}
\end{corollary}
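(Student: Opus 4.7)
The plan is to deduce the corollary from Theorem \ref{theorem_intro_degreewise_continuity} applied twice---once to the pair $(A,I)$ and once to $(\hat A, I\hat A)$---together with the finite generation of Theorem \ref{theorem_intro_finite_gen_of_THH_TR}, which is what converts pro-isomorphisms into honest isomorphisms of inverse limits. I describe the argument for $\HH$; the $\THH$ case is formally identical, and $\TR^r$ requires one further ingredient noted at the end.

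First, applying Theorem \ref{theorem_intro_degreewise_continuity} to $(A,I)$, the pro-map $\{\HH_n(A;\bb Z/p^v)\otimes_A A/I^s\}_s \to \{\HH_n(A/I^s;\bb Z/p^v)\}_s$ is a pro-isomorphism and hence induces an isomorphism after $\projlim_s$. Theorem \ref{theorem_intro_finite_gen_of_THH_TR}(i), together with the standard identification $A_p^\comp/p^v\cong A/p^v$, implies that $M := \HH_n(A;\bb Z/p^v)$ is finitely generated over $A$; Noetherianity of $A$ then gives $\projlim_s(M\otimes_A A/I^s) = M\otimes_A \hat A$. Consequently the composition
\[
\HH_n(A;\bb Z/p^v)\otimes_A \hat A\To \HH_n(\hat A;\bb Z/p^v)\To \projlim_s \HH_n(A/I^s;\bb Z/p^v)
\]
is an isomorphism.

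Second, I would apply the same theorem to the pair $(\hat A, I\hat A)$; this is legitimate because $\hat A$ is Noetherian (classical) and, by Lemma \ref{lemma_conditions_for_F-finite}, F-finite. Via the canonical identification $\hat A/(I\hat A)^s \cong A/I^s$, and the fact that $\hat A/p^v$ is $I\hat A$-adically complete (so that any finitely generated module over it is complete), passage to $\projlim_s$ of the resulting pro-isomorphism yields $\HH_n(\hat A;\bb Z/p^v) \isoto \projlim_s \HH_n(A/I^s;\bb Z/p^v)$, showing that the right-hand arrow of the corollary is an isomorphism in its own right. Combined with the previous paragraph, the left-hand arrow is therefore also an isomorphism.

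The $\TR^r$ case runs identically, once one observes that $W_r$ commutes with cofiltered inverse limits of rings (its underlying set being simply $r$ copies of the ring), so that $\projlim_s W_r(A/I^s) \cong W_r(\hat A)$; the analogous Artin--Rees identification over $W_r(A)$ then applies to the finitely generated module $\TR^r_n(A;p,\bb Z/p^v)$ supplied by Theorem \ref{theorem_intro_finite_gen_of_THH_TR}(ii), and one concludes exactly as above. The one non-formal ingredient---and the step I expect to require the most care---is the F-finiteness of $\hat A$, which is handled by Lemma \ref{lemma_conditions_for_F-finite}; the remainder reduces to routine commutative algebra of finitely generated modules over Noetherian rings.
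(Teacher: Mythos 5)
Your proposal is correct and follows essentially the same route as the paper's proof of Corollary \ref{corollary_pre_continuity}: apply the degree-wise continuity theorem together with finite generation and standard completion algebra over the Noetherian rings $A$ and $W_r(A)$ to the pair $(A,I)$, then repeat for $(\hat A, I\hat A)$ using Lemma \ref{lemma_conditions_for_F-finite} and $\hat A/I^s\hat A\cong A/I^s$, with Lemma \ref{lemma_W_S_of_completion} (your observation that $W_r$ commutes with inverse limits) handling the $\TR^r$ case. The only cosmetic difference is that you re-derive the mod-$p^v$ finite generation from the $p$-complete statement of Theorem \ref{theorem_intro_finite_gen_of_THH_TR}, whereas the paper cites its finite-coefficient version (Theorem \ref{theorem_finite_gen_of_THH_TR}) directly.
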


Of a more topological nature than such degree-wise continuity statements are spectral continuity, namely the question of whether the canonical map of spectra \[\THH(A)\To\holim_s\THH(A/I^s)\] is a weak-equivalence, at least after $p$-completion. The analogous continuity question for $K$-theory was studied for discrete valuation rings by A.~Suslin \cite{Suslin1984a}, I.~Panin \cite{Panin1986}, and the first author \cite{Dundas1998}, and for power series rings $A=R[[X_1,\dots,X_d]]$ over a regular, F-finite $\bb F_p$-algebra $R$ by Geisser and Hesselholt \cite{GeisserHesselholt2006b}, with $I=\pid{X_1,\dots,X_d}$. Geisser and Hesselholt proved the continuity of $K$-theory in such cases by establishing it first for $\THH$ and $\TR^r$.

We use the previous degree-wise continuity results to prove the following:

\begin{theorem}[see Thm.~\ref{theorem_continuity_in_complete_case}]\label{theorem_intro_continuity_in_complete_case}
Let $A$ be a Noetherian, F-finite $\bb Z_{(p)}$-algebra, and $I\subseteq A$ an ideal; assume that $A$ is $I$-adically complete. Then the following canonical maps are weak equivalences after $p$-completion:
\begin{align*}
&\THH(A)\To\holim_s\THH(A/I^s)\\
&\TR^r(A;p)\To\holim_s\TR^r(A/I^s;p)\quad(r\ge1)
\end{align*}
and similarly for $\TR(-;p)$, $\TC^r(-;p)$, and $\TC(-;p)$.
\end{theorem}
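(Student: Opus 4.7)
The plan is to check that the indicated maps become weak equivalences after $p$-completion by verifying, for each $v\ge1$, that they induce isomorphisms on $\pi_n(-;\bb Z/p^v)$ for all $n\ge0$, using the Milnor short exact sequence; and then to bootstrap from $\THH$ and $\TR^r$ to the remaining theories via commutation of homotopy limits and via fibration sequences.

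For $\THH$: since the Moore spectrum $S/p^v$ is dualizable, smashing with it commutes with $\holim_s$, so the Milnor tower exact sequence reads
\[0\to \lim{}^1_s \THH_{n+1}(A/I^s;\bb Z/p^v)\to \pi_n\bigl(\holim_s \THH(A/I^s);\bb Z/p^v\bigr)\to \lim_s\THH_n(A/I^s;\bb Z/p^v)\to 0.\]
By Theorem~\ref{theorem_intro_degreewise_continuity}, the pro-system $\{\THH_n(A/I^s;\bb Z/p^v)\}_s$ is pro-isomorphic to $\{\THH_n(A;\bb Z/p^v)\otimes_A A/I^s\}_s$, which has surjective transition maps; the $\lim^1$ thus vanishes, and since pro-isomorphic systems have the same $\lim$ and $\lim^1$, the $\lim^1$ in the Milnor sequence vanishes as well. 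By Theorem~\ref{theorem_intro_finite_gen_of_THH_TR}, $M:=\THH_n(A;\bb Z/p^v)$ is a finitely generated $A$-module; because $A$ is Noetherian and $I$-adically complete, $M$ is $I$-adically complete, so $\lim_s(M\otimes_A A/I^s)=M$. Hence $\pi_n(\holim_s\THH(A/I^s);\bb Z/p^v)\cong \THH_n(A;\bb Z/p^v)$, with the isomorphism realized by the natural map, settling the $\THH$ case. The $\TR^r$ case is identical, using the $\TR^r$-part of Theorem~\ref{theorem_intro_degreewise_continuity}, the finite generation of $\TR^r_n(A;p,\bb Z/p^v)$ over $W_r(A)$, the fact that $W_r$ preserves surjections, and completeness of this module with respect to the $\{\ker(W_r(A)\to W_r(A/I^s))\}_s$-topology.

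To pass to $\TR(A;p)=\holim_r\TR^r(A;p)$, interchange $\holim_s$ and $\holim_r$ on the target and apply the $\TR^r$-case levelwise in $r$. For $\TC^r(A;p)$, use the defining cofibration sequence
\[\TC^r(A;p)\to \TR^r(A;p)\xrightarrow{R-F}\TR^{r-1}(A;p),\]
natural in $A$ and stable under $\holim_s$: since the two right-hand maps are $p$-equivalences by the $\TR^r$ step, so is the map on fibers. Finally, $\TC(A;p)=\holim_r\TC^r(A;p)$ is treated as in the $\TR$ case.

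The principal technical point is the Witt-vector step, namely verifying that $\TR^r_n(A;p,\bb Z/p^v)$, finitely generated over $W_r(A)$, is complete in the topology induced by the tower $\{W_r(A/I^s)\}_s$, and that the corresponding pro-system has vanishing $\lim^1$. Both rest on the fact that $W_r$ sends the $I$-adic topology on the Noetherian complete ring $A$ to an adic topology on $W_r(A)$ for which $W_r(A)$ is complete Noetherian; this requires some Witt-vector commutative algebra (using the Verschiebung filtration and surjectivity of $W_r(A)\onto W_r(A/I^s)$) but is standard and should cause no trouble beyond the care already required to formulate Theorem~\ref{theorem_intro_degreewise_continuity} at the $\TR^r$-level.
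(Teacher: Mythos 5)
Your proposal is correct and follows essentially the same route as the paper's proof: reduce to $\bb Z/p^v$-coefficients, use the Milnor $\lim^1$-sequence together with the degree-wise continuity theorem (surjective transition maps kill $\lim^1$), identify $\projlim_s$ of the base-changed groups with $\THH_n(A;\bb Z/p^v)$ resp.\ $\TR^r_n(A;\bb Z/p^v)$ via finite generation and ($W_r$-)completeness, and then pass to $\TR$, $\TC^r$, $\TC$ by commuting homotopy limits. The Witt-vector completeness input you flag as ``standard'' is exactly what the paper supplies (intertwining of $W_r(I)^s$ and $W_r(I^s)$, $W_r(\hat A)\cong\projlim_sW_r(A)/W_r(I^s)$, and Noetherianness of $W_r(A)$ by Langer--Zink), so no gap remains.
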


There are two important special cases in which the results so far can be analysed further: when $p$ is nilpotent, and when $p$ generates $I$. Firstly, if $p$ is nilpotent in $A$, for example if $A$ is a Noetherian, F-finite, $\bb F_p$-algebra, then Theorem \ref{theorem_intro_finite_gen_of_THH_TR} -- Theorem \ref{theorem_intro_continuity_in_complete_case} are true integrally, without $p$-completing or working with finite coefficients; see Corollaries \ref{corollary_finite_gen_nilp} and \ref{corollary_nilpotent_continuity} for precise statements. The second important special case is $I=pA$, when Theorems \ref{theorem_intro_degreewise_continuity} and \ref{theorem_intro_continuity_in_complete_case}  simplify to the following statements:

\begin{corollary}[see Corol.~\ref{corollary_p_I}]\label{corollary_intro_p_I}
Let $A$ be a Noetherian, F-finite $\bb Z_{(p)}$-algebra. Then the following canonical maps are isomorphisms for all $n\ge0$ and $v,r\ge1$:
\begin{itemize}
\item[] $\HH_n(A;\bb Z/p^v)\To\{\HH_n(A/p^sA;\bb Z/p^v)\}_s$
\item[] $\THH_n(A;\bb Z/p^v)\To\{\THH_n(A/p^sA;\bb Z/p^v)\}_s$
\item[] $\TR_n^r(A;p,\bb Z/p^v)\To\{\TR_n^r(A/p^sA;p,\bb Z/p^v)\}_s$ 
\item[] $\TC_n^r(A;p,\bb Z/p^v)\To\{\TC_n^r(A/p^sA;p,\bb Z/p^v)\}_s$ 
\end{itemize}
Moreover, all of the following maps (not just the compositions) are weak equivalences after $p$-completion:
\begin{itemize}
\item[] $\THH(A)\To\THH(A_p^\comp)\To\holim_s\THH(A/p^sA)$
\item[] $\TR^r(A;p)\To\TR^r(A_p^\comp;p)\To\holim_s\TR^r(A/p^sA;p)\quad(r\ge1)$,
%\item[]$\TC^r(A;p)\To\TC^r(A_p^\comp;p)\To\holim_s\TC^r(A/p^sA;p)\quad(r\ge1)$,
%\item[]$\TR(A;p)\To\TR(A_p^\comp;p)\To\holim_s\TR(A/p^sA;p)$,
%\item[]$\TC(A;p)\To\TC(A_p^\comp;p)\To\holim_s\TC(A/p^sA;p)$.
\end{itemize}
and similarly for $\TR(-;p)$, $\TC^r(-;p)$, and $\TC(-;p)$.
\end{corollary}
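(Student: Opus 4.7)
The plan is to derive the corollary by specialising the main continuity results of the paper to the ideal $I = pA$ and exploiting that every group with $\bb Z/p^v$-coefficients is annihilated by $p^v$.

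For the four degree-wise isomorphisms I would apply Theorem \ref{theorem_intro_degreewise_continuity} with $I = pA$. Its right-hand pro-systems are already the pro-systems appearing on the right of the corollary, so it suffices to identify the corresponding left-hand pro-systems with the constant pro-systems generated by $\HH_n(A;\bb Z/p^v)$, $\THH_n(A;\bb Z/p^v)$, and $\TR^r_n(A;p,\bb Z/p^v)$. In the $\HH$ and $\THH$ cases this is immediate: any $\bb Z/p^v$-module $M$ is annihilated by $p^v$, so for $s \ge v$ one has $M \otimes_A A/p^sA = M/p^sM = M$, and the pro-system $\{M \otimes_A A/p^sA\}_s$ is strictly constant from $s = v$ onwards. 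The $\TC^r$-statement is then formal: the five-lemma for pro-systems applied to the ladder induced by the defining cofibre sequence $\TC^r(-;p) \to \TR^r(-;p) \xrightarrow{R - F} \TR^{r-1}(-;p)$ reduces it to the $\TR^r$ and $\TR^{r-1}$ cases.

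The genuinely new input is the $\TR^r$-case, where the tensor product is over $W_r(A)$ against $W_r(A/p^sA) = W_r(A)/W_r(p^sA)$. Because $\TR^r_n(A;p,\bb Z/p^v)$ is a $\bb Z/p^v$-module, the central element $p^v \in W_r(A)$ acts trivially on it, so the $W_r(A)$-action factors through $W_r(A)/p^v W_r(A)$. It therefore suffices to establish the purely ring-theoretic inclusion
\[
W_r(p^s A) \;\subseteq\; p^v\, W_r(A) \qquad \text{for all $s$ sufficiently large (depending on $r$ and $v$),}
\]
which I would verify by induction on $r$ using the short exact sequence $0 \to V W_{r-1}(A) \to W_r(A) \to A \to 0$ together with the identities $FV = p$ and the projection formula $V(x) y = V(x F(y))$; equivalently one can work with ghost coordinates in the $p$-torsion-free case and descend via a presentation of $A$ as a quotient of a polynomial $\bb Z_{(p)}$-algebra. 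This Witt-vector estimate is the principal technical obstacle of the proof.

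For the spectral statements I would factor each claimed weak equivalence through $A_p^\comp$. The assertion that $\THH(A) \to \THH(A_p^\comp)$ is a $p$-completion equivalence, together with its analogues for $\TR^r$, $\TR$, $\TC^r$, and $\TC$, is a direct consequence of Corollary \ref{corollary_intro_completing} applied with $I = pA$: the degree-wise isomorphisms with $\bb Z/p^v$-coefficients provided by that corollary are equivalent to weak equivalences at the level of $p$-completed spectra, since $\pi_\ast(-;\bb Z/p^v)$ detects $p$-completion equivalences. For the remaining leg $\THH(A_p^\comp) \to \holim_s \THH(A/p^s A)$ I would invoke Theorem \ref{theorem_intro_continuity_in_complete_case} applied to the pair $(A_p^\comp, pA_p^\comp)$: this ring is again Noetherian and F-finite, is $p$-adically complete by construction, and satisfies $A_p^\comp / p^s A_p^\comp = A/p^s A$, so the theorem immediately yields the desired $p$-completion equivalence. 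The $\TR$, $\TR^r$, $\TC^r$, and $\TC$ cases follow by the identical two-step factorisation.
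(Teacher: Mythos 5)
Your proposal is correct and follows essentially the same route as the paper: specialise Theorem \ref{theorem_continuity} to $I=pA$, use that the finite-coefficient groups are killed by $p^v$ together with the inclusion $W_r(p^sA)\subseteq p^vW_r(A)$ for $s\gg0$ (which is precisely the content of Lemma \ref{lemma_W_r_of_p_completion}, so your ``principal technical obstacle'' is already established in the paper), deduce the $\TC^r$ case by the five lemma, and obtain the spectral statements by comparing $A$ with $A_p^\comp$ and invoking Theorem \ref{theorem_continuity_in_complete_case}. The only cosmetic difference is in the first spectral leg $\TR^r(A;p)\to\TR^r(A_p^\comp;p)$: the paper applies its pro isomorphisms to both $A$ and $A_p^\comp$ (using $A_p^\comp/p^sA_p^\comp\cong A/p^sA$), whereas you quote Corollary \ref{corollary_pre_continuity} and must additionally strip off the base change $\otimes_A A_p^\comp$ (resp.\ $\otimes_{W_r(A)}W_r(A_p^\comp)$), which follows from the same $p^v$-torsion observation you already made.
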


Corollary \ref{corollary_intro_p_I} was proved by Hesselholt and I.~Madsen \cite[Thm.~5.1]{Hesselholt1997} for finite algebras over the Witt vectors of perfect fields, and then by Geisser and Hesselholt \cite[\S3]{GeisserHesselholt2006a} for any (possibly non-commutative) ring $A$ in which $p$ is a non-zero divisor.  Our result eliminates the assumption that $p$ be a non-zero divisor, but at the expense of requiring $A$ to be a Noetherian, F-finite $\bb Z_{(p)}$-algebra.

Next we present our pro Hochschild--Kostant--Rosenberg theorems, which can be found in Section \ref{subsection_pro_HKR}, starting with algebraic Hochschild homology. Given a geometrically regular (e.g., smooth) morphism $k\to A$ of Noetherian rings, the classical Hochschild--Kostant--Rosenberg theorem states that the antisymmetrisation map $\Omega_{A/k}^n\to HH_n^k(A)$ is an isomorphism of $A$-modules for all $n\ge0$. We establish its pro analogue in full generality:

\begin{theorem}[see Thm.~\ref{theorem_pro_HKR}]\label{theorem_intro_pro_HKR}
Let $k\to A$ be a geometrically regular morphism of Noetherian rings, and let $I\subseteq A$ be an ideal. Then the canonical map of pro $A$-modules \[\{\Omega^n_{(A/I^s)/k}\}_s\To\{HH_n^k(A/I^s)\}_s\] is an isomorphism for all $n\ge0$.
\end{theorem}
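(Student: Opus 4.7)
\emph{Reduction to the smooth case.} The plan is to reduce to the case in which $k\to A$ is smooth by applying Popescu's smoothing theorem, which presents any geometrically regular morphism of Noetherian rings as a filtered colimit $A=\varinjlim_\lambda A_\lambda$ of smooth $k$-algebras. Since $A$ is Noetherian, the ideal $I$ is finitely generated; lifting a finite set of generators of $I$ to some $A_{\lambda_0}$ produces a compatible system of ideals $I_\lambda\subseteq A_\lambda$ for $\lambda\ge\lambda_0$, satisfying $A/I^s=\varinjlim_\lambda A_\lambda/I_\lambda^s$ for every $s\ge1$. Because both Kähler differentials and derived Hochschild homology commute with filtered colimits of rings, and because a filtered colimit of pro-isomorphisms of modules is itself a pro-isomorphism, it suffices to treat the case in which $k\to A$ is smooth.

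\emph{The smooth case.} Assuming $k\to A$ is smooth, $L_{A/k}=\Omega_{A/k}$ is projective and the classical HKR theorem identifies $HH^k_n(A)\cong\Omega^n_{A/k}$. To handle the quotient $B_s:=A/I^s$, I would exploit the Hodge-type filtration on Hochschild homology, whose associated graded relates $HH^k_n(B_s)$ to the derived exterior powers of the cotangent complex $L_{B_s/k}$, coupled with the transitivity triangle
\[
L_{A/k}\dotimes_A B_s \To L_{B_s/k} \To L_{B_s/A}.
\]
Using that $H_0(L_{B_s/A})=0$ and $H_1(L_{B_s/A})=I^s/I^{2s}$, one expects the top graded piece to contribute precisely $\Omega^n_{B_s/k}$ to $HH^k_n(B_s)$, while every other piece is a correction term supported on the higher homology of $L_{B_s/A}$.

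\emph{The main obstacle.} The technical heart of the argument is a pro-vanishing assertion: for each $i\ge2$, the pro-$A$-module $\{H_i(L_{(A/I^s)/A})\}_s$ must be pro-zero, and more generally one needs compatible pro-equivalences between the derived exterior powers of $L_{(A/I^s)/A}$ and those of $I^s/I^{2s}$ shifted into degree one. Although these higher homology groups do not vanish at any fixed $s$ when $I$ fails to be a regular ideal, the Artin--Rees lemma, available thanks to the Noetherian hypothesis, should ensure that the transition maps in the pro-system push the higher homology into ever-deeper powers of $I$, yielding the required pro-vanishing. Feeding this into the Hodge filtration spectral sequence and verifying that the only graded piece surviving in total degree $n$ is $\{\Omega^n_{(A/I^s)/k}\}_s$ then completes the proof.
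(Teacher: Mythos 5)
There are two genuine gaps. First, the reduction to the smooth case fails as stated: the principle ``a filtered colimit of pro-isomorphisms of modules is itself a pro-isomorphism'' is false. A map of pro-modules $\{X_s\}_s\to\{Y_s\}_s$ is an isomorphism precisely when, for each $r$, a \emph{single} $s\ge r$ makes the transition maps kill the kernel and cokernel; if $X_s=\varinjlim_\lambda X_s^\lambda$, the shift $s(\lambda,r)$ witnessing the pro-isomorphism over $A_\lambda$ is governed by Artin--Rees-type constants for $I_\lambda\subseteq A_\lambda$ and need not be bounded in $\lambda$, so no single $s$ need work in the colimit. (A toy counterexample: $X^\lambda_s=\bb Z$ for $s\le\lambda$ and $0$ for $s>\lambda$, mapping to $Y^\lambda_s=0$; each $\{X^\lambda_s\}_s$ is pro-zero, but $\{\varinjlim_\lambda X^\lambda_s\}_s=\{\bb Z\}_s$ with identity transitions is not.) The paper uses N\'eron--Popescu only for the much weaker consequence that $\HH^k_n(A)\cong\Omega^n_{A/k}$ is a \emph{flat} $A$-module (a filtered colimit of finite projectives), and otherwise works with the Noetherian ring $A$ itself, where Artin--Rees is available directly; your reduction, by contrast, tries to transport the entire pro-statement along the colimit, which is not justified.

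Second, in the smooth case the whole technical content of your argument is the asserted pro-vanishing of $\{H_i(\bb L_{(A/I^s)/A})\}_s$ for $i\ge 2$, together with compatible pro-identifications of all derived exterior powers $\bb L^j_{(A/I^s)/A}$ with (wedges of) $I^s/I^{2s}$ in degree one; the sentence ``the Artin--Rees lemma should ensure that the transition maps push the higher homology into deeper powers of $I$'' is a placeholder, not a proof. Higher relative Andr\'e--Quillen homology of $A/I^s$ over $A$ is not controlled by a naive Artin--Rees argument, and establishing this pro-triviality (essentially the route of Corti\~nas--Haesemeyer--Weibel in the finite-type case) is a substantial theorem in its own right. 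The paper avoids it entirely: it applies the restriction spectral sequence of Proposition \ref{proposition_Bjorns_SS}, $E^2_{ij}=\HH^k_i(A/I^s,\Tor^A_j(A/I^s,A/I^s))\Rightarrow\HH^k_{i+j}(A,A/I^s)$, whose higher Tor terms vanish as pro-modules by Corollary \ref{corollary_Artin_Rees}, giving $\{\HH^k_n(A,A/I^s)\}_s\cong\{\HH^k_n(A/I^s)\}_s$ (Lemma \ref{lemma_continuity}); flatness of $\Omega^n_{A/k}$ then identifies $\{\HH^k_n(A,A/I^s)\}_s$ with $\{\Omega^n_{A/k}\otimes_AA/I^s\}_s$, and the elementary observation $d(I^{2s})\subseteq I^s\Omega^n_{A/k}$ identifies the latter with $\{\Omega^n_{(A/I^s)/k}\}_s$, as in the proof of Theorem \ref{theorem_pro_HKR}. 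To salvage your route you would have to prove the cotangent-complex pro-vanishing honestly, and do so for the Noetherian ring $A$ itself rather than via the colimit reduction.
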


In the special case of finite type algebras over fields, the theorem was proved by G.~Corti\~nas, C.~Haesemeyer, and C.~Weibel \cite[Thm.~3.2]{Cortinas2009}. The stronger form presented here has recently been required in the study of infinitesimal deformations of algebraic cycles \cite{BlochEsnaultKerz2013, Morrow_Deformational_Hodge}.

The analogue of the classical Hochschild--Kostant--Rosenberg theorem for topological Hochschild homology and the fixed point spectra $\TR^r$ were established by Hesselholt \cite[Thm.~B]{Hesselholt1996} and state the following: If $A$ is a regular $\bb F_p$-algebra, then there is a natural isomorphism of $W_r(A)$-modules \[\bigoplus_{i=0}^nW_r\Omega_A^i\otimes_{W_r(\bb F_p)}TR_{n-i}^r(\bb F_p;p)\isoto  TR_n^r(A;p),\] where $W_r\Omega_A^*$ denotes the de~Rham--Witt complex of S.~Bloch, P.~Deligne, and L.~Illusie. In the limit over $r$, Hesselholt moreover showed that the contribution from the left side vanishes except in top degree $i=n$, giving an isomorphism of pro abelian groups $\{W_r\Omega_A^i\}_r\cong\{\TR^r_n(A;p)\}_r$ which deserves to be called the Hochschild--Kostant--Rosenberg theorem for the pro spectrum $\{\TR^r\}_r$.

We prove the following pro versions of these HKR theorem with respect to powers of an ideal:

\begin{theorem}[see Thm.~\ref{theorem_top_pro_HKR} \& Corol.~\ref{corollary_top_pro_HKR_for_TR}]
Let $A$ be a regular, F-finite $\bb F_p$-algebra, and $I\subseteq A$ an ideal. Then:
\begin{enumerate}
\item The canonical map \[\bigoplus_{i=0}^n\{W_r\Omega_{A/I^s}^i\otimes_{W_r(\bb F_p)}\TR^r_{n-i}(\bb F_p;p)\}_s\To \{\TR^r_n(A/I^s;p)\}_s\] of pro $W_r(A)$-modules is an isomorphism for all $n\ge0$, $r\ge1$.
\item The canonical map of pro pro abelian groups, i.e., in the category $\op{Pro}(\op{Pro}Ab)$,
\[\big\{\{W_r\Omega_{A/I^s}^n\}_s\big\}_r\To \big\{\{\TR^r_n(A/I^s;p)\}_s\big\}_r\] is an isomorphism for all $n\ge0$.
\end{enumerate}
\end{theorem}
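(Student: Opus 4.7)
The plan is to combine Hesselholt's classical HKR theorem for the regular F-finite $\mathbb{F}_p$-algebra $A$ with the degree-wise continuity for $\TR^r$ (Theorem \ref{theorem_intro_degreewise_continuity}), and then to reduce part (i) to a pro HKR statement for the de Rham--Witt complex itself. Observe first that on $\mathbb{F}_p$-algebras $\TR^r_n(-;p)$ is automatically $p^r$-torsion, so continuity holds integrally and yields an isomorphism of pro $W_r(A)$-modules
$$\{\TR_n^r(A;p) \otimes_{W_r(A)} W_r(A/I^s)\}_s \xrightarrow{\simeq} \{\TR_n^r(A/I^s;p)\}_s.$$
Because $A$ is regular, Hesselholt's HKR theorem identifies $\TR_n^r(A;p)$ with $\bigoplus_{i=0}^n W_r\Omega_A^i \otimes_{W_r(\mathbb{F}_p)} \TR^r_{n-i}(\mathbb{F}_p;p)$. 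Part (i) therefore reduces to the pro HKR statement for the de Rham--Witt complex, namely that the canonical map
$$\{W_r\Omega_A^i \otimes_{W_r(A)} W_r(A/I^s)\}_s \xrightarrow{\simeq} \{W_r\Omega_{A/I^s}^i\}_s$$
is an isomorphism for every $i \geq 0$ and $r \geq 1$.

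This pro HKR statement I would prove by induction on $r$. The base case $r=1$ asserts $\{\Omega^i_{A/\mathbb{F}_p} \otimes_A A/I^s\}_s \cong \{\Omega^i_{(A/I^s)/\mathbb{F}_p}\}_s$, and follows by comparing both sides with $\{\HH^{\mathbb{F}_p}_i(A/I^s)\}_s$: classical HKR for the regular F-finite algebra $A$ handles the left, pro HKR for Hochschild homology (Theorem \ref{theorem_intro_pro_HKR}) handles the right, and continuity of Hochschild homology (Theorem \ref{theorem_intro_degreewise_continuity}) connects $\HH_i(A) \otimes_A A/I^s$ to $\HH_i(A/I^s)$. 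For the inductive step I would invoke the standard short exact sequences of the de Rham--Witt complex relating $W_r\Omega^\bullet$, $W_{r-1}\Omega^\bullet$, and $\Omega^\bullet$ via restriction, Verschiebung, and Frobenius; tensor with $W_r(A/I^s)$, compare with the corresponding sequences for $W_r\Omega^\bullet_{A/I^s}$, and apply the five-lemma in the pro category using the inductive hypothesis together with the $r=1$ case.

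For part (ii), take the limit over $r$ of the isomorphism from part (i). Hesselholt's calculation of $\TR^r_*(\mathbb{F}_p;p)$ shows that, for any $\mathbb{F}_p$-algebra $B$ and any $i < n$, the pro system $\{W_r\Omega_B^i \otimes_{W_r(\mathbb{F}_p)} \TR^r_{n-i}(\mathbb{F}_p;p)\}_r$ is pro-zero. Applying this levelwise with $B = A/I^s$ kills every summand with $i < n$ as an object of $\op{Pro}(\op{Pro}\op{Ab})$, and the only surviving summand $i=n$ (where $\TR^r_0(\mathbb{F}_p;p)=W_r(\mathbb{F}_p)$) collapses to $\{\{W_r\Omega_{A/I^s}^n\}_s\}_r$, yielding the claim.

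The main obstacle is the pro HKR theorem for the de Rham--Witt complex in its inductive step: one must verify that tensoring with $W_r(A/I^s)$ interacts compatibly with the filtrations on $W_r\Omega_A^\bullet$ generated by the operators $V$ and $dV$, and that the resulting filtration agrees up to pro-isomorphism in $s$ with the analogous filtration on $W_r\Omega_{A/I^s}^\bullet$. I expect this to require a pro Artin--Rees argument that exploits the Noetherian and F-finite hypotheses to control the interaction of the ideal $I$ with the Verschiebung tower.
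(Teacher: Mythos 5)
Your overall strategy coincides with the paper's. For part (i) the paper argues exactly as you do: since $p$ is nilpotent, continuity for $\TR^r$ holds integrally (Corollary \ref{corollary_nilpotent_continuity}), so $\{\TR^r_n(A;p)\otimes_{W_r(A)}W_r(A/I^s)\}_s\isoto\{\TR^r_n(A/I^s;p)\}_s$, and Hesselholt's HKR theorem for the regular ring $A$ then reduces everything to the pro de~Rham--Witt comparison $\{W_r\Omega^i_A\otimes_{W_r(A)}W_r(A/I^s)\}_s\To\{W_r\Omega^i_{A/I^s}\}_s$ being an isomorphism. For part (ii) your argument is also the paper's: $R(\sigma_r)=p\lambda_r\sigma_{r-1}$ forces $R^r:\TR^{2r}_{n-i}(\bb F_p;p)\to\TR^r_{n-i}(\bb F_p;p)$ to vanish for $n-i\ge1$, so in $\op{Pro}(\op{Pro}Ab)$ every summand with $i<n$ is trivial Mittag--Leffler and only $i=n$ survives.

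The one place you diverge is the de~Rham--Witt comparison itself, which you single out as the main obstacle and propose to prove by induction on $r$ via the standard $V$- and $dV$-filtration together with a pro Artin--Rees argument; as written, that step is not carried out, so strictly speaking your proposal has a gap there. But the gap is much smaller than you anticipate, and the machinery you envisage is unnecessary. The maps $W_r(A/I^s)\otimes_{W_r(A)}W_r\Omega^i_A\to W_r\Omega^i_{A/I^s}$ are surjective for every $s$, so it suffices to show the pro system of kernels vanishes, and this follows from the Leibniz rule combined with Lemma \ref{lemma_witt_1}(iv): for $M\gg s$ every element of $W_r(I^M)$ is a sum of products of two elements of $W_r(I^s)$, so its differential lies in $W_r(I^s)\,W_r\Omega^1_A$, and the offending classes die already at level $s$; this is precisely \cite[Prop.~2.5]{GeisserHesselholt2006b}, which the paper cites. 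In particular no regularity, no induction on $r$, and no control of the Verschiebung tower is needed for this lemma (regularity enters only through Hesselholt's HKR theorem for $A$). Your inductive route could likely be completed -- the graded pieces of the standard filtration are expressible through K\"ahler differentials, your $r=1$ case is fine (though it is quicker to argue directly via $d(I^{2s})\subseteq I^s\Omega^1_A$ than to pass through Hochschild homology), and finite generation of $W_r\Omega^i_A$ over the Noetherian ring $W_r(A)$ would give the exactness you need -- but it is considerably heavier than the direct argument.
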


Finally, in Section \ref{section_proper}, the earlier finite generation and continuity results are extended from Noetherian, F-finite $\bb Z_{(p)}$-algebras to proper schemes over such rings. These are obtained by combining the results in the affine case with Zariski descent and Grothendieck's formal functions theorem for coherent cohomology. Our main finite generation result in this context is the following:

\begin{theorem}[see Corol.~\ref{corollary_p_complete_finite_gen_proper}]\label{theorem_intro_fin_gen_p_complete}
Let $A$ be a Noetherian, F-finite, finite Krull-dimensional $\bb Z_{(p)}$-algebra, and $X$ a proper scheme over $A$. Then:
\begin{enumerate}
\item $\HH_n(X;\bb Z_p)$ and $\THH_n(X;\bb Z_p)$ are finitely generated $A_p^\comp$-modules for all $n\ge~0$.
\item $\TR_n^r(X;p,\bb Z_p)$ is a finitely generated $W_r(A_p^\comp)$-module for all $n\ge0$, $r\ge1$.
\end{enumerate}
\end{theorem}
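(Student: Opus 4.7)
My strategy is to build on the affine finite generation result of Theorem~\ref{theorem_intro_finite_gen_of_THH_TR} by combining Zariski descent for these homology theories with Grothendieck's finiteness theorem for coherent cohomology of proper morphisms. The first step is to reduce to the analogous statements with finite coefficients. The Milnor sequence
\[
0\To{\projlim}^{1}_{v}\,\THH_{n+1}(X;\bb Z/p^v)\To \THH_n(X;\bb Z_p)\To \projlim_v\THH_n(X;\bb Z/p^v)\To0
\]
(and its $\HH$ and $\TR^r$ analogues), together with an extension of Corollary~\ref{corollary_intro_p_I} to proper $X$ -- obtained by applying spectral continuity in the affine case sectionwise over a finite affine cover and taking $\holim$ -- reduces the problem to showing that $\THH_n(X;\bb Z/p^v)$, $\HH_n(X;\bb Z/p^v)$ are finitely generated over $A$, and $\TR_n^r(X;p,\bb Z/p^v)$ is finitely generated over $W_r(A)$, for each fixed $v,r\ge1$.

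For fixed $v$, I would invoke Geisser--Hesselholt Zariski hyperdescent to obtain strongly convergent spectral sequences
\begin{equation*}
E_2^{s,t}=H^s_{\op{Zar}}(X,\mathcal{THH}_t(-;\bb Z/p^v))\Longrightarrow \THH_{t-s}(X;\bb Z/p^v),
\end{equation*}
and analogues for $\HH$ and $\TR^r$ (the latter using $W_r(\mathcal{O}_X)$ as structure sheaf). The coefficient presheaves are quasi-coherent because $\THH$ satisfies étale base change. On any affine open $\Spec B \sseq X$, the ring $B$ inherits Noetherianness and F-finiteness, so Theorem~\ref{theorem_intro_finite_gen_of_THH_TR} makes $\THH_t(B;\bb Z/p^v)$ a finitely generated $B_p^\comp$-module; since it is $p^v$-torsion and the natural map $B/p^vB\to B_p^\comp/p^vB_p^\comp$ is an isomorphism for the Noetherian ring $B$, the same generators make it finitely generated already as a $B$-module. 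Thus $\mathcal{THH}_t(-;\bb Z/p^v)$ is a coherent $\mathcal{O}_X$-module, and the parallel arguments yield coherence for $\mathcal{HH}_t$ and, invoking Noetherianness of $W_r(B)$ for Noetherian F-finite $B$, for $\mathcal{TR}^r_t(-;p,\bb Z/p^v)$ as a $W_r(\mathcal{O}_X)$-module.

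Grothendieck's finiteness theorem then produces finite generation of each $E_2$ term over $A$, resp.\ $W_r(A)$; the finite Krull-dimension hypothesis enters via Grothendieck vanishing, so only finitely many columns are nonzero, giving finite filtrations in each total degree, and Noetherianness of $A$ and $W_r(A)$ promotes this to finite generation of the abutments. Reassembling with the Milnor sequence and continuity recovers the statement over $A_p^\comp$ and $W_r(A_p^\comp)$. The main obstacle I foresee is the coherence step: it rests on the deep affine input of Theorem~\ref{theorem_intro_finite_gen_of_THH_TR} and on the $p^v$-torsion collapse $B_p^\comp/p^v\cong B/p^v$, and for the $\TR^r$ variant additionally requires that the de~Rham--Witt-type sheaf formalism be genuinely coherent on $X$, which uses F-finiteness of $A/pA$ to guarantee Noetherianness of the relevant Witt rings.
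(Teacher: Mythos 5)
Your core argument coincides with the paper's: quasi-coherence of the homotopy sheaves (Lemma \ref{lemma_quasi_coherent}), coherence on the Noetherian, F-finite affine opens via the affine finite generation theorem (Corollary \ref{corollary_coherence_of_THH_TR}), the bounded Zariski descent spectral sequence (boundedness from finite Krull dimension), and coherent finiteness for the proper morphism, giving exactly the finite-coefficient statement of Theorem \ref{theorem_finite_gen_proper}. One warning there: "\'etale base change" disposes of $\HH$ and $\THH$, but quasi-coherence of $R^{r-1}_*\cal\TR^r_n$ over $W_r(\roi_X)$ is not formal; the paper proves it by induction on $r$ using the fundamental fibre sequence, flatness of $W_r(R_f)$ over $W_r(R)$, the homotopy-orbit spectral sequence, and the cocartesian Frobenius square of Langer--Zink, which is precisely where F-finiteness enters; moreover for $\TR^r$ one pushes forward to $W_r(X)$, which is proper over $W_r(A)$, before applying coherent finiteness over $W_r(A)$. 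You gesture at this but do not supply it.

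The genuine gap is the passage from finite coefficients to $\bb Z_p$-coefficients. Knowing that each $\THH_n(X;\bb Z/p^v)$ is a finitely generated $A$-module does not, via the Milnor sequence alone, yield finite generation of $\THH_n(X;\bb Z_p)$ over $A_p^\comp$: an inverse limit of finitely generated torsion modules need not be finitely generated over the completion (for instance $M_v=\bigoplus_{i=1}^{v}\bb Z/p\bb Z$ with the projections as transition maps has limit $\prod_{i\ge1}\bb Z/p\bb Z$, not finitely generated over $\bb Z_p$), and your sketch says nothing about the ${\projlim}^1_v$ term. The appeal to an extension of Corollary \ref{corollary_intro_p_I} does not repair this: that corollary concerns continuity along the quotients $A/p^sA$, not the coefficient tower in $v$, and it plays no role in the paper's proof of this theorem. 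What is needed is the relation of the coefficient tower to the \emph{integral} homotopy groups: this is the paper's Proposition \ref{proposition_to_prove_finite_gen_p_adically}, proved in the appendix via the exact sequence $0\to\Ext^1_{\bb Z}(\bb Q_p/\bb Z_p,\pi_n)\to\pi_n(-;\bb Z_p)\to\Hom_{\bb Z}(\bb Q_p/\bb Z_p,\pi_{n-1})\to0$ together with the lemmas that finite generation of $M/pM$ forces finite generation of $\Ext^1_{\bb Z}(\bb Q_p/\bb Z_p,M)$ over $A_p^\comp$, and finite generation of $M[p]$ forces the same for the Tate module $\Hom_{\bb Z}(\bb Q_p/\bb Z_p,M)$. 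Replacing your last step by an appeal to (or a proof of) this proposition -- note that only the case $v=1$ of your finite-coefficient statement is needed as input -- and invoking $W_r(A)_p^\comp\cong W_r(A_p^\comp)$ (Lemma \ref{lemma_W_r_of_p_completion}) so that part (ii) lands in the stated module category, closes the argument.
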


Next we consider analogues of degree-wise continuity for proper schemes over $A$: given an ideal $I\subseteq A$ and a proper scheme $X$ over $A$, we consider the natural map of pro $A$-modules \[\{\HH_n(X)\otimes_AA/I^s\}_s\To\{\HH_n(X_s)\}_s\] where $X_s:=X\times_AA/I^s$, and similarly for $\THH$, $\TR$. In this setup we establish the exact analogues of Theorem \ref{theorem_intro_degreewise_continuity} and Corollary \ref{corollary_intro_completing}; see Theorem \ref{theorem_degree_wise_cont_proper} and Corollary \ref{corollary_pre_continuity_proper} for precise statements.

As in the affine case, we use these degree-wise continuity statements to deduce continuity of topological cyclic homology for proper schemes over our usual base rings:

\begin{theorem}[see Thm.~\ref{theorem_spectral_continuity_proper}]
Let $A$ be a Noetherian, F-finite $\bb Z_{(p)}$-algebra, $I\subseteq A$ an ideal, and $X$ be a proper scheme over $A$; assume that $A$ is $I$-adically complete. Then the following canonical maps of spectra are weak equivalences after $p$-completion:
\begin{align*}
&\THH(X)\To\holim_s\THH(X_s)\\
&\TR^r(X;p)\To\holim_s\TR^r(X_s;p)\quad(r\ge1),
\end{align*}
and similarly for $\TR(-;p)$, $\TC^r(-;p)$, and $\TC(-;p)$.
\end{theorem}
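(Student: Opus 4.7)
The plan is to reduce the spectral continuity statement to the degree-wise (pro-)continuity Theorem~\ref{theorem_degree_wise_cont_proper} combined with the finite generation Theorem~\ref{theorem_intro_fin_gen_p_complete}, via the Milnor short exact sequence for a homotopy limit of spectra.

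First I would use that a map of spectra is a $p$-completion weak equivalence if and only if it induces isomorphisms on $\pi_*(-;\bb Z/p^v)$ for all $v\ge 1$. For the map $\THH(X)\To\holim_s\THH(X_s)$, the target is computed by the Milnor short exact sequence
\[0\To{\projlim_s}^1\THH_{n+1}(X_s;\bb Z/p^v)\To\pi_n\bigl(\holim_s\THH(X_s);\bb Z/p^v\bigr)\To\projlim_s\THH_n(X_s;\bb Z/p^v)\To 0,\]
and likewise for $\TR^r(-;p)$ with $\bb Z/p^v$ coefficients.

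Next, the pro-isomorphism furnished by Theorem~\ref{theorem_degree_wise_cont_proper} identifies $\{\THH_n(X_s;\bb Z/p^v)\}_s$ with $\{\THH_n(X;\bb Z/p^v)\otimes_A A/I^s\}_s$ in $\op{Pro}(A\text{-mod})$, and analogously for $\TR^r$ over $W_r(A)$. Pro-isomorphic systems have identical $\projlim$ and $\projlim^1$, so it suffices to compute those of the latter, simpler system. Here the finite generation Theorem~\ref{theorem_intro_fin_gen_p_complete} enters: with $M:=\THH_n(X;\bb Z/p^v)$, viewed as a finitely generated module over the Noetherian ring $A/p^v$, the system $\{M/I^sM\}_s$ has surjective transition maps, so $\projlim^1$ vanishes and $\projlim$ equals the $I$-adic completion $\hat M$, which coincides with $M$ by the $I$-adic completeness of $A$ and the finite generation of $M$. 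The Milnor sequence therefore collapses to $\pi_n(\holim_s\THH(X_s);\bb Z/p^v)\cong\THH_n(X;\bb Z/p^v)$, and the identical argument, with $W_r(A)$ in place of $A$, handles $\TR^r$ and produces the two displayed weak equivalences.

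Finally I would bootstrap from $\TR^r$ to $\TR$, $\TC^r$, and $\TC$. Each of these is built from the $\TR^r(-;p)$ by a homotopy limit over $r$ or a homotopy equalizer involving the Frobenius, all of which commute with the outer $\holim_s$ in the $I$-adic direction. The main obstacle will lie in the interchange of $\holim_s$ with $\holim_r$ needed for $\TR(-;p)=\holim_r\TR^r(-;p)$: the interchange itself is formal, but verifying that $\projlim^1$ vanishes uniformly in $r$ is exactly what the finite generation theorem, applied for all $r\ge1$ at once, is designed to give, and this uniform Mittag--Leffler control is the real engine driving the proof.
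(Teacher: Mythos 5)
Your proposal is correct and follows essentially the same route as the paper's proof of Theorem \ref{theorem_spectral_continuity_proper}: work with $\bb Z/p^v$-coefficients, use the Milnor $\projlim$--$\projlim^1$ sequence, kill $\projlim^1$ via the degree-wise continuity of Theorem \ref{theorem_degree_wise_cont_proper} (surjective transition maps), and identify the $\projlim$ term with $\THH_n(X;\bb Z/p^v)$ resp.\ $\TR^r_n(X;\bb Z/p^v)$ using finite generation and $I$-adic completeness (this is exactly Corollary \ref{corollary_pre_continuity_proper}), then pass to $\TR$, $\TC^r$, $\TC$ by commuting homotopy limits. The only inessential quibble is your closing worry: no uniform-in-$r$ Mittag--Leffler control is needed, since the per-$r$ weak equivalences pass through $\holim_r$ (and through smashing with the finite spectrum $S/p^v$) purely formally.
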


Also as in the affine case, it is particular interesting to consider the cases that $p$ is nilpotent or is the generator of $I$; see Corollaries \ref{corollary_p_nilpotent_proper} and \ref{corollary_proper_I_p}.

\subsection*{Notation, etc.}
All rings from Section \ref{section_Witt} onwards are tacitly understood to be commutative; the strict exception is Proposition \ref{proposition_Bjorns_SS}, which holds also in the non-commutative case. Modules are therefore understood to be symmetric bimodules whenever a bimodule structure is required for Hochschild homology; again, the strict exception is Proposition \ref{proposition_Bjorns_SS}, where a non-symmetric module naturally appears. 

Given a positive integer $n$, the $n$-torsion of an abelian group $M$ is denoted by $M[n]$.

\subsection*{Acknowledgements}
The second author would like to thank the Department of Mathematics at the University of Bergen for providing such a hospitable environment during two visits.

\section{Review of Artin--Rees properties and homology theories}\label{section_review}

\subsection{Pro abelian groups and Artin--Rees properties}
Here we summarise some results and notation concerning pro abelian group and pro modules which will be used throughout the paper.

If $\cal A$ is a category, then we denote by $\op{Pro}\cal A$ the category of pro objects of $\cal A$ indexed over the natural numbers. That is, an object of $\op{Pro}\cal A$ is an inverse system \[M_\infty=\{M_s\}_s=``M_1\leftarrow M_2\leftarrow\cdots",\] where the objects $M_i$ and the transition maps belong to $\cal A$; the morphisms are given by \[\Hom_{\op{Pro}\cal A}(M_\infty,N_\infty):=\projlim_r\indlim_s\Hom_\cal A(M_s,N_r).\]

If $\cal A$ is abelian then so is $\op{Pro}\cal A$, and a pro object $M_\infty\in\op{Pro}\cal A$ is isomorphic to zero if and only if it satisfies the trivial Mittag-Leffler condition: that is, that for all $r\ge1$ there exists $s\ge r$ such that the transition map $M_s\to M_r$ is zero.

We will be particularly interested in the cases $\cal A=Ab$ and $A\op{-mod}$, where $A$ is a commutative ring, in which case we speak of pro abelian groups and pro $A$-modules respectively. When it is unlikely to cause any confusion, we will occasionally use $\infty$ notation in proofs for the sake of brevity; for example, if $I$ is an ideal of a ring $A$ and $M$ is an $A$-module, then \[M\otimes_AA/I^\infty=\{M\otimes_AA/I^s\}_s,\quad\HH_n(A/I^\infty,M/I^\infty M)=\{\HH_n(A/I^s,M/I^sM)\}_s.\]

For the sake of reference, we now formally state the fundamental Artin--Rees result which will be used repeatedly; this result appears to have been first noticed and exploited by M.~Andr\'e \cite[Prop.~10 \& Lem.~11]{Andre1974} and D.~Quillen \cite[Lem.~9.9]{Quillen1968}:

\begin{theorem}[Andr\'e, Quillen]\label{theorem_Artin_Rees}
Let $A$ be a commutative, Noetherian ring, and $I\subseteq A$ an ideal.
\begin{enumerate}
\item If $M$ is a finitely generated $A$-module, then the pro $A$-module $\{\Tor_n^A(A/I^s,M)\}_s$ vanishes for all $n>0$.
\item The ``completion'' functor
\begin{align*}
-\otimes_A A/I^\infty\,:A\op{-mod}&\To  \op{Pro}A\op{-mod}\\
M&\mapsto \{M\otimes_AA/I^s\}_s
\end{align*}
is exact on the subcategory of finitely generated $A$-modules.
\end{enumerate}
\end{theorem}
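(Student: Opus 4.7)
The plan is to deduce both parts from the classical Artin--Rees lemma, with part (ii) being a formal consequence of part (i), so the real work is in (i). I would further reduce (i) to the case $n=1$ by dimension-shifting, so the heart of the argument is a direct Artin--Rees computation of $\Tor_1$.

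For $n=1$, I start with a presentation $0\to N\to F\to M\to 0$ in which $F$ is a finitely generated free $A$-module; since $A$ is Noetherian and $M$ is finitely generated, $N$ is finitely generated too. Tensoring with $A/I^s$ and taking the long exact sequence gives
\[
\Tor_1^A(A/I^s,M)\;\cong\;(I^sF\cap N)/I^sN.
\]
The classical Artin--Rees lemma produces a constant $c\ge 0$, depending only on $N\subseteq F$ and $I$, such that $I^sF\cap N=I^{s-c}(I^cF\cap N)\subseteq I^{s-c}N$ for all $s\ge c$. Consequently, for any $r\ge 1$ and any $s\ge r+c$, the transition map
\[
\Tor_1^A(A/I^s,M)\;\To\;\Tor_1^A(A/I^r,M)
\]
is zero, since every representative in $I^sF\cap N$ already lies in $I^rN$. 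This is precisely the trivial Mittag--Leffler condition, so the pro $A$-module $\{\Tor_1^A(A/I^s,M)\}_s$ vanishes.

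For $n\ge 2$, I argue by induction: the same short exact sequence $0\to N\to F\to M\to 0$, together with the vanishing of $\Tor_i^A(A/I^s,F)$ for all $i\ge 1$, yields a natural isomorphism $\Tor_n^A(A/I^s,M)\cong \Tor_{n-1}^A(A/I^s,N)$ compatible with the transition maps in $s$. Since $N$ is finitely generated, the inductive hypothesis applies and gives the pro-vanishing at level $n$.

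Part (ii) is then formal: given a short exact sequence $0\to M'\to M\to M''\to 0$ of finitely generated $A$-modules, the long exact sequence of $\Tor$ yields an exact sequence of pro $A$-modules whose only possibly non-trivial term beyond the usual right-exact tail is $\{\Tor_1^A(A/I^s,M'')\}_s$, which vanishes by (i); since $\op{Pro}A\op{-mod}$ is abelian, this gives the desired exactness of $-\otimes_A A/I^\infty$ on finitely generated modules. The main conceptual obstacle is the base case $n=1$, where one must extract the uniform Mittag--Leffler shift $s\mapsto s+c$ from the Artin--Rees lemma; once that is in hand, everything else is formal homological algebra.
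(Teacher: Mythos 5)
Your proposal is correct and takes essentially the same approach as the paper: both deduce the trivial Mittag--Leffler vanishing from the classical Artin--Rees lemma applied to a syzygy inside a finitely generated free/projective resolution (the paper applies it to $d(P_n)\subseteq P_{n-1}$ for each $n$, you dimension-shift to the $n=1$ case), and part (ii) is then formal.
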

\begin{proof}[Sketch of proof]
By picking a resolution $P_\bullet$ of $M$ by finitely generated projective $A$-modules and applying the classical Artin--Rees property to the pair $d(P_n)\subseteq P_{n-1}$, one sees that for each $r\ge1$ there exists $s\ge r$ such that the map \[\Tor_n^A(A/I^s,M)\to\Tor_n^A(A/I^r,M)\] is zero. This proves (i). (ii) is just a restatement of (i).
\end{proof}

\begin{corollary}\label{corollary_Artin_Rees}
Let $A$ be a commutative, Noetherian ring, $I\subseteq A$ an ideal, and $M$ a finitely generated $A$-module. Then the pro $A$-module $\{\Tor_n^A(A/I^s,M/I^s)\}_s$ vanishes for all $n>0$.
\end{corollary}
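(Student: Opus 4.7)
The plan is to reduce the statement to Theorem \ref{theorem_Artin_Rees}(i) by decoupling the two occurrences of the index $s$ and exploiting bifunctoriality of $\Tor$. Recall that a pro $A$-module vanishes exactly when it satisfies the trivial Mittag--Leffler condition, so it suffices, given $n>0$ and $r\ge1$, to exhibit some $s\ge r$ for which the transition map $\Tor_n^A(A/I^s, M/I^s M) \to \Tor_n^A(A/I^r, M/I^r M)$ is zero.

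The key observation is that, for any fixed $r$, the module $M/I^rM$ is itself a finitely generated $A$-module (as $A$ is Noetherian and $M$ is f.g.). Applying Theorem \ref{theorem_Artin_Rees}(i) to this fixed module gives vanishing of the pro $A$-module $\{\Tor_n^A(A/I^s, M/I^r M)\}_s$ for $n>0$. I would then pick $s\ge r$ so that the transition map $\Tor_n^A(A/I^s, M/I^r M) \to \Tor_n^A(A/I^r, M/I^r M)$ is zero.

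By bifunctoriality of $\Tor$ (and the fact that $s\ge r$, so that $M/I^sM\onto M/I^rM$ and $A/I^s\onto A/I^r$ make sense), the transition map of the pro-system in the statement factors through the intermediate stage as
$$\Tor_n^A(A/I^s, M/I^s M) \To \Tor_n^A(A/I^s, M/I^r M) \To \Tor_n^A(A/I^r, M/I^r M),$$
where the first arrow is induced by the projection $M/I^sM \to M/I^rM$ and the second by $A/I^s \to A/I^r$. By the choice of $s$ the second arrow vanishes, hence so does the composition, which is exactly the trivial Mittag--Leffler condition for the original pro-system. The only step requiring any care is the factorisation above, but this is a purely formal consequence of $\Tor$ being a bifunctor, so I do not anticipate any real obstacle.
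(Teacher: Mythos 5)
Your proposal is correct and is essentially the paper's own argument: the paper likewise applies Theorem \ref{theorem_Artin_Rees}(i) to the fixed finitely generated module $M/I^r$ and factors the transition map through $\Tor_n^A(A/I^s,M/I^rM)$, so that the vanishing of the second arrow kills the composite.
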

\begin{proof}
For each $r\ge1$ we may apply the previous theorem to the module $M/I^r$ to see that there exists $s\ge r$ such that the second of the following arrows is zero: \[\Tor_n^A(A/I^s,M/I^s)\to \Tor_n^A(A/I^s,M/I^r)\to \Tor_n^A(A/I^r,M/I^r)\] Hence the composition is zero, completing the proof.
\end{proof}

\begin{corollary}\label{corollary_Artin_Rees_group_homology}
Let $A$ be a commutative, Noetherian ring, $I\subseteq A$ an ideal, $M$ a finitely generated $A$-module, and $G$ a finite group acting $A$-linearly on $M$. Then the canonical map of pro group homologies \[\{H_n(G,M)\otimes_AA/I^s\}_s\To \{H_n(G,M/I^sM)\}_s\] is an isomorphism for all $n\ge0$.
\end{corollary}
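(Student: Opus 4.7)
The plan is to compute both sides using a single complex of finitely generated $A$-modules arising from a projective resolution of $\bb Z$ over $\bb Z[G]$, and then invoke the Artin--Rees exactness result of Theorem \ref{theorem_Artin_Rees}(ii) to commute the pro $I$-adic completion functor past taking homology.

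First I would choose a resolution $P_\bullet\to\bb Z$ by finitely generated free $\bb Z[G]$-modules (for instance, the bar resolution, where $P_n=\bb Z[G]^{\otimes(n+1)}$ has rank $|G|^n$ over $\bb Z[G]$; possible because $G$ is finite). Set $C_\bullet:=P_\bullet\otimes_{\bb Z[G]}M$, so each $C_n$ is isomorphic to a finite direct sum of copies of $M$, hence is a finitely generated $A$-module (here we use that $G$ acts $A$-linearly, so the $A$-module structure descends). By definition $H_n(C_\bullet)=H_n(G,M)$. Moreover, since $P_\bullet$ is free over $\bb Z[G]$, tensoring commutes and \[C_\bullet\otimes_AA/I^s=P_\bullet\otimes_{\bb Z[G]}(M/I^sM),\] so $H_n(C_\bullet\otimes_AA/I^s)=H_n(G,M/I^sM)$.

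Now every $C_n$ is a finitely generated $A$-module and $A$ is Noetherian, so the cycles, boundaries, and homology of $C_\bullet$ are all finitely generated $A$-modules as well. By Theorem \ref{theorem_Artin_Rees}(ii), the functor \[-\otimes_AA/I^\infty\colon A\text{-mod}_{\textup{fg}}\To\op{Pro}A\op{-mod}\] is exact. Exact functors between abelian categories commute with taking homology of complexes of objects in their domain, so applying this functor to $C_\bullet$ yields \[\{H_n(C_\bullet\otimes_AA/I^s)\}_s=\{H_n(C_\bullet)\otimes_AA/I^s\}_s,\] which under the identifications above is exactly the desired pro-isomorphism.

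There is no real obstacle; the only thing to verify is that the finite-generation hypotheses needed to apply the Artin--Rees result are in place at every level, and this is automatic because $G$ is finite (giving finite-rank freeness of each $P_n$ over $\bb Z[G]$) and $A$ is Noetherian (ensuring subquotients of finitely generated modules remain finitely generated).
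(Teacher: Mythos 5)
Your proof is correct, and it takes a somewhat different route from the paper's. You choose a finite-rank free resolution $P_\bullet\to\bb Z$ over $\bb Z[G]$ (finiteness of $G$ giving finite rank), observe that $C_\bullet=P_\bullet\otimes_{\bb Z[G]}M$ is a complex of finitely generated $A$-modules computing $H_*(G,M)$, that $C_\bullet\otimes_AA/I^s$ computes $H_*(G,M/I^sM)$, and then use only part (ii) of Theorem \ref{theorem_Artin_Rees} -- exactness of $-\otimes_AA/I^\infty$ on finitely generated modules -- together with the fact that an exact additive functor commutes with homology of a complex whose terms, cycles, boundaries and homology all lie in the subcategory where it is exact (automatic here since $A$ is Noetherian), plus the fact that kernels and cokernels of level maps in $\op{Pro}A\op{-mod}$ are computed levelwise. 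The paper instead runs the two base-change spectral sequences $\Tor_i^A(A/I^s,\Tor_j^{\bb Z[G]}(M,\bb Z))$ and $\Tor_i^{\bb Z[G]}(\Tor_j^A(A/I^s,M),\bb Z)$ with common abutment, and uses Theorem \ref{theorem_Artin_Rees}(i) twice to degenerate both to edge isomorphisms, composing these. The underlying input is the same Artin--Rees statement, so the two arguments are cousins; yours is more elementary and chain-level (no spectral sequences, only part (ii) of the theorem), at the cost of relying on the existence of a finite-rank free resolution of $\bb Z$ over $\bb Z[G]$, while the paper's derived base-change formulation is the pattern reused throughout the rest of the paper (where no such small explicit complex is available) and so fits more uniformly with the later arguments.
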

\begin{proof}
Considering $\bb Z$ as a left $\bb Z[G]$-module via the augmentation map, $A/I^s$ as a right $A$-module, and $M$ as an $A-\bb Z[G]$-bimodule, there are first quadrant spectral sequences of $A$-modules with the same abutement by \cite[Ex.~5.6.2]{Weibel1994}: \[E_{ij}^2(s)=\Tor_i^A(A/I^s,\Tor_j^{\bb Z[G]}(M,\bb Z)),\quad ^\prime E_{ij}^2(s)=\Tor_i^{\bb Z[G]}(\Tor_j^A(A/I^s,M),\bb Z).\] These assemble to first quadrant spectral sequences of pro $A$-modules with the same abutement: \[E_{ij}^2(\infty)=\{\Tor_i^A(A/I^s,\Tor_j^{\bb Z[G]}(M,\bb Z))\}_s,\quad ^\prime E_{ij}^2(\infty)=\{\Tor_i^{\bb Z[G]}(\Tor_j^A(A/I^s,M),\bb Z)\}_s.\]

Since $\Tor^{\bb Z[G]}_j(M,\bb Z)$ is a finitely generated $A$-module for all $j\ge0$, Theorem \ref{theorem_Artin_Rees}(i) implies that $E_{ij}^2(\infty)=0$ for $i>0$; so the $E(\infty)$-spectral sequence degenerates to edge map isomorphisms. Theorem \ref{theorem_Artin_Rees}(i) similarly implies that $\Tor_j^A(A/I^s,M)=0$ for $j>0$, and hence the $^\prime E(\infty)$-spectral sequence also degenerates to edge map isomorphisms.

Composing these edge map isomorphisms, we arrive at an isomorphism of pro $A$-modules \[\{\Tor^{\bb Z[G]}_n(M,\bb Z)\otimes_AA/I^s\}_s\isoto \{\Tor^{\bb Z[G]}_n(M/I^sM,\bb Z)\}_s\] for all $n\ge0$, which is exactly the desired isomorphism.
\end{proof}

\begin{corollary}\label{corollary_pro_torsion}
Let $A$ be a commutative, Noetherian ring, $I\subseteq A$ an ideal, $M$ a finitely generated $A$-module, and $m\ge1$. Then the canonical maps \[\{M[m]\otimes_AA/I^s\}_s\To\{M/I^sM\,[m]\}_s,\quad \{M/mM\otimes_AA/I^s\}_s\To\{M/(mM+I^sM)\}_s\] are isomorphisms of pro $A$-modules.
\end{corollary}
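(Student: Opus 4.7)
The plan is to deduce both isomorphisms from Theorem \ref{theorem_Artin_Rees}(ii), the exactness of the completion functor $-\otimes_A A/I^\infty$ on finitely generated $A$-modules, applied to the structures coming from the multiplication-by-$m$ map on $M$. Since $A$ is Noetherian, the submodule $M[m]$, the image $mM$, and the quotient $M/mM$ are all finitely generated, so this tool applies to each of them.

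For the second map, no Artin--Rees input is really needed: it is already a levelwise isomorphism, since for each $s$ one computes directly
$$(M/mM) \otimes_A (A/I^s) \;=\; (M/mM)\big/I^s(M/mM) \;=\; M/(mM + I^sM).$$

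For the first map, I would apply the completion functor to the four-term exact sequence
$$0 \to M[m] \to M \xrightarrow{m} M \to M/mM \to 0$$
of finitely generated $A$-modules (viewed as a splice of two short exact sequences through the finitely generated image $mM$). By Theorem \ref{theorem_Artin_Rees}(ii), this yields an exact sequence of pro $A$-modules
$$0 \to \{M[m]\otimes_A A/I^s\}_s \to \{M/I^sM\}_s \xrightarrow{m} \{M/I^sM\}_s \to \{M/mM \otimes_A A/I^s\}_s \to 0,$$
exhibiting $\{M[m]\otimes_A A/I^s\}_s$ as the kernel, in the abelian category $\op{Pro}(A\op{-mod})$, of multiplication by $m$ on $\{M/I^sM\}_s$. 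Since the kernel of a strict levelwise morphism in $\op{Pro}(A\op{-mod})$ is computed levelwise, this same kernel is represented by $\{(M/I^sM)[m]\}_s$, and the canonical comparison map of the statement is the resulting isomorphism between these two kernels.

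The only point requiring any care is the identification of the abstract pro-kernel with the levelwise $m$-torsion system, and the verification that this identification realises the canonical map of the statement; but both arise from the inclusion $M[m] \hookrightarrow M$ and are characterised by the same universal property, so they coincide. Consequently no substantive obstacle is expected beyond the bookkeeping of how the exact sequence of pro modules produces the comparison.
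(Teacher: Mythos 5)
Your proposal is correct and is essentially the paper's own argument: the paper likewise applies Theorem \ref{theorem_Artin_Rees}(ii) to the four-term sequence $0\to M[m]\to M\xrightarrow{m}M\to M/mM\to 0$ to get the exact sequence of pro $A$-modules, and reads off both isomorphisms from the fact that kernels and cokernels of level maps in $\op{Pro}(A\op{-mod})$ are computed levelwise. Your side remark that the second map is already a levelwise isomorphism by right-exactness of the tensor product (so needs no Artin--Rees input) is accurate but only a minor streamlining.
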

\begin{proof}
These isomorphisms follow from  the sequence \[0\to\{M[m]\otimes_AA/I^s\}_s\to\{M/I^sM\}\xto{\times m}\{M/I^sM\}\to\{M/mM\otimes_AA/I^s\}_s\to0,\] which is exact by Theorem \ref{theorem_Artin_Rees}(ii).
\end{proof}

\subsection{Andr\'e--Quillen and Hochschild homology}\label{subsection_AQ_HH}
Let $k$ be a commutative ring. Here we briefly review the Andr\'e--Quillen and Hochschild homologies of $k$-algebras, though we assume that the reader has some familiarity with these theories.

We begin with Andr\'e--Quillen homology \cite{Andre1974, Quillen1970, Ronco1993}. Let $A$ be a commutative $k$-algebra, let $P_\bullet\to A$ be a simplicial resolution of $A$ by free commutative $k$-algebras, and set \[\bb L_{A/k}:= \Omega_{P_\bullet/k}^1\otimes_{P_\bullet}A.\] Thus $\bb L_{A/k}$ is a simplicial $A$-module which is free in each degree; it is called the cotangent complex of the $k$-algebra $A$. Up to homotopy, the cotangent complex depends only on $A$, since the free simplicial resolution $P_\bullet\to A$ is unique up to homotopy.

Given simplicial $A$-modules $M_\bullet$, $N_\bullet$, the tensor product and alternating powers are the simplicial $A$-modules defined degreewise: $(M_\bullet\otimes_AN_\bullet)_n=M_n\otimes_AN_n$ and $(\bigwedge_A^rM_\bullet)_n=\bigwedge_A^rM_n$.

In particular we set $\bb L_{A/k}^i:=\bigwedge_A^i\bb L_{A/k}$ for each $i\ge 1$. The Andr\'e--Quillen homology of the $k$-algebra $A$, with coefficients in an $A$-module $M$, is defined by \[D_n^i(A/k,M):=\pi_n(\bb L_{A/k}^i\otimes_A M).\tag{$n,i\ge0$}\] When $M=A$ the notation is simplified to $D_n^i(A/k):=D_n^i(A/k,A)=\pi_n\bb L_{A/k}^i$.
\comment{
If $k\to A$ is essentially of finite type and $k$ is Noetherian, then $D_n^i(A/k,M)$ is a finitely generated $A$-module for all $n,i$ and for all finitely generated $A$-modules $M$.

If $0\to M\to N\to P\to 0$ is a short exact sequence of $A$-modules, then there is a resulting long exact sequence for each $i\ge 1$: \[\cdots\To D_n^i(A/k,M)\To D_n^i(A/k,N)\To D_n^i(A/k,P)\To \cdots\]

Finally, if $k\to A\to B$ are ring homomorphisms then the simplicial resolutions may be chosen so that there is an exact sequence of simplicial $B$-modules \[0\To\bb L_{A/k}\otimes_A B\To\bb L_{B/k}\To\bb L_{B/A}\To 0.\] This remains exact upon tensoring by any $B$-module $M$ since these simplicial $B$-modules are free in each degree; taking homology yields the Jacobi--Zariski long exact sequence \[\cdots\To D_n(A/k,M)\To D_n(B/k,M)\To D_n(B|A,M)\To\cdots\]
}

Now we discuss Hochschild homology \cite{Loday1992}, so let $A$ be a possibly non-commutative $k$-algebra (however, apart from Proposition \ref{proposition_Bjorns_SS}, all rings from Section \ref{section_Witt} onwards are commutative). For an $A$-bimodule $M$, the ``usual'' Hochschild homology of $A$ as a $k$-algebra with coefficients in $M$ is defined to be $\HH_n^{\sub{usual},k}(A,M):=\pi_n(C_\bullet^k(A,M))$ for $n\ge0$, where $C_\blob^k(A,M)$ is the usual simplicial $k$-module
\[C_\bullet^k(A,M):=\qquad M
\begin{array}{c}
\longleftarrow\\[-7pt]
\longleftarrow
\end{array}
M\otimes_kA
\begin{array}{c}
\longleftarrow\\[-7pt]
\longleftarrow\\[-7pt]
\longleftarrow
\end{array}
M\otimes_kA\otimes_kA
\begin{array}{c}
\longleftarrow\\[-7pt]
\longleftarrow\\[-7pt]
\longleftarrow\\[-7pt]
\longleftarrow
\end{array}
\cdots
\]
However, we will work throughout with the derived version of Hochschild homology, which we now explain; more details may be found in \cite[\S4]{Morrow_birelative}. Letting $P_\bullet\to A$ be a simplicial resolution of $A$ by free $k$-algebras, let $\HH^k(A,M)$ denote the diagonal of the bisimplicial $k$-module $C_\blob^k(P_\blob,M)$; the homotopy type of $\HH^k(A,M)$ does not depend on the choice of resolution, and we set \[\HH_n^k(A,M):=\pi_n\HH^k(A,M).\tag{$n\ge0$}\] We stress that there is a canonical map $\HH_n^k(A,M)\to \HH_n^{\sub{usual},k}(A,M)$ for all $n\ge0$, which is an isomorphism if $A$ is flat over $k$ (in particular, if $k$ is a field).

The advantage of derived, rather than usual, Hochschild homology is that it ensures the existence of two spectral sequences: firstly the Andr\'e--Quillen-to-Hochschild-homology spectral sequence when $A$ is commutative, \[E^2_{ij}=D_i^j(A/k,M)\Longrightarrow \HH_{i+j}^k(A,M),\] and secondly the Pirashvili--Waldhausen spectral sequence which will be discussed in Section \ref{subsection_THH}. In the special case $A=M$ we write $\HH^k(A)=\HH^k(A,A)$ and $\HH_n^k(A)=\HH_n^k(A,A)$, and when $k=\bb Z$ we omit it from the notation.

\subsection{Topological Hochschild and cyclic homology}\label{subsection_intro_to_THH}
The manipulations of topological Hochschild and cyclic homology contained in this paper are of a mostly algebraic nature, using only the formal properties of the theory. In this section we explicitly collect various recurring spectral sequences, long exact sequences, etc.~which we need; we hope that the algebraic nature of this exposition will be accessible to non-topologists since the results of this paper will be later applied to problems in arithmetic and algebraic geometry.

\subsubsection{Topological Hochschild homology}\label{subsection_THH}
If $A$ is a ring and $M$ is an $A$-bimodule then $\THH(A,M)$ denotes the associated topological Hochschild homology spectrum, as constructed in, e.g. \cite{Dundas2013}. Its homotopy groups are the topological Hochschild homology of $A$ with coefficients in $M$, namely \[\THH_n(A,M):=\pi_n\THH(A,M).\tag{$n\ge0$}\] If $A$ is commutative and $M$ is a symmetric $A$-module, then these are $A$-modules. In the most important case of $M=A$, one writes \[\THH(A)=\THH(A,A),\quad\quad \THH_n(A)=\THH_n(A,A).\]

Algebraic properties of the groups $\THH_n(A,M)$ may frequently be extracted from the following two results:
\begin{enumerate}
\item The Pirashvili--Waldhausen \cite[Thm.~4.1]{PirashviliWaldhausen1992} \cite[Lem.~4.2.3.7]{Dundas2013} first quadrant spectral sequence of abelian groups (of $A$-modules if $A$ is commutative and $M$ is a symmetric $A$-module) \[E^2_{ij}=\HH_i(A,\THH_j(\bb Z,M))\implies \THH_{i+j}(A,M),\] which compares the topological Hochschild homology groups with their algebraic counterpart $\HH_n(A,M)=\HH_n^\bb Z(A,M)$.
\item M.~B\"okstedt's \cite[Thm.~4.1.0.1]{Dundas2013} calculation of the groups $\THH_n(\bb Z,M)$:
\[\THH_n(\bb Z,M)\cong\begin{cases}M&n=0\\\Tor^\bb Z_0(\bb Z/m\bb Z,M)= M/mM&n=2m-1\\\Tor^\bb Z_1(\bb Z/m\bb Z,M)={M[m]}&n=2m>0.\end{cases}\]
\end{enumerate}
For example, if $A$ is a commutative, Noetherian ring for which $\HH_n(A)$ is a finitely generated $A$-module for all $n\ge 0$, then (i) \& (ii) easily imply the same of the $A$-modules $\THH_n(A)$; this will be used in the proof of Theorem \ref{theorem_finite_gen_of_THH_TR}.

\subsubsection{The fixed point spectra $TR^r$}\label{subsection_TR}
An essential fact for the foundations of and calculations in topological cyclic homology is that $\THH(A)$ is not merely a spectrum, but a cyclotomic spectrum in the sense of \cite{Hesselholt1997}. This means that $\THH(A)$ is an $S^1$-spectrum (i.e.,~it carries a ``nice'' action by the circle group $S^1$), which is already enough to ensure the existence of so-called Frobenius and Verschiebung maps, together with additional pro structure ensuring the existence of restriction maps. The intricacies of cyclotomic spectra and the construction of their homotopy fixed points, homotopy orbits, etc.~are irrelevant for this paper; we only need certain algebraic consequences which we now list.

Let $p$ be a fixed prime number. For $r\ge 1$, one lets $C_{p^r}$ denote the cyclic subgroup of $S^1$ of order $p^r$, and one denotes by $\TR^r(A;p):=\THH(A)^{C_{p^{r-1}}}$ the $C_{p^{r-1}}$-fixed point spectrum of $\THH(A)$, and by $TR_n^r(A;p)$ its homotopy groups: \[\TR_n^r(A;p):=\pi_n\TR^r(A;p)=\pi_n(\THH(A)^{C_{p^{r-1}}}).\] Note that $\TR_n^1(A;p)=\THH_n(A)$.

Formal algebraic properties of the groups $\TR_n^r(A;p)$ may be obtained from the following two facts, which are non-trivial consequences of $\THH(A)$ being a cyclotomic spectrum; see, e.g., Lems.~1.4.5 \& 2.0.6 of \cite[\S VI]{Dundas2013}, or Thm.~1.2 and the proof of Prop.~2.3 of \cite{Hesselholt1997}:
\begin{enumerate}
\item There is a natural homotopy fibre sequence  of spectra \[\THH(A)_{hC_{p^r}}\To \TR^{r+1}(A;p)\xTo{R} \TR^r(A;p)\] where $\THH(A)_{hC_{p^r}}$ is the spectrum of homotopy orbits for the action of $C_{p^r}$ on $\THH(A)$. This gives rise to a long exact sequence of homotopy groups \[\cdots\To\pi_n(\THH(A)_{hC_{p^r}})\To \TR^{r+1}_n(A;p)\xTo{R} \TR^r_n(A;p)\To\cdots\] $R$ is known as the {\em restriction map}.
\item The homotopy groups of $\THH(A)_{hC_{p^r}}$ appearing in the previous long exact sequence are described by a first quadrant spectral sequence of abelian groups \[E^2_{ij}=H_i(C_{p^r},\THH_j(A))\implies \pi_{i+j}(\THH(A)_{hC_{p^r}}),\] where the groups on the $E^2$-page are group homology for $C_{p^r}$ acting trivially on $\THH_j(A)$.
\end{enumerate}

\subsubsection{Witt structure}\label{subsection_Witt_structure}
Assuming that $A$ is commutative, the various aforementioned groups inherit natural module structures:
\begin{enumerate}\itemsep0mm
\item $\THH_n(A)$ is a module over $A$.
\item $\TR^r_n(A;p)$ and $\pi_n(\THH(A)_{hC_{p^r}})$ are modules over the $p$-typical Witt vectors $W_r(A)$.
\end{enumerate}
The Witt vector structure appears in the following way: firstly, $\TR^r(A;p)$ is a ring spectrum and so the homotopy groups $\TR_n^r(A;p)$ and $\pi_n(\THH(A)_{hC_{p^r}})$ are modules over the ring $TR_0^r(A;p)$ (this does not require $A$ to be commutative), and secondly it is a theorem of Hesselholt and Madsen \cite[Thm.~F]{Hesselholt1997} that there is a natural isomorphism of rings $W_r(A)\isoto TR_0^r(A;p)$. Moreover, by \cite[\S1.3]{Hesselholt1996} we have the following structure:
\begin{enumerate}\itemsep0mm
\item The long exact sequence of homotopy groups from \ref{subsection_TR}(i) above is a long exact sequence of $W_{r+1}(A)$-modules, where the action of $W_{r+1}(A)$ on the $W_r(A)$-module $TR_n^r(A;p)$ is via the restriction map $R:W_{r+1}(A)\to W_r(A)$.
\item The group homology spectral sequence from \ref{subsection_TR}(ii) above is a spectral sequence of $W_{r+1}(A)$-modules, where the action of $W_{r+1}(A)$ on the $E^2$-page, whose terms are clearly $A$-modules, is via the $r^\sub{th}$ power of the Frobenius $F^r:W_{r+1}(A)\to A$.
\end{enumerate}

\subsubsection{Topological cyclic homology}
As indicated in \ref{subsection_TR}(i) above, there is a restriction map of spectra $R:\TR^{r+1}(A;p)\to \TR^r(A;p)$. Using these as transition maps, one puts \[\TR(A;p)=\TR^\infty(A;p):=\op{holim}_rTR^r(A;p),\] which is a ring spectrum whose homotopy groups $TR_n(A;p):=\pi_n(TR(A;p))$ fit into short exact sequences \[0\To{\projlim_r}^1\TR_{n+1}^r(A;p)\To TR_n(A;p)\To\projlim_rTR_n^r(A;p)\To 0.\] If $A$ is commutative then the groups $TR_n(A;p)$ are naturally modules over $W(A)=\projlim_rW_r(A)$.

To reach topological cyclic homology, one must finally introduce the so-called {\em Frobenius map} $F:\TR^{r+1}(A;p)\to \TR^r(A;p)$, which is nothing other than the  inclusion of the $C_{p^r}$-fixed point spectrum of $\THH(A)$ into the $C_{p^{r-1}}$-fixed point spectrum. The Frobenius commutes with the restriction, and thus induces a map $F:\TR(A;p)\to \TR(A;p)$. The $p$-typical topological cyclic homology spectrum of $A$ is, by definition, \[\TC(A;p)=\TC^\infty(A;p):=\op{hofib}(\TR(A;p)\xto{\op{id}-F} \TR(A;p)),\] and thus its homotopy groups $\TC_n(A;p):=\pi_n\TC(A;p)$ fit into a long exact sequence \[\cdots\To \TC_n(A;p)\To \TR_n(A;p)\xto{\op{id}-F} \TR_n(A;p)\To\cdots\]

We remark that the notation $\TR^\infty(A;p)=\TR(A;p)$ and $\TC^\infty(A;p)=\TC(A;p)$ is not standard, but we adopt it to more succulently state a number of our results.

One may additionally define a $p$-typical topological cyclic homology spectrum for a fixed level $r\ge1$ by setting \[\TC^r(A;p):=\op{hofib}(\TR^r(A;p)\xto{R-F} \TR^r(A;p)),\] whose homotopy groups therefore fit into a long exact sequence \[\cdots\To \TC_n^r(A;p)\To \TR_n^r(A;p)\xto{R-F} \TR_n^{r-1}(A;p)\To\cdots\] The commutativity of homotopy limits implies that there is a natural weak equivalence $\TC(A;p)\quis\op{holim}_r\TC^r(A;p)$.

\subsection{Finite coefficients and $p$-completions}\label{subsection_p_completions}
It will be necessary at times to work with versions of the aforementioned homology theories with finite coefficients or after $p$-completing. Here we will review some standard machinery and notation. Additional lemmas concerning $p$-completions may be found in the appendix.

 Given a prime number $p$ and a simplicial abelian group $M_\blob$, its {\em (derived) $p$-completion} is by definition the simplicial abelian group \[(M_{\blob})_p^\comp:=\holim_v(M_\blob\dotimes_{\bb Z}\bb Z/p^v\bb Z).\] The homotopy groups of $M\dotimes_{\bb Z}\bb Z/p^v\bb Z$ are denoted by $\pi_n(M_\blob;\bb Z/p^v)$ and fit into an exact sequence \[0\To \pi_n(M_\blob)\otimes_\bb Z\bb Z/p^v\bb Z\To\pi_n(M_\blob;\bb Z/p^v)\To\pi_{n-1}(M_\blob)[p^v]\To0\] Also, the homotopy group of the derived $p$-completion are denoted  by $\pi_n(M_\blob;\bb Z_p)$ and fit into an exact sequence \[0\to\op{Ext}^1_{\bb Z}(\bb Q_p/\bb Z_p,\pi_n(M_\blob))\to \pi_n(M_\blob;\bb Z_p)\to\Hom_{\bb Z}(\bb Q_p/\bb Z_p,\pi_{n-1}(M_\blob))\to 0.\]
 
Similarly, if $X$ is a spectrum then its {\em $p$-completion} is by definition \[X_p^\comp:=\holim_v(X\wedge S/p^v),\] where $S/p^r$ denotes the $p^{r\,th}$ Moore spectrum; the same short exact sequences as for a simplicial abelian group apply, and we point out that $H((M_\blob)_p^\comp)=H(M_\blob)_p^\comp$ for any simplicial abelian group $M_\blob$, where $H(-)$ denotes the Eilenberg--Maclane construction.

We remark that if $M$ is an abelian group then $M_p^\comp$ denotes the usual $p$-adic completion of $M$, namely $M_p^\comp=\projlim_vM/p^vM$, and not the derived $p$-completion of $M$ as a constant simplicial abelian group.

Now let $A$ be a commutative ring, and $M$ an $A$-module. We will use the notation
\[\begin{array}{ll}
\HH(A,M;\bb Z/p^v):=\HH(A,M)\dotimes_{\bb Z}\bb Z/p^v\bb Z & \HH(A,M;\bb Z_p):=\HH(A,M)_p^\comp\\
\THH(A,M;\bb Z/p^v):=\THH(A,M)\wedge S/p^v & \THH(A,M;\bb Z_p):=\THH(A,M)_p^\comp\\
\TR^r(A;\bb Z/p^v):=\TR^r(A;p)\wedge S/p^v & \TR^r(A;p,\bb Z_p):=\TR(A;p)_p^\comp\\
\end{array}
\]
and similarly for $\TR$, $\TC^r$, and $\TC$; the homotopy groups are denoted in the obvious manner. To make the already overburdened notation more manageable, we have chosen to write $\TC^r(A;\bb Z/p^v)$, $\TC^r(A;\bb Z/p^v)$, etc., rather than $\TC^r(A;p,\bb Z/p^v)$,  $\TC^r(A;p,\bb Z/p^v)$,~etc.

There is a exact sequence of simplicial $A$-modules $0\to \Sigma A[p^v]\to A\dotimes_{\bb Z}\bb Z/p^v\bb Z\to A/p^vA\to 0$ (where $\Sigma$ denotes suspension, i.e., $-1$ shift in the language of complexes), and this induces a long exact sequence of $A$-modules \[\cdots\To\HH_{n-1}(A,A[p^v])\To\HH_n(A;\bb Z/p^v)\To\HH_n(A,A/p^vA)\To\cdots\] This will be a useful tool for deducing results for $\HH_n(A;\bb Z/p^v)$ via $\HH$ with coefficients in $A$-modules. There is an analogous long exact sequence for $\THH$.

We now make some comments about how the properties of topological Hochschild and cyclic homology from Section \ref{subsection_intro_to_THH} respect finite coefficients. Smashing the homotopy fibre sequence of Section \ref{subsection_TR}(i) with $S/p^v$ yields a new homotopy fibre sequence, and hence a long exact sequence of the homotopy groups with finite coefficients. Moreover, $\THH(A)_{hC_{p^r}}\wedge S/p^v\simeq (\THH(A)\wedge S/p^v)_{hC_{p^r}}$, and hence there is a homotopy orbit spectral sequence, as in Section \ref{subsection_TR}(ii), with finite coefficients.

Next, $\HH(A;\bb Z/p^v)$ is a simplicial module over $\HH(A)$; $\THH(A;\bb Z/p^v)$ is a module spectrum over $\THH(A)$; and $\TR^r(A;\bb Z/p^v)$ is a module spectrum over $\TR^r(A;p)$. Hence their homotopy groups are naturally modules over $A$, $A$, and $W_r(A)$ respectively. The Witt structure outlined in Section \ref{subsection_Witt_structure} thus remains true with finite coefficients.

\section{Preliminaries on Witt rings and F-finiteness}\label{section_Witt}
\subsection{Witt rings}\label{subsection_Witt}
In this section we establish some preliminary results on Witt rings, which will be required throughout the paper; some similar material may be found in \cite{GeisserHesselholt2006}. In particular, in Theorem \ref{theorem_base_change_by_Witt_rings} we establish a technical relationship between completing along a pro Witt ring $W_r(A/I^\infty)$ and the Frobenius map.

We begin with a reminder on Witt rings of a ring $A$ and associated notation; recall that all rings are henceforth commutative. We will use the language of both big Witt rings $\bb W_S(A)$ associated to truncation sets $S\subseteq\bb N$, and of $p$-typical Witt rings $W_r(A)=\bb W_{\{1,p,p^2,\dots,p^{r-1}\}}(A)$ when a particular prime number $p$ is clear from the context. The $p$-typical case is classical, while the language of truncation sets is due to \cite{Hesselholt2001}; a more detailed summary, to which we will refer for various Witt ring identities, may be found in \cite[Appendix]{Rulling2007}.

Given an inclusion of truncation sets $S\supseteq T$, there are associated Restriction, Frobenius and Verschiebung maps \[R_T,\;F_T:\bb W_S(A)\to\bb W_T(A),\quad V_T:\bb W_T(A)\to\bb W_S(A).\] The Restriction $R_T$ and the Frobenius $F_T$ are ring homomorphisms, while $V_T$ is merely additive. If $m\ge1$ is an integer then one defines a new truncation set by \[S/m:=\{s\in S:sm\in S\}\] and writes $R_m$, $F_m$, and $V_m$ instead of $R_{S/m}$, $F_{S/m}$, and $V_{S/m}$ respectively.

The Teichm\"uller map $[-]_S:A\to\bb W_S(A)$ is multiplicative. If $S$ is finite then each element of $\bb W_S(A)$ may be written uniquely as $\sum_{i\in S}V_i[a_i]_{S/i}$ for some $a_i\in A$; we will often use this to reduce questions to the study of terms of the form $V_i[a]_{S/i}$, which we will abbreviate to $V_i[a]$ when the truncation set $S$ is clear.

In the $p$-typical case we follow the standard abuse of notation, writing $R,\; F:W_r(A)\to W_{r-1}(A)$ and $V:W_{r-1}(A)\to W_r(A)$ in place of $R_p,F_p$, and $V_p$.

If $I$ is an ideal of $A$ then $\bb W_S(I)$ denotes the ideal of $\bb W_S(A)$ defined as the kernel of the quotient map $\bb W_S(A)\onto\bb W_S(A/I)$. Alternatively, $\bb W_S(I)$ is the Witt vectors of the non-unital ring $I$. An element $\al\in \bb W_S(A)$ lies in $\bb W_S(I)$ if and only if, in its expansion $\al=\sum_{i\in S}V_i[a_i]$, the coefficients $a_i\in A$ all belong to $I$. 

The following lemma establishes the first collection of basic properties we need concerning the ideals $\bb W_S(I)$:

\begin{lemma}\label{lemma_witt_1}
Let $A$ be a ring, $I,J\subseteq A$ ideals, and $S$ a finite truncation set. Then:
\begin{enumerate}
\item $\bb W_S(I)\bb W_S(J)\subseteq\bb W_S(IJ)$.
\item $\bb W_S(I)^N\subseteq\bb W_S(I^N)$ for all $N\ge1$.
\item $\bb W_S(I)+\bb W_S(J)=\bb W_S(I+J)$.
\item Assume $I$ is a finitely generated ideal; then for any $N\ge 1$ there exists $M\ge 1$ such that $\bb W_S(I^M)\subseteq\bb W_S(I)^N$.
%(In fact, $M$ can be chosen to depend only on $S$ and the number of generators of $I$, though we don't need this.)
\end{enumerate}
\end{lemma}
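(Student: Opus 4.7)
Both $\bb W_S(I)$ and $\bb W_S(J)$ are additively generated by Verschiebung--Teichm\"uller terms $V_i[a]$ ($a\in I$) and $V_j[b]$ ($b\in J$), so by bilinearity part (i) reduces to a single product $V_i[a]\cdot V_j[b]$. I would invoke the standard Witt-vector identity
\[V_i[a]\cdot V_j[b]=\gcd(i,j)\,V_{\op{lcm}(i,j)}\bigl[a^{\op{lcm}(i,j)/i}\,b^{\op{lcm}(i,j)/j}\bigr],\]
obtained from the projection formula $V_i(x)\cdot y=V_i(x\cdot F_iy)$ together with the standard $F_iV_j$-identities. Since $\op{lcm}(i,j)/i$ and $\op{lcm}(i,j)/j$ are both $\ge1$, the resulting Teichm\"uller argument lies in $IJ$, placing the product in $\bb W_S(IJ)$. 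Part (ii) then follows by a one-line induction on $N$ using (i) and $I\cdot I^{N-1}=I^N$.

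\textbf{Part (iii).} The inclusion $\bb W_S(I)+\bb W_S(J)\subseteq\bb W_S(I+J)$ is immediate from functoriality of $\bb W_S$ applied to $A\onto A/(I+J)$. For the reverse I would write $\alpha\in\bb W_S(I+J)$ as $\sum_iV_i[c_i]$ with $c_i=a_i+b_i$, $a_i\in I$, $b_i\in J$, and reduce everything to the ``almost-additivity'' of Teichm\"uller,
\[[x+y]-[x]-[y]\in\bb W_T\bigl((xy)\bigr)\qquad(\text{any truncation set }T,\text{ any ring }R,\ x,y\in R).\]
This is a universal identity which I would verify in $\bb W_T(\bb Z[x,y])$: the ghost components of $[x+y]-[x]-[y]$ are $(x+y)^{p^n}-x^{p^n}-y^{p^n}\in(xy)\bb Z[x,y]$, and the quotient $\bb Z[x,y]/(xy)$ is $\bb Z$-torsion-free, so its Witt ghost map is injective, forcing $[x+y]-[x]-[y]$ to vanish in $\bb W_T(\bb Z[x,y]/(xy))$, i.e.\ to lie in $\bb W_T((xy))$; naturality then transports the conclusion to any $R$. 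Applying this with $(T,R,x,y)=(S/i,A,a_i,b_i)$ gives $[c_i]\equiv[a_i]+[b_i]\pmod{\bb W_{S/i}(IJ)}$, hence $V_i[c_i]\in V_i[a_i]+V_i[b_i]+\bb W_S(IJ)\subseteq\bb W_S(I)+\bb W_S(J)$.

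\textbf{Part (iv), the main obstacle.} Fix generators $a_1,\dots,a_k$ of $I$ and set $\sigma:=\max S$; I claim that $M:=k(N-1)\sigma+1$ works. First, (iii) applied iteratively to the finite ideal decomposition $I^M=\sum_\mu(\mu)$, indexed by the finitely many monomials $\mu=a_1^{\epsilon_1}\cdots a_k^{\epsilon_k}$ of total degree $M$, yields $\bb W_S(I^M)=\sum_\mu\bb W_S((\mu))$, reducing the problem to showing $\bb W_S((\mu))\subseteq\bb W_S(I)^N$ for one such $\mu$. Pigeonhole on $\sum\epsilon_l=M$ forces some $\epsilon_l>(N-1)\sigma\ge(N-1)i$ for every $i\in S$; without loss of generality $l=1$. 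Each additive generator $V_i[r\mu]$ of $\bb W_S((\mu))$ ($r\in A$, $i\in S$) then decomposes as
\[V_i[r\mu]=V_i\bigl[r\mu'\cdot(a_1^{N-1})^i\bigr]=V_i[r\mu']\cdot[a_1]^{N-1}\in\bb W_S(I)\cdot\bb W_S(I)^{N-1}=\bb W_S(I)^N,\]
where $\mu':=a_1^{\epsilon_1-(N-1)i}a_2^{\epsilon_2}\cdots a_k^{\epsilon_k}$ has total degree $M-(N-1)i\ge1$ (hence lies in $I$), and the middle equality is the projection formula $V_i(x)\cdot[d]=V_i(x\cdot[d^i])$ with $d=a_1^{N-1}$. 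The real content of (iv) is this pigeonhole $+$ projection-formula reduction: a sufficiently deep power of $I$ forces every monomial to carry some generator to a high exponent, and the projection formula then trades this excess exponent for the extra factors of $\bb W_S(I)$ needed to land inside $\bb W_S(I)^N$.
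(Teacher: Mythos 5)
Your proof is correct, and while (i) and (ii) coincide with the paper's argument (the same reduction to $V_i[a]\cdot V_j[b]$ and the same standard identity; the precise labelling of the exponents is immaterial since both are $\ge1$), your treatments of (iii) and (iv) genuinely differ from the paper's. For the nontrivial inclusion in (iii) the paper simply observes that the surjection $J\onto(I+J)/I$ induces a surjection $\bb W_S(J)\onto\bb W_S(I+J)/\bb W_S(I)$, which is shorter and computation-free; your route via the almost-additivity $[x+y]-[x]-[y]\in\bb W_T(\pid{xy})$, verified by ghost components in the torsion-free universal ring $\bb Z[x,y]/(xy)$ and transported by naturality, is sound and in fact proves a sharper and independently useful congruence (note only that for a general truncation set the ghost components are the $n$-th power expressions for $n\in T$, not $p^n$-th powers; both lie in $\pid{xy}$, so nothing changes). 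For (iv) both proofs hinge on the projection formula $V_i(x)\cdot[d]=V_i(x[d^i])$, but the bookkeeping differs: the paper replaces $I^M$ by $\pid{t_1^{M},\dots,t_m^{M}}$ with $M=N\ell$, $\ell=\op{lcm}(S)$, so that the entire power $t_j^M$ is an exact $i$-th power for every $i\in S$ and can be pulled completely outside $V_i$, whereas you decompose $I^M$ into all degree-$M$ monomials and use a pigeonhole to extract only $(N-1)i$ copies of a single generator, leaving the remainder inside $V_i$; both choices of $M$ work. The one thing the paper's normalisation buys that yours does not immediately give is the stronger containment $\bb W_S(I^M)\subseteq\pid{[t_1],\dots,[t_m]}^N$ recorded in Remark \ref{remark_improved_witt_1} and used later (e.g.\ in Lemma \ref{lemma_W_r_of_p_completion}); your argument as written lands in $\bb W_S(I)\cdot\pid{[a_1],\dots,[a_k]}^{N-1}$, which suffices for the lemma as stated, and extracting $Ni$ rather than $(N-1)i$ copies (with $M$ enlarged accordingly) would recover the remark as well.
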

\begin{proof}
(i): It is enough to show that $\al\beta\in\bb W_S(IJ)$ in the case that $\al=V_i[a]$ and $\beta=V_j[b]$ for some $i,j\in S$, $a\in I$, and $b\in J$, since such terms additively generate $\bb W_S(I)$ and $\bb W_S(J)$. But this follows from the standard Witt ring identity \cite[A.4(v)]{Rulling2007} \[V_i[a]\,V_j[b]=gV_{ij/g}[a^{i/g}b^{j/g}],\] where $g:=\gcd(i,j)$. Now (ii) follows from (i) by induction.

(iii): The surjection $J\to\tfrac{I+J}{I}$ induces a surjection \[\bb W_S(J)\onto\bb W_S(\tfrac{I+J}{I})\cong\tfrac{\bb W_S(I+J)}{\bb W_S(I)},\] whence $\bb W_S(I+J)\subseteq\bb W_S(I)+\bb W_S(J)$. The reverse inclusion is obvious.

(iv): By assumption we have $I=\pid{t_1,\dots,t_m}$ for some $t_1,\dots,t_m\in A$. For any $M\ge 1$, we will write $I^{(M)}:=\pid{t_1^M,\dots,t_m^M}\subseteq I^M$. Note that $I^{(M)}\supseteq I^{m(M-1)+1}$, so it is enough to find $M\ge1$ such that $\bb W_S(I^{(M)})\subseteq \bb W_S(I)^N$; we claim that $M=N\ell$ suffices, where $\ell$ is the least common multiple of all elements of $S$. To prove the claim we first use (iii) to see that $\bb W_S(I^{(M)})=\bb W_S(At_1^M)+\cdots+\bb W_S(At_m^M)$, and then we note that $\bb W_S(At_j^M)$ is additively generated by terms $V_i[at_j^M]$ where $i\in S$ and $a\in A$; so it is enough to prove the claim for such terms. Writing $M=N\ell=N'i$ for some $N'\ge N$, this claim follows from the standard Witt ring identity \cite[A.4(vi)]{Rulling2007} \[V_i[at_j^M]=V_i[at_j^{N'i}]=[t_j]^{N'}V_i[a]\in\bb W_S(I)^{N'}\subseteq \bb W_S(I)^N.\]
\end{proof}

\begin{remark}\label{remark_improved_witt_1}
The proof of part (iv) of Lemma \ref{lemma_witt_1} establishes a stronger result: namely that for any $N\ge1$ and any set of generators $t_1,\dots,t_m$ of $I$, there exists $M\ge1$ such that \[\bb W_S(I^M)\subseteq \pid{[t_1],\dots,[t_m]}^N.\] In particular, if $f:A\to B$ is a ring homomorphism, then we have $\bb W_S(f(I^M)B)\subseteq f(\bb W_S(I)^N)\bb W_S(B)$.
\comment{
 for any ring homomorphism $f:A\to B$. Indeed, choosing $M$ as in the proof above, it is required to show that $V_i[bf(t_j^M)]\in f(\bb W_S(I^N))\bb W_S(B)$ for any $i\in S$ and $b\in B$. This follows as it did in the previous proof, from the calculation \[V_i[bf(t_j^M)]=V_i[bf(t_j)^{N'i}]=[f(t_j)]^{N'}V_i[b]=f([t_j^{N'}])V_i[b].\]
}
\end{remark}

The next two lemmas establish the basic relationship between completions of Witt rings and Witt rings of completions:

\begin{lemma}\label{lemma_W_S_of_completion}
Let $A$ be a ring, $I\subseteq A$ a finitely generated ideal, and $S$ a finite truncation set; let $\hat A:=\projlim_sA/I^s$ be the $I$-adic completion of $A$. Then the canonical maps \[\projlim_s\bb W_S(A)/\bb W_S(I)^s\To\projlim_s\bb W_S(A)/\bb W_S(I^s)\longleftarrow \bb W_S(\hat A)\] are isomorphisms.
\end{lemma}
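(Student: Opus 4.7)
The plan is to prove the two isomorphisms separately using Lemma \ref{lemma_witt_1}.

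For the left map, I will argue that the two ideal filtrations $\{\bb W_S(I)^s\}_s$ and $\{\bb W_S(I^s)\}_s$ of $\bb W_S(A)$ are mutually cofinal; since the inverse limit of the quotients depends only on the resulting linear topology on $\bb W_S(A)$, the two inverse limits are then canonically identified via the transition maps induced by the identity on $\bb W_S(A)$. The inclusion $\bb W_S(I)^s \subseteq \bb W_S(I^s)$ is exactly Lemma \ref{lemma_witt_1}(ii), while the reverse cofinality---for each $s \geq 1$ there exists $s' \geq s$ with $\bb W_S(I^{s'}) \subseteq \bb W_S(I)^s$---is precisely Lemma \ref{lemma_witt_1}(iv) applied to the finitely generated ideal $I$.

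For the right map, I will use the fact that $S$ is finite, so that the Witt-coordinate expansion $\alpha = \sum_{i \in S} V_i[a_i]$ gives a natural (in the ring) bijection $\bb W_S(B) \cong B^S$, under which any ring homomorphism $f : A \to B$ acts on Witt vectors coordinate-wise. Because $\bb W_S(I^s)$ is by definition the kernel of the surjection $\bb W_S(A) \onto \bb W_S(A/I^s)$, one obtains the set-theoretic identification $\bb W_S(A)/\bb W_S(I^s) = \bb W_S(A/I^s)$, and then passing to the inverse limit (using that the finite product $(-)^S$ commutes with $\projlim$) gives
\[\projlim_s \bb W_S(A/I^s) \;\cong\; \projlim_s (A/I^s)^S \;=\; \Big(\projlim_s A/I^s\Big)^S \;=\; \hat A^S \;=\; \bb W_S(\hat A).\]
Naturality of each step in the ring shows this chain of bijections is inverse to the canonical ring homomorphism in the statement, which is itself induced by the maps $\hat A \to A/I^s$.

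The only substantive point is the cofinality argument in the first paragraph, which reduces directly to Lemma \ref{lemma_witt_1}; the second paragraph is essentially formal once one records that for $S$ finite the underlying-set functor of $\bb W_S(-)$ is naturally isomorphic to $(-)^S$. I therefore do not foresee any serious obstacle beyond the bookkeeping already performed in Lemma \ref{lemma_witt_1}.
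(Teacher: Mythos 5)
Your proof is correct. The first half (cofinality of the filtrations $\{\bb W_S(I)^s\}_s$ and $\{\bb W_S(I^s)\}_s$ via Lemma \ref{lemma_witt_1}(ii)+(iv)) is exactly the paper's argument for the left-hand map. For the right-hand map you take a genuinely different route: the paper proceeds by induction on the largest element $m$ of $S$, using the short exact sequence $0\to\bb W_{\{1\}}(R)\xto{V_m}\bb W_S(R)\xto{R_T}\bb W_T(R)\to0$ with $T=S\setminus\{m\}$ and comparing the resulting exact sequences for $\hat A$ and for $\projlim_s(A/I^s)$, whereas you observe directly that for finite $S$ the underlying-set functor of $\bb W_S$ is naturally $(-)^S$ (via the unique expansion $\sum_{i\in S}V_i[a_i]$, on which ring maps act coordinatewise by naturality of $V_i$ and the Teichm\"uller map), so that $\bb W_S(A)/\bb W_S(I^s)\cong\bb W_S(A/I^s)$ and $\bb W_S$ commutes with the inverse limit, making the canonical ring map a bijection. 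Both arguments are sound; yours is shorter and makes the limit-preservation of $\bb W_S$ explicit, while the paper's induction stays entirely within the $R,V$ exact-sequence formalism and so does not need to invoke the coordinate description (though the paper has in fact recorded the unique $\sum_iV_i[a_i]$ expansion, so your input is available there too).
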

\begin{proof}
The left arrow is an isomorphism since the two chains of ideals $\bb W_S(I^s)$ and $\bb W_S(I)^s$ are intertwined by Lemma \ref{lemma_witt_1}. It remains to consider the right arrow.

For $S=\{1\}$ there is nothing to prove since $\bb W_{\{1\}}(A)=A$ and $\bb W_{\{1\}}(I)=I$. We proceed by induction on the maximal element $m$ of the truncation set $S$; putting $T:=S\setminus\{m\}$ and noticing that $S/m=\{1\}$, we have a short exact sequence \[0\To\bb W_{\{1\}}(R)\xto{V_m}\bb W_S(R)\xto{R_T}\bb W_T(R)\To0\] for any ring $R$, possibly non-unital. We thus arrive at the following comparison of short exact sequences, where $\hat A$ denotes the $I$-adic completion of $A$:
\[
\xymatrix{
0\ar[r]&\bb W_{\{1\}}(\hat A)\ar[r]^{V_m}\ar[d]&\bb W_S(\hat A)\ar[r]^{R_T}\ar[d]&\bb W_T(\hat A)\ar[r]\ar[d]&0\\
0\ar[r]&\projlim_s\bb W_{\{1\}}(A/I^s)\ar[r]^{V_m}&\projlim_s\bb W_S(A/I^s)\ar[r]^{R_T}&\projlim_s\bb W_T(A/I^s)\ar[r]&0
}\]
Applying the inductive hypotheses to $T$, we see that the left and right vertical arrows are isomorphisms, whence the central is too.
\end{proof}

In the remainder of the section we fix a prime number $p$ and focus on the $p$-typical Witt rings, starting with a $p$-adic analogue of the previous completion lemma. In the following lemma, as well as later in the paper, the $p$-adic completion of a ring $R$ is denoted by $R_p^\comp:=\projlim_sR/p^sR$.

\begin{lemma}\label{lemma_W_r_of_p_completion}
Let $A$ be a ring, $p$ a prime number, and $r\ge1$. Then there is a natural isomorphism of rings $W_r(A)_p^\comp\cong W_r(A_p^\comp)$.
\end{lemma}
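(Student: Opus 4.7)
The plan is to deduce this from the preceding Lemma \ref{lemma_W_S_of_completion}. Applying that lemma with truncation set $S=\{1,p,\dots,p^{r-1}\}$ and the principal (hence finitely generated) ideal $I=pA$, one obtains the identification
\[W_r(A_p^\comp)\cong\projlim_sW_r(A)/W_r(pA)^s,\]
while by definition $W_r(A)_p^\comp=\projlim_sW_r(A)/p^sW_r(A)$. Thus it suffices to exhibit a natural isomorphism between these two pro-rings, equivalently to show that the descending chains of ideals $\{W_r(pA)^s\}_s$ and $\{p^sW_r(A)\}_s$ of $W_r(A)$ determine the same pro-object. A head-on cofinality check is awkward since, for example, the element $p\in W_r(A)$ has Witt coordinates $(p,1-p^{p-1},\dots)$ and so fails to lie in $W_r(pA)$ when $r\ge 2$, and conversely $pW_r(A)$ is not in general contained in $W_r(pA)$.

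Instead I would argue inductively on $r$, using the standard finite Verschiebung filtration
\[W_r(A)\supseteq VW_{r-1}(A)\supseteq V^2W_{r-2}(A)\supseteq\cdots\supseteq V^rW_0(A)=0,\]
whose successive quotients $V^iW_{r-i}(A)/V^{i+1}W_{r-i-1}(A)$ are canonically isomorphic to $A$ as abelian groups. A short computation with ghost components shows that the Verschiebung is $p$-equivariant, i.e.\ $pV(x)=V(px)$, so these identifications respect the natural actions of multiplication by $p$. Applied to the ring $A_p^\comp$, the filtration realises $W_r(A_p^\comp)$ as a finite iterated extension of copies of the $p$-adically complete group $A_p^\comp$; hence $W_r(A_p^\comp)$ is itself $p$-adically complete, and the functorial map $W_r(A)\to W_r(A_p^\comp)$ descends to the comparison map $W_r(A)_p^\comp\to W_r(A_p^\comp)$ that we wish to show is an isomorphism.

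The induction is on $r$, with trivial base case $r=1$ (since $W_1(A)=A$). For the inductive step, we take the natural map from the short exact sequence
\[0\To W_{r-1}(A)\xrightarrow{V}W_r(A)\To A\To 0\]
to its counterpart for $A_p^\comp$, and $p$-adically complete the top row. The inductive hypothesis provides $W_{r-1}(A)_p^\comp\isoto W_{r-1}(A_p^\comp)$ on the left, the right-hand map $A_p^\comp\to A_p^\comp$ is the identity (using that $A_p^\comp$ is itself $p$-adically complete), and the five-lemma concludes. The main technical obstacle is verifying that $p$-adic completion preserves the exactness of the top row; using the $p$-equivariance of $V$ this reduces to a Mittag--Leffler argument for the pro-abelian group $\{V(W_{r-1}(A))\cap p^sW_r(A)\}_s$, which holds in the Noetherian context under which the rest of the paper is written, or directly by noting that the Verschiebung filtration is finite and the graded pieces carry the standard $p$-action on $A$.
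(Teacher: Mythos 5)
Your reduction via Lemma \ref{lemma_W_S_of_completion} with $I=pA$ is the same first step as the paper, but the inductive argument that follows has a genuine gap at precisely the point you flag. Completing $0\to W_{r-1}(A)\xto{V}W_r(A)\to A\to0$ $p$-adically requires comparing the induced filtration $\{VW_{r-1}(A)\cap p^sW_r(A)\}_s$ with the $p$-adic filtration $\{p^sVW_{r-1}(A)\}_s$, and neither of your two justifications establishes this. The appeal to ``the Noetherian context'' is off target: the lemma is stated, and needed, for arbitrary rings $A$, and even for Noetherian $A$ an Artin--Rees argument would have to take place inside $W_r(A)$, which is not known to be Noetherian without F-finiteness (Theorem \ref{theorem_Langer_Zink}). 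The alternative remark that ``the Verschiebung filtration is finite and the graded pieces carry the standard $p$-action'' does not engage with the actual problem: if $V(x)=p^sy$ in $W_r(A)$, the element $y$ need not lie in $VW_{r-1}(A)$ (its image in $A$ is merely $p^s$-torsion), so there is no obvious reason why $V(x)$ should be $V$ of something divisible by a comparable power of $p$ -- and rings with $p$-torsion are exactly the case of interest. The same gap undermines your claim that $W_r(A_p^\comp)$ is $p$-adically complete: the class of $p$-adically complete and separated groups is not closed under extensions in general, so a finite filtration with complete graded pieces does not by itself give completeness; one needs the same filtration comparison again.

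The required comparison does in fact hold for every ring, but it has to be proved: for example, for $r=2$ the universal identity $p^s(a_0,a_1)=\bigl(p^sa_0,\;p^{s-1}(1-p^{s(p-1)})a_0^p+p^sa_1\bigr)$ shows $VW_1(A)\cap p^sW_2(A)\subseteq V(p^{s-1}A)$, and analogous (more involved) coordinate computations would be needed for general $r$. The paper sidesteps this bookkeeping entirely: after the same reduction via Lemma \ref{lemma_W_S_of_completion}, it shows the two chains of ideals $p^sW_r(A)$ and $W_r(p^sA)$ are intertwined, using $p^rW_r(A)\subseteq W_r(pA)$ (because $W_r(A/pA)$ is a $\bb Z/p^r\bb Z$-algebra) and $W_r(pA)^M\subseteq[p]^pW_r(A)\subseteq pW_r(A)$ (Remark \ref{remark_improved_witt_1}, together with $[p]-p\in VW_{r-1}(A)$ and the fact that $(VW_{r-1}(A))^2\subseteq pW_r(A)$). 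Note also that this explicit intertwining, not just the statement of the lemma, is quoted later in the proof of Corollary \ref{corollary_p_I}, so it is worth obtaining in any case.
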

\begin{proof}
By Lemma \ref{lemma_W_S_of_completion}, it is enough to show that the ideals $pW_r(A)$ and $W_r(pA)$ each contain a power of the other.

Firstly, it is well-known that $W_r(\bb F_p)=\bb Z/p^r\bb Z$, whence $W_r(A/pA)$ is a $\bb Z/p^r\bb Z$-algebra; in other words, $p^rW_r(A)\subseteq W_r(pA)$.

Secondly, by Remark \ref{remark_improved_witt_1} there exists $M\ge1$ such that $W_r(p^MA)\subseteq[p]^pW_r(A)$; so \[W_r(pA)^M\subseteq W_r(p^MA)\subseteq [p]^pW_r(A),\] where the first inclusion is by Lemma \ref{lemma_witt_1}(ii). Hence we can complete the proof by showing that $[p]^p\in pW_r(A)$. Since $R^{r-1}([p])=p\in A$, and since there is a short exact sequence \[0\To W_{r-1}(A)\xto{V}W_r(A)\xto{R^{r-1}}A\To 0,\] we may write $[p]-p=V(\al)$ for some $\al\in W_{r-1}(A)$. Raising to the $p^\sub{th}$ power, we deduce that $[p]^p=p\beta+V(\al)^p$ for some $\beta\in W_r(A)$, and so it is enough to show that $V(\al)^2\in pW_r(A)$. But indeed it follows from standard Witt vector identities that the square of the ideal $VW_{r-1}(A)$ lies inside $pW_r(A)$, e.g., \cite[Prop.~A.4(v)]{Rulling2007}.
\end{proof}

Now we turn to the Frobenius:

\begin{lemma}\label{lemma_witt_2}
Let $A$ be a ring, $I\subseteq A$ a finitely generated ideal, and $r\ge1$.
\begin{enumerate}
\item The ideal of $A$ generated by $F^{r-1}W_r(I)$ contains $I^M$ for $M\gg 0$.
\item The natural ring homomorphisms $A\otimes_{W_r(A)}W_r(A/I^s)\to A/I^s$, induced by the commutative diagrams of rings
\[\xymatrix{
W_r(A) \ar[r]\ar[d]_{F^{r-1}} & W_r(A/I^s)\ar[d]^{F^{r-1}}\\
A \ar[r] & A/I^s,
}\] induce an isomorphism of pro rings $\{A\otimes_{W_r(A)}W_r(A/I^s)\}_s\isoto \{A/I^s\}_s$.
\end{enumerate}
\end{lemma}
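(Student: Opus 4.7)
The strategy is to prove (i) directly from a Frobenius calculation on Teichm\"uller lifts of generators of $I$, and then to deduce (ii) by making the tensor product explicit and reducing to (i) applied to powers of $I$.

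For part (i), fix a finite generating set $I=\pid{t_1,\ldots,t_m}$. Each Teichm\"uller lift $[t_j]$ lies in $W_r(I)$, and the standard identity $F^{r-1}([t_j])=t_j^{p^{r-1}}$ (immediate from ghost coordinates) shows that the ideal of $A$ generated by $F^{r-1}W_r(I)$ contains $\pid{t_1^{p^{r-1}},\ldots,t_m^{p^{r-1}}}$. A pigeonhole argument then gives $I^M\subseteq\pid{t_1^{p^{r-1}},\ldots,t_m^{p^{r-1}}}$ for $M=m(p^{r-1}-1)+1$, proving (i).

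For part (ii), the reduction $W_r(A)\onto W_r(A/I^s)$ has kernel $W_r(I^s)$, so there is a canonical identification
\[A\otimes_{W_r(A)}W_r(A/I^s)\;\cong\;A\big/\bigl(F^{r-1}(W_r(I^s))\cdot A\bigr),\]
where $A$ is regarded as a $W_r(A)$-module via $F^{r-1}$. The formula $F^{r-1}(V^i[a])=p^i a^{p^{r-1-i}}$ shows that $F^{r-1}(W_r(I^s))\subseteq I^s$ (since $a\in I^s$ forces $a^{p^{r-1-i}}\in I^s$), so the induced map to $A/I^s$ is well-defined and evidently surjective. To upgrade this to an isomorphism of pro rings it suffices, for each $s$, to find $s'\ge s$ with $I^{s'}\subseteq F^{r-1}(W_r(I^s))\cdot A$. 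Since $I^s$ is itself finitely generated (by the degree-$s$ monomials in the $t_j$), applying (i) with $I$ replaced by $I^s$ supplies such an $s'$ at once.

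I anticipate no serious obstacle: the whole argument rests on the explicit Witt-vector identity $F^{r-1}V^i[a]=p^i a^{p^{r-1-i}}$ and the containment relations for ideals $W_S(-)$ already recorded in Lemma~\ref{lemma_witt_1}. The only point requiring real care in (ii) is distinguishing the two $W_r(A)$-module structures (via $F^{r-1}$ versus reduction) when unwinding the tensor product; once this is straight, everything reduces mechanically to part (i).
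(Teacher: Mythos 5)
Your proposal is correct and follows essentially the same route as the paper: part (i) via $F^{r-1}[t_j]=t_j^{p^{r-1}}$ together with the pigeonhole containment $I^{m(p^{r-1}-1)+1}\subseteq\pid{t_1^{p^{r-1}},\dots,t_m^{p^{r-1}}}$, and part (ii) by identifying $A\otimes_{W_r(A)}W_r(A/I^s)$ with $A/\pid{F^{r-1}W_r(I^s)}$ and intertwining the two chains of ideals, the nontrivial inclusion coming from (i) applied to $I^s$. The only difference is that you verify the easy inclusion $F^{r-1}(W_r(I^s))\subseteq I^s$ explicitly via $F^{r-1}V^i[a]=p^ia^{p^{r-1-i}}$, which the paper dismisses as obvious.
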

\begin{proof}
(i): If $I$ is generated by $t_1,\dots,t_m\in A$, then $I^{(M)}=\pid{t_1^M,\dots,t_m^M}$ contains $I^{m(M-1)+1}$; so it is enough to show that $I^{(M)}\subseteq\pid{F^{r-1}W_r(I)}$ for $M\gg 0$. But $M=p^{r-1}$ clearly has this property, since for any $a\in I$ we have $F^{r-1}(a,0,\dots,0)=a^{p^{r-1}}$.

(ii): Since $W_r(A)\to W_r(A/I^s)$ is surjective with kernel $W_r(I^s)$, the tensor product $A\otimes_{W_r(A)}W_r(A/I^s)$ is simply $A/\pid{F^{r-1}W_r(I^s)}$. Thus the claimed isomorphism of pro rings is the statement that the chains of ideals $I^s$ and $F^{r-1}W_r(I^s)A$, for $s\ge1$, are intertwined; one inclusion is obvious and the other is (i).
\end{proof}

To say more in the $p$-typical we will focus on $\bb Z_{(p)}$-algebras $A$ which are F-finite; recall from Definition \ref{definition_F-finite_intro} that this means $A/pA$ is finitely generated over its subring of $p^\sub{th}$-powers. We will require the following results of A.~Langer and T.~Zink, which may be found in the appendix of \cite{LangerZink2004}, concerning Witt vectors of such rings:

\comment{
\begin{definition}\label{definition_F_finite}
An $\bb F_p$-algebra $A$ is called {\em F-finite} if and only if it is finitely generated as an algebra (or equivalently, as a module) over its subring $A^p$ of $p^\sub{th}$-powers. More generally, a $\bb Z_{(p)}$-algebra $A$ is called F-finite if and only if $A/pA$ is F-finite in the previous sense.

For example, a perfect field $k$ of characteristic $p$ is F-finite since $k^p=k$; to construct more examples from this, see Lemma \ref{lemma_conditions_for_F-finite}.
\end{definition}
}

\begin{theorem}[Langer--Zink]\label{theorem_Langer_Zink}
Let $A$ be an F-finite $\bb Z_{(p)}$-algebra and let $r\ge1$. Then:
\begin{enumerate}
\item The Frobenius $F:W_{r+1}(A)\to W_r(A)$ is a finite ring homomorphism; i.e., $W_r(A)$ is finitely generated as a module over its subring $FW_{r+1}(A)$.
\item If $B$ is a finitely generated $A$-algebra, then $B$ is also F-finite and $W_r(B)$ is a finitely generated $W_r(A)$-algebra.
\item If $A$ is Noetherian then $W_r(A)$ is also Noetherian.
\end{enumerate}
\end{theorem}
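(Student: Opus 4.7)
The plan is to treat all three parts by induction on $r$, using the short exact sequence of $W_r(A)$-modules
$$0\To V^{r-1}A\To W_r(A)\xto{R}W_{r-1}(A)\To 0,$$
the iterated projection formula $F(x)\cdot V^k(y)=V^k(F^{k+1}(x)\,y)$, and the fact that the iterated Frobenius $F^s\colon W_{s+1}(A)\to W_1(A)=A$ is the ghost map $w_s$, whose image contains $A^{p^s}+pA$. For (i), the base case $r=1$ asks that $A$ be finitely generated over $FW_2(A)=A^p+pA$; this follows from F-finiteness by lifting generators of $A/pA$ over $(A/pA)^p$ and using $pA\subseteq A^p+pA$. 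For the inductive step, the $FW_{r+1}(A)$-action on the quotient $W_{r-1}(A)$ factors through $FW_r(A)$ (since $RF=FR$ and $R$ is surjective on Witt rings), so finite generation follows from the inductive hypothesis; on the submodule $V^{r-1}A\cong A$ the projection formula shows the action factors through $F^r\colon W_{r+1}(A)\to A$, whose image contains $A^{p^r}+pA$, and iterating F-finiteness gives finite generation of $A$ over this subring.

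For (ii), F-finiteness of $B$ is a standard computation: if $\bar b_1,\dots,\bar b_s$ generate $A/pA$ over $(A/pA)^p$ and $B=A[t_1,\dots,t_n]$, then the finitely many products $\bar b_i\bar t_1^{e_1}\cdots\bar t_n^{e_n}$ (with $0\le e_j<p$) generate $B/pB$ over $(B/pB)^p$. For algebra-finiteness of $W_r(B)$ over $W_r(A)$, induct on $r$ and apply the short exact sequence to $B$: by the inductive hypothesis, $W_{r-1}(B)$ is generated over $W_{r-1}(A)$ by finitely many elements which may be lifted to $W_r(B)$, producing a subalgebra $S\subseteq W_r(B)$ with $W_r(B)=S+V^{r-1}B$. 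Adjoining finitely many further elements of $V^{r-1}B$---namely, $V^{r-1}$ applied to lifts of generators of $B/pB$ over $(B/pB)^{p^{r-1}}$, together with the Verschiebung--Teichm\"uller terms $V^{r-1}[t_j]$---absorbs $V^{r-1}B$, since the projection formula reduces the remaining question to finite generation of $B$ as a module over $F^{r-1}(W_r(A))\supseteq A^{p^{r-1}}+pA$, which holds because $B$ is finitely generated over $A$ and iterated F-finiteness applies.

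For (iii), apply the short exact sequence once more. By the inductive hypothesis, $W_{r-1}(A)$ is Noetherian, and hence Noetherian as a $W_r(A)$-module via the surjection $R$. On $V^{r-1}A$ the $W_r(A)$-action factors through the subring $R_0:=F^{r-1}(W_r(A))\subseteq A$, which contains $A^{p^{r-1}}+pA$; by the argument of (i), $A$ is finitely generated as an $R_0$-module, so Eakin's theorem implies $R_0$ is itself Noetherian, and consequently $A$ is Noetherian as an $R_0$- and hence $W_r(A)$-module. Since both terms of the short exact sequence are Noetherian $W_r(A)$-modules, $W_r(A)$ is itself a Noetherian ring.

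The main technical obstacle is the bookkeeping in (ii) to produce an explicit finite set of algebra generators absorbing $V^{r-1}B$, since neither Teichm\"uller nor Verschiebung is additive and explicit use of the Witt vector addition polynomials is required. In (iii), the critical step is the invocation of Eakin's theorem to upgrade module-finiteness over the intermediate subring $R_0$ to Noetherianness of $R_0$ itself.
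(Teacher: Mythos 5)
The paper does not actually prove this theorem: it is quoted from the appendix of Langer--Zink \cite{LangerZink2004}, so your argument has to stand on its own. For (i) and (iii) it essentially does: the induction along $0\To V^{r-1}A\To W_r(A)\xto{R}W_{r-1}(A)\To 0$, the identification via the projection formula of the $W_{r+1}(A)$-action (through $F$) on $V^{r-1}A$ with the action of $A$ through $F^r$, and the appeal to Eakin--Nagata to see that $R_0=F^{r-1}(W_r(A))$ is Noetherian are all correct and in the spirit of Langer--Zink's own appendix. One slip to repair: the image of $F^s\colon W_{s+1}(A)\to A$ is $\{a_0^{p^s}+pa_1^{p^{s-1}}+\cdots+p^sa_s\}$, which contains $A^{p^s}$ and $p^sA$ but \emph{not} $pA$ in general (already for $A=\bb Z_{(p)}[t]$ and $s=2$). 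The module-finiteness you need still holds, because the image is a subring containing $p$: from $A=\sum_i b_iA^{p^s}+pA$ (iterated F-finiteness) one gets $A=\sum_i b_iS+p^sA\subseteq\sum_i b_iS+S$ after iterating $s$ times, where $S$ denotes the image.

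The genuine problem is in (ii), in the step absorbing $V^{r-1}B$. First, the stated reduction is false: $B$ is \emph{not} a finitely generated module over (the image in $B$ of) $F^{r-1}(W_r(A))$ --- that subring lies inside the image of $A$, and already $B=A[t]$ is not module-finite over $A$. Second, the elements $V^{r-1}[t_j]$ you adjoin do not remedy this: $F^{r-1}(V^{r-1}[t_j])=p^{r-1}t_j$, and products of Verschiebungs degenerate, $V^{r-1}(x)V^{r-1}(y)=p^{r-1}V^{r-1}(xy)$, so the subalgebra they generate does not contain $[t_j]$ and one cannot reach a general $V^{r-1}(bt^e)$. The repair is to adjoin the Teichm\"uller lifts $[t_1],\dots,[t_n]$ themselves, so that $F^{r-1}$ of your subalgebra $S'$ contains the subring $C\subseteq B$ generated by $F^{r-1}(W_r(A))$ and $t_1^{p^{r-1}},\dots,t_n^{p^{r-1}}=F^{r-1}([t_j])$; then $B$ \emph{is} a finite $C$-module (write each monomial as $t^e=(t^q)^{p^{r-1}}t^s$ with $0\le s_j<p^{r-1}$ and use from (i) that $A$ is module-finite over $F^{r-1}(W_r(A))$), and finally adjoin $V^{r-1}(g)$ for the finitely many module generators $g$ of $B$ over $C$; the projection formula $s\cdot V^{r-1}(g)=V^{r-1}(F^{r-1}(s)g)$ then absorbs all of $V^{r-1}B$ into $S'$. (Also note $B$ need only be a quotient of $A[t_1,\dots,t_n]$; this is harmless since $W_r$ preserves surjections.) With these corrections your overall strategy for (ii) goes through, but as written that step fails.
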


Combining these results of Langer--Zink with our own lemmas, we reach the main result of this section, in which we relate the Frobenius on $W_r(A)$ with pro completion along an ideal of $A$; this will be the primary algebraic tool by which we inductively extend results from $\THH$ to $\TR^r$:

\begin{theorem}\label{theorem_base_change_by_Witt_rings}
Let $A$ be a Noetherian, F-finite $\bb Z_{(p)}$-algebra, $I\subseteq A$ an ideal, and $r\ge1$. Consider the ``completion'' functor
\[\xymatrix@R=1mm@C=3cm{
\Phi_r:W_r(A)\op{-mod}\ar[r]^{-\otimes_{W_r(A)}W_r(A/I^\infty)} &  \op{Pro}W_r(A)\op{-mod}\\
M\ar@{|->}[r] & \{M\otimes_{W_r(A)}W_r(A/I^s)\}_s
}\]
Then:
\begin{enumerate}
\item $\Phi_r$ is exact on the subcategory of finitely generated $W_r(A)$-modules.
\item There is a natural equivalence of functors between the composition \[A\op{-mod}\xto{(F^{r-1})^*}W_r(A)\op{-mod}\xto{\Phi_r}\op{Pro}W_r(A)\op{-mod}\] and the composition \[A\op{-mod}\xto{\Phi_1}\op{Pro}A\op{-mod}\xto{(F^{r-1})^*}\op{Pro}-W_r(A)\op{-mod}\] In other words, if $M$ is an $A$-module, viewed as a $W_r(A)$-module via the $r-1$ power of the Frobenius $F^{r-1}:W_r(A)\to A$, then there is a natural isomorphism \[\Phi_r(A)\cong \{M\otimes_AA/I^s\}_s.\]
\end{enumerate}
\end{theorem}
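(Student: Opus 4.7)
My plan is to deduce (i) directly from Theorem \ref{theorem_Artin_Rees}(ii) applied in the Noetherian ring $W_r(A)$, and to deduce (ii) from the associativity of tensor product combined with the pro-ring isomorphism of Lemma \ref{lemma_witt_2}(ii). The main work is a pro-category argument for (ii) showing that the pro-isomorphism of Lemma \ref{lemma_witt_2}(ii) is preserved under tensoring with $M$ over $A$.

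For (i), observe that $W_r(A)$ is Noetherian by Theorem \ref{theorem_Langer_Zink}(iii), so the ideal $W_r(I)\subseteq W_r(A)$ is finitely generated. By Lemma \ref{lemma_witt_1}(ii) and (iv), the chains of ideals $\{W_r(I^s)\}_s$ and $\{W_r(I)^s\}_s$ are cofinal in each other, so for every $W_r(A)$-module $M$ there is a canonical isomorphism of pro $W_r(A)$-modules
\[
\Phi_r(M) \;=\; \{M/W_r(I^s)M\}_s \;\cong\; \{M/W_r(I)^s M\}_s \;=\; \{M\otimes_{W_r(A)}W_r(A)/W_r(I)^s\}_s.
\]
Theorem \ref{theorem_Artin_Rees}(ii) applied to the Noetherian ring $W_r(A)$ and the ideal $W_r(I)$ then yields exactness of $\Phi_r$ on the subcategory of finitely generated $W_r(A)$-modules.

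For (ii), let $M$ be an $A$-module, regarded as a $W_r(A)$-module via $F^{r-1}$. Associativity of tensor product gives a natural isomorphism
\[
M\otimes_{W_r(A)}W_r(A/I^s) \;\cong\; M\otimes_A\bigl(A\otimes_{W_r(A)}W_r(A/I^s)\bigr).
\]
Writing $J_s:=F^{r-1}(W_r(I^s))\cdot A$, the inner tensor product is $A/J_s$; by Lemma \ref{lemma_witt_2}(i) the chains $\{I^s\}_s$ and $\{J_s\}_s$ are cofinal in each other. Since $J_s\subseteq I^s$, the natural surjection $M/J_s M\onto M/I^s M$ has kernel $I^s M/J_s M$, and by the cofinality just recalled, for each $s$ we may choose some $s'\ge s$ with $I^{s'}\subseteq J_s$, which forces the transition map $I^{s'}M/J_{s'}M\to I^sM/J_sM$ to vanish. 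Hence this kernel pro-module is trivially Mittag--Leffler, the surjection is an isomorphism in $\op{Pro}W_r(A)\op{-mod}$, and composing with the associativity isomorphism yields the required natural equivalence $\Phi_r(M)\cong\{M\otimes_A A/I^s\}_s$.

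The only step that is not entirely formal is in (ii): the map $\{A\otimes_{W_r(A)}W_r(A/I^s)\}_s\to\{A/I^s\}_s$ of Lemma \ref{lemma_witt_2}(ii) is only a pro-isomorphism rather than a levelwise one, and since $M$ need not be flat over $A$, tensoring with $M$ does not a priori preserve injectivity at each level. The approach above sidesteps this by working directly with the explicit kernel $I^s M/J_s M$, which is pro-zero by cofinality alone, so only the right-exactness of $M\otimes_A-$ is used.
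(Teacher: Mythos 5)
Your proposal is correct and follows essentially the same route as the paper: part (i) combines Langer--Zink's Noetherianness of $W_r(A)$, the intertwining of the chains $W_r(I^s)$ and $W_r(I)^s$ from Lemma \ref{lemma_witt_1}, and the Artin--Rees Theorem \ref{theorem_Artin_Rees}, exactly as the paper does. For part (ii) the paper simply declares the statement a restatement of Lemma \ref{lemma_witt_2}(ii), and your explicit kernel argument just unpacks that restatement correctly (the flatness worry is in any case harmless, since the levelwise functor $M\otimes_A-$ induces a functor on pro-categories and functors preserve isomorphisms).
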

\begin{proof}
(i): We must prove that the pro abelian group $\{\op{Tor}_n^{W_r(A)}(W_r(A/I^s), M)\}_s$ vanishes for any finitely generated $W_r(A)$-module $M$ and integer $n>0$.

According to Lemma \ref{lemma_witt_1}(ii)+(iv), the chain of ideals $W_r(I^s)$ is intertwined with the chain $W_r(I)^s$, so it is sufficient to prove that the pro abelian group \[\{\op{Tor}_n^{W_r(A)}(W_r(I)^s, M)\}_s\] vanishes. But according to Langer--Zink $W_r(A)$ is Noetherian, so this vanishing claim is covered by the Artin--Rees theorem recalled in Theorem \ref{theorem_Artin_Rees}(i).

(ii) is a restatement of Lemma \ref{lemma_witt_2}(ii).
\end{proof}

\subsection{F-finiteness}\label{subsection_F-finiteness}
In this section we prove some basic properties surrounding F-finiteness (Definition \ref{definition_F-finite_intro}), for which we claim no originality but for which we know of no suitable reference. We fix a prime number $p$ for the next three lemmas.

Our main results apply to Noetherian, F-finite $\bb Z_{(p)}$-algebras, so we first show that this property is preserved under many natural constructions:

\begin{lemma}\label{lemma_conditions_for_F-finite}
Let $A$ be a Noetherian, F-finite $\bb Z_{(p)}$-algebra; then the following are also Noetherian, F-finite $\bb Z_{(p)}$-algebras:
\begin{enumerate}
\item Any finitely generated $A$-algebra.
\item Any localisation of $A$ at a mutiplicative system.
\item The completion of $A$ at any ideal.
\item The Henselisation of $A$ at any ideal.
\item The strict Henselisation of $A$ at any maximal ideal.
\end{enumerate}
\end{lemma}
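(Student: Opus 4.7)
The plan is to separate Noetherianness from F-finiteness in each of the five cases. Noetherianness of $B$ is classical: (i) is Hilbert's basis theorem; (ii) is the standard result for localisations; (iii) is the Noetherianness of adic completions of Noetherian rings; and (iv)--(v) follow from the classical fact that Henselisations and strict Henselisations of Noetherian rings along ideals are Noetherian. The content is therefore F-finiteness. The first reduction is to each case's counterpart for $\bb F_p$-algebras, by identifying $B/pB$ with the analogous construction applied to $R:=A/pA$; this is immediate for (i)--(iv). For (v), if $p\notin \mathfrak m$ then $p$ is a unit in $A^{sh}$ so $A^{sh}/pA^{sh}=0$ is trivially F-finite, while if $p\in\mathfrak m$ then $A^{sh}/pA^{sh}$ is the strict Henselisation of $R$ at the image of $\mathfrak m$. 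So it suffices to show each construction preserves F-finiteness among $\bb F_p$-algebras.

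For (i) and (ii), I would exhibit explicit generators. Fixing $a_1,\ldots,a_m\in R$ generating $R$ as an $R^p$-module, the set $\{a_i\cdot x_1^{e_1}\cdots x_n^{e_n}:1\le i\le m,\,0\le e_j<p\}$ generates $R[x_1,\ldots,x_n]$ over $R[x_1,\ldots,x_n]^p$, and the property descends to quotients; this handles (i). For (ii), the elementary identity $a/s=(a s^{p-1})(1/s)^p$ shows that $a_1,\ldots,a_m$ continue to generate $S^{-1}R$ over $(S^{-1}R)^p$.

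For cases (iii)--(v), the strategy is to use the equivalent reformulation that $R$ is F-finite if and only if the Frobenius $\phi:R\to R$ is a finite ring homomorphism. The common pattern is that each operation (completion, Henselisation, strict Henselisation) commutes with finite ring extensions, so that after extending $\phi$ to the new ring $R'$ the extension $\phi':R'\to R'$ remains finite, making $R'$ F-finite. For (iii), the key input is that the $I$-adic topology on $R$ is cofinal with the $(\{a^p:a\in I\})$-adic topology, by Noetherianness, which ensures that completion intertwines correctly with the Frobenius-twisted module structure. For (iv), one uses that Henselisation depends only on the radical of the ideal, together with the equality $\sqrt{\{a^p:a\in I\}}=\sqrt{I}$. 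For (v), one additionally needs that the separable closure $\kappa^{sep}$ of an F-finite field $\kappa$ of characteristic $p$ is itself F-finite, which follows from $[\kappa^{sep}:(\kappa^{sep})^p]=[\kappa:\kappa^p]$ (separable extensions are stable under the Frobenius twist). The main obstacle will be coordinating the Frobenius with the topology and residue-field data in cases (iii)--(v); once those compatibilities are checked, finiteness of the Frobenius transfers through each operation in the expected way.
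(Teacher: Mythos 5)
Your proposal is correct, and for (i)--(ii) it coincides with the paper's proof (Noetherianness is classical; reduce modulo $p$; exhibit explicit generators over the subring of $p$\textsuperscript{th} powers, the localisation case resting on $a/s=(as^{p-1})(1/s)^p$). For (iii)--(v), however, you take a genuinely different route. The paper handles (iii) by choosing generators $t_1,\dots,t_d$ of $I$, surjecting $A[[X_1,\dots,X_d]]\onto\hat A$, and checking directly that the power series ring is F-finite (the same regrouping of coefficients into $p$\textsuperscript{th} powers as in the polynomial case), F-finiteness passing to quotients; and it handles (iv) and (v) uniformly by citing Langer--Zink's observation that for an \'etale map $A\to A'$ of $\bb F_p$-algebras the Frobenius square is cocartesian, so that taking the filtered colimit over the \'etale neighbourhoods defining the (strict) Henselisation exhibits the Frobenius of $B$ as the base change of the finite Frobenius of $A$. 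Your route instead views F-finiteness as finiteness of $\phi\colon R\to\phi_*R$ and pushes it through each operation via ``completion/Henselisation commutes with finite (integral) base change'', using cofinality of the $I$-adic and $\{a^p:a\in I\}$-adic topologies in (iii), radical-invariance of the Henselisation in (iv), and the residue-field bookkeeping (linear disjointness of $\kappa^{\mathrm{sep}}$ from purely inseparable extensions, uniqueness of the prime of $\phi_*R$ over $\frak m$) in (v); in each case one must also identify the induced map with the absolute Frobenius of the new ring, which follows by continuity/density in (iii) and from the universal property of (strict) Henselisation in (iv)--(v). These compatibilities, which you flag, are routine and do go through, so your argument is sound. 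What each approach buys: the paper's \'etale-square argument is shorter, avoids the structure theory of finite algebras over Henselian local rings and any discussion of residue fields, and yields the stronger statement that the Frobenius square of $A\to B$ is itself cocartesian (i.e.\ the Henselisation is relatively perfect over $A$); your argument avoids the Langer--Zink input and its (iii) is the more conceptual ``completion of a finite module'' statement, though the paper's power-series trick for (iii) is more elementary and explicit.
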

\begin{proof}
The claim that operations (i)--(v) preserve the Noetherian property is standard commutative algebra, so we need only prove the F-finiteness assertion, for which we may replace $A$ by $A/pA$ and therefore assume that $A$ is an $\bb F_p$-algebra. Let $a_1,\dots,a_n$ generate $A$ as an algebra over its $p^\sub{th}$-powers $A^p$.

(i): If $B=A[b_1,\dots,b_m]$ is a finitely generated $A$-algebra, then it is generated by $a_1,\dots,a_n,b_1,\dots,b_m$ over its $p^\sub{th}$-powers, hence is F-finite. (ii): If $B$ is a localisation of $A$ then $B$ is also generated by $a_1,\dots,a_n$ over its $p^\sub{th}$-powers. (iii): Let $\hat A$ be the completion of $A$ at an ideal $I\subseteq A$. Picking generators $t_1,\dots,t_d\in I$ for the ideal $I$, there is a resulting surjection $A[[X_1,\dots,X_d]]\to \hat A$ and so the F-finiteness of $\hat A$ follows from that of $A[[X_1,\dots,X_d]]$ (it is generated by $a_1,\dots,a_n,X_1,\dots,X_d$ over its $p^\sub{th}$-powers).

(iv): Suppose $B$ is the Henselisation of $A$ along an ideal $I\subseteq A$; so, by definition, $B$ is the filtered inductive limit of all \'etale $A$-algebras $A'$ for which the canonical map $A/I\to A'/IA'$ is an isomorphism. For each such $A'$, the diagram
\[\xymatrix{
A\ar[r]\ar[d]_F & A'\ar[d]^F\\
A\ar[r] & A'
}\]
is cocartesian by the proof of \cite[Lem.~A.9]{LangerZink2004} (with notation $R=A/pA$, $S=A'/pA'$), where $F$ denotes the absolute Frobenius homomorphism $x\mapsto x^p$. Taking the colimit over $A'$ we deduce that the diagram
\[\xymatrix{
A\ar[r]\ar[d]_F & B\ar[d]^F\\
A\ar[r] & B
}\]
is cocartesian. Since the left vertical arrow is a finite morphism by assumption, so is the right vertical arrow, as required.

(v): The proof is the same as case (iv).
\end{proof}

Next we note that properties such as F-finiteness extend from a ring to its Witt vectors:

\begin{lemma}\label{lemma_F-finiteness_of_Witt_ring}
Let $A$ be a $\bb Z_{(p)}$-algebra and let $r\ge1$. Then:
\begin{enumerate}
\item $W_r(A)$ is a $\bb Z_{(p)}$-algebra.
\item If $p$ is nilpotent in $A$ then $p$ is nilpotent in $W_r(A)$.
\item If $A$ is F-finite then $W_r(A)$ is also F-finite.
\end{enumerate}
\end{lemma}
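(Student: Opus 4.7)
The plan is to handle each of the three parts in turn, exploiting the universal Witt-vector identities collected in Section~\ref{subsection_Witt} together with the structural results of Langer--Zink (Theorem~\ref{theorem_Langer_Zink}).

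For part~(i), it suffices to show that each prime $\ell\ne p$ is invertible in $W_r(A)$. The Teichm\"uller lift $[\ell]$ is already a unit with inverse $[\ell^{-1}]$, and since $R^{r-1}(\ell-[\ell])=0$ we have $\ell=[\ell]+V(z)$ for some $z\in W_{r-1}(A)$; hence $\ell\cdot[\ell^{-1}]=1+[\ell^{-1}]V(z)\in 1+VW_{r-1}(A)$, and it remains to show $1+VW_{r-1}(A)\subseteq W_r(A)^\times$. By functoriality of $W_r$ applied to $\bb Z_{(p)}\hookrightarrow A$, we may reduce to the universal case $A=\bb Z_{(p)}$, in which the ghost map $W_r(\bb Z_{(p)})\hookrightarrow\bb Z_{(p)}^r$ is injective because $\bb Z_{(p)}$ is $p$-torsion-free. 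Imposing the ghost equations $w_i(\beta)=1/\ell$ and solving recursively for the Witt coordinates $\beta_i$, the divisibility $p\mid\ell^{p^i}-\ell^{p^{i-1}}$ supplied by Fermat's little theorem guarantees that each $\beta_i$ actually lies in $\bb Z_{(p)}$, producing the desired inverse of $\ell$.

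For part~(ii), consider the descending filtration by ideals
\[ W_r(A)\supseteq VW_{r-1}(A)\supseteq V^2W_{r-2}(A)\supseteq\cdots\supseteq V^rW_0(A)=0, \]
in which each $V^iW_{r-i}(A)$ is an ideal by virtue of the identity $V(x)\cdot y=V(xF(y))$ and each successive quotient is isomorphic as an abelian group to $A$. If $p^N=0$ in $A$, then $p^N$ kills each successive quotient, so $p^N\cdot V^iW_{r-i}(A)\subseteq V^{i+1}W_{r-i-1}(A)$; iterating $r$ times yields $p^{Nr}\cdot W_r(A)\subseteq V^rW_0(A)=0$.

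Part~(iii) is the most delicate. We must show that $W_r(A)/pW_r(A)$ is finite over its subring of $p^\sub{th}$-powers. The Langer--Zink theorem (Theorem~\ref{theorem_Langer_Zink}(i)) supplies that $W_r(A)$ is finite as a module over the subring $FW_{r+1}(A)$, and a direct calculation with Witt polynomials working universally over $\bb Z$ yields the congruence $F(a_0,\ldots,a_r)\equiv(a_0^p,\ldots,a_{r-1}^p)\pmod{pW_r(A)}$ for an arbitrary ring. It then remains to show that the image of $FW_{r+1}(A)$ in $W_r(A)/pW_r(A)$ is contained in the subring of $p^\sub{th}$-powers. For $A$ an $\bb F_p$-algebra one verifies that $p=V(1)$ in $W_r(A)$, whence $V^i[a^p]=pV^{i-1}[a]\in pW_r(A)$ for $i\ge 1$, so that every element of $W_r(A^p)$ becomes a genuine ring-theoretic $p^\sub{th}$-power modulo $p$, proving the claim in this case. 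For a general $\bb Z_{(p)}$-algebra $A$ we pass to the surjection $W_r(A)/pW_r(A)\twoheadrightarrow W_r(A/pA)/pW_r(A/pA)$, whose kernel is a nilpotent (by Lemma~\ref{lemma_W_r_of_p_completion}), finitely generated ideal (using Noetherianity of $W_r(A)$ from Theorem~\ref{theorem_Langer_Zink}(iii) in the cases of interest), which allows F-finiteness to be transferred from the target to the source. The principal obstacle is exactly this mismatch in mixed characteristic between the Witt-vector Frobenius $F$, the componentwise $p^\sub{th}$-power, and the ring-theoretic $p^\sub{th}$-power, which coincide modulo $p$ for $\bb F_p$-algebras but require the reduction step above in general.
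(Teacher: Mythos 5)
Parts (i) and (ii) of your argument are correct, though they take more computational routes than the paper. For (i) the paper simply cites Hesselholt's lemma that an integer invertible in $A$ is invertible in $\bb W_S(A)$; you in effect reprove this for $W_r(\bb Z_{(p)})$ by a Dwork-style ghost argument and push it into $W_r(A)$ by functoriality. That works, but note two small points: the divisibilities you actually need in the recursion are $\ell^{p^i}\equiv\ell^{p^{i-1}}\bmod p^i$ (Fermat plus lifting the congruence), not merely mod $p$, and the ``reduction to the universal case'' is legitimate only because the element in question, $\ell$ itself, comes from $W_r(\bb Z_{(p)})$ --- the blanket claim $1+VW_{r-1}(A)\subseteq W_r(A)^\times$ does not reduce to $A=\bb Z_{(p)}$ by functoriality, and your Teichm\"uller detour is in fact unnecessary. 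For (ii) the paper argues via $W_r(\bb F_p)=\bb Z/p^r\bb Z$ and nilpotence of the kernel $W_r(pA)$ (Lemma \ref{lemma_witt_1}(ii)); your $V$-filtration argument, with graded pieces isomorphic to $A$, is a perfectly good alternative.

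Part (iii) contains two genuine gaps. First, the congruence $F(a_0,\dots,a_r)\equiv(a_0^p,\dots,a_{r-1}^p)\bmod pW_r(A)$ is false for general rings: already for $F:W_3(\bb Z)\to W_2(\bb Z)$ and $a=(0,1,0)$ one has $F(a)=(p,\,1-p^{p-1})$, and the difference $F(a)-(0,1)$ has ghost vector $(p,0)$, which is not $p$ times the ghost of any integral Witt vector, so it does not lie in $pW_2(\bb Z)$. The correct universal statement is $F(a)\equiv (R(a))^p\bmod pW_r(A)$, the $p$-th power being taken in the ring $W_r(A)$; the componentwise formula you quote is exact only for $\bb F_p$-algebras. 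Second, your passage back from $A/pA$ to $A$ transfers F-finiteness along the surjection $W_r(A)/pW_r(A)\onto W_r(A/pA)/pW_r(A/pA)$; nilpotence of the kernel alone does not suffice for such a transfer (a square-zero extension of $\bb F_p$ by an infinite-dimensional vector space is a counterexample), so you genuinely need the kernel finitely generated, and you obtain that only by invoking Noetherianity of $W_r(A)$ via Theorem \ref{theorem_Langer_Zink}(iii) --- but the lemma makes no Noetherian hypothesis, so as written you prove (iii) only for Noetherian $A$.

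Both defects disappear if you use the correct congruence: Langer--Zink gives $W_r(A)$ module-finite over $FW_{r+1}(A)$, and $F(x)\equiv(Rx)^p\bmod pW_r(A)$ shows that the image of $F$ in $W_r(A)/pW_r(A)$ consists of $p$-th powers, so $W_r(A)/pW_r(A)$ is module-finite over its subring of $p$-th powers --- no reduction to characteristic $p$ and no Noetherian hypothesis needed. The paper's own proof is shorter still: since $FV=p$, the Frobenius induces a ring homomorphism $A\cong W_{r+1}(A)/VW_r(A)\to W_r(A)/pW_r(A)$, which is finite by Langer--Zink, so $W_r(A)/pW_r(A)$ is a finitely generated algebra over the F-finite ring $A$ and hence F-finite.
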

\begin{proof}

(i): If an integer is invertible in $A$ then it is also invertible in $\bb W_S(A)$ for any truncation set $S$; see e.g., \cite[Lem.~1.9]{Hesselholt2010}.

(ii): It is well-known that $W_r(\bb F_p)=\bb Z/p^r\bb Z$, whence $p^r=0$ in $W_r(A/pA)$; since the kernel of $W_r(A)\to W_r(A/pA)$ is nilpotent by Lemma \ref{lemma_witt_1}(ii) and the assumption, it follows that $p$ is also nilpotent in $W_r(A)$.

(iii): The formula $FV=p$, e.g.~\cite[A.4(vi)]{Rulling2007}, implies that the Frobenius $F$ induces a ring homomorphism \[A\cong W_{r+1}(A)/VW_r(A)\xto{F}W_r(A)/pW_r(A),\] which is a finite morphism by Langer-Zink; since $A$ is F-finite by assumption, it follows from Lemma \ref{lemma_conditions_for_F-finite}(i) that $W_r(A)/pW_r(A)$ is also F-finite.
\end{proof}

Finally in this section on F-finiteness, we observe that F-finiteness is a sufficient (and, in fact, necessary) condition for the finite generation of K\"ahler differentials; this fact underlies the finite generation of Andr\'e--Quillen homology which we will prove in Theorem \ref{theorem_F_finite_implies_AQ_finite}:

\begin{lemma}\label{lemma_finite_generation_of_differentials}
Let $A$ be a Noetherian, F-finite $\bb Z_{(p)}$-algebra. Then, for all $n\ge0$:
\begin{enumerate}
\item $\Omega_A^n\otimes_AA/pA$ is a finitely generated $A$-module.
\item If $p$ is nilpotent in $A$, then $\Omega_A^n$ is a finitely generated $A$-module.
\end{enumerate}
\end{lemma}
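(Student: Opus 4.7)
The plan is to reduce part (i) to the finite generation of $\Omega^1_{(A/pA)/\bb F_p}$, which is the classical statement that F-finite $\bb F_p$-algebras have finitely generated K\"ahler differentials. Write $\bar A:=A/pA$. The conormal sequence for the surjection $A\onto\bar A$ yields an exact sequence of $\bar A$-modules
\[
pA/p^2A\To \Omega^1_A\otimes_A\bar A\To \Omega^1_{\bar A/\bb Z}\To 0,
\]
and the rightmost term coincides with $\Omega^1_{\bar A/\bb F_p}$ since $\bar A$ is an $\bb F_p$-algebra. The left term is a cyclic $\bar A$-module, so it will suffice to prove that $\Omega^1_{\bar A/\bb F_p}$ is a finitely generated $\bar A$-module.

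The crucial observation is that for any $\bb F_p$-algebra $B$ and any $\bb F_p$-derivation $D:B\to M$, we have $D(b^p)=pb^{p-1}Db=0$; thus every $\bb F_p$-derivation on $B$ is automatically a $B^p$-derivation, and therefore $\Omega^1_{B/\bb F_p}=\Omega^1_{B/B^p}$. Applying this to $B=\bar A$, the F-finiteness assumption gives elements $a_1,\dots,a_n\in\bar A$ generating $\bar A$ as an algebra (indeed as a module) over $\bar A^p$, and then $da_1,\dots,da_n$ generate $\Omega^1_{\bar A/\bar A^p}=\Omega^1_{\bar A/\bb F_p}$ as a $\bar A$-module. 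Combining this with the conormal sequence proves (i) for $n=1$. For higher $n$, use $\Omega^n_A\otimes_A\bar A\cong\bigwedge^n_{\bar A}(\Omega^1_A\otimes_A\bar A)$, which is finitely generated over the Noetherian ring $\bar A$.

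For part (ii), suppose $p^N=0$ in $A$. By (i), $\Omega^1_A\otimes_A\bar A$ is a finitely generated $\bar A$-module; choose lifts $\omega_1,\dots,\omega_m\in\Omega^1_A$ of a generating set, and let $M\sseq\Omega^1_A$ be the $A$-submodule they span. Then $M+p\Omega^1_A=\Omega^1_A$, and iterating gives $M=M+p^N\Omega^1_A=\Omega^1_A$. Hence $\Omega^1_A$ is finitely generated, and since $A$ is Noetherian, so is $\Omega^n_A=\bigwedge_A^n\Omega^1_A$ for every $n\ge 0$.

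There is no serious obstacle here: the argument is essentially formal once one has at hand the conormal sequence, the identity $\Omega^1_{B/\bb F_p}=\Omega^1_{B/B^p}$ in characteristic $p$, and the nilpotent version of Nakayama's lemma. The only point that requires any care is ensuring that the kernel $pA/p^2A$ appearing in the conormal sequence is indeed a finitely generated $\bar A$-module, which is immediate since it is cyclic as an $\bar A$-module.
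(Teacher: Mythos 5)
Your proof is correct. The core mechanism is the same one the paper uses -- in characteristic $p$ the differential of a $p$\textsuperscript{th} power vanishes, so the finitely many F-finiteness generators have differentials that generate everything -- but you organize it differently. The paper argues directly in $\Omega^1_A$: it lifts module generators $a_1,\dots,a_m$ of $A/pA$ over its $p$\textsuperscript{th} powers to $A$, writes an arbitrary $b\in A$ as $b=pb'+\sum_i b_i^pa_i$, and differentiates to see that $da_1,\dots,da_m$ generate $\Omega^1_A\otimes_AA/pA$ outright. You instead pass through the conormal sequence for $A\onto A/pA$ together with the identity $\Omega^1_{B/\bb F_p}=\Omega^1_{B/B^p}$; this is slightly more structural, and since the map $pA/p^2A\to\Omega^1_A\otimes_AA/pA$ sends the class of $p$ to $dp\otimes1=0$, your argument in fact shows the stronger statement $\Omega^1_A\otimes_AA/pA\cong\Omega^1_{(A/pA)/\bb F_p}$, whereas the paper's computation has the advantage of exhibiting explicit generators. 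Your reduction to $n=1$ via compatibility of exterior powers with base change is the same implicit reduction the paper makes, and your nilpotent Nakayama step for (ii) is equivalent to the paper's finite filtration of $\Omega^1_A$ by the submodules $p^e\Omega^1_A$ with finitely generated graded pieces. (A minor remark: the Noetherian hypothesis is not actually needed where you invoke it -- exterior powers of finitely generated modules are finitely generated.)
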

\begin{proof}
It is enough to treat the case $n=1$. Let $a_1,\dots,a_m$ generate $A/pA$ as a module over its $p^\sub{th}$-powers. Then any $b\in A$ may be written as a sum $b=pb'+\sum_ib_i^pa_i$ for some $b',b_1,\dots,b_m\in A$, and so we deduce that \[db=p\,db'+\sum_i(b_i^p\,da_i+pb_i^{p-1}a_i\,db_i)\equiv\sum_ib_i^p\,da_i\mod{p}\] That is, $\Omega_{A/k}^1\otimes_AA/pA$ is generated by $da_1,\dots,da_m$.

Multiplying by $p^e$, it follows that $p^e\Omega_A^1/p^{e+1}\Omega_A^1$ is also finitely generated; so if $p$ is nilpotent then $p^e\Omega_A^1$, $e\ge1$, defines a finite filtration on $\Omega_A^1$ with finitely generated steps, whence $\Omega_A^1$ is finitely generated.
\end{proof}

\section{Finite generation results for $\HH$, $\THH$, and $\TR^r$}\label{section_finite_generation}
The primary aim of this section is Theorem \ref{theorem_finite_gen_of_THH_TR}, which states that the algebraic and topological Hochschild homology groups, with finite coefficients, of a Noetherian, F-finite $\bb Z{(p)}$-algebra $A$ are finitely generated over $A$, and that the groups $\TR^r_n(A;\bb Z/p^v)$ are finitely generated over $W_r(A)$. From this we deduce additional finite generation results for the $p$-completed theories, and for rings in which $p$ is nilpotent. We believe these are the first general finite generation results for topological Hochschild homology.

The key step is similar finite generation results for the Andr\'e--Quillen homology of $A$; indeed, the following lemma reduces the problem to Andr\'e--Quillen homology:

\begin{lemma}\label{lemma_finite_generation_reduction}
Let $A$ be a Noetherian ring, and $I\subseteq A$ an ideal. Consider the following statements:
\begin{enumerate}
\item[(i)] $D_n^i(A/\bb Z, A/I)$ is a finitely generated $A$-module for all $n,i\ge0$.
\item[(i')] $D_n^i(A/\bb Z, M)$ is a finitely generated $A$-module for all $n,i\ge0$ and all finitely generated $A$-modules $M$ killed by a power of $I$.
\item[(ii)] $\HH_n(A, A/I)$ is a finitely generated $A$-module for all $n\ge0$.
\item[(ii')] $\HH_n(A, M)$ is a finitely generated $A$-module for all $n\ge0$ and all finitely generated $A$-modules $M$ killed by a power of $I$.
\item[(iii)] $\THH_n(A, A/I)$ is a finitely generated $A$-module for all $n\ge0$.
\item[(iii')] $\THH_n(A, M)$ is a finitely generated $A$-module for all $n\ge0$ and all finitely generated $A$-modules $M$ killed by a power of $I$.
\end{enumerate}
Then (i)$\iff$(i')$\implies$(ii)$\iff$(ii')$\iff$(iii)$\iff$(iii').
\end{lemma}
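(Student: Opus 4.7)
The argument splits into the within-theory equivalences $(\text{x})\iff(\text{x}')$, the passage $(\text{i}')\Rightarrow(\text{ii}')$, and the equivalence $(\text{ii}')\iff(\text{iii}')$. For each $\text{x}\in\{\text{i},\text{ii},\text{iii}\}$, the direction $(\text{x}')\Rightarrow(\text{x})$ is immediate on setting $M=A/I$. For the converse, let $M$ be finitely generated and annihilated by $I^N$; the $I$-adic filtration $M\supseteq IM\supseteq\cdots\supseteq I^NM=0$ has successive subquotients which are finitely generated $A/I$-modules, and the long exact sequences of $D^i_*(A/\bb Z,-)$, $\HH_*(A,-)$ and $\THH_*(A,-)$ in the coefficient variable reduce us to handling a finitely generated $A/I$-module $N$. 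For such $N$, I would invoke the associativity isomorphisms
\[\bb L^i_{A/\bb Z}\otimes^L_A N\;\simeq\;(\bb L^i_{A/\bb Z}\otimes^L_A A/I)\otimes^L_{A/I}N,\]
together with $\HH(A,N)\simeq\HH(A,A/I)\otimes^L_{A/I}N$ and $\THH(A,N)\simeq\THH(A,A/I)\wedge^L_{H(A/I)}HN$, which produce first-quadrant hyper-Tor spectral sequences
\[E^2_{p,q}=\op{Tor}^{A/I}_p(H_q,N)\Longrightarrow H_{p+q}(A,N),\]
with $H_q$ equal to $D_q^i(A/\bb Z,A/I)$, $\HH_q(A,A/I)$, or $\THH_q(A,A/I)$. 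Hypothesis $(\text{x})$ makes each $H_q$ finitely generated over the Noetherian ring $A/I$, so the $E^2$-page, and hence the abutement, are finitely generated.

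For $(\text{i}')\Rightarrow(\text{ii}')$ I would apply the Andr\'e--Quillen-to-Hochschild spectral sequence $E^2_{p,q}=D_p^q(A/\bb Z,M)\Rightarrow\HH_{p+q}(A,M)$ recalled in Section~\ref{subsection_AQ_HH}: under $(\text{i}')$ its $E^2$-page is finitely generated, hence so is its abutement by first-quadrant convergence. For $(\text{ii}')\Rightarrow(\text{iii}')$ I would use the Pirashvili--Waldhausen spectral sequence
\[E^2_{p,q}=\HH_p(A,\THH_q(\bb Z,M))\Longrightarrow\THH_{p+q}(A,M)\]
of Section~\ref{subsection_THH} together with B\"okstedt's identification of $\THH_q(\bb Z,M)$ as $M$, $M/mM$, or $M[m]$; each of these remains a finitely generated $A$-module annihilated by $I^N$, so $(\text{ii}')$ directly provides finite generation of every $E^2$-entry.

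The converse $(\text{iii}')\Rightarrow(\text{ii}')$ is the subtlest step and I expect it to be the main obstacle, since there is no a priori edge map from $\THH_n(A,M)$ onto the desired Hochschild group. The approach is induction on $n$: the base case $n=0$ uses $\HH_0(A,M)=M$, and for $n\ge1$ I assume $(\text{ii}')$ in all degrees $<n$. Then every entry $E^2_{p,q}$ of the Pirashvili--Waldhausen spectral sequence with $q>0$ and $p<n$ is $\HH_p(A,M/mM)$ or $\HH_p(A,M[m])$, hence finitely generated by the inductive hypothesis (since $M/mM$ and $M[m]$ remain finitely generated and killed by $I^N$). The filtration $E^2_{n,0}\supseteq E^3_{n,0}\supseteq\cdots$ coming from the outgoing differentials $d_r\colon E^r_{n,0}\to E^r_{n-r,r-1}$ stabilises at $E^{n+1}_{n,0}=E^\infty_{n,0}$ because $E^r_{n-r,r-1}=0$ once $r>n$; its successive quotients embed into the finitely generated modules $E^r_{n-r,r-1}$, and the final piece $E^\infty_{n,0}$ is a quotient of the finitely generated module $\THH_n(A,M)$ supplied by $(\text{iii}')$. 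Thus $\HH_n(A,M)=E^2_{n,0}$ inherits a finite filtration with finitely generated graded pieces and is itself finitely generated, completing the induction. A secondary technical point is the base-change isomorphism for $\THH$ used in the within-theory equivalences, which requires treating $\THH(A,-)$ as a sufficiently well-behaved functor of module spectra over $H(A/I)$; the corresponding $\bb L^i$ and $\HH$ base changes follow automatically from degreewise $A$-flatness and the bar-complex model.
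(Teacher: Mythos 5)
Your proposal is correct and follows essentially the same route as the paper: universal-coefficient/base-change spectral sequences (after reducing to coefficients killed by $I$) for the (x)$\Rightarrow$(x$'$) directions, the Andr\'e--Quillen-to-Hochschild spectral sequence for (i$'$)$\Rightarrow$(ii$'$), Pirashvili--Waldhausen plus B\"okstedt's computation for (ii$'$)$\Rightarrow$(iii$'$), and the same induction on $n$ in the Pirashvili--Waldhausen spectral sequence for (iii$'$)$\Rightarrow$(ii$'$), where your explicit filtration of $E^2_{n,0}$ just spells out the Serre-quotient-category argument the paper invokes. The only differences are cosmetic: the paper reduces by induction on the power of $I$ killing $M$ and writes the universal-coefficient Tor over $A$ rather than base-changing over $A/I$ along the $I$-adic filtration.
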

\begin{proof}
The implications (i)$\Rightarrow$(i'), (ii)$\Rightarrow$(ii'), and (iii)$\Rightarrow$(iii') (whose converses are trivial) follow in the usual way using universal coefficient spectral sequences and long exact homology sequences; we will demonstrate the argument in the case of Andr\'e--Quillen homology. Let $M$ be a finitely generated $A$-module killed by a power of $I$; by induction on the power of $I$ killing $M$, and using the long exact sequence \[\cdots\To D_n^i(A/\bb Z,IM)\To D_n^i(A/\bb Z,M)\To D_n^i(A/\bb Z,M/IM)\To\cdots,\] we may clearly assume that $IM=0$. Thus $M\otimes_AA/I=M$ and so there is a universal coefficient spectral sequence \[E_{st}^2=\Tor_s^A(M,D_t^i(A/\bb Z,A/I))\Longrightarrow D_{s+t}^i(A/\bb Z,M).\] Assuming (i), the $A$-modules on the left are finitely generated, hence the abutment is also finitely generated, proving (i').

Implication (i)$\Rightarrow$(ii) is an immediate consequence of the Andr\'e--Quillen-to-Hochschild-homology spectral sequence \[E^2_{ij}=D_i^j(A/\bb Z,A/I)\Longrightarrow \HH_{i+j}(A,A/I).\]
The implication (ii')$\Rightarrow$(iii) follows from the results recalled in Section \ref{subsection_THH}. Indeed, the Pirashvili--Waldhausen spectral sequence \[E^2_{ij}=\HH_i(A,\THH_j(\bb Z,A/I))\implies \THH_{i+j}(A,A/I)\] will prove this implication if we know that $\THH_j(\bb Z,A/I)$ is a finitely generated $A$-module for all $j$; fortunately B\"okstedt's calculation shows that this is indeed the case:
\[\THH_j(\bb Z,A/I)\cong\begin{cases}A/I&j=0\\A/(I+mA)&j=2m-1\\(A/I)[m]&j=2m>0\end{cases}\]

It remains to prove (iii')$\Rightarrow$(ii'), although we will not need this implication in the remainder of the paper. To proceed by induction, fix $n\ge0$ and suppose we have shown that $\HH_i(A,M)$ is finitely generated for all $i<n$ and for all finitely generated $A$-modules $M$ killed by a power of $I$; note that the base case of the induction is covered by the identity $\HH_0(A,M)=M$. To prove the desired implication, it is now enough to fix an $A$-module $M$ killed by a power of $I$ and to show that $\HH_n(A,M)$ is finitely generated. The inductive hypothesis and B\"okstedt's calculation shows that, in the Pirashvili--Waldhausen spectral sequence \[E^2_{ij}=\HH_i(A,\THH_j(\bb Z,M))\implies \THH_{i+j}(A,M),\] the $A$-modules $E^2_{ij}$ are finitely generated for all $i<n$. Since $\THH_n(A,M)$ is finitely generated by assumption, it easily follows, e.g.~by working in the Serre quotient category of $A$-modules modulo the finitely generated modules, that $E_{n\,0}^2=\HH_n(A,M)$ is also finitely generated, as required.
\end{proof}

To prove the fundamental results about finite generation of Andr\'e--Quillen homology we will systematically work with simplicial modules and their homotopy theory, albeit not in a deep way; this could be avoided by making use of various spectral sequences, but at the expense of considerably lengthening the proofs and essentially repeating the same arguments. Therefore we now introduce some notation and make a few observations which will be repeatedly used in what follows.

Given a commutative ring $A$, let $sA\op{-mod}$ denote the category of simplicial $A$-modules, which is equivalent via the Dold--Kan correspondence to the category of chain complexes vanishing in negative degrees. We will be particularly interested in those simplicial $A$-modules having finitely generated homotopy over $A$; i.e.,  those $M_\blob\in sA\op{-mod}$ for which $\pi_n(M_\blob)$ is a finitely generated $A$-module for all $n\ge0$.

The following permanence properties of such simplicial $A$-modules will be essential:

\begin{lemma}\label{lemma_fg_simplicial}
Let $A$ be a Noetherian ring. Then:
\begin{enumerate}
\item If $L_\blob\to M_\blob\to N_\blob$ is a homotopy cofibre sequence of simplicial $A$-modules such that two of $L_\blob$, $M_\blob$, $N_\blob$ have finitely generated homotopy over $A$, then so does the third.
%\item If $L_\blob\xto{f}M_\blob\xto{g}N_\blob$ are two morphisms in $sA\op{-mod}$ whose homotopy fibres have finitely generated homotopy, then so does $\op{hofib}(g\circ f)$.
\item If $L_\blob\in sA\op{-mod}$ consists of flat $A$-modules in each degree and has finitely generated homotopy, and $B$ is a Noetherian $A$-algebra, then $L_\blob\otimes_AB$ has finitely generated homotopy over $B$.
\item If $L_\blob, M_\blob\in sA\op{-mod}$ have finitely generated homotopy, and $L_\blob$ consists of flat $A$-modules in each degree, then $L_\blob\otimes_AM_\blob$ has finitely generated homotopy.
\end{enumerate}
\end{lemma}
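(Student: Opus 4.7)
My plan is to treat the three parts in order, using the long exact sequence of homotopy groups for (i) and the appropriate hyper-Tor / K\"unneth-type spectral sequences for (ii) and (iii).

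For (i), I would simply pass to the long exact sequence of $A$-modules associated to the homotopy cofibre sequence,
\[\cdots\To \pi_{n+1}(N_\blob)\To \pi_n(L_\blob)\To \pi_n(M_\blob)\To \pi_n(N_\blob)\To \pi_{n-1}(L_\blob)\To\cdots\]
In each of the three cases (two of $L_\blob$, $M_\blob$, $N_\blob$ having finitely generated homotopy in every degree), the missing $\pi_n$ fits, as a successive extension of the image and cokernel of adjacent maps, between finitely generated $A$-modules; Noetherianness of $A$ then delivers finite generation.

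For (ii), the crucial preliminary observation is that, under the Dold--Kan correspondence, degreewise flatness of $L_\blob$ carries over to flatness of the normalised chain complex $N(L_\blob)$, since each $N(L_\blob)_n$ is a direct summand of $L_n$. Hence $L_\blob\otimes_AB$ represents the derived tensor product, and the standard convergent first-quadrant spectral sequence of $B$-modules applies:
\[E^2_{p,q}=\Tor_p^A(\pi_q(L_\blob),B)\Longrightarrow \pi_{p+q}(L_\blob\otimes_AB).\]
Since $A$ is Noetherian and each $\pi_q(L_\blob)$ is finitely generated, a resolution of $\pi_q(L_\blob)$ by finitely generated free $A$-modules shows each $\Tor_p^A(\pi_q(L_\blob),B)$ to be a finitely generated $B$-module. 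The first-quadrant shape then produces a finite filtration of $\pi_n(L_\blob\otimes_AB)$ with finitely generated subquotients, and Noetherianness of $B$ propagates finite generation through the resulting extensions.

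For (iii), I would invoke the K\"unneth (hyper-Tor) spectral sequence, which is available precisely because $L_\blob$ is degreewise flat:
\[E^2_{p,q}=\bigoplus_{i+j=q}\Tor_p^A(\pi_i(L_\blob),\pi_j(M_\blob))\Longrightarrow \pi_{p+q}(L_\blob\otimes_AM_\blob).\]
Each summand is a Tor between two finitely generated $A$-modules over a Noetherian ring, hence finitely generated, and the sum over $i+j=q$ is finite. The same finite-filtration bookkeeping as in (ii) then concludes. The main technical point in the whole proof is simply invoking these spectral sequences correctly; once the flatness of $N(L_\blob)$ is noted, the statements are classical, and the propagation of finite generation through a finite filtration over a Noetherian ring is routine.
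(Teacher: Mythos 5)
Your proof is correct and follows essentially the same route as the paper: part (i) via the long exact homotopy sequence, and part (ii) via the universal-coefficient spectral sequence $\Tor_p^A(\pi_q(L_\blob),B)\Rightarrow\pi_{p+q}(L_\blob\otimes_AB)$ together with resolutions by finitely generated projectives over the Noetherian ring $A$. The only (harmless) divergence is in part (iii): you invoke the one-step K\"unneth spectral sequence $E^2_{p,q}=\bigoplus_{i+j=q}\Tor_p^A(\pi_i(L_\blob),\pi_j(M_\blob))\Rightarrow\pi_{p+q}(L_\blob\otimes_AM_\blob)$, which is legitimate here since both simplicial modules are connective and the degreewise flatness of $L_\blob$ (equivalently of its normalised chains, as you note) identifies $L_\blob\otimes_AM_\blob$ with the derived tensor product; the paper instead argues in two stages, first using the spectral sequence with $E_2$-terms $\pi_i(L_\blob\otimes_A\pi_j(M_\blob))$ and then computing each of these by a secondary universal-coefficient spectral sequence, but both versions reduce to the same Tor-finiteness over the Noetherian ring and the same finite-filtration bookkeeping.
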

\begin{proof}
(i) is an obvious consequence of the long exact sequence of $A$-modules \[\cdots\To\pi_n(L_\blob)\To\pi_n(M_\blob)\To\pi_n(N_\blob)\To\cdots\]

%(ii): Since homotopy limits commute, there is easily seen to be a homotopy fibre sequence $\op{hofib}(f)\to\op{hofib}(g\circ f)\to\op{hofib}(g)$, so that the claim follows from (i).
(iii): The flatness assumption implies that there is a spectral sequence of $B$-modules $E^2_{ij}=\op{Tor}_i^A(\pi_j(L_\blob),B)\implies \pi_{i+j}(L_\blob\otimes_AB)$, so it is enough to show that the Tors are finitely generated $B$-modules; since $\pi_j(L_\blob)$ is finitely generated over $A$ by assumption, we may pick a resolution $P_\blob$ of it by finitely generated, projective $A$-modules, and then $\op{Tor}_i^A(\pi_j(L_\blob),B)=H_i(P_\blob\otimes_AB)$ is evidently finitely generated over $B$.

(iv): The flatness assumption implies that there is a spectral sequence of $A$-modules $E_{ij}^1=L_p\otimes_A\pi_i(M_\blob)\implies \pi_{i+j}(L_\blob\otimes_AM_\blob)$, whose $E_2$-page is $E_2^{ij}=\pi_i(L_\blob\otimes_A\pi_j(M_\blob))$, where. But these terms on the $E_2$-page are finitely generated thanks to our assumption and the spectral sequences $^\prime E_2^{st}=\op{Tor}^A_s(\pi_t(L_\blob),\pi_j(M_\blob))\implies\pi_{s+t}(L_\blob\otimes_A\pi_j(M_\blob))$ for each $j\ge0$.
\end{proof}

The second tool to prove the required results about finite generation of Andr\'e--Quillen homology is a spectral sequence due originally to C.~Kassel \& A.~Sletsj\o e  \cite[Thm.~3.2]{Kassel1992}, which was rediscovered in \cite{Krishna2010} and \cite{Morrow_birelative}. We state it here as a filtration on the cotangent complexes, rather than as the resulting spectral sequence:

\begin{lemma}[Kassel--Sletsj\o e]\label{lemma_Kassel_Sletjoe}
Let $A\to B\to C$ be homomorphisms of rings. Then it is possible to choose the cotangent complexes $\bb L_{C/A}$, $\bb L_{C/B}$, and $\bb L_{B/A}$ to be degree-wise projective modules and, for all $i\ge0$, so that $\bb L_{C/A}^i$ has a natural filtration \[\bb L_{C/A}^i=F^0\bb L_{C/A}^i\supseteq F^1\bb L_{C/A}^i\supseteq\cdots\supseteq F^i\bb L_{C/A}^i=\bb L_{B/A}^i\otimes_BC\supseteq F^{i+1}\bb L_{C/A}^i=0\] of length $i$ by simplicial $C$-modules, with graded pieces \[\op{gr}^j\bb L_{C/A}^i\cong (\bb L_{B/A}^j\otimes_BC)\otimes_C\bb L_{C/B}^{i-j}\] for $j=0,\dots,i$.
\end{lemma}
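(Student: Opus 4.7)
The strategy is to realise the Jacobi--Zariski sequence of cotangent complexes as a degree-wise split short exact sequence of free simplicial $C$-modules, and then apply the classical multilinear algebra filtration on the $i$-th exterior power of a split short exact sequence of projective modules.

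First I would construct the three cotangent complexes from a compatible pair of free simplicial resolutions. Pick a simplicial resolution $P_\bullet\to B$ of $B$ by free polynomial $A$-algebras, and then a simplicial resolution $R_\bullet\to C$ of $C$ by free polynomial $P_\bullet$-algebras; since a polynomial algebra on a polynomial algebra is again a polynomial algebra, $R_\bullet$ is simultaneously a simplicial resolution of $C$ by free $A$-algebras, compatibly with the chain $A\to P_\bullet\to R_\bullet\to C$. I would then realise the three cotangent complexes by
\[\bb L_{B/A}:=\Omega^1_{P_\bullet/A}\otimes_{P_\bullet}B,\quad \bb L_{C/B}:=\Omega^1_{R_\bullet/P_\bullet}\otimes_{R_\bullet}C,\quad \bb L_{C/A}:=\Omega^1_{R_\bullet/A}\otimes_{R_\bullet}C,\]
each of which is degree-wise free over $C$.

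In each simplicial degree $n$, the tower of polynomial algebras $A\to P_n\to R_n$ gives the usual short exact sequence of free $R_n$-modules
\[0\to\Omega^1_{P_n/A}\otimes_{P_n}R_n\to\Omega^1_{R_n/A}\to\Omega^1_{R_n/P_n}\to 0,\]
which is split because $\Omega^1_{R_n/P_n}$ is a free $R_n$-module. Tensoring with $C$ over $R_n$ and reassembling over $n$ yields a degree-wise split short exact sequence of simplicial $C$-modules
\[0\to \bb L_{B/A}\otimes_B C\to \bb L_{C/A}\to \bb L_{C/B}\to 0.\qquad(\star)\]
Now I would invoke the classical multilinear algebra fact: for any split short exact sequence $0\to L\to M\to N\to 0$ of flat modules over a commutative ring, the $i$-th exterior power $\bigwedge^iM$ carries a natural descending filtration
\[\bigwedge^iM=F^0\supseteq F^1\supseteq\cdots\supseteq F^i=\bigwedge^iL\supseteq F^{i+1}=0,\]
in which $F^j$ is the submodule generated by wedges containing at least $j$ factors from $L$, with canonical graded pieces $F^j/F^{j+1}\cong \bigwedge^jL\otimes \bigwedge^{i-j}N$. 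Applying this degree-wise to $(\star)$ produces the required filtration of the simplicial $C$-module $\bb L_{C/A}^i$, with graded pieces $(\bb L_{B/A}^j\otimes_B C)\otimes_C\bb L_{C/B}^{i-j}$.

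The only real difficulty is arranging that $(\star)$ is degree-wise \emph{split} exact of free simplicial modules, since this is precisely what makes both the tensor products and the exterior-power filtration behave correctly (flatness of the quotient is what ensures exactness of $\bigwedge^iL\hookrightarrow\bigwedge^iM$ and the identification of the graded pieces). This is controlled entirely by the two-step free resolution $A\to P_\bullet\to R_\bullet\to C$; once it is fixed, everything else is bookkeeping in each simplicial degree.
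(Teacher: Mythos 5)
Your proposal is correct and is essentially the paper's own argument: choose the resolutions via a two-stage free simplicial resolution $A\to P_\bullet\to R_\bullet\to C$ so that the Jacobi--Zariski sequence becomes a degree-wise (split) short exact sequence of projective simplicial $C$-modules, then apply the standard exterior-power filtration of a short exact sequence. The only point you leave implicit is that $\Omega^1_{R_\bullet/P_\bullet}\otimes_{R_\bullet}C$ really computes $\bb L_{C/B}$, which rests on Quillen's base-change result ($R_\bullet\otimes_{P_\bullet}B\to C$ is a free simplicial $B$-algebra resolution) --- precisely the reference [Quillen, Thm.~5.1] the paper invokes.
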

\begin{proof}[Sketch of proof]
One chooses simplicial resolutions in the usual way \cite[Thm.~5.1]{Quillen1970} to ensure that the Jacobi--Zariski sequence \[0\to \bb L_{B/A}\otimes_BC\to\bb L_{C/A}\to\bb L_{C/B}\to 0\] is actually a short exact sequence of simplicial $C$-modules which are projective in each degree. Then observe that whenever $0\to L\to M\to N\to 0$ is a short exact sequence of projective modules over $C$, there is a resulting filtration on $\bigwedge^iM$ with graded pieces $\bigwedge^jL\otimes_C\bigwedge^{i-j}N$.
\end{proof}

With these tools at hand, we may now begin to prove the main results about finite generation of Andr\'e--Quillen homology; to simplify our statements, it is convenient to make the following definition:

\begin{definition}
Let $m\ge0$ be an integer. We say that a homomorphism $k\to A$ of Noetherian rings is {\em $m$-AQ-finite}, or that $A$ is $m$-AQ-finite over $k$, if and only if the Andr\'e--Quillen homology groups $D_n^i(A/k,A/mA)$ are finitely generated $A$-modules for all $n,i\ge0$.

In the case $m=0$ we omit the ``$0$-''; so $A$ is AQ-finite over $k$ if and only if $D_n^i(A/k)$ is finitely generated for all $n,i\ge0$.

In the case $k=\bb Z$ we omit the ``over $\bb Z$'', and say simply that $A$ is $m$-AQ-finite or AQ-finite.
\end{definition}

\begin{lemma}\label{lemma_finite_gen_of_AQ}
Assume all rings in the following assertions are Noetherian, and let $m\ge0$. Then:
\begin{enumerate}
\item A finite type morphism is $m$-AQ-finite.
\item Localising at a multiplicative system is $m$-AQ-finite.
\item A composition of two $m$-AQ-finite morphisms is again $m$-AQ-finite.
\end{enumerate}
\end{lemma}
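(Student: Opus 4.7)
The plan is to prove the three parts in the order (iii), (ii), (i), since (i) reduces to the others via a factorisation of finite type morphisms, and (iii) is the substantive step.

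For (iii), suppose $k \to A \to B$ are both $m$-AQ-finite. I would apply the Kassel--Sletsj\o e filtration (Lemma \ref{lemma_Kassel_Sletjoe}) to this composition: $\bb L^i_{B/k}$ carries a filtration by degree-wise projective simplicial $B$-modules with $j$-th graded piece $(\bb L^j_{A/k} \otimes_A B) \otimes_B \bb L^{i-j}_{B/A}$. Degree-wise projectivity makes the filtration degree-wise split, so tensoring with $B/mB$ preserves it; by Lemma \ref{lemma_fg_simplicial}(i) applied iteratively, it suffices to check each graded piece tensored with $B/mB$ has finitely generated homotopy over $B$. The essential move is the rearrangement
\[
(\bb L^j_{A/k} \otimes_A B) \otimes_B \bb L^{i-j}_{B/A} \otimes_B B/mB \;\cong\; (\bb L^j_{A/k} \otimes_A A/mA) \otimes_{A/mA} (\bb L^{i-j}_{B/A} \otimes_B B/mB),
\]
using that $B/mB$ is an $A/mA$-module. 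Call these factors $L$ and $M$. By the $m$-AQ-finiteness hypotheses, $\pi_*(L)$ is finitely generated over $A$ (hence over the Noetherian ring $A/mA$) and $\pi_*(M)$ is finitely generated over $B$. Since $L$ is degree-wise flat over $A/mA$, a K\"unneth spectral sequence computes $\pi_*(L \otimes_{A/mA} M)$ out of $\Tor^{A/mA}_p(\pi_s L, \pi_t M)$. Each such $\Tor$ is finitely generated over $B$: resolve $\pi_s L$ by finitely generated $A/mA$-modules (Noetherianness of $A/mA$) and tensor with the finitely generated $B$-module $\pi_t M$. The $E^2$-page has only finitely many nonzero entries in each total degree, and since $B$ is Noetherian, finite generation over $B$ passes to the abutment.

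Part (ii) is immediate from $\bb L_{S^{-1}A/A}=0$, which gives $\bb L^i_{S^{-1}A/A} = 0$ for $i \geq 1$ and $\bb L^0_{S^{-1}A/A} = S^{-1}A$; so $D^i_n(S^{-1}A/A, S^{-1}A/m S^{-1}A)$ is nonzero only at $(i,n) = (0,0)$, where it equals $S^{-1}A/m S^{-1}A$. For (i), factor any finite type $k \to A$ as $k \to P := k[x_1, \ldots, x_n] \twoheadrightarrow A$ and invoke (iii). The polynomial morphism is $m$-AQ-finite because $\bb L_{P/k} = \Omega^1_{P/k}$ is free of rank $n$ concentrated in degree $0$, so every $\bb L^i_{P/k}$ is free of finite rank. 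For the surjection $P \twoheadrightarrow A$, whose kernel is finitely generated by Noetherianness of $P$, one constructs by iterated syzygies a simplicial resolution $Q_\bullet \to A$ in which each $Q_n$ is a polynomial $P$-algebra on finitely many variables; then $\bb L_{A/P}$, and thus each $\bb L^i_{A/P}$, is a degree-wise free $A$-module of finite rank. Tensoring with $A/mA$ yields a degree-wise finitely generated simplicial module over the Noetherian ring $A/mA$, and such a module automatically has finitely generated homotopy.

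The main obstacle is the rearrangement trick in (iii): one has to realise that tensoring the graded piece with $B/mB$ lets one descend the first factor from $A$ to $A/mA$, turning the problem into a K\"unneth computation over a Noetherian ring in which \emph{both} inputs have finitely generated homotopy. The finite-type simplicial resolution used in (i) is technical but standard.
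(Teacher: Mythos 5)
Your proposal is correct and follows essentially the same route as the paper: part (iii) via the Kassel--Sletsj\o e filtration, tensoring the graded pieces with the mod-$m$ reduction and rearranging so that both factors have finitely generated homotopy (your direct K\"unneth argument over $A/mA$ is just a streamlined version of the paper's appeal to Lemma \ref{lemma_fg_simplicial}(ii)+(iii)), and parts (i) and (ii) resting on Quillen's finitely generated free resolutions and the vanishing $\bb L_{S^{-1}A/A}\simeq 0$. The only cosmetic differences are that the paper reduces (i) and (ii) to the case $m=0$ via Lemma \ref{lemma_finite_generation_reduction} and cites Quillen's Prop.~4.12 outright, whereas you reprove it by factoring through a polynomial ring and invoking (iii) — the same underlying construction.
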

\begin{proof}
Since AQ-finiteness implies $m$-AQ-finiteness (this follows from the implication (i)$\Rightarrow$(i') of Lemma \ref{lemma_finite_generation_reduction}), it is enough to prove (i) and (ii) in the case $m=0$.

Then (i) is a result of Quillen, obtained by constructing a simplicial resolution of the finitely generated $k$-algebra $A$ by finitely generated, free $k$-algebras; see \cite[Prop.~4.12]{Quillen1970}. Claim (ii) is another result of Quillen: if $S$ is a multiplicative system in $k$, then \cite[Thm.~5.4]{Quillen1970} states that $D_n^i(S^{-1}k/k)=0$.

(iii): Let $A\to B\to C$ be homomorphisms of Noetherian rings such that $A\to B$ and $B\to C$ are $m$-AQ-finite. Pick the cotangent complexes according to Lemma \ref{lemma_Kassel_Sletjoe}, and fix $i\ge0$. Since all the simplicial modules appearing in the statement of Lemma \ref{lemma_Kassel_Sletjoe} are degree-wise projective, the description of the filtration remains valid after tensoring by any $C$-module. In particular, we deduce that $\bb L_{C/A}^i\otimes_CC/mC$ has a decreasing filtration with graded pieces.
\begin{align*}
\op{gr}^j(\bb L_{C/A}^i\otimes_CC/mC)
	&\cong (\bb L_{B/A}^j\otimes_BC)\otimes_C\bb L_{C/B}^{i-j}\otimes_CC/mC\\
	&\cong ((\bb L_{B/A}^j\otimes_BB/mB)\otimes_{B/mB}C/mC)\otimes_{C/mC}(\bb L_{C/B}^{i-j}\otimes_CC/mC)
\end{align*} for $j=0,\dots,i$. By the $m$-AQ-finiteness assumption, $\bb L^j_{B/A}\otimes_BB/mB$ and $\bb L_{C/B}^{i-j}\otimes_CC/mC$ and have finitely generated homotopy over $B$ and $C$, respectively; so Lemma \ref{lemma_fg_simplicial}(iii)+(iv) imply that the above graded pieces have finitely generated homotopy. Applying Lemma \ref{lemma_fg_simplicial}(i) $i$-times, it follows that $\bb L^i_{C/A}\otimes_CC/mC$ has finitely generated homotopy, as desired.
\end{proof}

The following finite generation result is the first main result of the paper; it states, in particular, that if $A$ is a Noetherian, F-finite $\bb F_p$-algebra, then the Andr\'e--Quillen homology groups $D_n^i(A/\bb F_p)$ are finitely generated $A$-modules for all $n,i\ge0$:

\begin{theorem}\label{theorem_F_finite_implies_AQ_finite}
Let $p$ be a prime number.
\begin{enumerate}
\item Let $e\ge 1$; then any Noetherian, F-finite $\bb Z/p^e\bb Z$-algebra is AQ-finite over $\bb Z/p^e\bb Z$ and over $\bb Z$.
\item Any Noetherian, F-finite $\bb Z_{(p)}$-algebra is $p$-AQ-finite over $\bb Z_{(p)}$ and over $\bb Z$.
\end{enumerate}
\end{theorem}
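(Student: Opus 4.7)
The strategy is to reduce both (i) and (ii) to the \emph{baseline case}: $A$ is a Noetherian, F-finite $\bb F_p$-algebra, and one wants $D_n^i(A/\bb F_p)$ to be a finitely generated $A$-module for all $n,i\geq 0$. These reductions rely on the Kassel--Sletsj\o e filtration (Lemma \ref{lemma_Kassel_Sletjoe}) together with Lemma \ref{lemma_fg_simplicial}. Concretely, for $A$ an F-finite $\bb Z_{(p)}$-algebra, applying Kassel--Sletsj\o e to $\bb Z\to A\to A/pA$ produces a filtration of $\bb L^i_{(A/pA)/\bb Z}$ whose graded pieces involve $\bb L^j_{A/\bb Z}\otimes_A A/pA$ for $j\le i$ together with the cotangent complex of the finite surjection $A\to A/pA$; an induction on $i$ then deduces $p$-AQ-finiteness of $A$ over $\bb Z$ from the baseline case applied to $A/pA$. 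A parallel Kassel--Sletsj\o e argument for $\bb Z\to\bb F_p\to A/pA$, using that $\bb L_{\bb F_p/\bb Z}\simeq\bb F_p[1]$, passes between AQ-finiteness over $\bb F_p$ and over $\bb Z$, while for $\bb Z/p^e\bb Z$-algebras Lemma \ref{lemma_finite_generation_reduction} identifies AQ-finiteness with $p$-AQ-finiteness; these ingredients together finish the reductions from (i) and (ii) to the baseline.

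For the baseline case itself I would exploit the Frobenius. By F-finiteness, the absolute Frobenius $F\colon A\to A$ is a finite ring homomorphism, hence of finite type, so Lemma \ref{lemma_finite_gen_of_AQ}(i) gives that $D_n^i(A/A,F)$ (where $A$ is viewed as an $A$-algebra via $F$) is a finitely generated $A$-module for every $n,i\geq 0$. The key additional input is that the functoriality map $F^*\bb L_{A/\bb F_p}\to\bb L_{A/\bb F_p}$ induced by Frobenius is nullhomotopic: choosing a simplicial resolution $P_\bullet\to A$ by free $\bb F_p$-algebras and lifting $F$ to $P_\bullet$ via $x\mapsto x^p$ on polynomial generators, the induced map on $\Omega^1_{P_n/\bb F_p}$ is literally zero in every simplicial degree, since $d(x^p)=px^{p-1}\,dx=0$ in characteristic $p$.

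Consequently the Jacobi--Zariski cofiber sequence
\[F^*\bb L_{A/\bb F_p}\to\bb L_{A/\bb F_p}\to\bb L_{A/A,F}\]
associated to $\bb F_p\to A\xto{F} A$ has nullhomotopic first arrow, hence splits and exhibits $\bb L_{A/\bb F_p}$ as a retract of $\bb L_{A/A,F}$ in the derived category of simplicial $A$-modules. Since derived exterior powers, applied to a retract between degreewise-projective simplicial modules, again produce a retract, the $i$-th derived exterior power $L\bigwedge^i\bb L_{A/\bb F_p}$ is a retract of $L\bigwedge^i\bb L_{A/A,F}$; taking $\pi_n$ realises $D_n^i(A/\bb F_p)$ as a direct summand of the finitely generated $A$-module $D_n^i(A/A,F)$, and since $A$ is Noetherian this forces $D_n^i(A/\bb F_p)$ to be finitely generated. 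The main delicate point is justifying cleanly that the derived exterior power functors honestly preserve this retract structure, which requires a consistent choice of degreewise-projective simplicial models; once the baseline is in hand, the bootstraps to (ii) and to the non-$\bb F_p$ portions of (i) are comparatively routine applications of Kassel--Sletsj\o e and Lemma \ref{lemma_fg_simplicial}.
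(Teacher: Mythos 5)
Your reductions of (i) and (ii) to the core case of a Noetherian, F-finite $\bb F_p$-algebra are essentially the paper's own (Kassel--Sletsj\o e applied to $k\to A\to A/pA$ together with Lemma \ref{lemma_finite_gen_of_AQ} and, for $\bb Z/p^e\bb Z$-algebras, Lemma \ref{lemma_finite_generation_reduction}), but your treatment of that core case is genuinely different, and it is correct. The paper factors the Frobenius as $A\xto{\pi}A^p\xto{e}A$ and runs an induction on the exterior-power degree $i$, interleaving two Kassel--Sletsj\o e filtrations and only at the end using that $e\circ\pi=F$ kills $\bb L^i_{A/\bb F_p}$ to split the resulting cofibre. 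You instead note that F-finiteness makes $F\colon A\to A$ itself of finite type (algebra generators of $A$ over $A^p$ generate $A$ over the image of $F$), so Quillen's finite-type result (Lemma \ref{lemma_finite_gen_of_AQ}(i)) already gives finite generation of the Andr\'e--Quillen homology of $A$ viewed as an $A$-algebra via $F$; then the nullhomotopy of Frobenius on $\bb L_{A/\bb F_p}$ --- legitimately witnessed by the absolute Frobenius of a free simplicial resolution, which lifts $F$ because Frobenius commutes with every $\bb F_p$-algebra map and induces zero on each $\Omega^1_{P_n/\bb F_p}$ --- splits the transitivity triangle for $\bb F_p\to A\xto{F}A$ and exhibits $\bb L_{A/\bb F_p}$ as a retract of the relative cotangent complex of $F$ in the homotopy category of simplicial $A$-modules. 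Since the derived exterior powers are functors on that homotopy category (degreewise $\bigwedge^i$ on degreewise-projective models preserves weak equivalences and simplicial homotopies --- the same Dold--Puppe/Quillen facts the paper already invokes to define $\bb L^i_{A/k}$), they preserve the retract, so each $D_n^i(A/\bb F_p)$ is a direct summand of a finitely generated $A$-module and hence finitely generated as $A$ is Noetherian. What your route buys is the elimination of the induction on $i$ and of the second filtration: the same two inputs (finiteness of Frobenius-type maps from F-finiteness, and $d(x^p)=0$) are packaged as a splitting rather than a two-step cofibre comparison. In a written version you should spell out the retract-preservation point you flagged and the (easy) verification that $F$ is of finite type; both are standard and cause no difficulty.
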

\begin{proof}
First let $A$ be a Noetherian, F-finite $\bb F_p$-algebra. Let $F:A\to A$ be the absolute Frobenius $x\mapsto x^p$. The homomorphism $F$ may be factored as a composition of $\bb F_p$-algebra homomorphisms \[A\xTo{\pi}A^p\xTo{e}A,\] where $A^p$ is the subring of $A$ consisting of $p^\sub{th}$-powers, $\pi(x):=x^p$, and $e$ is the natural inclusion. Our proof will be based on the following observations:
\begin{enumerate}
\item $\pi$ and $e$ are finite type homomorphism between Noetherian rings (indeed, $A^p$ is a quotient of the Noetherian ring $A$, and the fact that $e$ is of finite type is exactly our assumption that $A$ is F-finite).
\item The absolute Frobenius $F:\bb L_{A/\bb F_p}^i\to\bb L_{A/\bb F_p}^i$ is zero for $i\ge1$ (indeed, in any $\bb F_p$-algebra, we have $F(d\beta)=d\beta^p=p\beta^{p-1}\,d\beta=0$).
\end{enumerate}
We will now prove the following by induction on $i\ge0$: \begin{quote}If $A$ is a Noetherian, F-finite $\bb F_p$-algebra, then $\bb L_{A/\bb F_p}^i$ has finitely generated homotopy over $A$.\end{quote} The claim is trivial for $i=0$ since $\bb L_{A/\bb F_p}^0\simeq A$, so assume $i\ge1$. We remark that, in the inductive proof that follows, we will need to choose different models of the cotangent complex $\bb L_{A/\bb F_p}$

Apply Lemma \ref{lemma_Kassel_Sletjoe} to the composition $\bb F_p\to A^p\xto{e} A$ to see that it is possible to pick the cotangent complexes $\bb L_{A/\bb F_p}$, $\bb L_{A/A^p}$, and $\bb L_{A^p/A}$ in such a way that $\bb L_{A/\bb F_p}^i$ has a descending filtration $F^\blob \bb L_{A/\bb F_p}^i$ with graded pieces \[\op{gr}^j\bb L_{A/\bb F_p}^i\cong (\bb L_{A^p/\bb F_p}^j\otimes_{A^p}A)\otimes_A\bb L_{A/A^p}^{i-j}\tag{\dag}\] for $j=0,\dots,i$. For each $j=0,\dots,i-1$, the simplicial $A^p$-module $\bb L_{A^p/\bb F_p}^j$ has finitely generated homotopy over $A^p$ by the inductive hypothesis (note that $A^p$ is also a Noetherian, F-finite $\bb F_p$-algebra since it is a quotient of $A$); meanwhile, $\bb L_{A/A^p}^{i-j}$ has finitely generated homotopy over $A$ by Lemma \ref{lemma_finite_gen_of_AQ}(i). Applying Lemma \ref{lemma_fg_simplicial}(iii)+(iv) we deduce that the right side of (\dag) has finitely generated homotopy over $A$ for $j=0,\dots,i-1$; from Lemma \ref{lemma_fg_simplicial}(i) it follows that $X:=\bb L^i_{A/\bb F_p}/F^i\bb L^i_{A/\bb F_p}$ has finitely generated homotopy. In summary, we have a homotopy cofibre sequence \[\op{gr}^i\bb L_{A/\bb F_p}^i=\bb L_{A^p/\bb F_p}^i\otimes_{A^p}A\xTo{e} \bb L_{A/\bb F_p}^i\To X,\] where $X$ has finitely generated homotopy over $A$.

We now apply a similar argument to the composition $\bb F_p\to A\xto{\pi}A^p$, after first picking new models for the cotangent complexes $\bb L_{A^p/\bb F_p}$, $\bb L_{A^p/A}$, and $\bb L_{A/\bb F_p}$ such that $\bb L_{A^p/\bb F_p}^i$ has a descending filtration $F^\blob \bb L_{A^p/\bb F_p}^i$ with graded pieces \[\op{gr}^j\bb L_{A^p/\bb F_p}^i\cong (\bb L_{A/\bb F_p}^j\otimes_AA^p)\otimes_{A^p}\bb L_{A^p/A}^{i-j}\tag{\dag}\] for $j=0,\dots,i$. The inductive hypothesis implies that $\bb L_{A/\bb F_p}^j$ has finitely generated homotopy over $A$ for $j=0,\dots,i-1$, and Lemma \ref{lemma_finite_gen_of_AQ}(i) implies that $\bb L_{A^p/A}^{i-j}$ has finitely generated homotopy over $A^p$. So by the same argument as in the previous paragraph, $\bb L^i_{A^p/\bb F_p}/F^i\bb L^i_{A^p/\bb F_p}$ has finitely generated homotopy over $A^p$. Using Lemma \ref{lemma_fg_simplicial}(iii) to base change along $A^p\xto{e} A$, we deduce that there is a homotopy cofibre sequence \[\bb L_{A/\bb F_p}^i\otimes_AA^p\otimes_{A^p}A\xTo{\pi} \bb L_{A^p/\bb F_p}^i\otimes_{A^p}A\To Y\] of simplicial $A$-modules, where $Y$ has finitely generated homotopy over $A$.

In conclusion, we have a diagram in the homotopy category of simplicial $A$-modules
\[\xymatrix@=6mm{
Y & & \\
\bb L^i_{A^p/\bb F_p}\otimes_{A^p}A \ar[r]^-{e} \ar[u] & \bb L^i_{A/\bb F_p} \ar[r] & X \\
\bb L^i_{A/\bb F_p}\otimes_AA^p\otimes_{A^p}A \ar[u]^\pi &&
}\]
in which the row and column are both homotopy cofibre sequences, and in which $X$ and $Y$ have finitely generated homotopy. By commutativity of taking homotopy colimits, there is therefore a resulting homotopy cofibre sequence \[Y\To\op{hocofib}(\bb L_{A/\bb F_p}^i\otimes_AA^p\otimes_{A^p}A\xTo{e\circ\pi}\bb L_{A/\bb F_p}^i)\To X,\] and so Lemma \ref{lemma_fg_simplicial}(i) implies that the simplicial $A$-module in the centre of this sequence also has finitely generated homotopy. But by observation (ii) earlier in the proof, $e\circ\pi=F$ is nulhomotopic on the cotangent complexes, so this means that both $\bb L_{A/\bb F_p}^i\otimes_AA^p\otimes_{A^p}A$ and $\bb L_{A/\bb F_p}^i$ must have finitely generated over homotopy over $A$, which completes the proof of the inductive step.

We have proved that any Noetherian, F-finite $\bb F_p$-algebra is AQ-finite over $\bb F_p$; the remaining claims of the theorem will all follow from this. Firstly let $k$ denote either $\bb Z/p^e\bb Z$ or $\bb Z_{(p)}$, and note that any Noetherian, F-finite $\bb F_p$-algebra is also AQ-finite over $k$; this follows from Lemma \ref{lemma_finite_gen_of_AQ}. 

Now let $A$ be a Noetherian, F-finite $k$-algebra. We will prove by induction on $i\ge0$ that $\bb L_{A/k}^i\otimes_AA/pA$ has finitely generated homotopy over $A/pA$. We apply Lemma \ref{lemma_Kassel_Sletjoe} to the composition $k\to A\to A/pA$ and note the following, similar to the earlier part of the proof: $\bb L_{(A/pA)/k}^i$ has finitely generated homotopy by what we have already proved, while $\op{gr}^j\bb L_{(A/pA)/k}^i\cong (\bb L_{A/k}^j\otimes_AA/pA)\otimes_{A/pA}\bb L_{(A/pA)/A}^{i-j}$ has finitely generated homotopy over $A/pA$ for $j=0,\dots,i-1$ by Lemma \ref{lemma_fg_simplicial} and the inductive hypothesis. By Lemma \ref{lemma_fg_simplicial} again, it follows that the remaining part of the filtration, namely $\op{gr}^i\bb L_{(A/pA)/k}^i\cong \bb L_{A/k}^i\otimes_AA/pA$, also has finitely generated homotopy over $A/pA$, as required.

We have proved that any Noetherian, F-finite $k$-algebra is $p$-AQ-finite over $k$, hence also over $\bb Z$ by Lemma \ref{lemma_finite_gen_of_AQ}. This completes the proof of (ii).

From now on $k=\bb Z/p^e\bb Z$ for some $e\ge1$. All that remains to be shown is that if a Noetherian $k$-algebra $A$ is $p$-AQ-finite over $k$, then it is actually AQ-finite over $k$. By implication (i)$\Rightarrow$(ii) of Lemma \ref{lemma_finite_generation_reduction} with $I=pA$, it follows that $D_n^i(A/k,M)$ is finitely generated for any finitely generated $A$-module $M$ which is killed by a power of $p$; in particular, taking $M=A=A/p^eA$ we deduce that $D_n^i(A/k)$ is finitely generated for all $n,i\ge0$, as required.
 \end{proof}

The following is our main finite generation theorem for $\HH$, $\THH$, and $\TR^r$, from which others will follow:

\begin{theorem}\label{theorem_finite_gen_of_THH_TR}
Let $A$ be a Noetherian, F-finite $\bb Z_{(p)}$-algebra, and $v$ a positive integer. Then:
\begin{enumerate}
\item $\HH_n(A;\bb Z/p^v)$ and $\THH_n(A;\bb Z/p^v)$ are finitely generated $A$-modules for all $n\ge~0$.
\item $\TR_n^r(A;\bb Z/p^v)$ is a finitely generated $W_r(A)$-module for all $n\ge 0$, $r\ge1$.
\end{enumerate}
\end{theorem}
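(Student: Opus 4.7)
The strategy is to prove (i) by combining Theorem \ref{theorem_F_finite_implies_AQ_finite} with Lemma \ref{lemma_finite_generation_reduction}, and then to derive (ii) by induction on $r$, bootstrapping from (i) via the $C_{p^r}$-homotopy orbit fibre sequence of Section \ref{subsection_TR}.

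For (i), Theorem \ref{theorem_F_finite_implies_AQ_finite}(ii) tells us that $A$ is $p$-AQ-finite over $\bb Z$. Lemma \ref{lemma_finite_generation_reduction} upgrades this to statements (ii$'$) and (iii$'$) of that lemma, namely $\HH_n(A,M)$ and $\THH_n(A,M)$ are finitely generated $A$-modules for every finitely generated $A$-module $M$ killed by a power of $p$. Applied to $M=A/p^vA$ and $M=A[p^v]$ -- both finitely generated since $A$ is Noetherian and both killed by $p^v$ -- the long exact sequence of Section \ref{subsection_p_completions},
\[\cdots\To\HH_{n-1}(A,A[p^v])\To\HH_n(A;\bb Z/p^v)\To\HH_n(A,A/p^vA)\To\cdots,\]
and its $\THH$-analogue sandwich $\HH_n(A;\bb Z/p^v)$ and $\THH_n(A;\bb Z/p^v)$ between finitely generated $A$-modules, yielding (i).

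For (ii), I induct on $r$, the base case $r=1$ being (i) since $\TR_n^1(A;\bb Z/p^v)=\THH_n(A;\bb Z/p^v)$ and $W_1(A)=A$. For the inductive step, smashing the fibre sequence of Section \ref{subsection_TR}(i) with $S/p^v$ yields a long exact sequence of $W_{r+1}(A)$-modules
\[\cdots\To\pi_n(\THH(A)_{hC_{p^r}};\bb Z/p^v)\To\TR_n^{r+1}(A;\bb Z/p^v)\xTo{R}\TR_n^r(A;\bb Z/p^v)\To\cdots\]
whose right-hand term is finitely generated over $W_r(A)$ by the inductive hypothesis, hence over $W_{r+1}(A)$ via the surjective restriction $R:W_{r+1}(A)\onto W_r(A)$. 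It therefore suffices to show that $\pi_n(\THH(A)_{hC_{p^r}};\bb Z/p^v)$ is finitely generated over $W_{r+1}(A)$. For this I invoke the homotopy orbit spectral sequence with finite coefficients,
\[E^2_{ij}=H_i(C_{p^r},\THH_j(A;\bb Z/p^v))\Longrightarrow\pi_{i+j}(\THH(A)_{hC_{p^r}};\bb Z/p^v),\]
which is a first-quadrant spectral sequence of $W_{r+1}(A)$-modules whose $E^2$-page is acted on through the iterated Frobenius $F^r:W_{r+1}(A)\to A$ (Section \ref{subsection_Witt_structure}). By (i), each $\THH_j(A;\bb Z/p^v)$ is finitely generated over the Noetherian ring $A$; the usual finite free $\bb Z[C_{p^r}]$-resolution of $\bb Z$ then shows $H_i(C_{p^r},-)$ preserves finite generation over $A$, so each $E^2_{ij}$ is finitely generated over $A$. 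First-quadrant boundedness endows $\pi_n(\THH(A)_{hC_{p^r}};\bb Z/p^v)$ with a finite filtration by $W_{r+1}(A)$-submodules whose graded pieces are finitely generated over $A$ via $F^r$. Iterating Theorem \ref{theorem_Langer_Zink}(i) $r$ times shows that $A$ is finitely generated as a $W_{r+1}(A)$-module via $F^r$, so each graded piece, and hence the abutment, is finitely generated over $W_{r+1}(A)$, completing the induction.

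The substantive content has already been established in the previous two sections: Theorem \ref{theorem_F_finite_implies_AQ_finite} drives (i), while the Langer--Zink finiteness of Frobenius (Theorem \ref{theorem_Langer_Zink}(i)) is what allows finite generation to be transported through the iterated Frobenius in the inductive step for (ii). Everything else is formal manipulation of the long exact and spectral sequences collected in Section \ref{subsection_intro_to_THH}; the only subtle point to keep track of is that the Witt-module structure on the homotopy orbit term is accessed only through $F^r$, which is precisely where the Langer--Zink input is indispensable.
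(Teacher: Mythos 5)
Your proposal is correct and follows essentially the same route as the paper: part (i) via Theorem \ref{theorem_F_finite_implies_AQ_finite}, Lemma \ref{lemma_finite_generation_reduction} and the coefficient long exact sequence, and part (ii) by induction on $r$ using the homotopy-orbit fibre sequence and the group-homology spectral sequence of $W_{r+1}(A)$-modules acting through $F^r$, with Langer--Zink supplying the finiteness of the Frobenius. The only point you leave implicit is that deducing finite generation of $\TR^{r+1}_n(A;\bb Z/p^v)$ from that of the outer terms of the long exact sequence requires $W_{r+1}(A)$ to be Noetherian (Theorem \ref{theorem_Langer_Zink}(iii)), which the paper invokes explicitly.
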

\begin{proof}
\comment{
(i): There are short exact sequences $0\to A[p^v]\to A\xto{\pi} p^vA\to 0$ and $0\to p^vA\xto{i} A\to A/p^vA\to 0$, where the map $i\pi:A\to A$ is multiplication by $p^v$. Taking the associated long exact sequences of Hochschild homology, and noting that $\HH_n(A,A[p^v])$ and $\HH_n(A,A/p^vA)$ are finitely generated $A$-modules by Theorem \ref{theorem_F_finite_implies_AQ_finite} and Lemma \ref{lemma_finite_generation_reduction}, it easily follows that the maps \[\pi:\HH_n(A)\to \HH_n(A,p^vA),\quad i:\HH_n(A,p^vA)\to HH_n(A)\] have finitely generated kernel and cokernel; hence the same is true of $i\pi=p^v$, which exactly means that $\HH_n(A)$ has finitely generated $p^v$-torsion and $p^v$-cotorsion for all $n\ge0$. Thanks to the standard short exact sequence of $A$-modules \[0\To \HH_n(A)/p^v\HH_n(A)\To\HH_n(A;\bb Z/p^v)\To\HH_{n-1}(A)[p^v]\To 0,\] this completes the proof of the finite generation claim for $\HH$. The argument for $\THH$ is verbatim equivalent.
}
(i): As explained in Section \ref{subsection_p_completions}, there is a long exact sequence of $A$-modules \[\cdots\To\HH_{n-1}(A,A[p^v])\To\HH_n(A;\bb Z/p^v)\To\HH_n(A,A/p^vA)\To\cdots\] The $A$-modules $\HH_n(A,A[p^v])$ and $\HH_n(A,A/p^vA)$ are finitely generated for all $n\ge0$ by Theorem \ref{theorem_F_finite_implies_AQ_finite} and Lemma \ref{lemma_finite_generation_reduction}, whence $\HH_n(A;\bb Z/p^v)$ is also finitely generated. The argument for $\THH$ is verbatim equivalent.

(ii): The fundamental long exact sequence \[\cdots\To\pi_n(\THH(A;\bb Z/p^v)_{hC_{p^r}})\To \TR^{r+1}_n(A;\bb Z/p^v)\To \TR^r_n(A;\bb Z/p^v)\to\cdots\] is one of $W_{r+1}(A)$-modules, as explained in Sections \ref{subsection_Witt_structure} and \ref{subsection_p_completions}. Since the ring $W_{r+1}(A)$ is Noetherian by Langer--Zink (Theorem \ref{theorem_Langer_Zink}), it is enough by the five lemma and induction (recall that $\TR_n^1(A;\bb Z/p^v)=\THH_n(A;\bb Z/p^v)$ to start the induction) to show that $\pi_n(\THH(A;\bb Z/p^v)_{hC_{p^r}})$ is a finitely generated $W_{r+1}(A)$-module for all $n,r\ge0$.

To show this, recall the group homology spectral sequence \[E^2_{ij}=H_i(C_{p^r},\THH_j(A;\bb Z/p^v))\implies \pi_{i+j}(\THH(A;\bb Z/p^v)_{hC_{p^r}}),\] which is a spectral sequence of $W_{r+1}(A)$-modules, where $W_{r+1}(A)$ acts on the $A$-modules on the $E^2$-page via $F^r:W_{r+1}(A)\to A$. But each $A$-module $E^2_{st}$ is finitely generated by part (i), hence is also finitely generated as a $W_{r+1}(A)$-module since $F^r$ is a finite morphism (again by Langer--Zink). Thus the abutment of the spectral sequence is also finitely generated over $W_{r+1}(A)$, as required.
\end{proof}

Standard techniques allow us to let $v\to\infty$ in Theorem \ref{theorem_finite_gen_of_THH_TR}, thereby proving Theorem \ref{theorem_intro_finite_gen_of_THH_TR} of the Introduction:

\begin{corollary}\label{corollary_finite_gen_of_THH_TR_p_complete}
Let $A$ be a Noetherian, F-finite $\bb Z_{(p)}$-algebra. Then:
\begin{enumerate}
\item $\HH_n(A;\bb Z_p)$ and $\THH_n(A;\bb Z_p)$ are finitely generated $A_p^\comp$-modules for all $n\ge0$.
\item $\TR_n^r(A;p,\bb Z_p)$ is a finitely generated $W_r(A_p^\comp)$-module for all $n\ge 0$, $r\ge1$.
\end{enumerate}
\end{corollary}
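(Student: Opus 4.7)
The plan is to bootstrap Theorem~\ref{theorem_finite_gen_of_THH_TR} from $\bb Z/p^v$-coefficients to $\bb Z_p$-coefficients by means of a topological Nakayama argument. Throughout let $X$ denote any one of the spectra $\HH(A)$, $\THH(A)$, or $\TR^r(A;p)$, and let $R$ denote the corresponding base ring ($A$ in the first two cases, $W_r(A)$ in the third). The relevant $p$-completed base ring $R_p^\comp$ coincides with $A_p^\comp$ or $W_r(A_p^\comp)$ by Lemma~\ref{lemma_W_r_of_p_completion}, and in either case it is Noetherian: this follows from Lemma~\ref{lemma_conditions_for_F-finite}(iii) for $A_p^\comp$, and from Lemma~\ref{lemma_F-finiteness_of_Witt_ring} together with Langer--Zink (Theorem~\ref{theorem_Langer_Zink}) in the Witt case.

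Next I would record two essentially formal properties of $p$-completion of spectra: firstly, $\pi_n(X;\bb Z_p)$ is derived $p$-complete by construction, being the homotopy of $X_p^\comp=\holim_v X\wedge S/p^v$; secondly, $p$-completion is invisible to smashing with the mod-$p$ Moore spectrum, so $\pi_n(X_p^\comp;\bb Z/p)=\pi_n(X;\bb Z/p)$. Combining the latter with the cofibre sequence $X_p^\comp\xto{p}X_p^\comp\to X_p^\comp\wedge S/p$ and taking homotopy groups yields the short exact sequence
\[0\To \pi_n(X;\bb Z_p)/p\To \pi_n(X;\bb Z/p)\To \pi_{n-1}(X;\bb Z_p)[p]\To 0.\]
By Theorem~\ref{theorem_finite_gen_of_THH_TR} the middle term is a finitely generated $R$-module, and since it is annihilated by $p$ it is in fact finitely generated over the Noetherian quotient $R/pR=R_p^\comp/pR_p^\comp$; hence its submodule $\pi_n(X;\bb Z_p)/p$ is finitely generated over $R_p^\comp/pR_p^\comp$.

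The final step is to apply the derived (``topological'') Nakayama lemma: a derived $p$-complete $R_p^\comp$-module whose reduction modulo $p$ is finitely generated over $R_p^\comp/pR_p^\comp$ is itself finitely generated over $R_p^\comp$. Applying this to $M=\pi_n(X;\bb Z_p)$ delivers the desired finite generation. The only obstacles are the technical bookkeeping around derived $p$-completeness of the homotopy groups of a $p$-complete spectrum and the exact form of Nakayama in this setting, but these are entirely standard inputs of the type that Section~\ref{subsection_p_completions} flags as being handled in the appendix; no further information about $\HH$, $\THH$, or $\TR^r$ is required beyond Theorem~\ref{theorem_finite_gen_of_THH_TR} itself.
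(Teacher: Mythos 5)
Your argument is correct, but it is genuinely different from the paper's. The paper deduces the corollary from Theorem \ref{theorem_finite_gen_of_THH_TR} via Proposition \ref{proposition_to_prove_finite_gen_p_adically}, which works with the homotopy of the \emph{uncompleted} object: it uses the exact sequence $0\to\op{Ext}^1_{\bb Z}(\bb Q_p/\bb Z_p,\pi_n(X))\to\pi_n(X;\bb Z_p)\to\Hom_{\bb Z}(\bb Q_p/\bb Z_p,\pi_{n-1}(X))\to0$ and elementary lemmas showing that these Ext and Hom groups are finitely generated over $A_p^\comp$ once $\pi_n(X)/p$ and $\pi_n(X)[p]$ are finitely generated over $A$ (which follows from the mod $p$ case of Theorem \ref{theorem_finite_gen_of_THH_TR}). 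You instead work with the homotopy of the \emph{completed} spectrum directly: you use that $\pi_n(X;\bb Z_p)$ is derived $p$-complete, that its reduction mod $p$ embeds in $\pi_n(X;\bb Z/p)$, and then conclude by derived Nakayama over the $p$-adically complete Noetherian ring $R_p^\comp$ (Noetherianity of $W_r(A)$ by Langer--Zink and $W_r(A)_p^\comp\cong W_r(A_p^\comp)$ being common to both routes). Your route is shorter and conceptually cleaner, at the cost of importing the derived-complete formalism (derived completeness of $\pi_n$ of a $p$-complete spectrum, closure of derived complete modules under cokernels, and the Nakayama statement itself), whereas the paper's appendix argument is entirely elementary homological algebra with $\bb Q_p/\bb Z_p$. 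Two small points you should tidy up: for $p=2$ the middle term $\pi_n(X;\bb Z/p)$ of your sequence need not be killed by $p$ (multiplication by $2$ on the Moore spectrum $S/2$ is not null), but this is harmless since you only need it to be a finitely generated module over the Noetherian ring $R$, its submodule $\pi_n(X;\bb Z_p)/p$ genuinely being $p$-torsion; and you should say a word about why $\pi_n(X;\bb Z_p)$ is an $R_p^\comp$-module (either because derived $p$-complete $R$-modules are canonically modules over $R_p^\comp$, or, as the paper does, via the ring-spectrum structure on $X_p^\comp$ and the map $R_p^\comp\to\pi_0(X;\bb Z_p)$).
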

\begin{proof}
This follows from Theorem \ref{theorem_finite_gen_of_THH_TR} via Proposition \ref{proposition_to_prove_finite_gen_p_adically} in the appendix; note that $W_r(A)_p^\comp\cong W_r(A_p^\comp)$ by Lemma \ref{lemma_W_r_of_p_completion}.
\end{proof}

In the case in which $p$ is nilpotent in $A$, for example when $A$ is an $\bb F_p$-algebra, it is evidently not necessary to $p$-complete:

\begin{corollary}\label{corollary_finite_gen_nilp}
Let $A$ be a Noetherian, F-finite $\bb Z_{(p)}$-algebra in which $p$ is nilpotent. Then:
\begin{enumerate}
\item $\HH_n(A)$ and $\THH_n(A)$ are finitely generated $A$-modules for all $n\ge0$.
\item $\TR_n^r(A;p)$ is a finitely generated $W_r(A)$-module for all $n\ge 0$, $r\ge1$.
\end{enumerate}
\end{corollary}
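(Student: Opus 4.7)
The plan is to deduce the integral statement directly from the mod $p^v$ statement of Theorem \ref{theorem_finite_gen_of_THH_TR} by observing that, when $p$ is nilpotent in $A$, a sufficiently large power of $p$ annihilates each of $\HH_n(A)$, $\THH_n(A)$, and $\TR_n^r(A;p)$, so that these integral groups embed into their mod $p^v$ versions. The key tool connecting the two is the standard short exact sequence
\[0\to \HH_n(A)\otimes_{\bb Z}\bb Z/p^v\bb Z\to \HH_n(A;\bb Z/p^v)\to \HH_{n-1}(A)[p^v]\to 0,\]
recalled in Section \ref{subsection_p_completions}, and its analogues for $\THH$ and $\TR^r$.

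For part (i), fix $e\ge1$ with $p^e=0$ in $A$. Then $\HH_n(A)$ and $\THH_n(A)$ are $A$-modules annihilated by $p^e$, so the left-hand term of the above sequence (with $v=e$) is simply $\HH_n(A)$; in particular we obtain an injection $\HH_n(A)\hookrightarrow \HH_n(A;\bb Z/p^e)$, and similarly $\THH_n(A)\hookrightarrow \THH_n(A;\bb Z/p^e)$. By Theorem \ref{theorem_finite_gen_of_THH_TR}(i) the right-hand sides are finitely generated $A$-modules, and since $A$ is Noetherian, so are their submodules $\HH_n(A)$ and $\THH_n(A)$.

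For part (ii), the same strategy applies, except the base ring is now $W_r(A)$. By Lemma \ref{lemma_F-finiteness_of_Witt_ring}(ii), $p$ is nilpotent in $W_r(A)$, so we may choose $N\ge 1$ (depending only on $r$) with $p^N\,W_r(A)=0$; then $p^N$ annihilates the $W_r(A)$-module $\TR_n^r(A;p)$. The analogue of the displayed short exact sequence for $\TR^r$ therefore yields an injection $\TR_n^r(A;p)\hookrightarrow \TR_n^r(A;\bb Z/p^N)$ of $W_r(A)$-modules. Theorem \ref{theorem_finite_gen_of_THH_TR}(ii) shows that the target is a finitely generated $W_r(A)$-module, and Langer--Zink (Theorem \ref{theorem_Langer_Zink}(iii)) guarantees that $W_r(A)$ is Noetherian, so the submodule $\TR_n^r(A;p)$ is finitely generated. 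No step here presents a serious obstacle; the only point requiring care is the passage from nilpotence of $p$ in $A$ to nilpotence in $W_r(A)$, which is exactly what Lemma \ref{lemma_F-finiteness_of_Witt_ring}(ii) provides.
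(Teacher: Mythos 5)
Your argument is correct, and it is precisely the alternative route that the paper's proof mentions only in passing: the paper's displayed argument first observes, via Lemma \ref{lemma_F-finiteness_of_Witt_ring}(ii), that $p$ is nilpotent in $W_r(A)$, concludes that $\HH_n(A)$, $\THH_n(A)$, $\TR^r_n(A;p)$ are of bounded $p$-torsion and hence coincide with their $p$-completions (and $A_p^\comp=A$), and then quotes the $p$-completed statement, Corollary \ref{corollary_finite_gen_of_THH_TR_p_complete}; it adds parenthetically that one can instead argue directly from Theorem \ref{theorem_finite_gen_of_THH_TR}, which is exactly what you do. Your version spells out that direct deduction: since $p^e$ kills the integral groups, the universal-coefficient sequence identifies $\HH_n(A)$ (respectively $\THH_n(A)$, $\TR^r_n(A;p)$) with the left-hand term $\HH_n(A)\otimes_{\bb Z}\bb Z/p^e$ and hence embeds it, as an $A$-module (respectively $W_r(A)$-module), into $\HH_n(A;\bb Z/p^e)$ (respectively $\THH_n(A;\bb Z/p^e)$, $\TR^r_n(A;\bb Z/p^N)$), and finite generation follows from Theorem \ref{theorem_finite_gen_of_THH_TR} together with Noetherianity of $A$, respectively of $W_r(A)$ via Langer--Zink. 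The only dividend of your route is that it bypasses the $p$-completion formalism of Corollary \ref{corollary_finite_gen_of_THH_TR_p_complete} and the appendix entirely; the only points needing care, which you address, are that the universal-coefficient injection is a map of $A$- (resp.\ $W_r(A)$-) modules and that nilpotence of $p$ passes from $A$ to $W_r(A)$, again by Lemma \ref{lemma_F-finiteness_of_Witt_ring}(ii).
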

\begin{proof}
Since $p$ is nilpotent in $A$, it is also nilpotent in $W_r(A)$ by Lemma \ref{lemma_F-finiteness_of_Witt_ring}(ii); hence $\HH_n(A)$, $\THH_n(A)$, and $\TR_n^r(A;p)$ are all groups of bounded $p$-torsion. It follows that $\HH_n(A)=\HH_n(A;\bb Z_p)$, similarly for $\THH$ and $\TR$, and that $A_p^\comp=A$. So the claim follows from Corollary \ref{corollary_finite_gen_of_THH_TR_p_complete} (or it can be deduced directly from Theorem \ref{theorem_finite_gen_of_THH_TR} without passing via the $p$-completion).
\end{proof}

\begin{remark}
The assertions of Corollary \ref{corollary_finite_gen_nilp} are true whenever $A$ is a Noetherian, F-finite, AQ-finite $\bb Z_{(p)}$-algebra (e.g., an essentially finite type $\bb Z_{(p)}$-algebra).

Firstly, (i) follows immediately from Lemmas \ref{lemma_finite_gen_of_AQ} and \ref{lemma_finite_generation_reduction}. To prove (ii), one then repeats the proof of Theorem \ref{theorem_finite_gen_of_THH_TR}(ii), except working directly with $\THH(A)$ and $\TR^r(A;p)$ instead of $\THH(A;\bb Z/p^v)$ and $\TR^r(A;\bb Z/p^v)$.
\end{remark}

\section{Continuity and the pro HKR theorems}\label{section_continuity_and_HKR}
Let $A$ be a Noetherian ring and $I\subseteq A$ an ideal. Then there is a canonical map \[\THH_n(A)\otimes_AA/I\to \THH_n(A/I),\] which is rarely an isomorphism. Replacing $I$ by powers of itself obtains a morphism of pro $A$-modules, \[\{\THH_n(A)\otimes_AA/I^s\}_s\To\{\THH_n(A/I^s)\}_s,\] and the question was raised in \cite{Carlsson2001} as to when it is an isomorphism. For example, it was shown by Geisser and Hesselholt \cite[\S1]{GeisserHesselholt2006b} to be an isomorphism in the case that $A=R[X_1,\dots,X_d]$ is a polynomial algebra over any ring $R$, and $I=\pid{X_1,\dots,X_d}$. Under our usual hypotheses including F-finiteness, we will show in Section \ref{subsection_continuity} that it is an isomorphism for {\em any} ideal $I$, at least working with Hochschild homology with finite coefficients.

From such ``degree-wise continuity'' results we obtain further isomorphisms such as $\THH_n(A;\bb Z/p^v)\otimes_A\hat A\cong\THH_n(A;\bb Z/p^v)$, where $\hat A$ is the $I$-adic completion of $A$, and weak equivalences of spectra such as $\THH(A)_p^\comp\quis\holim_s\THH(A/I^s)_p^\comp$. This latter result is known as the continuity of topological Hochschild homology with respect to $I\subseteq A$.

Similar results are given for $\HH$, $\TR^r$, $\TC$, etc., and in the following special cases: firstly when $p$ is nilpotent, and secondly when $I=pA$.

\subsection{The restriction spectral sequence}
A key tool in our forthcoming proof of continuity properties is a certain restriction spectral sequence for derived Hochschild homology. This was established for topological Hochschild homology by M.~Brun \cite[Thm.~6.2.10]{Brun2000}, and the analogue in the algebraic case likely follows from his proof; however, for convenience we offer a purely algebraic proof via simplicial methods.

We remark that this is the one place in the paper where the rings need not be commutative; indeed, even if the reader restricts to commutative rings, he/she should be aware that the $B$-bimodule $\op{Tor}_q^A(B,M)$ appearing in the following result is not in general a symmetric module (though in our main case of interest, when $A$ is commutative and $B=A/I^s$, it is symmetric).

\begin{proposition}\label{proposition_Bjorns_SS}
Let $k$ be a commutative ring, $A\to B$ a morphism of $k$-algebras, and $M$ a $B$-bimodule. Then there is a natural spectral sequence of $k$-modules \[E^2_{ij}=\HH_i^k(B,\Tor_j^A(B,M))\Longrightarrow \HH_{i+j}^k(A,M).\]
\end{proposition}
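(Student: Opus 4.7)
The plan is to establish a natural weak equivalence
\[\HH^k(A,M)\simeq\HH^k(B,B\otimes_A^{\mathbb{L}} M)\]
of simplicial $k$-modules, where $B\otimes_A^{\mathbb{L}} M$ denotes the derived tensor product (formed by resolving $M$ as a right $A$-module) regarded as a $B$-bimodule via the left $B$-action on the tensor factor and the right $B$-action inherited from $M$. Granted this equivalence, the desired spectral sequence is the standard hyperhomology spectral sequence for the functor $N_\bullet\mapsto\HH^k(B,N_\bullet)$ applied to the simplicial $B$-bimodule $B\otimes_A^{\mathbb{L}} M$, whose homotopy groups are precisely $\Tor_j^A(B,M)$.

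The conceptual reason for the equivalence is a change-of-rings manipulation along the map $A\otimes_k^{\mathbb{L}} A^{op}\to B\otimes_k^{\mathbb{L}} B^{op}$. Starting from $\HH^k(A,M)\simeq M\otimes_{A\otimes_k^{\mathbb{L}} A^{op}}^{\mathbb{L}}A$ and using the identification $(B\otimes_k^{\mathbb{L}} B^{op})\otimes_{A\otimes_k^{\mathbb{L}} A^{op}}^{\mathbb{L}}A\simeq B\otimes_A^{\mathbb{L}}B$ of $B$-bimodules, one obtains $\HH^k(A,M)\simeq M\otimes_{B\otimes_k^{\mathbb{L}} B^{op}}^{\mathbb{L}}(B\otimes_A^{\mathbb{L}}B)$. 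A cyclic-permutation identity then rewrites this as $(B\otimes_A^{\mathbb{L}}M)\otimes_{B\otimes_k^{\mathbb{L}} B^{op}}^{\mathbb{L}}B\simeq\HH^k(B,B\otimes_A^{\mathbb{L}}M)$.

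Because $A$ need not be flat over $k$, to make this rigorous I would avoid the language of derived enveloping algebras and construct explicit bisimplicial models. Choose a simplicial resolution $P_\bullet\to A$ by free $k$-algebras and, compatibly, a simplicial resolution $Q_\bullet\to B$ by free $P_\bullet$-algebras (so each $Q_n$ is free over both $k$ and $P_n$). Also pick a flat simplicial resolution $F_\bullet\to M$ of $M$ as a right $A$-module. Assembling these into an appropriate trisimplicial $k$-module and collapsing two of its three directions in each of the two possible orders produces a pair of spectral sequences with isomorphic abutments: one converges to $\HH^k(A,M)$, the other to $\HH^k(B,B\otimes_A^{\mathbb{L}}M)$, and combining with the hyperhomology spectral sequence yields the claimed $E^2$-page.

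The principal obstacle is the cyclic identification $M\otimes_{B\otimes_k^{\mathbb{L}}B^{op}}^{\mathbb{L}}(B\otimes_A^{\mathbb{L}}B)\simeq(B\otimes_A^{\mathbb{L}}M)\otimes_{B\otimes_k^{\mathbb{L}}B^{op}}^{\mathbb{L}}B$, which demands careful bookkeeping of left versus right $B$-actions on each tensor factor. The warning in the proposition's preamble that $\Tor_j^A(B,M)$ is in general \emph{not} a symmetric $B$-bimodule underscores that this identification genuinely shuffles bimodule structures and must be built from an explicit cyclic rearrangement of tensor factors, rather than any symmetry shortcut. All remaining ingredients---the hyperhomology spectral sequence, Eilenberg--Zilber-type collapses of bisimplicial spectral sequences, and the flatness of free $k$-algebra resolutions---are routine.
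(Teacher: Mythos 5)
Your proposal is correct and is essentially the paper's own argument: the proof there establishes exactly your key equivalence $\HH^k(A,M)\simeq\HH^k(B,B\otimes_A^{\mathbb{L}}M)$ by forming the multisimplicial object $C_\bullet(Q_\bullet,B_\bullet(Q_\bullet,P_\bullet,M_\bullet))$ from compatible free simplicial resolutions $P_\bullet\to A$ and $Q_\bullet\to B$ (with Quillen's left-properness lemma supplying the cofibrant model $Q_\bullet\otimes_{P_\bullet}A$ of $B$ over $A$, whose levels give the projective resolutions computing $\Tor^A_*(B,M)$), collapsing it in the two possible orders, and then running the spectral sequence of the resulting bisimplicial object. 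The only quibble is a handedness slip: in $B\otimes_A^{\mathbb{L}}M$ one should resolve $M$ as a \emph{left} $A$-module (or resolve $B$ as a right $A$-module, which is what the paper in effect does), not $M$ as a right $A$-module, though the bimodule structure you describe on $B\otimes_A^{\mathbb{L}}M$ is the correct one.
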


Before we can give the proof of the proposition, we must recall some results about simplicial resolutions and bar complexes.

Given a commutative ring $k$, we will work with simplicial $k$-algebras, the category of which is equipped with Quillen's usual model structure \cite{Quillen1967} for categories of simplicial objects.\comment{; in particular, if $A\to B$ is a homomorphism of commutative $k$-algebras, then a cofibrant replacement of $B$ as an $A$-algebra is a simplicial resolution $A[X_\bullet]\to B$ by a free simplicial $A$-algebra $A[X_\bullet]$, where $X_\bullet$ is a simplicial set.} We have the following key lemma thanks to Quillen \cite[Thm.~5.1]{Quillen1970}, which amounts to the statement that the model structure on simplicial $k$-algebras is left proper:
\begin{quote}
Suppose $k\cofib P_\bullet\quis A$ is a cofibrant replacement of $k\to A$, and that $P_\bullet\cofib Q_\bullet\quis B$ is a cofibrant replacement of $P_\bullet\to B$ (so that $k\cofib Q_\bullet\quis B$ is a cofibrant replacement of $k\to B$). Then $A\to Q_\bullet\otimes_{P_\bullet}A\to B$ is a cofibrant replacement of $A\to B$.
\end{quote}

Let $k$ be a commutative ring, $A$ a $k$-algebra, $M$ a left $A$-module, and $N$ a right $A$-module. We denote by $B_\bullet^k(M,A,N)$ the two-sided bar complex: it is the simplicial $k$-module given by \[B_\bullet^k(N,A,M)=N\otimes_k\underbrace{A\otimes_k\cdots\otimes_kA}_{\sub{\sub{$\bullet$ times}}}\otimes_kM.\] Equivalently, $B_\bullet(N,A,M)=C_\bullet^k(A,M\otimes_kN)$, where $C_\bullet^k(A,-)$ denotes the simplicial Hochschild $k$-module of an $A$-bimodule.

The following are standard facts about bar complexes:
\begin{enumerate}
\item $B_\bullet^k(N,A,M)\cong N\otimes_AB_\bullet^k(A,A,A)\otimes_AM$.
\item $B_\bullet^k(A,A,A)=C_\bullet^k(A,A\otimes_kA^\sub{op})$, and this is a simplicial $A$-bimodule. More generally, if $M$ is an $A$-bimodule, then there is an isomorphism of simplicial $k$-modules: \[C_\bullet^k(A,M)\cong M\otimes_{A\otimes A^\sub{op}}B_\bullet^k(A,A,A),\quad a_0ma_{n+1}\otimes a_1\otimes\cdots\otimes a_n\leftrightarrow m\otimes a_0\otimes\cdots\otimes a_{n+1}\]
\item $B_\bullet^k(A,A,M)\to M$ is a resolution of $M$ as a left $A$-module; if $M$ is in fact an $A$-bimodule, then it is a resolution of $M$ as an $A$-bimodule.
\end{enumerate}

\begin{proof}[Proof of Proposition \ref{proposition_Bjorns_SS}]
Let simplicial resolutions be as in Quillen's lemma. Moreover, let $M_\bullet\to M$ be a projective simplicial resolution of $M$ as a left $Q_\bullet$-module; i.e., $M_\bullet\to M$ is a quasi-isomorphism of simplicial left $Q_\bullet$-modules, and $M_i$ is a projective left $Q_i$-module for each $i\ge0$.

Form the simplicial $k$-module \[X_\bullet=C_\bullet(Q_\bullet,B_\bullet(Q_\bullet,P_\bullet,M_\bullet)),\] whose homotopy groups we will calculate in two ways. Firstly, $X_\bullet$ is the diagonal of the bisimplicial $k$-module \[\langle X_{ij}\rangle_{ij}=\langle C_i(Q_j,B_j(Q_j,P_j,M_j))\rangle_{ij}.\] For each fixed $j\ge 0$, the simplicial $k$-module $C_\bullet(Q_j,B_j(Q_j,P_j,M_j))$ calculates \[\Tor_*^{Q_j\otimes_kQ_j^\sub{op}}\big(B_j(Q_j,P_j,M_j),Q_j\big).\] But $B_j(Q_j,P_j,M_j)$ is a projective $Q_j\otimes_kQ_j^\sub{op}$-module, so these $\Tor$ groups vanish in higher degree; therefore the natural map $X_\bullet\to Q_\bullet\otimes_{Q_\bullet\otimes_kQ_\bullet^\sub{op}}B_\bullet(Q_\bullet,P_\bullet,M_\bullet))$ is a quasi-isomorphism. Furthermore, for each $i$, the natural map \[M_i\to Q_i\otimes_{Q_i\otimes_kQ_i^\sub{op}}(M_i\otimes_kQ_i)\] is an isomorphism $k$-modules (this is obvious if $M_i$ is a free left $Q_i$-module, hence follows for the case of a projective module), and so we have an isomorphism of simplicial $k$-modules: \[C_\bullet(P_\bullet,M_\bullet)\isoto Q_\bullet\otimes_{Q_\bullet\otimes_kQ_\bullet^\sub{op}}B_\bullet(Q_\bullet,P_\bullet,M_\bullet)\] In conclusion, $X_\bullet\simeq C_\bullet(P_\bullet,M_\bullet)$, and $C_\bullet(P_\bullet,M_\bullet)\to C_\bullet(P_\bullet,M)$ is obviously a quasi-isomorphism; so $X_\bullet$ calculates $\HH_*^k(A,M)$.

Secondly, for each fixed $i$, the augmentation map $B_\bullet(P_i,P_i,M_i)\to M_i$ is a quasi-isomorphism and remains so after applying $C_i(Q_i,Q_i\otimes_{P_i}-)$; it easily follows that $X_\bullet$ is quasi-isomorphic to $C_\bullet(Q_\bullet,Q_\bullet\otimes_{P_\bullet}M_\bullet)=C_\bullet(Q_\bullet,R_\blob\otimes_AM_\bullet)$, where $R_\bullet:=Q_\bullet\otimes_{P_\bullet}A$. For each $i$, we may factor the natural map of simplicial $k$-algebras $R_i\to B$ as $R_i\cofib R_{i\bullet}\quis B$; since both $R_\bullet$ and the diagonal of $R_{\bullet\bullet}$ are quasi-isomorphic to $B$, we see that $X_\bullet$ is quasi-isomorphic to the diagonal of the bisimplicial $k$-algebra \[Y_{\bullet\bullet}=\langle C_i(Q_i,R_{ij}\otimes_AM_i)\rangle_{ij}\] Since $M_\bullet\quis M$ we may also replace $M_i$ by $M$ in the definition of $Y$.

For each fixed $i$, the simplicial $A$-algebra $R_{i\bullet}$ is a projective resolution of $B$, so $R_{i\bullet}\otimes_AM_i$ calculates $\op{Tor}_*^A(B,M)$. The usual spectral sequence of a bisimplicial group completes the proof.
\end{proof}

\comment{
Suppose that $H$ is a functor from commutative, Noetherian rings to abelian groups such that $H(A)$ is naturally an $A$-module; e.g., $H=\HH_n,\THH_n$. Then, given an ideal of $I$, we have a canonical map of $A/I$-modules , which need not be an isomorphism; taking the limit over powers of $I$ yields a map of pro $A$-modules  \todo{The language of flat pro $A$-algebras could vastly simplify this... Or even etale.} The theme of this section is to show that this map is an isomorphism in our cases of interest.
}

\subsection{Degree-wise continuity and continuity of $\HH$, $\THH$, $\TR^r$, etc.}\label{subsection_continuity}
Combining the restriction spectral sequence of Proposition \ref{proposition_Bjorns_SS} with our earlier finite generation results and the Artin--Rees theorem, we may now establish our ``degree-wise continuity'' results, starting with the following lemma:

\begin{lemma}\label{lemma_continuity}
Let $A$ be a Noetherian ring, $I\subseteq A$ an ideal, $M$ a finitely generated $A$-module, and $n\ge0$. Consider the canonical maps:
\begin{itemize}
\item[] $\{\HH_n(A,M)\otimes_AA/I^s\}_s\xto{(ii)}\{\HH_n(A,M/I^sM)\}_s\xto{(i)} \{\HH_n(A/I^s,M/I^sM)\}_s$
\item[] $\{\THH_n(A,M)\otimes_AA/I^s\}_s\xto{(ii)}\{\THH_n(A,M/I^sM)\}_s\xto{(i)} \{\THH_n(A/I^s,M/I^sM)\}_s$
\end{itemize}
Then maps (i) are isomorphisms. If $A$ is further assumed to be $m$-AQ-finite for some $m\ge0$, and $M$ is annihilated by $m$, then maps (ii) are also isomorphisms.
\end{lemma}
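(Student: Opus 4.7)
The proof separates into \emph{(i)}, which is a formal consequence of Artin--Rees, and \emph{(ii)}, whose Hochschild case requires a derived base-change identity and whose topological case reduces to the Hochschild case.

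\emph{Maps (i).} I would apply the restriction spectral sequence (Proposition \ref{proposition_Bjorns_SS}) to the composition $\bb Z\to A\to A/I^s$ with the $A/I^s$-bimodule $M/I^sM$:
\[
E_{ij}^2(s)=\HH_i(A/I^s,\Tor_j^A(A/I^s,M/I^sM))\Longrightarrow\HH_{i+j}(A,M/I^sM).
\]
Assembling over $s$ gives a spectral sequence of pro $A$-modules, and Corollary \ref{corollary_Artin_Rees} shows that the pro $A$-module $\{\Tor_j^A(A/I^s,M/I^sM)\}_s$ vanishes for $j>0$. The pro spectral sequence therefore degenerates to edge-map isomorphisms $\{\HH_i(A/I^s,M/I^sM)\}_s\cong\{\HH_i(A,M/I^sM)\}_s$, inverse to the maps (i). The $\THH$ version follows identically from Brun's analogous restriction spectral sequence cited above Proposition \ref{proposition_Bjorns_SS}.

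\emph{Maps (ii), Hochschild case.} Under the $m$-AQ-finiteness hypothesis and $mM=0$, Lemma \ref{lemma_finite_generation_reduction} (via its implication (i)$\Rightarrow$(ii')) shows that $\HH_n(A,N)$ is finitely generated over $A$ for any finitely generated $A$-module $N$ annihilated by $m$; in particular for $N=M$ and $N=M/I^sM$. My strategy is to establish a pro equivalence of simplicial $A$-modules $\{\HH(A,M)\dotimes_A A/I^s\}_s\simeq\{\HH(A,M/I^sM)\}_s$ and then read off homotopy groups via a universal coefficient argument. The pro equivalence combines two ingredients: the pro equivalence $\{M\dotimes_A A/I^s\}_s\simeq\{M/I^sM\}_s$ coming from Theorem \ref{theorem_Artin_Rees}(i) applied to the finitely generated $A$-module $M$, and the natural base-change identity $\HH(A,N\dotimes_A L)\simeq\HH(A,N)\dotimes_A L$ for a symmetric $A$-bimodule $N$ and $A$-module $L$, which follows formally from projection-formula manipulations in the two-sided bar construction underlying derived Hochschild homology. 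Passing to homotopy groups via the universal coefficient spectral sequence
\[
\Tor_a^A(\HH_b(A,M),A/I^s)\Longrightarrow\pi_{a+b}(\HH(A,M)\dotimes_A A/I^s)
\]
and invoking Theorem \ref{theorem_Artin_Rees}(i) once more on the finitely generated $A$-modules $\HH_b(A,M)$ collapses the pro spectral sequence to the edge-map isomorphism $\{\HH_b(A,M)\otimes_A A/I^s\}_s\cong\{\pi_b(\HH(A,M)\dotimes_A A/I^s)\}_s$, which is the desired statement.

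\emph{Maps (ii), $\THH$ case.} This reduces to the Hochschild case via the Pirashvili--Waldhausen spectral sequence $E_{ij}^2(N)=\HH_i(A,\THH_j(\bb Z,N))\Rightarrow\THH_{i+j}(A,N)$. B\"oksted's calculation identifies $\THH_j(\bb Z,N)$ with $N$, $N/\ell N$, or $N[\ell]$; each of these is finitely generated and annihilated by $m$ (using $mM=0$), and Theorem \ref{theorem_Artin_Rees}(i) together with Corollary \ref{corollary_pro_torsion} identifies the pro $A$-modules $\{\THH_j(\bb Z,M)\otimes_A A/I^s\}_s\cong\{\THH_j(\bb Z,M/I^sM)\}_s$. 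Applying the Hochschild case of (ii) to each of these coefficient modules shows that the $E^2$-pages of the two Pirashvili--Waldhausen spectral sequences for $\{\THH(A,M)\otimes_A A/I^s\}_s$ and $\{\THH(A,M/I^sM)\}_s$ pro-agree, hence so do their abutments. The main obstacle I anticipate is the rigorous justification of the derived base-change identity $\HH(A,N\dotimes_A L)\simeq\HH(A,N)\dotimes_A L$; once that formal ingredient is secured, the Artin--Rees theorem and the previously established finite generation results do the rest.
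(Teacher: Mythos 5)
Your proof is correct, and for maps (i) and the Hochschild half of (ii) it is essentially the paper's argument: part (i) is exactly the restriction spectral sequence of Proposition \ref{proposition_Bjorns_SS} (resp.\ Brun's $\THH$ analogue) collapsed by Corollary \ref{corollary_Artin_Rees}, and your Hochschild case of (ii) is the paper's ``universal coefficient'' argument with the base-change step written out: the identity $\HH(A,N\dotimes_AL)\simeq\HH(A,N)\dotimes_AL$ that you flag as the main obstacle is in fact formal for derived Hochschild homology (degreewise the complex is $N\otimes_{\bb Z}(\text{free }\bb Z\text{-module})$, so derived base change in the coefficient passes through), and it is precisely what the paper's universal coefficient spectral sequence $\{\Tor_i^A(A/I^s,\HH_j(A,M))\}_s\Rightarrow\{\HH_{i+j}(A,M/I^sM)\}_s$ encodes; the only detail worth making explicit is why the pro-equivalence $\{M\dotimes_AA/I^s\}_s\simeq\{M/I^sM\}_s$ survives application of $\HH(A,-)$ --- use that derived $\HH(A,-)$ is exact in the coefficient variable, giving a coefficient spectral sequence $\HH_i(A,\pi_j(-))$ whose higher rows pro-vanish by functoriality. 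Where you genuinely diverge is the $\THH$ half of (ii): the paper simply runs the same universal-coefficient argument ``verbatim'' at the level of $\THH$, whereas you reduce to the already-proved Hochschild case via the Pirashvili--Waldhausen spectral sequence, B\"okstedt's computation and Corollary \ref{corollary_pro_torsion}. This is sound --- it is the same device the paper uses in its alternative proof of (i) for $\THH$ and in the proof of Theorem \ref{theorem_continuity} --- and it buys you independence from any topological base-change/universal-coefficient input, at the cost of checking that B\"okstedt's coefficient modules $M$, $M/\ell M$, $M[\ell]$ are finitely generated and killed by $m$ (which you do) and of justifying that the first-quadrant Pirashvili--Waldhausen spectral sequence of finitely generated modules may be base-changed along the completion functor, which is exact on finitely generated modules by Theorem \ref{theorem_Artin_Rees}(ii); this is legitimate and is used elsewhere in the paper.
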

\begin{proof}
(i): By Proposition \ref{proposition_Bjorns_SS}, for each $s\ge1$ there is a first quadrant spectral sequence of $A$-modules \[E^2_{ij}(s)=\HH_i(A/I^s,\Tor_j^A(A/I^s,M/I^sM))\implies \HH_{i+j}(A,M/I^sM).\] These assemble to a spectral sequence of pro $A$-modules \[E^2_{ij}(\infty)=\{\HH_i(A/I^s,\Tor_j^A(A/I^s,M/I^sM))\}_s\implies \{\HH_{i+j}(A,M/I^sM)\}_s.\] But $\{\Tor_j^A(A/I^s,M/I^sM)\}_s=0$ for all $j\ge1$, by Corollary \ref{corollary_Artin_Rees}, so this spectral sequence collapses to edge map isomorphisms $\{\HH_n(A,M/I^sM)\}_s\isoto\{\HH_n(A/I^s,M/I^sM)\}_r$ of pro $A$-modules for all $n\ge0$; this proves isomorphism (i) for $\HH$.

Isomorphism (i) for $\THH$ can be proved in exactly the same way as $\HH$, using Brun's $\THH$ version of Proposition \ref{proposition_Bjorns_SS} in \cite[Thm.~6.2.10]{Brun2000}. However, we will show that it can also be deduced from the $\HH$ case using the Pirashvili--Waldhausen spectral sequence, which was described in Section \ref{subsection_THH}.

The P.--W.\ spectral sequences for $A$ and its modules $M/I^sM$ assemble to a spectral sequence of pro $A$-modules  \[F^2_{ij}(\infty)=\{\HH_i(A,\THH_j(\bb Z,M/I^sM))\}_s\implies \{\THH_{i+j}(A,M/I^sM)\}_s.\] Similarly, the P.--W.\ spectral sequences for the rings $A/I^s$ and modules $M/I^sM$ assemble to a spectral sequence of pro $A$-modules
\[^\prime F^2_{ij}(\infty)=\{\HH_i(A/I^s,\THH_j(\bb Z,M/I^sM))\}_s\implies \{\THH_{i+j}(A/I^sM,M/I^sM)\}_s.\] By naturality there is a morphism of spectral sequences $^\prime F(\infty)\to F(\infty)$, and the proof of (i) will be complete if we show it is an isomorphism on the terms on the second pages, i.e., that
\[\{\HH_i(A,\THH_j(\bb Z,M/I^sM))\}_s\isoto\{\HH_i(A/I^s,\THH_j(\bb Z,M/I^sM))\}_s\tag{\dag}\] for all $i,j\ge0$. However, B\"okstedt's calculation of $\THH(\bb Z,-)$ as torsion or cortorsion, and Corollary \ref{corollary_pro_torsion}, imply that \[\{\THH_j(\bb Z,M/I^sM)\}_s\cong \{\THH_j(\bb Z,M)\otimes_AA/I^s\}_s.\] So the map (\dag) is an isomorphism if and only if the map \[\{\HH_i(A,\THH_j(\bb Z,M)\otimes_AA/I^s)\}_s\To\{\HH_i(A/I^s,\THH_j(\bb Z,M)\otimes_AA/I^s)\}_s\] is an isomorphism; since $\THH_j(\bb Z,M)$ is a finitely generated $A$-module by B\"okstedt's caclcuation, this is indeed an isomorphism by the already established $\HH$ case of (i).

(ii): Now assume further that $A$ is $m$-AQ-finite and that $M$ is killed by $m$. Universal coefficient spectral sequences for $\HH$ assemble to a spectral sequence of pro $A$-modules \[^\prime E^2_{ij}(\infty)=\{\Tor^A_i(A/I^s,\HH_j(A,M))\}_s\implies\{\HH_{i+j}(A,M/I^sM)\}_s.\] But the the $A$-modules $\HH_j(A,M)$ are finitely generated for all $q\ge0$ by assumption and Lemma \ref{lemma_finite_generation_reduction}, so $\{\Tor^A_i(A/I^s,\HH_j(A,M))\}_s=0$ for $i\ge1$ by the Artin--Rees Theorem \ref{theorem_Artin_Rees}(i). Thus we again obtain edge map isomorphisms $\{\HH_n(A,M)\otimes_AA/I^r\}_r\isoto\{\HH_n(A,M/I^rM)\}_r$, completing the proof of isomorphism (ii) for $\HH$. The proof for $\THH$ is verbatim equivalent.
\end{proof}

We now reach our main degree-wise continuity results, which establishes Theorem \ref{theorem_intro_degreewise_continuity} of the Introduction:

\begin{theorem}\label{theorem_continuity}
Let $A$ be a Noetherian, F-finite $\bb Z_{(p)}$-algebra, $I\subseteq A$ an ideal, and $v\ge1$. Then the canonical maps
\begin{enumerate}
\item $\{\HH_n(A;\bb Z/p^v)\otimes_AA/I^s\}_s\To\{\HH_n(A/I^s;\bb Z/p^v)\}_s$
\item $\{\TR_n^r(A;\bb Z/p^v)\otimes_{W_r(A)}W_r(A/I^s)\}_s\To\{\TR_n^r(A/I^s;\bb Z/p^v)\}_s$
\end{enumerate}
are isomorphisms for all $n\ge0$, $r\ge1$.
\end{theorem}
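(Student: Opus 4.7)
The plan is to reduce the claim for $\HH(-;\bb Z/p^v)$ to Lemma \ref{lemma_continuity} applied to coefficient modules of finite length annihilated by a power of $p$. Specifically, the long exact sequence
\[\cdots\To\HH_{n-1}(A,A[p^v])\To\HH_n(A;\bb Z/p^v)\To\HH_n(A,A/p^vA)\To\cdots\]
from Section \ref{subsection_p_completions}, and its analogue for each quotient $A/I^s$, assemble into a comparison of long exact sequences of pro $A$-modules. By the five lemma it suffices to produce the pro-isomorphism with coefficients in $A[p^v]$ and $A/p^v A$. Both are finitely generated $A$-modules (by Corollary \ref{corollary_finite_gen_nilp} applied to $A/p^v A$, or directly) and killed by $p^v$. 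Since $A$ is $p$-AQ-finite by Theorem \ref{theorem_F_finite_implies_AQ_finite}(ii), iterating the long exact sequence for Andr\'e--Quillen homology along $0\to A/p^{v-1}A\to A/p^vA\to A/pA\to 0$ shows that $A$ is $p^v$-AQ-finite, so Lemma \ref{lemma_continuity} delivers the required pro-isomorphisms. This proves (i), and in particular the case $r=1$ of (ii).

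\textbf{Plan for part (ii): the induction.} The plan is to induct on $r$, with base case $r=1$ just established. For the inductive step, one applies the functor $\{-\otimes_{W_{r+1}(A)}W_{r+1}(A/I^s)\}_s$ to the fundamental fibre sequence
\[\THH(A;\bb Z/p^v)_{hC_{p^r}}\To\TR^{r+1}(A;\bb Z/p^v)\xto{R}\TR^r(A;\bb Z/p^v),\]
viewed as a long exact sequence of finitely generated $W_{r+1}(A)$-modules (using Theorem \ref{theorem_finite_gen_of_THH_TR}). Because all terms are finitely generated over the Noetherian ring $W_{r+1}(A)$ (Theorem \ref{theorem_Langer_Zink}(iii)), Theorem \ref{theorem_base_change_by_Witt_rings}(i) tells us this base-change functor is exact on the subcategory of finitely generated modules, so the result is a long exact sequence of pro $W_{r+1}(A)$-modules which maps to the analogous long exact sequence for $A/I^s$. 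The inductive hypothesis handles the $\TR^r$ term (after restriction of scalars along $R$), so by the five lemma it is enough to treat the homotopy orbit term $\pi_n(\THH(A;\bb Z/p^v)_{hC_{p^r}})$.

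\textbf{Plan for part (ii): the orbit term.} For this I would invoke the group homology spectral sequence
\[E^2_{ij}=H_i\!\bigl(C_{p^r},\THH_j(A;\bb Z/p^v)\bigr)\Longrightarrow\pi_{i+j}\bigl(\THH(A;\bb Z/p^v)_{hC_{p^r}}\bigr),\]
which is a spectral sequence of $W_{r+1}(A)$-modules where the action on the $E^2$-page factors through $F^r:W_{r+1}(A)\to A$ (Section \ref{subsection_Witt_structure}). The crucial point is Theorem \ref{theorem_base_change_by_Witt_rings}(ii): for any finitely generated $A$-module $M$, the pro $W_{r+1}(A)$-module $\{M\otimes_{W_{r+1}(A)}W_{r+1}(A/I^s)\}_s$ is canonically $\{M\otimes_A A/I^s\}_s$. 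Applying this to each $E^2_{ij}$, which is finitely generated over $A$ by Theorem \ref{theorem_finite_gen_of_THH_TR}(i), and then using Corollary \ref{corollary_Artin_Rees_group_homology} on the $A$-linear $C_{p^r}$-action together with part (i) of the present theorem, one obtains
\[\{E^2_{ij}\otimes_{W_{r+1}(A)}W_{r+1}(A/I^s)\}_s\cong\{H_i(C_{p^r},\THH_j(A;\bb Z/p^v)\otimes_A A/I^s)\}_s\cong\{H_i(C_{p^r},\THH_j(A/I^s;\bb Z/p^v))\}_s,\]
which is precisely the $E^2$-page of the analogous spectral sequence for $A/I^s$. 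Tensoring the whole spectral sequence (permissible by exactness of $\Phi_{r+1}$ on finitely generated modules) gives an isomorphism of pro spectral sequences, hence of abutments, which is exactly the desired pro-isomorphism for the homotopy orbits.

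\textbf{Main obstacle.} The technical heart is matching the $W_{r+1}(A)$-module base change against the $A$-module base change on the group homology $E^2$-page; this is exactly what Theorem \ref{theorem_base_change_by_Witt_rings}(ii) is designed for, and its use depends essentially on F-finiteness of $A$ (to know $W_{r+1}(A)$ is Noetherian and to have the Artin--Rees vanishing over it) together with the finite generation results of Theorem \ref{theorem_finite_gen_of_THH_TR} to keep every term of the spectral sequence inside the subcategory on which $\Phi_{r+1}$ is exact. Once these pieces are assembled, the five-lemma and induction finish the proof with no further input.
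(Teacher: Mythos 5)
Your proposal follows essentially the same route as the paper: part (i) by comparing the two long exact sequences relating $\HH(-;\bb Z/p^v)$ to coefficients $A[p^v]$ and $A/p^vA$ and invoking Lemma \ref{lemma_continuity}, and part (ii) by induction on $r$, base-changing the fundamental fibre sequence via Theorem \ref{theorem_base_change_by_Witt_rings}(i), and handling the homotopy-orbit term through the group homology spectral sequence, Theorem \ref{theorem_base_change_by_Witt_rings}(ii), Corollary \ref{corollary_Artin_Rees_group_homology}, and the $r=1$ case. Two small repairs: the sequence $0\to A/p^{v-1}A\to A/p^vA\to A/pA\to 0$ you use to pass from $p$-AQ-finiteness to $p^v$-AQ-finiteness is not exact when $A$ has $p$-torsion (the multiplication-by-$p$ map need not be injective), but the conclusion you want follows directly from Lemma \ref{lemma_finite_generation_reduction}, implication (i)$\Rightarrow$(i') with $I=pA$, since $A[p^v]$ and $A/p^vA$ are killed by a power of $p$; and to compare your two long exact sequences you also need the identification $\{A[p^v]\otimes_AA/I^s\}_s\cong\{(A/I^s)[p^v]\}_s$ (and similarly for $A/p^vA$), which is Corollary \ref{corollary_pro_torsion} and is used explicitly in the paper at this point.
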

\begin{proof}
(i): This follows from the $\HH$ part of Lemma \ref{lemma_continuity} in a relatively straightforward way: to keep the proof clear we will use $\infty$ notation for all the pro $A$-modules. As in the proof of Theorem \ref{theorem_finite_gen_of_THH_TR}(i), there is a long exact sequence of $A$-modules \[\cdots\To\HH_{n-1}(A,A[p^v])\To\HH_n(A;\bb Z/p^v)\To\HH_n(A,A/p^vA)\To\cdots,\] all of which are finitely generated. Hence we may base change by $A/I^\infty$, as in the Artin--Rees Theorem \ref{theorem_Artin_Rees}(ii), to obtain a long exact sequence of pro $A$-modules \[\hspace{-5mm}\cdots\to\HH_{n-1}(A,A[p^v])\otimes_AA/I^\infty\to\HH_n(A;\bb Z/p^v)\otimes_AA/I^\infty\to\HH_n(A,A/p^vA)\otimes_AA/I^\infty\to\cdots.\tag{1}\]

Replacing $A$ by $A/I^\infty$, and using the isomorphisms of Corollary \ref{corollary_pro_torsion} with $M=A$, there is also a long exact sequence of pro $A$-modules  \[\hspace{-1cm}\cdots\to\HH_{n-1}(A/I^\infty,A[p^v]\otimes_AA/I^\infty)\to\HH_n(A/I^\infty;\bb Z/p^v)\to\HH_n(A/I^\infty,A/p^vA\otimes_AA/I^\infty)\to\cdots.\tag{2}\] The obvious map from (1) to (2) induces an isomorphism on all the terms with coefficients in $A[p^v]$ and $A/p^vA$, by Theorem \ref{theorem_F_finite_implies_AQ_finite} and Lemma \ref{lemma_continuity}, hence also induces an isomorphism on the $\bb Z/p^v\bb Z$ terms, as desired.

(ii): If $r=1$ then the claim is that the canonical map $\{\THH_n(A;\bb Z/p^v)\otimes_AA/I^s\}_s\to\{\THH_n(A/I^s;\bb Z/p^v)\}_s$ is an isomorphism; the proof of this is verbatim equivalent to part (i). So now assume $r>1$ and proceed by induction.

We may compare the fundamental long exact sequences with finite coefficients for both $A$ and $A/I^s$ as follows:
\[\xymatrix@C=6mm{
\cdots\ar[r]& \pi_n(\THH(A;\bb Z/p^v)_{hC_{p^r}})\ar[r]\ar[d]& \TR^{r+1}_n(A;\bb Z/p^v)\ar[r]\ar[d]& \TR^r_n(A;\bb Z/p^v)\ar[r]\ar[d]& \cdots\\
\cdots\ar[r]& \pi_n(\THH(A/I^s;\bb Z/p^v)_{hC_{p^r}})\ar[r]& \TR^{r+1}_n(A/I^s;\bb Z/p^v)\ar[r]& \TR^r_n(A/I^s;\bb Z/p^v)\ar[r]& \cdots
}\]
As we saw in the proof of Theorem \ref{theorem_finite_gen_of_THH_TR}(ii), the top row consists of finitely generated $W_{r+1}(A)$-modules; so by Theorem \ref{theorem_base_change_by_Witt_rings}(i), it remains exact after base changing by the pro $W_{r+1}(A)$-algebra $W_{r+1}(A/I^\infty)$. Simultaneously assembling the bottom row into pro $W_{r+1}(A)$-modules therefore yields a map of long exact sequences of pro $W_{r+1}(A)$-modules (note that we have rotated the diagram for the sake of space):
\[\xymatrix{
\vdots & \vdots \\
\{\TR^r_n(A;\bb Z/p^v)\otimes_{W_{r+1}(A)}W_{r+1}(A/I^s)\}_s\ar[u]\ar[r]&\{\TR^r_n(A/I^s;\bb Z/p^v)\}_s\ar[u]\\
 \{\TR^{r+1}_n(A;\bb Z/p^v)\otimes_{W_{r+1}(A)}W_{r+1}(A/I^s)\}_s\ar[u]\ar[r]&\{\TR^{r+1}_n(A/I^s;\bb Z/p^v)\}_s\ar[u]\\
\{\pi_n(\THH(A;\bb Z/p^v)_{hC_{p^r}})\otimes_{W_{r+1}(A)}W_{r+1}(A/I^s)\}_s\ar[u]\ar[r]& \{\pi_n(\THH(A/I^s;\bb Z/p^v)_{hC_{p^r}})\}_s\ar[u]\\
\vdots\ar[u]&\vdots\ar[u]
}\]

\comment{
\afterpage{
\thispagestyle{plain}
\begin{sideways}
\begin{minipage}[b]{24cm}
\ul{Diagram 1}:
\[\xymatrix@C=5mm{
\cdots\ar[r]& \{\pi_n(\THH(A;\bb Z/p^v)_{hC_{p^r}})\otimes_{W_{r+1}(A)}W_{r+1}(A/I^s)\}_s\ar[r]\ar[d]& \{\TR^{r+1}_n(A;\bb Z/p^v)\otimes_{W_{r+1}(A)}W_{r+1}(A/I^s)\}_s\ar[r]\ar[d]&\{\TR^r_n(A;\bb Z/p^v)\otimes_{W_{r+1}(A)}W_{r+1}(A/I^s)\}_s\ar[r]\ar[d]& \cdots\\
\cdots\ar[r]& \{\pi_n(\THH(A/I^s;\bb Z/p^v)_{hC_{p^r}})\}_s\ar[r]& \{\TR^{r+1}_n(A/I^s;\bb Z/p^v)\}_s\ar[r]& \{\TR^r_n(A/I^s;\bb Z/p^v)\}_s\ar[r]& \cdots
}\]
\end{minipage}
\end{sideways}
\newpage
}
}
By the five lemma and inductive hypothesis, it is enough to prove that the bottom horizontal arrow in the diagram, namely \[\pi_n(\THH(A;\bb Z/p^v)_{hC_{p^r}})\otimes_{W_{r+1}(A)}W_{r+1}(A/I^\infty)\To \pi_n(\THH(A/I^\infty;\bb Z/p^v)_{hC_{p^r}}),\] is an isomorphism for all $n\ge0$. Both sides are the abutment of natural group homology spectral sequences, so it is enough to check that the map of spectral sequence is an isomorphism on the second page, namely that the canonical map \[H_i(C_{p^r},\THH_j(A;\bb Z/p^v))\otimes_{W_{r+1}(A)}W_{r+1}(A/I^\infty)\To H_i(C_{p^r},\THH_j(A/I^\infty;\bb Z/p^v))\tag{\dag}\] is an isomorphism for all $i,j\ge0$. Since $H_i(C_{p^r},\THH_j(A;\bb Z/p^v))$ is a finitely generated $A$-module, the left side of (\dag) is precisely $H_i(C_{p^r},\THH_j(A;\bb Z/p^v))\otimes_AA/I^\infty$ by Theorem \ref{theorem_base_change_by_Witt_rings}(ii); meanwhile, the right side is $H_i(C_{p^r},\THH_j(A;\bb Z/p^v)\otimes_AA/I^\infty)$ by part (ii).

Therefore it is now enough to prove that the map \[H_i(C_{p^r},\THH_j(A;\bb Z/p^v))\otimes_AA/I^\infty\To H_i(C_{p^r},\THH_j(A;\bb Z/p^v)\otimes_AA/I^\infty)\] is an isomorphism; this follows from the finite generation of $\THH_j(A;\bb Z/p^v)$ and Corollary \ref{corollary_Artin_Rees_group_homology}.
\end{proof}

From the previous degree-wise continuity theorem we obtain a description of the relationship between $\HH$, $\THH$, and $\TR^r$ of $A$ and its $I$-adic completion $\hat A=\projlim_sA/I^s$, proving Corollary \ref{corollary_intro_completing} of the Introduction: 

\begin{corollary}\label{corollary_pre_continuity}
Let $A$ be a Noetherian, F-finite $\bb Z_{(p)}$-algebra, and $I\subseteq A$ an ideal; let $\hat A$ denote the $I$-adic completion of $A$. Then all of the following maps (not just the compositions) are isomorphisms for all $n\ge0$ and $v,r\ge1$:
\begin{align*}
&\HH_n(A;\bb Z/p^v)\otimes_A\hat A\To \HH_n(\hat A;\bb Z/p^v)\To \projlim_s\HH_n(A/I^s;\bb Z/p^v)\\
&\TR^r_n(A;\bb Z/p^v)\otimes_{W_r(A)}W_r(\hat A)\To \TR^r_n(\hat A;\bb Z/p^v)\To\projlim_s\TR^r_n(A/I^s;\bb Z/p^v)
\end{align*}
\end{corollary}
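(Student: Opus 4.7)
The plan is to apply Theorem \ref{theorem_continuity} twice---once to the ring $A$ itself and once to its $I$-adic completion $\hat A$---and then pass to projective limits. The essential inputs are: (a) $\hat A$ is still Noetherian and F-finite by Lemma \ref{lemma_conditions_for_F-finite}(iii), so Theorem \ref{theorem_continuity} applies to it; (b) $\hat A/I^s\hat A=A/I^s$ for all $s\ge 1$; and (c) all the homotopy groups involved are finitely generated over their natural base rings by Theorem \ref{theorem_finite_gen_of_THH_TR}, which is what lets us interpret tensor products with $\hat A$ (respectively $W_r(\hat A)$) as projective limits.

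For the Hochschild homology assertion, I would first apply Theorem \ref{theorem_continuity}(i) to the F-finite Noetherian ring $\hat A$ with the ideal $I\hat A$ to obtain a pro-isomorphism
\[\{\HH_n(\hat A;\bb Z/p^v)\otimes_{\hat A}\hat A/I^s\hat A\}_s\isoto\{\HH_n(A/I^s;\bb Z/p^v)\}_s.\]
Since $\HH_n(\hat A;\bb Z/p^v)$ is finitely generated over the Noetherian, $I$-adically complete ring $\hat A$, the left-hand pro-system is Mittag--Leffler with projective limit $\HH_n(\hat A;\bb Z/p^v)$, so passing to $\projlim_s$ shows the second map in the statement is an isomorphism. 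Analogously, applying Theorem \ref{theorem_continuity}(i) to $A$ and taking $\projlim_s$ gives an isomorphism whose source is $\HH_n(A;\bb Z/p^v)\otimes_A\hat A$---using here that $\HH_n(A;\bb Z/p^v)$ is finitely generated over the Noetherian ring $A$, so its tensor product with $\hat A$ agrees with $\projlim_s\HH_n(A;\bb Z/p^v)\otimes_A A/I^s$---and whose target is $\projlim_s\HH_n(A/I^s;\bb Z/p^v)$. This limit factors through $\HH_n(\hat A;\bb Z/p^v)$ as precisely the composition in the statement, and the two-out-of-three principle forces the first map to be an isomorphism as well.

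The $\TR^r$ case runs along identical lines, but working over the Witt rings: I would replace the triple $(A,I,\hat A)$ by $(W_r(A),W_r(I),W_r(\hat A))$ and invoke Theorem \ref{theorem_continuity}(ii) in place of (i). The key technical ingredients are Lemma \ref{lemma_W_S_of_completion}, which identifies $W_r(\hat A)$ as the $W_r(I)$-adic completion of $W_r(A)$, together with the intertwining of the ideals $W_r(I^s)$ and $W_r(I)^s$ coming from Lemma \ref{lemma_witt_1}, which is needed to match the pro-systems appearing in Theorem \ref{theorem_continuity}(ii) with honest $W_r(I)$-adic filtrations. Given these identifications, finite generation of $\TR^r_n(A;\bb Z/p^v)$ over $W_r(A)$ and of $\TR^r_n(\hat A;\bb Z/p^v)$ over $W_r(\hat A)$---each of which is Noetherian by Langer--Zink (Theorem \ref{theorem_Langer_Zink})---allows the same Mittag--Leffler argument to run through verbatim. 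The main (though mild) obstacle is keeping track of two simultaneous flavours of completion, $I$-adic on the ring and $W_r(I)$-adic on the Witt ring, and verifying that finite generation really does convert the tensor products appearing in the statement into the projective limits produced by Theorem \ref{theorem_continuity}.
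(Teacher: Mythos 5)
Your proposal is correct and follows essentially the same route as the paper: apply the degree-wise continuity Theorem \ref{theorem_continuity} to both $A$ and to $\hat A$ (which is Noetherian and F-finite by Lemma \ref{lemma_conditions_for_F-finite}), use finite generation from Theorem \ref{theorem_finite_gen_of_THH_TR} together with standard commutative algebra to identify the relevant tensor products with inverse limits, note $\hat A/I^s\hat A\cong A/I^s$, and for $\TR^r$ invoke Langer--Zink's Noetherianity of $W_r(A)$ plus Lemma \ref{lemma_W_S_of_completion} (with the intertwining from Lemma \ref{lemma_witt_1}) exactly as the paper does.
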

\begin{proof}
We claim that each of the following canonical maps is an isomorphism:
\[\HH_n(A;\bb Z/p^v)\otimes_A\hat A\To\projlim_s\HH_n(A;\bb Z/p^v)\otimes_AA/I^s\To \projlim_s\HH_n(A/I^s;\bb Z/p^v).\] Firstly, $\HH_n(A;\bb Z/p^v)$ is a finitely generated $A$-module by Theorem \ref{theorem_finite_gen_of_THH_TR}, and $A$ is Noetherian, so standard commutative algebra, e.g.~\cite[Thm.~8.7]{Matsumura1989}, implies that the first map is an isomorphism. Secondly, Theorem \ref{theorem_continuity}(i) implies that the second map is an isomorphism.

However, Lemma \ref{lemma_conditions_for_F-finite} implies that $\hat A$ is also a Noetherian, F-finite $\bb Z_{(p)}$-algebra, so applying the same argument to $\hat A$ with respect to the ideal $I\hat A$ we obtain another composition of isomorphisms \[\HH_n(\hat A; \bb Z/p^v\bb Z)\otimes_{\hat A}\hat A\To\HH_n(\hat A;\bb Z/p^v)\otimes_{\hat A}\hat A/I^s\hat A\To \projlim_s\HH_n(\hat A/I^s\hat A;\bb Z/p^v)\] Since $\hat A/I^s\hat A\cong A/I^s$ and $\HH_n(\hat A)\otimes_{\hat A}\hat A\cong \HH_n(\hat A)$, the desired isomorphisms for $\HH$ follow.

The proofs of the isomorphisms for $\TR^r$ are exactly the same as for $\HH$, except that for $\TR^r$ one must also note that $W_r(A)$ is Noetherian by Langer--Zink (Theorem \ref{theorem_Langer_Zink}) and use Lemma \ref{lemma_W_S_of_completion}.
\end{proof}

Whereas the previous two continuity results have concerned individual groups, we now prove the spectral continuity of $\THH$, $\TR^r$, etc.~under our usual hypotheses; this establishes Theorem \ref{theorem_intro_continuity_in_complete_case} of the Introduction:

\begin{theorem}\label{theorem_continuity_in_complete_case}
Let $A$ be a Noetherian, F-finite $\bb Z_{(p)}$-algebra, and $I\subseteq A$ an ideal; assume that $A$ is $I$-adically complete. Then, for all $1\le r\le\infty$, the following canonical maps of spectra are weak equivalences after $p$-completion:
\begin{align*}
&\TR^r(A;p)\To\holim_s\TR^r(A/I^s;p)\\
&\TC^r(A;p)\To\holim_s\TC^r(A/I^s;p)
\end{align*}
\end{theorem}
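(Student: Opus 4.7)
My approach is to bootstrap from the degree-wise continuity results of Theorem \ref{theorem_continuity} and Corollary \ref{corollary_pre_continuity} up to the spectrum-level statement via the Milnor ${\projlim}^{\!1}$ sequence, and then to propagate the result to $r=\infty$ and to the cyclic variants by commuting homotopy limits and taking homotopy fibres. The first reduction is this: since $p$-completion is itself a homotopy limit $\holim_v(-\wedge S/p^v)$, and homotopy limits commute among themselves, it suffices to prove, for each fixed $v,r\ge 1$, that the canonical map
\[ \TR^r(A;\bb Z/p^v)\To\holim_s\TR^r(A/I^s;\bb Z/p^v) \]
is a weak equivalence of spectra; the $p$-completed assertion for $\TR^r$ then follows by taking $\holim_v$, using the identification $\TR^r(A;p)_p^\comp\simeq\holim_v\TR^r(A;\bb Z/p^v)$ and the commutation of $\holim_v$ with $\holim_s$.

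\textbf{Core step.} For the displayed map I would apply the Milnor short exact sequence of homotopy groups,
\[ 0\To{\projlim_s}^{\!1}\TR^r_{n+1}(A/I^s;\bb Z/p^v)\To\pi_n\bigl(\holim_s\TR^r(A/I^s;\bb Z/p^v)\bigr)\To\projlim_s\TR^r_n(A/I^s;\bb Z/p^v)\To 0. \]
The right-hand $\projlim$ term is identified with $\TR^r_n(A;\bb Z/p^v)$ by Corollary \ref{corollary_pre_continuity}, applied with $\hat A=A$ (which holds because $A$ is $I$-adically complete by hypothesis). The $\projlim^1$ term vanishes: by Theorem \ref{theorem_continuity}(ii), the pro-$W_r(A)$-module $\{\TR^r_{n+1}(A/I^s;\bb Z/p^v)\}_s$ is pro-isomorphic to $\{\TR^r_{n+1}(A;\bb Z/p^v)\otimes_{W_r(A)}W_r(A/I^s)\}_s$, which is a pro-system of surjective quotients of the single $W_r(A)$-module $\TR^r_{n+1}(A;\bb Z/p^v)$ (finitely generated by Theorem \ref{theorem_finite_gen_of_THH_TR}), hence Mittag--Leffler, hence has vanishing $\projlim^1$. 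Consequently the canonical map induces an isomorphism on all homotopy groups and is therefore a weak equivalence.

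\textbf{Extensions and main obstacle.} To reach $r=\infty$ I would use the definition $\TR(-;p)=\holim_r\TR^r(-;p)$ and commute $\holim_r$ with both $p$-completion and $\holim_s$, reducing immediately to the finite-level case just treated. To pass from $\TR^r$ and $\TR$ to $\TC^r$ and $\TC$, I would invoke the defining homotopy fibre sequences with $R-F$ and $\op{id}-F$ respectively; these are natural in $A$ and hence in the tower $\{A/I^s\}_s$, and homotopy fibres commute with homotopy inverse limits and with $p$-completion, so continuity for $\TR^r$ and $\TR$ immediately yields continuity for $\TC^r$ and $\TC$. The main obstacle is precisely the Mittag--Leffler vanishing invoked in the core step: it is what unifies the inputs of the preceding sections, namely Noetherianness of the Witt rings (Langer--Zink, Theorem \ref{theorem_Langer_Zink}), finite generation of $\TR^r_n(A;\bb Z/p^v)$ over $W_r(A)$, and the degree-wise pro-isomorphism, so the conceptual heavy lifting built up in Sections \ref{section_Witt}--\ref{section_continuity_and_HKR} is exactly what makes the spectral statement essentially formal.
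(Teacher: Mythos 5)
Your proposal is correct and follows essentially the same route as the paper's proof: reduce to $\bb Z/p^v$-coefficients, apply the Milnor sequence with the ${\projlim_s}^1$ term killed by the surjectivity of the transition maps in the pro-system $\{\TR^r_{n+1}(A;\bb Z/p^v)\otimes_{W_r(A)}W_r(A/I^s)\}_s$ (via Theorem \ref{theorem_continuity}(ii)), identify the $\projlim_s$ term with $\TR^r_n(A;\bb Z/p^v)$ using Corollary \ref{corollary_pre_continuity} and the $I$-adic completeness of $A$, and deduce the $r=\infty$ and $\TC$ cases by commuting homotopy limits. No gaps.
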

\begin{proof}
We must prove that each of the given maps of spectra is a weak equivalence with $\bb Z/p^v\bb Z$-coefficients for all $v\ge1$. Firstly, fixing $1\le r<\infty$, the homotopy groups of $\holim_s\TR^r(A/I^s;\bb Z/p^v)$ fit into short exact sequences \[0\to{\projlim_s}^1\TR^r_{n+1}(A/I^s;\bb Z/p^v)\to\pi_n(\holim_s\TR^r(A/I^s;\bb Z/p^v))\to \projlim_s\TR^r_n(A/I^s;\bb Z/p^v)\to 0.\] Theorem \ref{theorem_continuity}(i) implies that the left-most term is $\projlim_s^1\TR^r_{n+1}(A;\bb Z/p^v)\otimes_{W_r(A)}W_r(A/I^s)$, which vanishes because of the surjectivity of the transition maps in the pro abelian group $\{\TR^r_{n+1}(A;\bb Z/p^v)\otimes_{W_r(A)}W_r(A/I^s)\}_s$. In conclusion, the natural map \[\pi_n(\holim_s\TR^r(A/I^s;\bb Z/p^v))\To \projlim_s\TR^r_n(A/I^s;\bb Z/p^v)\] is an isomorphism for all $n\ge0$. But since $A$ is already $I$-adically complete, Corollary \ref{corollary_pre_continuity} states that $\TR^r_n(A;\bb Z/p^v)\to\projlim_s\TR^r_n(A/I^s;\bb Z/p^v)$ is also an isomorphism for all $n\ge0$. So the map $\TR^r(A;\bb Z/p^v)\to\holim_s\TR^r(A/I^s;\bb Z/p^v)$ induces an isomorphism on all homotopy groups, as required to prove the first weak equivalence.

The claims for $\TC^r$, $\TR^\infty=\TR$, and $\TC^\infty=\TC$ then follow since homotopy limits commute.
\end{proof}

In the remainder of this section we consider straightforward consequences of the previous three results in special situations. We begin with the case in which $p$ is nilpotent in $A$:

\begin{corollary}\label{corollary_nilpotent_continuity}
Let $A$ be a Noetherian, F-finite $\bb Z_{(p)}$-algebra, and $I\subseteq A$ an ideal; assume that $p$ is nilpotent in $A$, and let $\hat A$ denote the $I$-adic completion of $A$. Then all of the following maps (not just the compositions) are isomorphisms for all $n\ge0$, $r\ge1$:
\begin{itemize}
\item[]$\{\HH_n(A)\otimes_AA/I^s\}_s\To\{\HH_n(A/I^s)\}_s$
\item[]$\{\TR_n^r(A;p)\otimes_{W_r(A)}W_r(A/I^s)\}_s\To\{\TR_n^r(A/I^s;p)\}_s$
\item[]$\HH_n(A)\otimes_A\hat A\To \HH_n(\hat A)\To \projlim_s\HH_n(A/I^s)$
\item[]$\TR^r_n(A;p)\otimes_{W_r(A)}W_r(\hat A)\To \TR^r_n(\hat A;p)\To\projlim_s\TR^r_n(A/I^s;p)$
\end{itemize}
Moreover, the maps of spectra in the statement of Theorem \ref{theorem_continuity_in_complete_case} are weak equivalences without $p$-completing.

\end{corollary}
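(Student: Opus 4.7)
The plan is to observe that under the hypothesis that $p$ is nilpotent, the relevant spectra are already ``$p$-complete'' in a trivial sense and all the required finite generation holds integrally, so one may rerun the proofs of Theorem~\ref{theorem_continuity}, Corollary~\ref{corollary_pre_continuity}, and Theorem~\ref{theorem_continuity_in_complete_case} without finite coefficients. First I would note the following percolation: by Lemma~\ref{lemma_F-finiteness_of_Witt_ring}(ii), $p$ is nilpotent in each $W_r(A)$ (and a fortiori in each $A/I^s$ and each $W_r(A/I^s)$); since $A$ is therefore a $\bb Z/p^e\bb Z$-algebra for some $e \geq 1$, Theorem~\ref{theorem_F_finite_implies_AQ_finite}(i) upgrades the usual $p$-AQ-finiteness to full AQ-finiteness, and Corollary~\ref{corollary_finite_gen_nilp} supplies integral finite generation of $\HH_n(A)$ and $\THH_n(A)$ over $A$ and of $\TR^r_n(A;p)$ over $W_r(A)$.

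With these ingredients the first bullet is immediate from Lemma~\ref{lemma_continuity} applied with $M = A$ and $m = 0$: AQ-finiteness supplies the hypothesis, so the two pro-arrows there are isomorphisms. For the second bullet I would rerun the inductive argument of Theorem~\ref{theorem_continuity}(ii) verbatim but integrally. The base case $r = 1$ is the $\THH$-analogue of the $\HH$ case just described. The inductive step from $r$ to $r+1$ uses integral finite generation of $\TR^r_n(A;p)$ and $\pi_n(\THH(A)_{hC_{p^r}})$ over the Noetherian ring $W_{r+1}(A)$, the exactness of base change along $\{W_{r+1}(A/I^s)\}_s$ on finitely generated modules (Theorem~\ref{theorem_base_change_by_Witt_rings}(i)), and the group homology isomorphism of Corollary~\ref{corollary_Artin_Rees_group_homology}; no change in the structure of the argument is necessary.

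The third and fourth bullets then follow exactly as in Corollary~\ref{corollary_pre_continuity}, combining the integral degree-wise continuity just established with the standard identification $M \otimes_A \hat A \cong \projlim_s M \otimes_A A/I^s$ for finitely generated $M$ over the Noetherian rings $A$ and $W_r(A)$ (together with Lemma~\ref{lemma_W_S_of_completion} to identify $W_r(\hat A)$ with the pro-limit), applied first to $A$ and then to $\hat A$, which is still Noetherian and F-finite by Lemma~\ref{lemma_conditions_for_F-finite}. For the spectral continuity without $p$-completion, I would invoke Theorem~\ref{theorem_continuity_in_complete_case} directly and observe that $p$-completion acts as the identity on both sides: the homotopy groups of $\TR^r(A;p)$, $\TR^r(A/I^s;p)$ and the associated $\TC$-spectra are bounded $p$-power torsion, and for such an abelian group $M$ one has $\Hom_{\bb Z}(\bb Q_p/\bb Z_p, M) = 0$ and $\Ext^1_{\bb Z}(\bb Q_p/\bb Z_p, M) = M$, so the exact sequence recalled in Section~\ref{subsection_p_completions} identifies $\pi_n$ with $\pi_n$ of the $p$-completion. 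The main obstacle, such as it is, is the bookkeeping in the inductive step for $\TR^r$, namely confirming that each appeal to finite generation carries over integrally; no genuinely new ingredient beyond the $p$-nilpotence is required.
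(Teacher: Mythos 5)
Your argument is correct, but it follows a different route from the paper's own proof of this corollary. The paper does not rerun anything integrally: it fixes $r$, picks $v\gg0$ so that $p^v$ annihilates $W_r(A)$ and hence all the groups $\HH_n(A)$, $\HH_n(A/I^s)$, $\TR^r_n(A;p)$, $\TR^r_n(A/I^s;p)$, observes that the relevant spectra are therefore already $p$-complete, and then extracts the integral pro-isomorphisms from the finite-coefficient statements of Theorem \ref{theorem_continuity} and Corollary \ref{corollary_pre_continuity} via the usual coefficient exact sequences. You instead upgrade the inputs to integral statements (full AQ-finiteness from Theorem \ref{theorem_F_finite_implies_AQ_finite}(i) since $A$ is a $\bb Z/p^e\bb Z$-algebra, integral finite generation from Corollary \ref{corollary_finite_gen_nilp}) and rerun Lemma \ref{lemma_continuity}, Theorem \ref{theorem_continuity}(ii) and Corollary \ref{corollary_pre_continuity} without coefficients; all the cited ingredients (Theorem \ref{theorem_base_change_by_Witt_rings}(i), Corollary \ref{corollary_Artin_Rees_group_homology}, Lemmas \ref{lemma_W_S_of_completion} and \ref{lemma_conditions_for_F-finite}) do apply verbatim, so this works. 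In fact your route is precisely the one the paper sketches in the remark immediately following this corollary, and it buys slightly more: the pro $\HH$, $\THH$ and $\TR^r$ isomorphisms then hold for any Noetherian AQ-finite (and, for $\TR^r$, F-finite) $\bb Z_{(p)}$-algebra, without assuming $p$ nilpotent. The paper's shortcut is quicker given that the finite-coefficient theorems are already in hand.

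One small imprecision: for the spectral statement you claim the homotopy groups of ``the associated $\TC$-spectra'' are bounded $p$-power torsion, which is true for $\TC^r$ with $r$ finite but false at level $r=\infty$ (e.g.\ $\TR_0(\bb F_p;p)=W(\bb F_p)=\bb Z_p$), since the torsion bound on $\TR^r_n$ depends on $r$. This is harmless: $\TR(-;p)$, $\TC(-;p)$ and $\holim_s$ of the finite-level spectra are homotopy limits of $p$-complete spectra, hence $p$-complete, so the $p$-completed equivalences of Theorem \ref{theorem_continuity_in_complete_case} still descend to integral equivalences by commuting homotopy limits, exactly as in the paper.
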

\begin{proof}
Note that $p$ is also nilpotent in $W_r(A)$, by Lemma \ref{lemma_F-finiteness_of_Witt_ring}(ii). So, fixing $r\ge1$, we may pick $v\gg0$ such that the groups \[\HH_n(A),\;\HH_n(A/I^s),\;\THH_n(A),\;\THH(A/I^s),\;\TR_n^r(A;p),\;\TR^r_n(A/I^s;p)\] are annihilated by $p^v$ for all $n\ge0$, $s\ge1$. Hence the spectra appearing in Theorem \ref{theorem_continuity_in_complete_case} are all $p$-complete, and the isomorphisms follow from Theorem \ref{theorem_continuity} and Corollary \ref{corollary_pre_continuity} by examining the short exact sequences for homotopy groups with finite coefficients, in the usual way.
\end{proof}

\begin{remark}
Some of the statements of Corollary \ref{corollary_nilpotent_continuity} hold for rings other than Noetherian, F-finite $\bb Z_{(p)}$-algebras in which $p$ is nilpotent. In particular, if $A$ is a Noetherian ring which is AQ-finite over $\bb Z$ (e.g., an essentially finite type $\bb Z$-algebra) and $I\subseteq A$ is an ideal, then the pro $\HH$ and $\THH=\TR^1$ isomorphisms of Corollary \ref{corollary_nilpotent_continuity} hold; indeed, this follows immediately from Lemma \ref{lemma_continuity} with $m=0$ and $M=A$.

Suppose now, in addition to be being Noetherian and AQ-finite, that $A$ is an F-finite $\bb Z_{(p)}$-algebra (e.g., an essentially finite type $\bb Z_{(p)}$-algebra). Then the pro $\TR^r$ isomorphisms of Corollary \ref{corollary_nilpotent_continuity} hold: this is proved by verbatim repeating the proof of Theorem \ref{theorem_continuity}(ii) integrally instead of with finite coefficients.
\end{remark}

Now, in stark contrast with the case in which $p$ is nilpotent, we consider the case where $I=pA$; here our methods yield a new proof, albeit under different hypotheses, of a result of Geisser and Hesselholt, as we will discuss in Remark \ref{remark_on_GH_p}:

\begin{corollary}\label{corollary_p_I}
Let $A$ be a Noetherian, F-finite $\bb Z_{(p)}$-algebra. Then the following canonical maps are isomorphisms for all $n\ge0$ and $v,r\ge1$:
\begin{itemize}
\item[] $\HH_n(A;\bb Z/p^v)\To\{\HH_n(A/p^sA;\bb Z/p^v)\}_s$
\item[] $\TR_n^r(A;\bb Z/p^v)\To\{\TR_n^r(A/p^sA;\bb Z/p^v)\}_s$
\item[] $\TC_n^r(A;\bb Z/p^v)\To\{\TC_n^r(A/p^sA;\bb Z/p^v)\}_s$
\end{itemize}
Moreover, for $1\le r\le\infty$, all of the following maps (not just the compositions) of spectra are weak equivalences after $p$-completion:
\begin{itemize}
\item[] $\TR^r(A;p)\To\TR^r(A_p^\comp;p)\To\holim_s\TR^r(A/p^sA;p)$,
\item[]$\TC^r(A;p)\To\TC^r(A_p^\comp;p)\To\holim_s\TC^r(A/p^sA;p)$.
\end{itemize}
\end{corollary}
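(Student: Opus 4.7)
The plan is to reduce the corollary to the earlier general results (Theorems \ref{theorem_continuity} and \ref{theorem_continuity_in_complete_case}, and Corollary \ref{corollary_pre_continuity}) by specializing to the ideal $I = pA$ and exploiting the fact that the $\bb Z/p^v$-homotopy groups which appear are annihilated by a power of $p$, so the base changes that appear in the earlier statements simplify.

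For the degree-wise isomorphisms, the key observation is that $\HH_n(A;\bb Z/p^v)$ is an $A$-module killed by $p^v$, so for $s \geq v$ the natural map $\HH_n(A;\bb Z/p^v) \to \HH_n(A;\bb Z/p^v)\otimes_A A/p^sA$ is an isomorphism, making the pro $A$-module $\{\HH_n(A;\bb Z/p^v)\otimes_A A/p^sA\}_s$ pro-isomorphic to the constant system. Composing with the pro-isomorphism supplied by Theorem \ref{theorem_continuity}(i) for $I=pA$ then yields the $\HH$-statement. The argument for $\TR^r$ is analogous once one verifies that $W_r(p^sA)$ annihilates $\TR^r_n(A;\bb Z/p^v)$ for $s \gg 0$: since $\TR^r_n(A;\bb Z/p^v)$ is killed by the integer $p^v \in W_r(A)$, it suffices to show $W_r(p^sA) \subseteq p^v W_r(A)$ for large $s$, which follows by combining Lemma \ref{lemma_witt_1}(iv) with the inclusion $W_r(pA)^M \subseteq pW_r(A)$ established in the second half of Lemma \ref{lemma_W_r_of_p_completion}. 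Theorem \ref{theorem_continuity}(ii) then yields the $\TR^r$-isomorphism, and the $\TC^r$-isomorphism is deduced by applying the five lemma to the long exact sequences $\cdots \to \TC^r_n \to \TR^r_n \xto{R-F} \TR^{r-1}_n \to \cdots$ associated to $A$ and to $\{A/p^sA\}_s$.

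For the spectral weak equivalences I factor the comparison through $A_p^\comp$. The map $\TR^r(A_p^\comp;p) \to \holim_s \TR^r(A/p^sA;p)$ is a weak equivalence after $p$-completion by Theorem \ref{theorem_continuity_in_complete_case} applied to the Noetherian, F-finite (by Lemma \ref{lemma_conditions_for_F-finite}), $pA_p^\comp$-adically complete ring $A_p^\comp$, noting $A_p^\comp/p^sA_p^\comp \cong A/p^sA$. For the first map $\TR^r(A;p)_p^\comp \to \TR^r(A_p^\comp;p)_p^\comp$, I check it induces an isomorphism on $\bb Z/p^v$-homotopy for every $v \geq 1$: Corollary \ref{corollary_pre_continuity} with $I = pA$ gives $\TR^r_n(A_p^\comp;\bb Z/p^v) \cong \TR^r_n(A;\bb Z/p^v)\otimes_{W_r(A)} W_r(A_p^\comp)$, and since this module is killed by $p^v$ the tensor product depends only on $W_r(A_p^\comp)/p^v = W_r(A)_p^\comp/p^v = W_r(A)/p^v$ (using Lemma \ref{lemma_W_r_of_p_completion}), so the right-hand side collapses to $\TR^r_n(A;\bb Z/p^v)$ itself. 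The corresponding $\TC^r$-statements follow at once since $\TC^r$ is a homotopy fibre of a $\TR$-map and $p$-completion preserves homotopy fibres; the $r = \infty$ cases for $\TR$ and $\TC$ are then obtained by taking the homotopy limit over $r$ of the finite-level statements, using that homotopy limits commute with $p$-completion and with one another.

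The main technical obstacle is the Witt-ring book-keeping underlying both halves of the argument --- verifying the ideal-theoretic intertwining $W_r(p^sA) \subseteq p^v W_r(A)$ for $s \gg 0$, and the identity $W_r(A_p^\comp)/p^v = W_r(A)/p^v$ --- but all the necessary ingredients are already available in Section \ref{subsection_Witt}.
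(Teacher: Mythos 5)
Your argument is correct and follows essentially the same route as the paper: annihilation of the $\bb Z/p^v$-groups by $p^v$ together with the Witt-ideal intertwining $W_r(p^sA)\subseteq p^vW_r(A)$ for $s\gg0$ collapses the base changes in Theorem \ref{theorem_continuity}, the five lemma handles $\TC^r$, and Theorem \ref{theorem_continuity_in_complete_case} applied to $A_p^\comp$ gives the second spectral equivalence. Your treatment of the first spectral map via Corollary \ref{corollary_pre_continuity} and the identification $W_r(A_p^\comp)/p^v\cong W_r(A)/p^v$ is a cosmetic variant of the paper's step of applying the degree-wise pro isomorphism to both $A$ and $A_p^\comp$, and is equally valid.
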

\begin{proof}
The groups $\HH_n(A;\bb Z/p^v)$ and $\TR_n^r(A;\bb Z/p^v)$ are annihilated by $p^v$. Since we proved in Lemma \ref{lemma_W_r_of_p_completion} that $p^vW_r(A)$ contains $W_r(p^sA)$ for $s\gg0$, we deduce that
\begin{itemize}
\item[]$\{\HH_n(A;\bb Z/p^v)\otimes_AA/p^sA\}_s=\HH_n(A;\bb Z/p^v)$,
\item[]$\{\TR_n^r(A;\bb Z/p^v)\otimes_{W_r(A)}W_r(A/p^sA)\}_s=\TR_n^r(A;\bb Z/p^v)$.
\end{itemize}
Hence the desired pro $\HH$ and $\TR^r$ isomorphisms follow from Theorem \ref{theorem_continuity}. The pro $\TC$ isomorphism then follows in the usual way by applying the five lemma to the long exact sequence relating $\TC^r$, $\TR^r$, and $\TR^{r-1}$.

Since $A_p^\comp$ is also a Noetherian, F-finite $\bb Z_{(p)}$-algebra by Lemma \ref{lemma_conditions_for_F-finite}, and since $A_p^\comp/p^sA_p^\comp\cong A/p^sA$, applying isomorphism (ii) to both $A$ and $A_p^\comp$ yields \[\TR^r_n(A;\bb Z/p^v)\cong\{\TR^r_n(A/p^s;\bb Z/p^v)\}_s\cong \TR^r_n(A_p^\comp;\bb Z/p^v)\] for any integer $r\ge1$. This proves that $\TR^r(A;p)_p^\comp\simeq\TR^r(A_p^\comp;p)_p^\comp$; the same follows for $\TC^r$, $\TR$ and $\TC$ by taking homotopy limits.

The second column of maps between spectra are weak equivalences after $p$-completing by Theorem~\ref{theorem_continuity_in_complete_case}.
\end{proof}

\begin{remark}\label{remark_on_GH_p}
The isomorphisms of Corollary \ref{corollary_p_I} were proved by Geisser and Hesselholt \cite[\S3]{GeisserHesselholt2006a} for any (possibly non-commutative) ring $A$ in which $p$ is a non-zero divisor. The key assertion is the pro $\THH$ isomorphism, namely \[\THH_n(A;\bb Z/p^v)\isoto\{\THH_n(A/p^sA;\bb Z/p^v)\}_s,\] as they have a short argument to deduce the corresponding $\TR^r$ isomorphism from this by induction.

This $\THH$ isomorphism can also be proved using our methods as follows:  mimicking the proof of Lemma \ref{lemma_continuity} via Proposition \ref{proposition_Bjorns_SS} and the Artin--Rees vanishing result $\{\Tor_n^A(A/I^s,A/I^s)\}_s=0$, it is enough to show that \[\{\Tor_n^A(A/p^sA,A/p^sA)\}_s=0\] for all $n>0$ whenever $p$ is a non-zero divisor of a possibly non-commutative ring $A$. But in such a situation we may calculate Tor using the projective resolution $0\to A\xto{\times p^s}A\to A/p^sA\to 0$ of $A/p^sA$, and from this calculation it easily follows that the map \[\Tor_1^A(A/p^{2s}A,A/p^{2s}A)\To \Tor_1^A(A/p^sA,A/p^sA)\] is zero, as required.
\end{remark}

\subsection{The pro Hochschild--Kostant--Rosenberg theorems}\label{subsection_pro_HKR}
Given a geometrically regular (e.g., smooth) morphism $k\to A$ of Noetherian rings, the classical Hochschild--Kostant--Rosenberg theorem \cite[Thm.~3.4.4]{Loday1992} states that the antisymmetrisation map $\Omega_{A/k}^n\to HH_n^k(A)$ is an isomorphism of $A$-modules for all $n\ge0$.

\begin{remark}\label{remark_geometrically_regular}
Since the notion of a geometrically regular morphism may not be familiar to all readers, here we offer a brief explanation. A good reference is R.~Swan's exposition of Neron--Popescu desingularisation \cite{Swan1998}.

If $k$ is a field, then a Noetherian $k$-algebra $A$ is said to be {\em geometrically regular} over $k$, or that $k\to A$ is a geometrically regular morphism, if and only if $A\otimes_kk'$ is a regular ring for all finite field extensions $k'/k$. If $k$ is perfect then this is equivalent to $A$ being a regular ring, which is equivalent to $A$ being smooth over $k$ if we moreover assume that $A$ is essentially of finite type over $k$. If $k$ is no longer necessarily a field, then $k\to A$ is said to be geometrically regular if and only if it is flat and $k(\frak p)\to R\otimes_kk(\frak p)$ is geometrically regular in the previous sense for all prime ideals $\frak p\subseteq k$, where $k(\frak p)=k_\frak p/\frak pk_\frak p$.

The Neron--Popescu desingularisation theorem \cite{Popescu1985, Popescu1986} states that if $A$ is a $k$-algebra, with both rings Noetherian, then $A$ is geometrically regular over $k$ if and only if it is a filtered colimit of smooth, finite-type $k$-algebras.
\end{remark}

The following establishes the pro Hochschild--Kostant--Rosenberg theorem for algebraic Hochschild homology in full generality:

\begin{theorem}[Pro HKR theorem for Hochschild homology]\label{theorem_pro_HKR}
Let $k\to A$ be a geometrically regular morphism of Noetherian rings, and $I\subseteq A$ an ideal. Then the canonical map of pro $A$-modules \[\{\Omega^n_{(A/I^s)/k}\}_s\To\{\HH_n^k(A/I^s)\}_s\] is an isomorphism for all $n\ge0$.
\end{theorem}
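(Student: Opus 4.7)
The plan is to factor the canonical comparison map
\[
\{\Omega^n_{(A/I^s)/k}\}_s\To\{\HH_n^k(A/I^s)\}_s
\]
through the intermediate pro $A$-module $\{\Omega^n_{A/k}\otimes_A A/I^s\}_s$, and to show that each of the two resulting maps is an isomorphism of pro $A$-modules. The first factor will come from the conormal exact sequence; the second will combine the restriction spectral sequence of Proposition \ref{proposition_Bjorns_SS} with the Andr\'e--Quillen-to-Hochschild homology spectral sequence, invoking classical HKR for $A$ itself.

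For the first factor, I will use the classical conormal right exact sequence associated to the surjection $A\onto A/I^s$,
\[
(I^s/I^{2s})\otimes_A\Omega^{n-1}_{A/k}\To\Omega^n_{A/k}\otimes_A A/I^s\To\Omega^n_{(A/I^s)/k}\To 0.
\]
The pro $A$-module $\{I^s/I^{2s}\}_s$ vanishes, because for any $s$ and any $s'\ge 2s$ the inclusion $I^{s'}\sseq I^{2s}$ makes the transition $I^{s'}/I^{2s'}\to I^s/I^{2s}$ the zero map; this is the trivial Mittag-Leffler condition. Tensoring with $\Omega^{n-1}_{A/k}$ preserves this vanishing, so the leftmost pro term is zero and $\{\Omega^n_{A/k}\otimes_A A/I^s\}_s\To\{\Omega^n_{(A/I^s)/k}\}_s$ is an isomorphism of pro $A$-modules.

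For the second factor, I will first apply Proposition \ref{proposition_Bjorns_SS} to the morphism $A\to A/I^s$ with bimodule $M=A/I^s$ and assemble over $s$, obtaining a pro spectral sequence
\[
\{\HH_i^k(A/I^s,\Tor_j^A(A/I^s,A/I^s))\}_s\implies\{\HH_{i+j}^k(A,A/I^s)\}_s.
\]
By Corollary \ref{corollary_Artin_Rees} (applied with $M=A$), the pro terms on the $E^2$-page vanish for $j>0$, so this degenerates to edge isomorphisms $\{\HH_n^k(A,A/I^s)\}_s\isoto\{\HH_n^k(A/I^s)\}_s$. Next, the Andr\'e--Quillen-to-Hochschild homology spectral sequence
\[
E^2_{ij}=D_i^j(A/k,A/I^s)\implies\HH_{i+j}^k(A,A/I^s)
\]
reduces computing $\HH_n^k(A,A/I^s)$ to computing Andr\'e--Quillen homology of $A/k$. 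Since $k\to A$ is geometrically regular, N\'eron--Popescu desingularisation writes $A=\indlim_\al A_\al$ as a filtered colimit of smooth finite-type $k$-algebras; compatibility of the cotangent complex with filtered colimits of rings yields $\bb L_{A/k}\simeq\Omega^1_{A/k}[0]$, with $\Omega^1_{A/k}=\indlim_\al(\Omega^1_{A_\al/k}\otimes_{A_\al}A)$ flat over $A$ as a filtered colimit of finitely generated projective $A$-modules. Hence $\bb L_{A/k}^j\simeq\Omega^j_{A/k}[0]$ with $\Omega^j_{A/k}$ still flat, so $D_i^j(A/k,A/I^s)=\Omega^j_{A/k}\otimes_A A/I^s$ for $i=0$ and vanishes for $i>0$. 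The spectral sequence collapses to $\HH_n^k(A,A/I^s)\cong\Omega^n_{A/k}\otimes_A A/I^s$, and combining this with the edge isomorphisms from the restriction spectral sequence yields the desired isomorphism $\{\Omega^n_{A/k}\otimes_A A/I^s\}_s\isoto\{\HH_n^k(A/I^s)\}_s$.

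The principal subtlety is the flatness of $\Omega^j_{A/k}$ over $A$: this is where geometric regularity (as opposed to mere regularity) enters essentially, via N\'eron--Popescu, and for a general Noetherian $k$-algebra the corresponding flatness would fail (e.g.~for $k\to k[[t]]$ with $k$ a field). Once this flatness is in hand, the collapse of the Andr\'e--Quillen-to-Hochschild spectral sequence is automatic, and the remaining steps are a formal combination of the restriction spectral sequence, the Artin--Rees theorem, and the pro-vanishing of $\{I^s/I^{2s}\}_s$.
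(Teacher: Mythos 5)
Your proof is correct and follows essentially the same route as the paper: the same intermediate pro-module $\{\Omega^n_{A/k}\otimes_AA/I^s\}_s$, the same conormal-sequence/pro-vanishing argument for comparing it with $\{\Omega^n_{(A/I^s)/k}\}_s$, and the same combination of the restriction spectral sequence of Proposition \ref{proposition_Bjorns_SS} with the Artin--Rees vanishing of $\{\Tor_j^A(A/I^s,A/I^s)\}_s$. The only minor variation is in one step: you obtain $\HH^k_n(A,A/I^s)\cong\Omega^n_{A/k}\otimes_AA/I^s$ from $\bb L_{A/k}\simeq\Omega^1_{A/k}$ (flat, via N\'eron--Popescu) and the Andr\'e--Quillen-to-Hochschild spectral sequence, whereas the paper quotes the classical HKR isomorphism $\Omega^n_{A/k}\isoto\HH^k_n(A)$ and collapses the universal coefficient spectral sequence using the same N\'eron--Popescu flatness input.
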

\begin{proof}
Consider the following commutative diagram of pro $A$-modules, in which the vertical arrows are the antisymmetrisation maps:
\[\xymatrix{
\{A/I^s\otimes_A\HH_n^k(A)\}_s \ar[r]^{(1)} & \{\HH_n^k(A,A/I^s)\}_s \ar[r]^{(2)} & \{\HH_n^k(A/I^s)\}_n\\
\{A/I^s\otimes_A\Omega_{A/k}^n\}_s \ar[u]\ar[rr]^{(3)} && \{\Omega^n_{(A/I^s)/k}\}_s \ar[u]
}\]
As recalled above, the classical HKR theorem implies that the antisymmetrization map $\Omega_{A/k}^j\to\HH_j^k(A)$ is an isomorphism for all $j\ge0$. So the left vertical arrow is an isomorphism. Moreover, Neron--Popescu desingularisation implies that $A$ is a filtered colimit of smooth, finite type $k$-algebras, and so $\Omega_{A/k}^j\cong\HH_j^k(A)$ is a filtered colimit of free $A$-modules, hence is a flat $A$-module.

For any $A$-module $M$, the universal coefficient spectral sequence $\Tor_i^A(M,\HH_j^k(A))\Rightarrow\HH_{i+j}(A,M)$ therefore collapses to edge map isomorphisms $M\otimes_A\Omega_{A/k}^n\isoto\HH_n^k(A,M)$. In particular, taking $M=A/I^s$ shows that arrow (1) is an isomorphism.

Next, Lemma \ref{lemma_continuity}(i) states that arrow (2) is an isomorphism (to be precise, Lemma \ref{lemma_continuity}(i) was stated only for the ground ring $\bb Z$, but the proof worked verbatim for any ground ring $k$). Finally, arrow (3) is easily seen to be an isomorphism using the inclusion $d(I^{2s})\subseteq I^s\Omega_{A/k}^n$.

It follows that the right vertical arrow is also an isomorphism, as desired.
\end{proof}

\begin{corollary}[Pro HKR Theorem for cyclic homology]
Let $k\to A$ be a geometrically regular morphism of Noetherian rings, and $I\subseteq A$ an ideal. Then there is a natural spectral sequence of pro $k$-modules \[E_{pq}^2=\begin{cases}\{\Omega_{(A/I^s)/k}^n/d\Omega_{(A/I^s)/k}^{n-1}\}_s&p=0\\\{H_\sub{dR}^{q-p}((A/I^s)/k)\}_s&p>0\end{cases}\implies \{HC_{p+q}^k(A/I^s)\}_s.\] If $k$ contains $\bb Q$ then this degenerates with naturally split filtration, yielding \[\{HC_n^k(A/I^s)\}_s\cong\{\Omega_{(A/I^s)/k}^n/d\Omega_{(A/I^s)/k}^{n-1}\}_s\oplus\bigoplus_{0\le p<\tfrac{n}{2}}\{H_\sub{dR}^{n-2p}((A/I^s)/k)\}_s.\]
\end{corollary}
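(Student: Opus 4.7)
The plan is to construct the spectral sequence from the standard Hodge-type spectral sequence associated to Connes' mixed complex, applied to each quotient $A/I^s$, and then to identify its $E^2$-page pro-systematically via the pro HKR theorem for Hochschild homology (Theorem~\ref{theorem_pro_HKR}). For any $k$-algebra $R$, the column filtration of the cyclic bicomplex yields a natural spectral sequence
\[ E^2_{pq}(R) \Longrightarrow HC_{p+q}^k(R), \]
whose $E^1$-page is $HH_q^k(R)$ with $d^1 = B$ (the Connes operator), so that $E^2_{pq}(R)$ is computed as the $B$-homology of $HH_*^k(R)$. Crucially, this entire construction is strictly natural in $R$.

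Applying the construction to each $R = A/I^s$ and assembling in $s$ produces a natural spectral sequence of pro $k$-modules $\{E^2_{pq}(A/I^s)\}_s \Rightarrow \{HC_{p+q}^k(A/I^s)\}_s$ with pro $E^1$-page $\{HH_q^k(A/I^s)\}_s$ and pro differential induced levelwise by $B$. To identify the $E^2$-page I would invoke Theorem~\ref{theorem_pro_HKR}, which gives a natural pro-isomorphism $\{HH_q^k(A/I^s)\}_s \cong \{\Omega^q_{(A/I^s)/k}\}_s$ via the antisymmetrisation map. Since the HKR map is functorial in the ring and intertwines $B$ with the de Rham differential $d$, this isomorphism transports the pro $d^1$ to the pro de Rham differential. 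Computing the homology of $(\{\Omega^*_{(A/I^s)/k}\}_s, d)$ then gives $\{\Omega^n_{(A/I^s)/k}/d\Omega^{n-1}_{(A/I^s)/k}\}_s$ in the bottom row $p=0$ and the pro de Rham cohomology $\{H^{q-p}_\sub{dR}((A/I^s)/k)\}_s$ in the columns $p>0$, producing the spectral sequence claimed.

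For the rational degeneration, when $\bb Q \subseteq k$ I would appeal to the Karoubi $\lambda$-decomposition, which provides a natural splitting $HC_n^k(R) = \bigoplus_i HC_n^{k,(i)}(R)$ by Adams weight for any commutative $k$-algebra $R$. This refines the spectral sequence as a direct sum of Adams-weight pieces, each of which degenerates at $E^2$ for degree reasons; for regular $R$ the summands are identified classically with $\Omega^n/d\Omega^{n-1}$ and the various $H^{n-2p}_\sub{dR}$, and the naturality of the Karoubi decomposition together with Theorem~\ref{theorem_pro_HKR} lets these identifications descend pro-systematically to the pro-ring $\{A/I^s\}_s$, which yields the stated pro-decomposition after rearranging the summands.

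The step I expect to be the main obstacle is the pro-identification of $d^1$ with $d$: the HKR map intertwines $B$ and $d$ on the nose only for geometrically regular algebras, whereas the rings $A/I^s$ are typically singular. The way out is to observe that the HKR antisymmetrisation map and the operators $B$, $d$ are all strictly natural in the ring, so the squares relating $B$ to $d$ commute at each level $s$; the pro-isomorphism of Theorem~\ref{theorem_pro_HKR} then automatically respects differentials at the level of pro-systems, even though no single level commutes in the strict HKR sense. A parallel compatibility check is required between Karoubi's decomposition and the pro HKR isomorphism in the rational case, and reduces similarly to the naturality of the Adams operations.
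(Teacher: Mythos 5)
Your proposal is correct and follows essentially the same route as the paper, whose proof is simply to combine Theorem \ref{theorem_pro_HKR} with the standard cyclic-homology arguments you spell out: the column-filtration spectral sequence of the cyclic bicomplex with $d^1=B$, the levelwise identity $B\circ\varepsilon_n=\varepsilon_{n+1}\circ d$ (which in fact holds on the nose for every commutative algebra, so the "obstacle" in your last paragraph is not an issue — only the bijectivity of $\varepsilon$ needs the pro HKR theorem), and the $\lambda$-decomposition for the rational degeneration and splitting.
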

\begin{proof}
This follows by combining Theorem \ref{theorem_pro_HKR} with standard arguments in cyclic homology.
\end{proof}

\begin{remark}
In the special case of certain finite type algebras over fields, the pro HKR theorem for algebraic Hochschild homology was established by G.~Corti\~nas, C.~Haesemeyer, and C.~Weibel \cite[Thm.~3.2]{Cortinas2009}. This full version of the pro HKR theorem has recently been required in the study of infinitesimal deformations of algebraic cycles \cite{BlochEsnaultKerz2013, Morrow_Deformational_Hodge}.
\end{remark}

Next we turn to topological Hochschild homology, for which we must first briefly review the de Rham--Witt complex. Given an $\bb F_p$-algebra $A$, the existence and theory of the $p$-typical de Rham--Witt complex $W_r\Omega_A^\bullet$, which is a pro differential graded $W(A)$-algebra, is due to S.~Bloch, P.~Deligne, and L.~Illusie; see especially \cite[Def.~I.1.4]{Illusie1979}. It was later extended by Hesselholt and Madsen to $\bb Z_{(p)}$-algebras with $p$ odd, and finally by Hesselholt in full generality; see the introduction to \cite{Hesselholt2010} for further discussion. We will only require the classical formulation for $\bb F_p$-algebras, with which we assume the reader has some familiarity.

We may now recall Hesselholt's topological Hochschild homology analogue of the HKR theorem. Letting $A$ be an $\bb F_p$-algebra, one knows that the pro graded ring $\{\TR^r_\blob(A;p)\}_r$ is a $p$-typical Witt complex with respect to its operators $F,V,R$; by universality of the de Rham--Witt complex, there are therefore natural maps of graded $W_r(A)$-algebras \cite[Prop.~1.5.8]{Hesselholt1996} \[W_r\Omega_A^\blob\To TR_\blob^r(A;p)\] for $r\ge0$, which are compatible with the Frobenius,  Verschiebung, and Restriction maps (in other words, a morphism of $p$-typical Witt complexes). Since there is also a natural map of graded $W_r(\bb F_p)$-algebras $TR_\blob^r(\bb F_p;p)\to  TR_\blob^r(A;p)$, we may tensor these algebra maps to obtain a natural morphism of graded $W_r(A)$-algebras \[W_r\Omega_A^\blob\otimes_{W_r(\bb F_p)}TR_\blob^r(\bb F_p;p)\to  TR_*^r(A;p).\tag{\dag}\] Hesselholt's HKR theorem may be stated as follows:

\begin{theorem}[{Hesselholt \cite[Thm.~B]{Hesselholt1996}}]
Suppose that the $\bb F_p$-algebra $A$ is regular. Then (\dag) is an isomorphism of graded $W_r(A)$-algebras for all $r\ge1$.

Moreover, there are isomorphisms $W_r(\bb F_p)[\sigma_r]\cong TR_\blob^r(\bb F_p;p)$ of graded $W_r(\bb F_p)$-algebras, where the polynomial variable $\sigma_r$ has degree $2$, $F(\sigma_r)=\sigma_{r-1}$, $V(\sigma_r)=p\sigma_{r+1}$, and $R(\sigma_r)=p\lambda_r\sigma_{r-1}$ for some unit $\lambda_r\in W_r(\bb F_p)$.
\end{theorem}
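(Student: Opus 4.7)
The plan is to establish the theorem in three stages: first compute $\TR_\bullet^r(\bb F_p;p)$, then compute $\TR_\bullet^r(A;p)$ for a regular $\bb F_p$-algebra $A$ by reducing to the polynomial case, and finally identify the answer with the de Rham--Witt complex via its universal property.

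First I would treat the base ring $\bb F_p$. B\"okstedt's theorem (\ref{subsection_THH}(ii)) gives $\THH_\bullet(\bb F_p)=\bb F_p[\sigma_1]$ concentrated in even degrees. Using the fundamental cofibre sequence $\THH(\bb F_p)_{hC_{p^r}}\to\TR^{r+1}(\bb F_p;p)\xto{R}\TR^r(\bb F_p;p)$ from \ref{subsection_TR}(i), one may induct on $r$: the homotopy orbit spectral sequence $E_{ij}^2=H_i(C_{p^r},\THH_j(\bb F_p))\Rightarrow\pi_{i+j}(\THH(\bb F_p)_{hC_{p^r}})$ has trivial $C_{p^r}$-action on a polynomial ring concentrated in even degrees, and its differentials are computable by exploiting the Tate cohomology periodicity and the multiplicative structure. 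Combined with the Hesselholt--Madsen identification $\TR_0^r(\bb F_p;p)=W_r(\bb F_p)=\bb Z/p^r\bb Z$ and keeping track of how $F,V,R$ act on the generator in degree $2$, this yields the claimed isomorphism $\TR_\bullet^r(\bb F_p;p)\cong W_r(\bb F_p)[\sigma_r]$ with its stated operator formulas.

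Next I would pass to a general regular $\bb F_p$-algebra $A$. By Neron--Popescu desingularization (Remark \ref{remark_geometrically_regular}), $A$ is a filtered colimit of smooth $\bb F_p$-algebras, and since $\THH$, $\TR^r$, $\Omega_{-/\bb F_p}^\bullet$ and $W_r\Omega_{-}^\bullet$ all commute with filtered colimits, one reduces to the smooth case. For $A$ smooth the Pirashvili--Waldhausen spectral sequence combined with the classical HKR identification $\HH_\bullet(A)\cong\Omega_{A/\bb F_p}^\bullet$ and B\"okstedt's computation degenerates for degree reasons, giving $\THH_n(A)\cong\bigoplus_{i+2j=n}\Omega_{A/\bb F_p}^i\otimes_{\bb F_p}\THH_{2j}(\bb F_p)$; this settles the case $r=1$. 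For the inductive step on $r$, apply the cofibre sequence to $A$: the group homology spectral sequence $H_i(C_{p^r},\THH_j(A))\Rightarrow\pi_{i+j}(\THH(A)_{hC_{p^r}})$ is controlled by the $r=1$ computation, and comparison with the $\bb F_p$ case (where everything is already known) lets one read off $\pi_\bullet(\THH(A)_{hC_{p^r}})$ as a tensor product of $\Omega_{A/\bb F_p}^\bullet$ with the corresponding groups over $\bb F_p$. Feeding this back into the long exact sequence and using the induction hypothesis on $\TR^r$ then determines $\TR_\bullet^{r+1}(A;p)$ abstractly.

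To identify the answer with the de Rham--Witt complex, I would invoke the universal property. The pro graded ring $\{\TR_\bullet^r(A;p)\}_r$, equipped with the operators $F,V,R$ and the differential induced by the $S^1$-action on $\THH(A)$, forms a $p$-typical Witt complex in the sense of Hesselholt--Madsen; hence by universality one obtains a canonical morphism of $p$-typical Witt complexes $W_r\Omega_A^\bullet\to\TR_\bullet^r(A;p)$. Tensoring with the $\bb F_p$-factor constructed above produces the map $(\dag)$, and the abstract calculation of the previous paragraph shows both sides have the same underlying $W_r(A)$-module, so that $(\dag)$ is an isomorphism. The main obstacle will be the inductive step in the third paragraph: matching the de Rham--Witt differential $d$ (and the relations $FdV=d$, $FV=p$, etc.) with the topologically-defined operators on $\TR^r$, and verifying that the homotopy orbit spectral sequence degenerates as predicted so that the de Rham--Witt-like $W_r(A)$-module structure can be read off without ambiguity.
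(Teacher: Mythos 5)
A preliminary remark: the paper does not prove this statement at all — it is quoted from Hesselholt \cite[Thm.~B]{Hesselholt1996} and used as a black box in the proof of the pro HKR theorem for $\TR$. So your proposal is really being measured against Hesselholt's original argument, and as a reconstruction of that argument it has genuine gaps precisely where the real work lies. The most concrete error is in your $r=1$ step: the Pirashvili--Waldhausen spectral sequence has $E^2_{ij}=\HH_i(A,\THH_j(\bb Z,A))$ with Hochschild homology taken over $\bb Z$, so for an $\bb F_p$-algebra the classical HKR identification $\HH_i(A)\cong\Omega^i_{A/\bb F_p}$ does not apply (already $\HH^{\bb Z}_\bullet(\bb F_p)$ is a divided power algebra on a degree-$2$ class, not $\bb F_p$), and the spectral sequence does not degenerate for degree reasons — even for $A=\bb F_p$ it must have nontrivial differentials, since the columns $j=2mp-1$ contribute classes absent from $\THH_\bullet(\bb F_p)=\bb F_p[\sigma]$. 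The statement $\THH_\bullet(A)\cong\Omega^\bullet_{A/\bb F_p}\otimes_{\bb F_p}\THH_\bullet(\bb F_p)$ for regular $A$ is true, but it is proved by a different route (reduction to polynomial rings over a perfect field via \'etale/filtered-colimit base change, or base change along $\THH(\bb F_p)$), not by the argument you give.

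The deeper gaps are at $r>1$. The computation of $\TR^r_\bullet(\bb F_p;p)$ is not an exercise in the homotopy orbit spectral sequence: its differentials are controlled by the cyclotomic structure, i.e.\ by comparing $\THH(\bb F_p)$ with its $C_p$-Tate construction (B\"okstedt--Madsen, Hesselholt--Madsen), and this is a substantial theorem. Worse, your inductive step for general regular $A$ — ``comparison with the $\bb F_p$ case lets one read off $\pi_\bullet(\THH(A)_{hC_{p^r}})$ as a tensor product'' — is exactly the point that cannot be taken for granted: nothing in the $r=1$ computation controls the differentials and extensions in the orbit spectral sequence for $r\ge1$. Hesselholt's proof does not proceed this way; it reduces, using that $\TR^r$ commutes with filtered colimits and behaves well under \'etale base change, to polynomial algebras over a perfect field, where $\THH(k[x_1,\dots,x_d])$ decomposes $C_{p^r}$-equivariantly via the cyclic bar construction of the free monoid and the fixed points of each weight piece are computed from the cyclotomic structure; the result is then matched with Illusie's explicit description of $W_r\Omega^\bullet$ of a polynomial ring. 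Finally, even granting the abstract computation, showing that ``both sides have the same underlying $W_r(A)$-module'' does not prove that the particular map $(\dag)$ furnished by the universal property of the de Rham--Witt complex is an isomorphism; one must check that this specific map hits generators, which again is done by explicit comparison in the polynomial case. In short, the skeleton (reduce via N\'eron--Popescu, compute for $\bb F_p$, induct on $r$, invoke universality of $W_r\Omega^\bullet$) is broadly the right shape, but each of the three computational pillars is asserted rather than proved, and the $r=1$ argument as written is incorrect.
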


We now prove the pro Hochschild--Kostant--Rosenberg theorem for $\THH$ and $\TR^r$; since infinite direct sums do not commute with the formation of pro-abelian groups, we must state it degree-wise:

\begin{theorem}[Pro HKR Theorem for $\THH$ and $\TR^r$]\label{theorem_top_pro_HKR}
Let $A$ be a regular, F-finite $\bb F_p$-algebra, and $I\subseteq A$ an ideal. Then the canonical map \[\bigoplus_{i=0}^n\{W_r\Omega_{A/I^s}^i\otimes_{W_r(\bb F_p)}\TR^r_{n-i}(\bb F_p;p)\}_s\To \{\TR^r_n(A/I^s;p)\}_s\] of pro $W_r(A)$-modules is an isomorphism for all $n\ge0$, $r\ge1$.
\end{theorem}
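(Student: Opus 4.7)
The plan is to combine Hesselholt's HKR theorem, applied to the regular ring $A$ itself, with the integral continuity result Corollary \ref{corollary_nilpotent_continuity}, thereby reducing the theorem to a pro base-change comparison for the de Rham--Witt complex.

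First, since $A$ is regular, Hesselholt's HKR theorem identifies $\TR^r_n(A;p)$ with $\bigoplus_{i=0}^n W_r\Omega_A^i\otimes_{W_r(\bb F_p)}\TR^r_{n-i}(\bb F_p;p)$. Next, because $p=0$ in $A$ (hence $p$ is nilpotent in $W_r(A)$ by Lemma \ref{lemma_F-finiteness_of_Witt_ring}(ii)), Corollary \ref{corollary_nilpotent_continuity} furnishes an integral pro-isomorphism
\[\{\TR^r_n(A;p)\otimes_{W_r(A)}W_r(A/I^s)\}_s\isoto\{\TR^r_n(A/I^s;p)\}_s.\]
Combining these two identifications and commuting the finite direct sum past the tensor product and the pro-system, the theorem reduces to establishing, for each fixed $i\ge 0$, that the canonical map
\[(\star)\colon\{W_r\Omega_A^i\otimes_{W_r(A)}W_r(A/I^s)\}_s\To\{W_r\Omega_{A/I^s}^i\}_s\]
is an isomorphism of pro $W_r(A)$-modules.

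Surjectivity of $(\star)$ at each level follows from the fact that $W_r\Omega_{A/I^s}^\bullet$ is generated as a graded $W_r(A/I^s)$-module by its degree-$0$ piece $W_r(A/I^s)=W_r(A)/W_r(I^s)$ under the differential (the operators $F$ and $V$ commuting with the quotient $A\to A/I^s$), and all such generators visibly lift to $W_r\Omega_A^\bullet$. The kernel of the surjection at level $s$ is generated by elements $d\alpha\cdot\omega$ with $\alpha\in W_r(I^s)$ and $\omega\in W_r\Omega_A^{i-1}$, so vanishing in the pro system reduces to a de Rham--Witt analogue of the classical inclusion $dI^s\subseteq I^{s-1}\Omega_A^1$ used in the proof of Theorem \ref{theorem_pro_HKR}, namely: for every $s_0\ge 1$ there exists $s\ge s_0$ with $dW_r(I^s)\subseteq W_r(I^{s_0})W_r\Omega_A^1$.

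To establish this last inclusion I would appeal to Remark \ref{remark_improved_witt_1}: since $A$ is Noetherian, $I=\pid{t_1,\dots,t_m}$ is finitely generated, and for any $N\ge 1$ there exists $M\ge 1$ with $W_r(I^M)\subseteq\pid{[t_1],\dots,[t_m]}^N$. The Leibniz rule applied to an $N$-fold product $[t_{j_1}]\cdots[t_{j_N}]$, or to a $W_r(A)$-multiple thereof, produces a sum of terms each containing at least $N-1$ Teichm\"uller factors from $\{[t_1],\dots,[t_m]\}$, whence $d\pid{[t_1],\dots,[t_m]}^N\subseteq\pid{[t_1],\dots,[t_m]}^{N-1}\cdot W_r\Omega_A^1\subseteq W_r(I^{N-1})W_r\Omega_A^1$ by Lemma \ref{lemma_witt_1}(ii); taking $N=s_0+1$ yields the desired inclusion. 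The main obstacle I anticipate is a clean justification of the surjectivity of $(\star)$ and the identification of its kernel as the image of the DG-ideal generated by $W_r(I^s)$, both of which require careful bookkeeping with the universal description of the $p$-typical de Rham--Witt complex and the operators $F$, $V$, $d$; once these are in hand the remaining pieces assemble routinely.
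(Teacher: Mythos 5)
Your proposal is correct and follows essentially the same route as the paper: Hesselholt's HKR theorem for the regular ring $A$, the integral continuity isomorphism of Corollary \ref{corollary_nilpotent_continuity}, and then the reduction to the pro-isomorphism $\{W_r\Omega_A^i\otimes_{W_r(A)}W_r(A/I^s)\}_s\isoto\{W_r\Omega_{A/I^s}^i\}_s$, which the paper likewise disposes of via surjectivity plus pro-vanishing of the kernels using Lemma \ref{lemma_witt_1}(iv) and the Leibniz rule (citing \cite[Prop.~2.5]{GeisserHesselholt2006b} for the bookkeeping with $F$, $V$, $d$ that you flag as the remaining detail).
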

\begin{proof}
Consider the following commutative diagram of pro $W_r(A)$-modules:
\[\xymatrix@C=4mm{
\displaystyle\bigoplus_{i=0}^n\{W_r(A/I^s)\otimes_{W_r(A)}W_r\Omega_A^i\otimes_{W_r(\bb F_p)}\TR_{n-i}^r(\bb F_p;p)\}_s\ar[r]\ar[d] & \{W_r(A/I^s)\otimes_{W_r(A)}\TR_n^r(A;p)\}_s\ar[d]\\
\bigoplus_{i=0}^n\{W_r\Omega_{A/I^s}^i\otimes_{W_r(\bb F_p)}\TR^r_{n-i}(\bb F_p;p)\}_s \ar[r]& \{\TR^r_n(A/I^s;p)\}_s
}\]
Hesselholt's HKR theorem implies that the top horizontal arrow is an isomorphism. Corollary \ref{corollary_nilpotent_continuity} implies that the right vertical arrow is an isomorphism. Since we wish to establish that the bottom horizontal arrow is an isomorphism, it is now enough to show that the left vertical arrow is an isomorphism, for which it suffices to prove that the canonical map \[\{W_r(A/I^s)\otimes_{W_r(A)}W_r\Omega_A^i\}_s\To\{W_r\Omega_{A/I^s}^i\}_s\tag{\dag}\] of pro $W_s(A)$-modules is an isomorphism for all $i\ge0$. To show this, note that the maps in (\dag) are surjective, so one needs only to show that the pro abelian group arising from the kernels is zero. This is an easy consequence of Lemma \ref{lemma_witt_1}(iv) and the Leibnitz rule; see, e.g.,~\cite[Prop.~2.5]{GeisserHesselholt2006b}.
\end{proof}

Next we let $r\to\infty$ to prove the pro Hochschild--Kostant--Rosenberg theorem for $\TR$, or more precisely for the pro spectrum $\{\TR^r\}_r$. This takes the form of an isomorphism of {\em pro pro abelian groups}; that is, an isomorphism in the category $\op{Pro}(\op{Pro}Ab)$. Although this initially appears unusual, we believe it is necessary to state the result in its strongest form; however, we note that, by taking the diagonal in the result, we also obtain a weaker isomorphism of pro abelian groups $\{W_r\Omega_{A/I^r}^n\}_r\isoto \big\{\TR^r_n(A/I^r;p)\}_r$.

\begin{corollary}[Pro HKR Theorem for $\TR$]\label{corollary_top_pro_HKR_for_TR}
With notation as in the previous theorem, the canonical map of pro pro abelian groups \[\big\{\{W_r\Omega_{A/I^s}^n\}_s\big\}_r\To \big\{\{\TR^r_n(A/I^s;p)\}_s\big\}_r\] is an isomorphism for all $n\ge0$.
\end{corollary}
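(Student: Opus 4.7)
The plan is to reduce the claim to the decomposition from Theorem \ref{theorem_top_pro_HKR} and then show that, as $r$ varies via the restriction maps, all summands with $i<n$ become pro pro zero. Concretely, applying Theorem \ref{theorem_top_pro_HKR} at each level $r\ge 1$ identifies
\[
\{\TR^r_n(A/I^s;p)\}_s\;\cong\;\bigoplus_{i=0}^n\{W_r\Omega^i_{A/I^s}\otimes_{W_r(\bb F_p)}\TR^r_{n-i}(\bb F_p;p)\}_s,
\]
and under this identification the canonical HKR map is precisely the inclusion of the $i=n$ summand (using $\TR^r_0(\bb F_p;p)=W_r(\bb F_p)$). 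Hence it suffices to prove that, in $\op{Pro}(\op{Pro}Ab)$, the outer pro object
\[
\Big\{\{W_r\Omega^i_{A/I^s}\otimes_{W_r(\bb F_p)}\TR^r_{n-i}(\bb F_p;p)\}_s\Big\}_r
\]
vanishes for each fixed $i<n$.

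By the Mittag--Leffler criterion in $\op{Pro}(\op{Pro}Ab)$, this amounts to showing that for each $r\ge 1$ there exists $r'\ge r$ for which the restriction-induced map of pro abelian groups (in the index $s$) is the zero map. If $n-i$ is odd then $\TR^r_{n-i}(\bb F_p;p)=0$ and there is nothing to prove, so assume $n-i=2m$ with $m\ge 1$. By Hesselholt's theorem recalled in the excerpt, $\TR^r_{2m}(\bb F_p;p)=W_r(\bb F_p)\cdot\sigma_r^m$ and $R(\sigma_r)=p\lambda_r\sigma_{r-1}$ with $\lambda_r$ a unit; iterating, $R^{r'-r}(\sigma_{r'}^m)=p^{m(r'-r)}u\,\sigma_r^m$ for some unit $u$. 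The action of $R$ on the tensor product is diagonal, so the transition map factors through multiplication by $p^{m(r'-r)}$ in the $\TR$-factor. Since $W_r(\bb F_p)=\bb Z/p^r\bb Z$ annihilates $p^r$, choosing any $r'$ with $m(r'-r)\ge r$ (e.g.\ $r'=2r$) makes the scalar $p^{m(r'-r)}$ vanish in $W_r(\bb F_p)$, and hence the entire transition map on the tensor product becomes zero.

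Combining these observations, the cokernel of the map in the statement is pro pro zero, while the kernel is automatically zero since the HKR decomposition is a direct sum, so the map is an isomorphism in $\op{Pro}(\op{Pro}Ab)$. The only nontrivial step is the vanishing argument above; the main subtlety is purely bookkeeping, namely making sure one applies the trivial Mittag--Leffler criterion in the correct (outer, i.e.\ $r$) direction of the double pro system, and tracking that the $p$-divisibility introduced by $R(\sigma_r)=p\lambda_r\sigma_{r-1}$ is compounded enough times to exhaust the exponent of $W_r(\bb F_p)$ while keeping the $A/I^s$-direction unaffected.
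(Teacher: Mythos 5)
Your proposal is correct and follows essentially the same route as the paper: identify each $\{\TR^r_n(A/I^s;p)\}_s$ via Theorem \ref{theorem_top_pro_HKR}, then use $R(\sigma_r)=p\lambda_r\sigma_{r-1}$ to show the summands with $i<n$ form trivial Mittag--Leffler systems in the $r$-direction (the paper likewise notes $R^r$ vanishes on $\TR^{2r}_{n-i}(\bb F_p;p)$), so only the $i=n$ summand survives. Your extra bookkeeping (odd-degree vanishing, diagonal $R$-action, the explicit exponent count) just spells out details the paper leaves implicit.
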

\begin{proof}
We stated as part of Hesselholt's HKR theorem that the Restriction map acts on $\TR_*^r(\bb F_p;p)\cong W_r(\bb F_p)[\sigma_r]$ by sending $\sigma_r$ to $p\lambda_r\sigma_{r-1}$ for some $\lambda_r\in W_r(\bb F_p)^\times$. In particular, it follows that $R^r:\TR_n^{2r}(\bb F_p;p)\to\TR_n^r(\bb F_p;p)$ is zero for all $n,r\ge1$. Thus the inverse system of pro abelian groups
\[\cdots \xto{R}\{W_{r+1}\Omega_{A/I^s}^i\otimes_{W_{r+1}(\bb F_p)}\TR^{r+1}_{n-i}(\bb F_p;p)\}_s\xto{R} \{W_r\Omega_{A/I^s}^i\otimes_{W_r(\bb F_p)}\TR^r_{n-i}(\bb F_p;p)\}_s\xto{R}\cdots\] is trivial Mittag-Leffler unless $i=n$. So the isomorphisms of Theorem \ref{theorem_top_pro_HKR} assemble over $r\ge1$ to the desired isomorphism.
\end{proof}

\section{Proper schemes over an affine base}\label{section_proper}
In this section we extend the finite generation results of Section \ref{section_finite_generation} and the continuity results of Section \ref{section_continuity_and_HKR} to proper schemes over an affine base. The key idea is to combine the already-established results with Zariski descent and Grothendieck's formal function theorem for coherent cohomology, which we will recall in Theorem \ref{theorem_GFFT} for convenience.

We first review the scheme-theoretic versions of $\THH$, $\TR^r$, etc. For a quasi-compact, quasi-separated scheme $X$, the spectra $\THH(X)$, $\TR^r(X;p)$, $\TC^r(X;p)$, $\TR(X;p)$, and $\TC(X;p)$ were defined by Geisser and Hesselholt in \cite{GeisserHesselholt1999} in such a way that all these presheaf of spectra satisfy Zariski descent (see the proof of \cite[Corol.~3.3.3]{GeisserHesselholt1999}). In particular, assuming that $X$ has finite Krull dimension, there is a bounded spectral sequence \[E_2^{ij}=H^i(X,\cal\THH_{-j}(X))\implies \THH_{-i-j}(X)\] where the caligraphic notation $\cal\THH_n(X)$ denotes the sheafification of the Zariski presheaf on $X$ given by $U\mapsto\THH_n(\roi_X(U))$; the same applies to $\TR^r(-;p)$ and $\TC^r(-;p)$, and we will always use caligraphic notation to denote such Zariski sheafifications, including when working with finite coefficients.

Moreover, the relations between the theories in the affine case explained in Section~\ref{subsection_intro_to_THH} continue to hold for schemes \cite[Prop.~3.3.2]{GeisserHesselholt1999}, and analogous comments also apply to algebraic Hochschild homology, thanks to Weibel \cite{Weibel1991, Weibel1996}.

We secondly recall the scheme-theoretic version of Witt vectors; further details may be found in the appendix of \cite{LangerZink2004}. Given a ring $A$ and an element $f\in A$, there is a natural isomorphism $W_r(A_f)\cong W_r(A)_{[f]}$ where $[f]$ is the Teichm\"uller lift of $f$; this localisation result means that, for any scheme $X$, we may define a new scheme $W_r(X)$ by applying $W_r$ locally. The restriction map $R^{r-1}:W_r(X)\to X$ induces a closed embedding of schemes $R^{r-1}:X\into W_r(X)$, which is an isomorphism of the underlying topological spaces if $p$ is nilpotent on $X$. Particularly in the presence of F-finiteness (a $\bb Z_{(p)}$-scheme is said to be F-finite if and only if it has a finite cover by spectra of F-finite $\bb Z_{(p)}$-algebras), many properties of $X$ are inherited by $W_r(X)$; see Prop.~A.1 -- Corol.~A.7 of \cite{LangerZink2004}:
\begin{enumerate}\itemsep0pt
\item $X$ separated $\implies$ $W_r(X)$ is separated.
\item $X$ Noetherian and F-finite $\implies$ $W_r(X)$ is Noetherian.
\item $X\to Y$ of finite type and $Y$ F-finite $\implies$ $X$ is F-finite and $W_r(X)\to W_r(Y)$ is of finite type.
\item $X\to Y$ proper and $Y$ F-finite $\implies$ $W_r(X)\to W_r(Y)$ is proper.
\end{enumerate}

Our first aim is to prove that the sheaves arising from $\HH$, $\THH$, and $\TR^r$ are quasi-coherent; this is a standard result for $\HH$ and $\THH$, but we could not find a reference covering the $\TR^r$ sheaves. We note that the following lemma remains true when working with finite $\bb Z/p^v\bb Z$-coefficients; this follows from the lemma as stated using the usual short exact sequences for finite coefficients.

\begin{lemma}\label{lemma_quasi_coherent}
Let $X$ be a quasi-compact, quasi-separated scheme. Then:
\begin{enumerate}
\item $\cal \HH_n(X)$ and $\cal \THH_n(X)$ are quasi-coherent sheaves on $X$ for all $n\ge0$.
\item If $X$ is moreover an F-finite, $\bb Z_{(p)}$-scheme, then $R^{r-1}_*\cal \TR_n^r(X;p)$ is a quasi-coherent sheaf on $W_r(X)$ for all $n\ge0$, $r\ge1$.
\end{enumerate}
\end{lemma}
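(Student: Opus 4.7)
The plan is to reduce both parts of the lemma to base change formulas for the homology theories along Zariski localisations, and then glue the resulting quasi-coherent sheaves on affine opens. Concretely, I will establish that for any ring $A$ and $f\in A$, the natural maps
\[A_f\otimes_A\HH_n(A)\to\HH_n(A_f),\quad A_f\otimes_A\THH_n(A)\to\THH_n(A_f),\]
and, under the hypotheses of (ii),
\[W_r(A_f)\otimes_{W_r(A)}\TR_n^r(A;p)\to\TR_n^r(A_f;p),\]
are isomorphisms. Given these, on each affine open $\Spec A\subseteq X$ the sections of $\cal\HH_n(X)$ and $\cal\THH_n(X)$ over basic opens $\Spec A_f$ become $\HH_n(A)_f$ and $\THH_n(A)_f$, which are the sections of the quasi-coherent sheaves $\widetilde{\HH_n(A)}$ and $\widetilde{\THH_n(A)}$. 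For (ii), the sections of the pushforward over an affine $\Spec W_r(A)\subseteq W_r(X)$ restricted to the basic open $D([f])=\Spec W_r(A_f)$ become $\TR_n^r(A;p)\otimes_{W_r(A)}W_r(A)_{[f]}$, using the identification $W_r(A_f)=W_r(A)_{[f]}$ recalled at the start of the section. These quasi-coherent sheaves on affine opens glue to the sheaves asserted in the lemma.

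The base change formulas for $\HH$ and $\THH$ in (i) are instances of flat base change: since $A\to A_f$ is \'etale, the natural maps of derived Hochschild and topological Hochschild spectra $\HH(A)\otimes_A^{\bb L}A_f\to\HH(A_f)$ and $\THH(A)\wedge_{HA}^{\bb L}HA_f\to\THH(A_f)$ are weak equivalences, and flatness of $A_f$ over $A$ lets us replace derived by ordinary tensor products on homotopy groups. Part (i) follows by the gluing argument described above.

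For the $\TR^r$ base change in (ii), I will induct on $r\ge1$. The base case $r=1$ is the $\THH$ formula from (i), noting $W_1=\op{id}$. For the inductive step, the fundamental homotopy fibre sequence $\THH(A)_{hC_{p^r}}\to \TR^{r+1}(A;p)\to \TR^r(A;p)$ from Section~\ref{subsection_TR} produces a long exact sequence of $W_{r+1}(A)$-modules, so by the five lemma and the inductive hypothesis it suffices to verify base change for the homotopy orbits, i.e.\ that
\[W_{r+1}(A_f)\otimes_{W_{r+1}(A)}\pi_n(\THH(A)_{hC_{p^r}})\isoto \pi_n(\THH(A_f)_{hC_{p^r}}).\]
The group homology spectral sequence $E_{ij}^2=H_i(C_{p^r},\THH_j(A))\Rightarrow \pi_{i+j}(\THH(A)_{hC_{p^r}})$ reduces this to checking base change on the $E^2$-page. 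There the $W_{r+1}(A)$-action factors through the Frobenius $F^r\colon W_{r+1}(A)\to A$, so the $W_{r+1}(A_f)$-base change on the $E^2$-page amounts to an $A$-linear base change via the Langer--Zink \'etale base change for Witt vectors, namely $W_{r+1}(A_f)\otimes_{W_{r+1}(A),F^r}A\cong A_f$ (the square relating Frobenius to localisation is cocartesian, cf.\ the proof of Lemma~\ref{lemma_conditions_for_F-finite}(iv)), combined with the fact that $A_f\otimes_A-$ commutes with group homology by flatness of $A\to A_f$. The flatness of $W_{r+1}(A)\to W_{r+1}(A_f)$, again from Langer--Zink, lifts this identification from the $E^2$-page to the abutment.

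The main obstacle is the inductive step for $\TR^r$: one must simultaneously track how the Zariski localisation interacts with the Frobenius-twisted $W_{r+1}(A)$-module structure on the homotopy orbits and on $\TR^r$ itself. The key algebraic ingredient resolving this is the \'etale base change property of the Witt vector functor due to Langer--Zink, which ensures both that $W_r(A)\to W_r(A_f)$ is \'etale (in particular flat) and that the Frobenius-localisation square is cocartesian; without this input the reduction to the $A$-linear base change on the $E^2$-page would fail.
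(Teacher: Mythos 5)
Your argument is correct and, for the essential part (ii), follows the paper's proof almost verbatim: induction on $r$ via the fundamental fibre sequence, five lemma, reduction through the group homology spectral sequence to the $E^2$-page, and then the Langer--Zink cocartesian square relating $F^r$ to localisation (which is exactly where F-finiteness enters, as you say), together with flatness of $W_{r+1}(A)\to W_{r+1}(A_f)$ to pass the base change inside group homology. The only real divergence is in part (i): the paper derives the localisation isomorphism for $\HH$ from its own restriction spectral sequence (Proposition \ref{proposition_Bjorns_SS}), using flatness of $A_f$ to collapse it, and then gets $\THH$ either from Brun's analogue or from Pirashvili--Waldhausen, whereas you invoke the standard flat/\'etale base change equivalences $\HH(A)\otimes_A^{\bb L}A_f\simeq\HH(A_f)$ and $\THH(A)\wedge_{HA}HA_f\simeq\THH(A_f)$ directly; both routes are fine, the paper's being self-contained in the tools it has already set up. One small over-attribution: flatness of $W_r(A)\to W_r(A_f)$ does not need any \'etale base change theorem, since $W_r(A_f)\cong W_r(A)_{[f]}$ is simply a localisation at the Teichm\"uller element; what genuinely requires Langer--Zink (and F-finiteness) is the cocartesian square $W_{r+1}(A_f)\otimes_{W_{r+1}(A),F^r}A\cong A_f$, which you correctly single out as the key ingredient.
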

\begin{proof}
Let $\Spec R\subseteq X$ be any affine open subscheme of $X$; we must show that for any $f\in R$, the canonical maps
\begin{align*}
\HH_n(R)\otimes_RR_f&\to\HH_n(R_f),\\
\TR_n^r(R;p)\otimes_{W_r(R)}W_r(R_f)&\to\TR_n^r(R_f;p),
\end{align*}
are isomorphisms for all $n\ge0$, $r\ge1$ (assuming in addition that $R$ is an F-finite $\bb Z_{(p)}$-algebra in the case of $\TR_n^r$, $r>1$).

Firstly, $R_f$ is flat over $R$, so the spectral sequence of Proposition \ref{proposition_Bjorns_SS} degenerates to edge map isomorphisms \[\HH_n(R,R_f)\isoto \HH_n(R_f,R_f\otimes_RR_f).\] But $R_f\otimes_RR_f=R_f$, and its flatness over $R$ implies that $\HH_n(R,R_f)=\HH_n(R)\otimes_RR_f$; this proves the claim for $\HH$.

The claim for $\THH=\TR^1$ can be proved either using the $\THH$ version of the spectral sequence of Proposition \ref{proposition_Bjorns_SS}, or in a straightforward way using the Pirashvili--Waldhausen spectral sequence and the already-proved $\HH$ claim; we omit the details.

Finally, assuming that $R$ is an F-finite $\bb Z_{(p)}$-algebra, we must prove the claim for $\TR^r_n$ by induction on $r$; we have just established the case $r=1$. Since $W_r(R_f)=W_r(R)_{[f]}$ is flat over $W_r(R)$, we may base change by $W_{r+1}(R_f)$ the fundamental long exact sequence of Section \ref{subsection_TR}(i) for $W_{r+1}(R)$, and compare it to the long exact sequence for $R_f$; this yields the following diagram with exact columns:
\[\xymatrix{
\vdots&\vdots\\
\TR^r_n(R;p)\otimes_{W_r(R)}W_r(R_f)\ar[u]\ar[r]&\TR^r_n(R_f;p)\ar[u]\\
\TR^{r+1}_n(R;p)\otimes_{W_{r+1}(R)}W_{r+1}(R_f)\ar[u]\ar[r]&\TR^{r+1}_n(R_f;p)\ar[u]\\
\pi_n(\THH(R)_{hC_{p^r}})\otimes_{W_{r+1}(R)}W_{r+1}(R_f)\ar[u]\ar[r]&\pi_n(\THH(R_f)_{hC_{p^r}})\ar[u]\\
\vdots\ar[u]&\vdots\ar[u]
}\]

By the five lemma and induction on $r$, it is sufficient to prove that the bottom horizontal arrow is an isomorphism. Moreover, the domain and codomain of this map are compatibly described by group homology spectral sequences, namely \[H_i(C_{p^r},\THH_j(R))\otimes_{W_{r+1}(R)}W_{r+1}(R_f)\implies \pi_{i+j}(\THH(R)_{hC_{p^r}})\otimes_{W_{r+1}(R)}W_{r+1}(R_f)\] and \[H_i(C_{p^r},\THH_j(R_f))\implies \pi_{i+j}(\THH(R_f)_{hC_{p^r}}),\] so it is enough to prove that the canonical map \[H_i(C_{p^r},\THH_j(R))\otimes_{W_{r+1}(R)}W_{r+1}(R_f)\To H_i(C_{p^r},\THH_j(R_f))\] is an isomorphism for all $i,j\ge0$ and $r\ge1$. Since $W_{r+1}(R_f)$ is flat over $W_{r+1}(R)$, we may identify the left side with $H_i(C_{p^r},\THH_j(R)\otimes_{W_{r+1}(R)}W_{r+1}(R_f))$, and so it remains only to show that the map $\THH_j(R)\otimes_{W_{r+1}(R)}W_{r+1}(R_f)\to\THH_j(R_f)$ is an isomorphism, where $W_{r+1}(R)$ is acting on $\THH_j(R)$ via $F^r:W_{r+1}(R)\to R$; but this follows from the fact that the diagram
\[\xymatrix{
W_{r+1}(R)\ar[r]\ar[d]_{F^r} & W_{r+1}(R_f)\ar[d]^{F^r}\\
R\ar[r] & R_f
}\]
is cocartesian by \cite[Lem.~A.18]{LangerZink2004} (this is where we require that $R$ be an F-finite $\bb Z_{(p)}$-algebra).
\end{proof}

As a consequence of the lemma and our earlier finite generation results, we therefore obtain the fundamental coherence results we will need:

\begin{corollary}\label{corollary_coherence_of_THH_TR}
Let $A$ be a Noetherian, F-finite $\bb Z_{(p)}$-algebra, $X$ an essentially finite-type $A$-scheme, and $v\ge1$. Then:
\begin{enumerate}
\item $\cal \HH_n(X;\bb Z/p^v)$ and $\cal \THH_n(X;\bb Z/p^v)$ are coherent sheaves on $X$ for all $n\ge0$.
\item $R^{r-1}_*\cal \TR^r_n(X;\bb Z/p^v)$ is a coherent sheaf on $W_r(X)$ for all $n\ge0$, $r\ge1$
\end{enumerate}
\end{corollary}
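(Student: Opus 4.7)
The plan is to combine the quasi-coherence result of Lemma \ref{lemma_quasi_coherent} with the finite generation result of Theorem \ref{theorem_finite_gen_of_THH_TR} in the affine case, after checking that the relevant affine opens sit inside the class of Noetherian, F-finite $\bb Z_{(p)}$-algebras to which Theorem \ref{theorem_finite_gen_of_THH_TR} applies. Since $X$ is essentially of finite type over $A$, it admits an open cover by affines of the form $\Spec B$ where $B$ is a localisation of a finitely generated $A$-algebra; Lemma \ref{lemma_conditions_for_F-finite}(i)+(ii) then implies that such a $B$ is again a Noetherian, F-finite $\bb Z_{(p)}$-algebra.

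For part (i), Lemma \ref{lemma_quasi_coherent}(i) together with its $\bb Z/p^v$-version (obtained via the usual short exact sequences for finite coefficients, as noted just before that lemma) shows that $\cal\HH_n(X;\bb Z/p^v)$ and $\cal\THH_n(X;\bb Z/p^v)$ are quasi-coherent sheaves on $X$. To check coherence, it suffices to exhibit finite generation over the sections on each affine open $\Spec B$ of the cover above: but $\HH_n(B;\bb Z/p^v)$ and $\THH_n(B;\bb Z/p^v)$ are finitely generated $B$-modules by Theorem \ref{theorem_finite_gen_of_THH_TR}(i), and $B$ is Noetherian, giving coherence.

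For part (ii), the same $\bb Z/p^v$-variant of Lemma \ref{lemma_quasi_coherent}(ii) first yields that $R^{r-1}_*\cal\TR^r_n(X;\bb Z/p^v)$ is quasi-coherent on $W_r(X)$. Now $W_r(X)$ is covered by the affines $\Spec W_r(B)$ for $B$ as above, and by property (iii) of $W_r$ listed at the start of this section these are of finite type over $\Spec W_r(A)$. The crucial point is that $W_r(B)$ is Noetherian by the Langer--Zink result (Theorem \ref{theorem_Langer_Zink}(iii)), so finite generation over $W_r(B)$ suffices for coherence; and the $W_r(B)$-module of sections is exactly $\TR_n^r(B;\bb Z/p^v)$, which is finitely generated by Theorem \ref{theorem_finite_gen_of_THH_TR}(ii).

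There is no serious obstacle here: the content of the corollary is entirely localised in Theorem \ref{theorem_finite_gen_of_THH_TR} and the Langer--Zink Noetherianity of $W_r$, and the only care needed is to track that essentially-finite-type affines over $A$, and their images under $W_r$, remain in the Noetherian + F-finite class to which those results apply.
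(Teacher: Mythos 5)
Your proposal is correct and follows essentially the same route as the paper: quasi-coherence comes from the finite-coefficient version of Lemma \ref{lemma_quasi_coherent}, the affine opens of $X$ are Noetherian, F-finite $\bb Z_{(p)}$-algebras by Lemma \ref{lemma_conditions_for_F-finite}, and finite generation of the sections then follows from Theorem \ref{theorem_finite_gen_of_THH_TR}. Your explicit appeal to Langer--Zink Noetherianity of $W_r$ in part (ii) is a point the paper leaves implicit (via the properties of $W_r(X)$ recalled at the start of the section), but it is the same argument.
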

\begin{proof}
The specified sheaves are quasi-coherent by the finite coefficient version of Lemma \ref{lemma_quasi_coherent}, so we must prove that if $\Spec R$ is an affine open subscheme of $X$, then $\HH_n(R;\bb Z/p^v)$ and $\THH_n(R;\bb Z/p^v)$ are finitely generated $R$-modules, and that $\TR_n^r(R;\bb Z/p^v)$ is a finitely generated $W_r(R)$-module. But Lemma \ref{lemma_conditions_for_F-finite} implies that $R$ is also a Noetherian, F-finite, $\bb Z_{(p)}$-algebra, so this follows from Theorem \ref{theorem_finite_gen_of_THH_TR}.
\end{proof}

\comment{
In the following theorems we will typically require a finite dimensionality assertion to ensure that descent arguments behave well; in this respect the following lemma is useful:

\begin{lemma}\label{lemma_finite_dimensional}
Let $A$ be a Noetherian, F-finite $\bb Z_{(p)}$-algebra, and consider the following conditions:
\begin{enumerate}
\item $p$ is nilpotent in $A$, or $A$ is essentially of finite type over $\bb Z_{(p)}$.
\item $A$ has finite Krull dimension.
\item $W_r(A)$ has finite Krull dimension for all $r\ge1$.
\end{enumerate}
Then (i)$\implies$(ii)$\iff$(iii).
\end{lemma}
\begin{proof}
(i)$\Rightarrow$(ii): This is clear if $A$ is finitely generated over $\bb Z_{(p)}$, so assume instead that $p$ is nilpotent in $A$. Then it is enough to show that $A/pA$ has finite Krull dimension, which actually follows simply from the fact that it is a Noetherian, F-finite, $\bb F_p$-algebra \cite[Prop.~1.1]{Kunz1976}.

(ii)$\Rightarrow$(iii): Assume $A$ has finite Krull dimension. To prove the same about $W_r(A)$, it is enough to show that both $W_r(A)/[p]W_r(A)$ and $W_r(A)_{[p]}$ have finite Krull dimension. Firstly, by the localisation result recalled at the start of this section, $W_r(A)_{[p]}\cong W_r(A_p)$; since $A_p$ is a (possibly zero) $\bb Q$-algebra, we have $W_r(A_p)\cong \prod_rA_p$, which has finite Krull dimension since $A$ does. Secondly, Remark \ref{remark_improved_witt_1} implies that there exists $M\ge1$ such that $W_r(p^MA)\subseteq[p]W_r(A)$; hence $W_r(A)/[p]W_r(A)$ is a quotient of $W_r(A)/W_r(p^MA)=W_r(A/p^MA)$, and so it is enough to show that $W_r(A/p^MA)$ has finite Krull dimension.

But $W_r(A/p^mA)$ is a Noetherian, F-finite, $\bb Z_{(p)}$-algebra in which $p$ is nilpotent, by Lemma \ref{lemma_F-finiteness_of_Witt_ring}; hence it has finite Krull dimension by part (i), completing the proof.
\end{proof}
}

We now obtain the generalisation to proper schemes of our earlier finite generation results:

\begin{theorem}\label{theorem_finite_gen_proper}
Let $A$ be a Noetherian, F-finite, finite Krull-dimensional $\bb Z_{(p)}$-algebra, $X$ a proper scheme over $A$, and $v\ge1$. Then:
\begin{enumerate}
\item $\HH_n(X;\bb Z/p^v)$ and $\THH_n(X;\bb Z/p^v)$ are finitely generated $A$-modules for all $n\ge~0$.
\item $\TR_n^r(X;\bb Z/p^v)$ is a finitely generated $W_r(A)$-module for all $n\ge0$, $r\ge1$.
\end{enumerate}
\end{theorem}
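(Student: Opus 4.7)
The plan is to combine the coherence results established in Corollary~\ref{corollary_coherence_of_THH_TR} with Zariski descent on $X$ and Grothendieck's finiteness theorem for proper morphisms. First observe that $X$ is Noetherian of finite Krull dimension, being of finite type over a Noetherian, finite-Krull-dimensional ring $A$. Thus the bounded Zariski-descent spectral sequences recalled at the beginning of this section apply; explicitly,
\[E_2^{ij} = H^i(X, \cal\HH_{-j}(X;\bb Z/p^v)) \implies \HH_{-i-j}(X;\bb Z/p^v),\]
and analogously for $\THH$ and $\TR^r$ (with the same ``$\TR$'' presheaf of spectra satisfying Zariski descent by Geisser--Hesselholt).

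For part (i), the sheaves $\cal\HH_n(X;\bb Z/p^v)$ and $\cal\THH_n(X;\bb Z/p^v)$ are coherent on $X$ by Corollary~\ref{corollary_coherence_of_THH_TR}. Since $X\to\Spec A$ is proper and $A$ is Noetherian, Grothendieck's finiteness theorem ensures that $H^i(X,\cal F)$ is a finitely generated $A$-module for any coherent sheaf $\cal F$ on $X$. Applied to each entry of the $E_2$-page and combined with boundedness of the spectral sequence, this yields the claimed finite generation of $\HH_n(X;\bb Z/p^v)$ and $\THH_n(X;\bb Z/p^v)$ over $A$.

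For part (ii), Corollary~\ref{corollary_coherence_of_THH_TR} says that the pushforward $R^{r-1}_*\cal\TR^r_n(X;p,\bb Z/p^v)$ is a coherent sheaf on $W_r(X)$, where $R^{r-1}:X\hookrightarrow W_r(X)$ is the closed immersion recalled at the start of this section. Pushforward along a closed immersion is exact and preserves cohomology, so
\[H^i\!\bigl(X,\cal\TR^r_n(X;p,\bb Z/p^v)\bigr)\cong H^i\!\bigl(W_r(X),R^{r-1}_*\cal\TR^r_n(X;p,\bb Z/p^v)\bigr).\]
Since $X\to\Spec A$ is proper and $\Spec A$ is F-finite, the induced map $W_r(X)\to \Spec W_r(A)$ is also proper by the permanence properties of $W_r$ listed at the beginning of this section, while $W_r(A)$ is Noetherian by Langer--Zink (Theorem~\ref{theorem_Langer_Zink}). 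Grothendieck's finiteness theorem then gives finite generation of the right-hand side as a $W_r(A)$-module, so the $E_2$-page of the Zariski-descent spectral sequence for $\TR^r_n(X;p,\bb Z/p^v)$ consists of finitely generated $W_r(A)$-modules, and its bounded convergence yields finite generation of the abutment.

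The only mild subtlety is that $\cal\TR^r_n(X;p,\bb Z/p^v)$ is not naturally an $\roi_X$-module — it carries a $W_r(\roi_X)$-module structure — so one cannot apply Grothendieck's theorem directly on $X$. Pushing forward along the closed immersion $R^{r-1}:X\hookrightarrow W_r(X)$ to the scheme $W_r(X)$, which is itself proper over $\Spec W_r(A)$, is precisely the manoeuvre needed to put the situation into Grothendieck's framework; beyond this, the argument is a mechanical application of the coherence, Zariski descent, and proper-pushforward packages assembled earlier.
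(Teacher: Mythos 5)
Your proposal is correct and follows essentially the same route as the paper: coherence from Corollary \ref{corollary_coherence_of_THH_TR} plus properness gives finitely generated cohomology groups, and the bounded Zariski-descent spectral sequences (bounded because $X$ has finite Krull dimension) then give finite generation of the abutment. The one point you spell out more explicitly than the paper does here — pushing $\cal\TR^r_n$ forward along the closed immersion $R^{r-1}:X\into W_r(X)$ and using properness of $W_r(X)$ over the Noetherian ring $W_r(A)$ — is exactly the manoeuvre the paper performs in the proof of Theorem \ref{theorem_degree_wise_cont_proper}(ii), so this is a welcome clarification rather than a deviation.
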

\begin{proof}
The coherence assertion of Corollary \ref{corollary_coherence_of_THH_TR} and the properness of $X$ over $A$ implies that $H^i(X,\cal\HH_n(X;\bb Z/p^v))$ and $H^i(X,\cal \THH_n(X;\bb Z/p^v))$ are finitely generated $A$-modules, and $H^i(X,\cal\TR^r_n(X;\bb Z/p^v))$ is a finitely generated $W_r(X)$-module, for all $i,n\ge0$ and $r\ge1$.

Since the presheafs of spectra $\HH\wedge S/p^v$, $\THH\wedge S/p^v$, and $\TR^r(-;p)\wedge S/p^v$ satisfy Zariski descent by the results of Geisser--Hesselholt recalled at the start of the section, and since $X$ has finite Krull dimension by assumption, there are right half-plane, bounded, Zariski descent spectral sequences (with differentials $E^{ij}_r\to E^{i+r\,j-r+1}_r$)
\[E_2^{ij}=H^i(X,\cal\HH_{-j}(X;\bb Z/p^v))\implies \HH_{-i-j}(X;\bb Z/p^v),\]
\[E_2^{ij}=H^i(X,\cal\THH_{-j}(X;\bb Z/p^v))\implies \THH_{-i-j}(X;\bb Z/p^v),\]
\[E_2^{ij}=H^i(X,\cal\TR_{-j}^r(X;\bb Z/p^v))\implies \TR_{-i-j}^r(X;\bb Z/p^v),\]
of modules over $A$, $A$, and $W_r(A)$ respectively. This evidently completes the proof.
\end{proof}

By $p$-completing we obtain Theorem \ref{theorem_intro_fin_gen_p_complete} of the Introduction:

\begin{corollary}\label{corollary_p_complete_finite_gen_proper}
Let $A$ be a Noetherian, F-finite, finite Krull-dimensional $\bb Z_{(p)}$-algebra, and $X$ a proper scheme over $A$. Then:
\begin{enumerate}
\item $\HH_n(X;\bb Z_p)$ and $\THH_n(X;\bb Z_p)$ are finitely generated $A_p^\comp$-modules for all $n\ge~0$.
\item $\TR_n^r(X;p,\bb Z_p)$ is a finitely generated $W_r(A_p^\comp)$-module for all $n\ge0$, $r\ge1$.
\end{enumerate}
\end{corollary}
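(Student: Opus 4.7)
The strategy is simply to $p$-complete the finite coefficient results of Theorem \ref{theorem_finite_gen_proper}, just as Corollary \ref{corollary_finite_gen_of_THH_TR_p_complete} was obtained from Theorem \ref{theorem_finite_gen_of_THH_TR} in the affine case. So the plan is to feed Theorem \ref{theorem_finite_gen_proper} into Proposition \ref{proposition_to_prove_finite_gen_p_adically} of the appendix, which converts uniform finite generation with finite coefficients $\bb Z/p^v$ into finite generation with $\bb Z_p$-coefficients over the $p$-adic completion of the base ring.

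More precisely, I would first record the hypotheses: by Theorem \ref{theorem_finite_gen_proper}, $\HH_n(X;\bb Z/p^v)$ and $\THH_n(X;\bb Z/p^v)$ are finitely generated $A$-modules for all $n\ge 0$ and all $v\ge 1$, and $\TR_n^r(X;\bb Z/p^v)$ is a finitely generated $W_r(A)$-module for all $n\ge 0$, $r\ge 1$, $v\ge 1$. Applying the appendix proposition on $p$-completions to the spectra $\HH(X)$, $\THH(X)$, and $\TR^r(X;p)$ (viewed, respectively, as module spectra over $A$, $A$, $W_r(A)$), finite generation of the homotopy groups with $\bb Z/p^v$-coefficients for every $v$ transfers to finite generation of the $p$-completed homotopy groups $\HH_n(X;\bb Z_p)$, $\THH_n(X;\bb Z_p)$ as modules over $A_p^\comp$, and of $\TR_n^r(X;p,\bb Z_p)$ as a module over $W_r(A)_p^\comp$.

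Finally, I would invoke Lemma \ref{lemma_W_r_of_p_completion} to identify $W_r(A)_p^\comp \cong W_r(A_p^\comp)$, which gives the statement in the form claimed in (ii); part (i) requires no such identification.

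\textbf{Main obstacle.} There is essentially no obstacle: the finite Krull-dimensionality hypothesis was already used in Theorem \ref{theorem_finite_gen_proper} to guarantee convergence of the Zariski descent spectral sequences; once that input is in hand, the passage from finite coefficients to $\bb Z_p$-coefficients is purely formal via the appendix. The only mild point worth noting is that one must apply the appendix proposition to the $\TR^r$ case with the correct Noetherian base ring, namely $W_r(A)$, which is legitimate since $W_r(A)$ is Noetherian by Theorem \ref{theorem_Langer_Zink}(iii).
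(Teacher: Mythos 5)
Your proposal is correct and is essentially the paper's own proof: the corollary is deduced from Theorem \ref{theorem_finite_gen_proper} via Proposition \ref{proposition_to_prove_finite_gen_p_adically} of the appendix, exactly as in the affine case. Your extra remarks (using $W_r(A)$ as the Noetherian base for the $\TR^r$ case and invoking Lemma \ref{lemma_W_r_of_p_completion} to rewrite $W_r(A)_p^\comp$ as $W_r(A_p^\comp)$) are the same points the paper makes, or leaves implicit, in Corollary \ref{corollary_finite_gen_of_THH_TR_p_complete}.
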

\begin{proof}
This follows from Theorem \ref{theorem_finite_gen_proper} via Proposition \ref{proposition_to_prove_finite_gen_p_adically} in the appendix.
\end{proof}

Next we generalise our degree-wise continuity result of Theorem \ref{theorem_continuity} to the case of a proper scheme. To do this we will consider a proper scheme $X$ over an affine base $A$, fix an ideal $I\subseteq X$, and always write $X_s:=X\times_AA/I^s$. The following theorem of Grothendieck will be required:

\begin{theorem}[{Grothendieck's Formal Functions Theorem \cite[Cor.~4.1.7]{EGA_III_I}}]\label{theorem_GFFT}
Let $A$ be a Noetherian ring, $I\subseteq A$ an ideal, $X$ a proper scheme over $A$, and $N$ a coherent $\roi_X$-module. Then the canonical map of pro $A$-modules \[\{H^n(X,N)\otimes_AA/I^s\}_s\isoto\{H^n(X_s,N/I^sN)\}_s\] is an isomorphism for all $n\ge0$.
\end{theorem}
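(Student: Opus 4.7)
Plan.

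The strategy is to reduce the theorem to a cohomological form of the Artin--Rees lemma and then deduce the result from long exact sequences of cohomology. The central claim to establish is the following:

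\emph{Cohomological Artin--Rees.} For each $n \geq 0$, the natural map of pro $A$-modules $\{H^n(X, I^s N)\}_s \to H^n(X, N)$ has pro-zero kernel and image intertwined with $\{I^s H^n(X, N)\}_s$; equivalently, $\{H^n(X, I^s N)\}_s \isoto \{I^s H^n(X, N)\}_s$ as pro sub-objects of the constant pro module $H^n(X, N)$.

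To prove this claim, I would consider the Rees algebra $R := \bigoplus_{s \geq 0} I^s$, which is a Noetherian finitely generated graded $A$-algebra (since $A$ is Noetherian and $I$ is finitely generated). The main technical point is that the graded $R$-module $M^* := \bigoplus_{s \geq 0} H^n(X, I^s N)$ is finitely generated. This is a standard consequence of the properness of $X$ over $\Spec A$: one can reduce via Chow's lemma to the case where $X$ is projective over $\Spec A$, then by devissage to $N = \roi_X(d)$ on $\bb P^r_A$, where the finite generation is a direct computation. Finite generation of $M^*$ over $R$ then implies the intertwining of $\{H^n(X, I^s N)\}_s$ and $\{I^s H^n(X, N)\}_s$ by a standard graded-module argument paralleling the classical proof of Artin--Rees.

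Granted the Claim, the theorem follows quickly. Since $X$ and $X_s$ share the same underlying topological space and $N/I^s N$ is annihilated by $I^s$, one has $H^n(X_s, N/I^s N) = H^n(X, N/I^s N)$. The long exact sequences of cohomology associated to $0 \to I^s N \to N \to N/I^s N \to 0$, assembled over $s$, produce a pro-exact sequence
\[
\{H^n(X, I^s N)\}_s \to H^n(X, N) \to \{H^n(X, N/I^s N)\}_s \to \{H^{n+1}(X, I^s N)\}_s \to H^{n+1}(X, N).
\]
Using the Claim, the outer pro terms become $\{I^s H^m(X, N)\}_s$ mapping into $H^m(X, N)$ via the inclusion (for $m = n, n+1$). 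The rightmost arrow is then injective at every level $s$, hence pro-injective, so the preceding arrow is pro-zero. The sequence therefore collapses to
\[
\{I^s H^n(X, N)\}_s \to H^n(X, N) \to \{H^n(X, N/I^s N)\}_s \to 0,
\]
which identifies $\{H^n(X, N/I^s N)\}_s$ with the cokernel $\{H^n(X, N)/I^s H^n(X, N)\}_s = \{H^n(X, N) \otimes_A A/I^s\}_s$, as required.

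The main obstacle is the finite generation of $\bigoplus_s H^n(X, I^s N)$ over the Rees algebra: this is the genuine geometric input and is where properness is really used. Once this is granted, the remainder of the argument is formal manipulation of pro modules together with the observation that pro-injections have pro-zero kernel.
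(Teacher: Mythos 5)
Your overall architecture is sound --- it is essentially the EGA-style argument that the paper itself does not reproduce (the paper's ``proof'' is only the remark that the pro-statement can be read off from the proof of EGA III 4.1.7) --- and your deduction of the theorem from the Claim is correct. The gap is in the justification of the Claim itself, specifically its kernel half. Write $R=\bigoplus_s I^s$, $M^*=\bigoplus_s H^n(X,I^sN)$ and $K^s=\ker\bigl(H^n(X,I^sN)\to H^n(X,N)\bigr)$. Finite generation of $M^*$ over $R$ does give, by the argument parallel to Artin--Rees, that the images of $H^n(X,I^sN)\to H^n(X,N)$ are intertwined with $\{I^sH^n(X,N)\}_s$; that is the standard part. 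It does \emph{not} give pro-vanishing of $\{K^s\}_s$ by the same argument: all one gets is that $K^*=\bigoplus_s K^s$ is a finitely generated graded $R$-submodule, hence generated in degrees $\le s_0$, and from this only that the transition map sends $K^s$ into $I^{s-r}K^r$ for $s\ge r\ge s_0$. That is strictly weaker than pro-zero --- it does not force any transition map $K^s\to K^r$ to vanish (the intersection $\bigcap_m I^mK^r$ need not be zero, and even when it is, containment in $I^{s-r}K^r$ at each finite stage gives nothing). Since, after your reduction, the pro-isomorphism of the theorem is \emph{equivalent} to this pro-vanishing (it is exactly what makes your rightmost arrow a monomorphism of pro-modules), this is the one point where the pro-statement genuinely exceeds the classical $\varprojlim$-statement, and it cannot be subsumed under ``the same graded-module argument''.

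The gap is reparable using only the same finiteness input. The transition maps $H^n(X,I^sN)\to H^n(X,I^{s-1}N)$ assemble into an $R$-linear endomorphism $\theta$ of $M^*$ of degree $-1$ (it commutes with multiplication by $a\in I^c$ because the corresponding sheaf maps do), and the degree-$s$ part of $\ker(\theta^m)$ for $m=s$ is exactly $K^s$, the map $M^s\to M^0=H^n(X,N)$ being $\theta^s$. Since $M^*$ is a Noetherian $R$-module, the ascending chain $\ker\theta\subseteq\ker\theta^2\subseteq\cdots$ stabilises, say at $c$; then for $s\ge r+c$ one has $\ker\theta^s=\ker\theta^{s-r}$, so the transition map $K^s\to K^r$, which is $\theta^{s-r}$, is zero. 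This yields the pro-vanishing and completes your Claim, and with it your proof. Two smaller points: your justification of $H^n(X_s,N/I^sN)=H^n(X,N/I^sN)$ is wrong as stated, since $X$ and $X_s$ have the same underlying space only when $I\roi_X$ is nilpotent; the correct reason is that pushforward along the closed immersion $X_s\hookrightarrow X$ is exact and preserves cohomology. Also, for the finite generation of $M^*$ it is simpler to apply the finiteness theorem for the proper morphism $X\times_A\Spec R\to\Spec R$ to the coherent sheaf corresponding to $\bigoplus_s I^sN$, rather than redoing Chow's lemma and d\'evissage by hand, though your route is the one EGA takes.
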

\begin{proof}[Proof/Remark]
In fact, Grothendieck's theorem is usually stated as the isomorphism $\projlim_s\,H^s(X,N)\otimes_AA/I^s\isoto\projlim_sH^n(X_s,N/I^sN)$, but a quick examination of the proof in EGA shows that the stronger isomorphism of pro $A$-modules holds.
\end{proof}

Combining Grothendieck's formal function theorem with the already-established \linebreak degree-wise continuity results proves the following formal function theorems for $\HH$, $\THH$, and $\TR^r$:

\begin{theorem}\label{theorem_degree_wise_cont_proper}
Let $A$ be a Noetherian, F-finite, finite Krull-dimensional $\bb Z_{(p)}$-algebra, $I\subseteq A$ an ideal, $X$ a proper scheme over $A$, and $v\ge1$. Then the canonical maps
\begin{enumerate}
\item $\{\HH_n(X;\bb Z/p^v)\otimes_AA/I^s\}_s\To\{\HH_n(X_s,\bb Z/p^v\bb Z)\}_s$
\item $\{\TR_n^r(X;\bb Z/p^v)\otimes_{W_r(A)}W_r(A/I^s)\}_s\To\{\TR_n^r(X_s;\bb Z/p^v)\}_s$
\end{enumerate}
are isomorphisms for all $n\ge0$, $r\ge1$.
\end{theorem}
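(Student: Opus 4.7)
The plan is to leverage Zariski descent spectral sequences together with Grothendieck's formal function theorem (Theorem \ref{theorem_GFFT}) to reduce the statement to the affine continuity result of Theorem \ref{theorem_continuity}.

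For both $\HH/\THH$ and $\TR^r$, I begin with the Zariski descent spectral sequences: the presheaves $\HH\wedge S/p^v$, $\THH\wedge S/p^v$, and $\TR^r(-;p)\wedge S/p^v$ satisfy Zariski descent, and so for $\HH$ we have
\[E_2^{ij}(X) = H^i(X, \cal\HH_{-j}(X;\bb Z/p^v)) \implies \HH_{-i-j}(X;\bb Z/p^v)\]
and analogously for each $X_s$ and for $\THH$, $\TR^r$. These are bounded since $X$ and the $X_s$ have finite Krull dimension, being proper over $A$. By Corollary \ref{corollary_coherence_of_THH_TR} together with properness, the $E_2$-terms for $X$ are finitely generated $A$-modules (resp.\ $W_r(A)$-modules), as is the abutment by Theorem \ref{theorem_finite_gen_proper}. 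The Artin--Rees exactness (Theorem \ref{theorem_Artin_Rees}(ii), and Theorem \ref{theorem_base_change_by_Witt_rings}(i) in the Witt case) permits base-changing the entire spectral sequence by $A/I^\infty$ (resp.\ $W_r(A/I^\infty)$) to obtain a spectral sequence of pro modules with the expected abutment. The spectral sequences for the $X_s$ assemble into a spectral sequence of pro modules; the natural map from one to the other factors through the base change, because its target is killed by $I^s$ (resp.\ $W_r(I^s)$), so it suffices to show that the induced map on $E_2$-pages is an isomorphism.

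For $\HH/\THH$, the required $E_2$-isomorphism
\[\{H^i(X, \cal\HH_n(X;\bb Z/p^v)) \otimes_A A/I^s\}_s \isoto \{H^i(X_s, \cal\HH_n(X_s;\bb Z/p^v))\}_s\]
will be established in two steps. First, Grothendieck's theorem applied to the coherent sheaf $\cal\HH_n(X;\bb Z/p^v)$ on the proper $X\to\Spec A$ identifies the left side with $\{H^i(X_s, \cal\HH_n(X;\bb Z/p^v)/I^s)\}_s$. Second, on any affine open $\Spec R\subseteq X$, the ring $R$ is a Noetherian, F-finite $\bb Z_{(p)}$-algebra by Lemma \ref{lemma_conditions_for_F-finite}, so Theorem \ref{theorem_continuity}(i) applied to $R$ with ideal $IR$ provides the pro-isomorphism $\{\HH_n(R;\bb Z/p^v)\otimes_R R/(IR)^s\}_s\cong\{\HH_n(R/(IR)^s;\bb Z/p^v)\}_s$; sheafifying shows that $\{\cal\HH_n(X;\bb Z/p^v)/I^s\}_s$ agrees with $\{\cal\HH_n(X_s;\bb Z/p^v)\}_s$ as pro-sheaves on $X$ (pushed forward along the closed immersion $X_s\hookrightarrow X$), and taking cohomology yields the desired comparison. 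The $\THH$ argument is verbatim the same.

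For $\TR^r$, one runs the same argument on $W_r(X)$ rather than $X$: the coherent sheaf $R^{r-1}_*\cal\TR^r_n(X;\bb Z/p^v)$ lives on $W_r(X)$, which is Noetherian by Langer--Zink and proper over $W_r(A)$ by the properness result recalled at the start of the section. Applying Grothendieck's theorem on $W_r(X)\to\Spec W_r(A)$ with the ideal $W_r(I)$ (intertwined with $W_r(I^s)=\ker(W_r(A)\to W_r(A/I^s))$ by Lemma \ref{lemma_witt_1}) reduces matters to the pro-sheaf isomorphism $\{R^{r-1}_*\cal\TR^r_n(X;\bb Z/p^v)\otimes_{W_r(A)}W_r(A/I^s)\}_s\cong\{R^{r-1}_*\cal\TR^r_n(X_s;\bb Z/p^v)\}_s$ on $W_r(X)$. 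The main obstacle I anticipate is verifying this isomorphism locally on an affine open $\Spec W_r(R)\subseteq W_r(X)$, since Theorem \ref{theorem_continuity}(ii) applied to $R$ with ideal $IR$ gives the analogous pro-isomorphism with tensor product taken over $W_r(R)$, not $W_r(A)$. To bridge this, I would show that the two chains of ideals $W_r(I^s)\cdot W_r(R)$ and $W_r((IR)^s)$ of $W_r(R)$ are intertwined: one inclusion is immediate, and for the other, choosing generators $t_1,\dots,t_m$ of $I$ and invoking Remark \ref{remark_improved_witt_1} produces $M\gg 0$ with $W_r((IR)^M)\subseteq\pid{[t_1],\dots,[t_m]}^s\cdot W_r(R)\subseteq W_r(I^s)\cdot W_r(R)$. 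Artin--Rees on the finitely generated $W_r(R)$-module $\TR^r_n(R;\bb Z/p^v)$ then forces the corresponding pro-tensor products to agree, completing the reduction.
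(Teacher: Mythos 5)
Your proposal is correct and follows essentially the same route as the paper: bounded Zariski descent spectral sequences, base change to pro modules via Theorem \ref{theorem_Artin_Rees}(ii) and Theorem \ref{theorem_base_change_by_Witt_rings}(i), reduction to the $E_2$-pages, and then Grothendieck's formal functions theorem (on $W_r(X)$, proper over $W_r(A)$, in the $\TR^r$ case) combined with the affine continuity Theorem \ref{theorem_continuity} on affine opens. Your explicit intertwining of the chains $W_r(I^s)\cdot W_r(R)$ and $W_r((IR)^s)$ via Remark \ref{remark_improved_witt_1} is a correct elaboration of a point the paper handles only tersely through Lemma \ref{lemma_witt_1}, and the intertwining alone already identifies the pro tensor products, so the final appeal to Artin--Rees there is not even needed.
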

\begin{proof}
(i): Exactly as in the proof of Theorem \ref{theorem_finite_gen_proper}, there is a bounded Zariski descent spectral sequence of finitely generated $A$-modules  \[E_2^{ij}=H^i(X,\cal\HH_{-j}(X;\bb Z/p^v))\implies\HH_{-i-j}(X;\bb Z/p^v).\] By Theorem \ref{theorem_Artin_Rees}(ii), we may base change by $A/I^\infty$ to obtain a bounded spectral sequence of pro $A$-modules \[E_2^{ij}(\infty)=\{H^i(X,\cal\HH_{-j}(X;\bb Z/p^v))\otimes_A A/I^s\}_s\implies\{\HH_{-i-j}(X;\bb Z/p^v)\otimes_AA/I^s\}_s.\]

There is also a bounded Zariski descent spectral sequence for $\HH$ associated to each scheme $X_s=X\times_AA/I^s$, for $s\ge1$, and these also assemble to a spectral sequence of pro $A$-modules \[^\prime E_2^{ij}(\infty)=\{H^i(X_s,\cal\HH_{-j}(X_s;\bb Z/p^v))\}_s\implies\{\HH_{-i-j}(X_s;\bb Z/p^v)\}_s.\] These two spectral sequences are compatible in that the $E(\infty)$ spectral sequence maps to the $^\prime E(\infty)$ spectral sequence, and so to complete the proof it is enough to show that the canonical map of pro $A$-modules on the second pages \[\{H^i(X,\cal\HH_{-j}(X;\bb Z/p^v))\otimes_A A/I^s\}_s\To\{H^i(X_s,\cal\HH_{-j}(X_s;\bb Z/p^v))\}_s\] is an isomorphism for all $i,j\ge0$. But this map factors as the composition
\begin{align*}
\{H^i(X,\cal\HH_{-j}(X;\bb Z/p^v))\otimes_A A/I^s\}_s&\\
\To\{H^i(X,\cal\HH_{-j}(X;&\bb Z/p^v)\otimes_{\roi_X}\roi_X/I^s\roi_X)\}_s\\
&\To\{H^i(X_s,\cal\HH_{-j}(X_s;\bb Z/p^v))\}_s,
\end{align*}
where the first arrow is an isomorphism by Grothendieck's formal functions theorem and Corollary \ref{corollary_coherence_of_THH_TR}, and the second arrow is an isomorphism because the map of pro sheaves $\{\cal\HH_{-j}(X;\bb Z/p^v)\otimes_{\roi_X}\roi_X/I^s\roi_X\}_s\to\{\cal\HH_{-j}(X_s;\bb Z/p^v)\}_s$ is an isomorphism by Theorem \ref{theorem_continuity}, since all the affine open subschemes of $X$ are Noetherian, F-finite $\bb Z_{(p)}$-algebras, by Lemma \ref{lemma_conditions_for_F-finite}.

(ii): The proof is very similar to that of part (i), just requiring some additional care when applying Grothendieck's formal functions theorem. Firstly, as in the proof of Theorem \ref{theorem_finite_gen_proper}, there is a bounded Zariski descent spectral sequence of finitely generated $W_r(A)$-modules \[F_2^{ij}=H^i(X,\cal\TR_{-j}^r(X;\bb Z/p^v))\implies \TR_{-i-j}^r(X;\bb Z/p^v).\] By Theorem \ref{theorem_base_change_by_Witt_rings}(i), we may base change by $W_r(A/I^\infty)$ to obtain a bounded spectral sequence of pro $W_r(A)$-modules \[\hspace{-7mm}F_2^{ij}(\infty)=\{H^i(X,\cal\TR_{-j}^r(X;\bb Z/p^v))\otimes_{W_r(A)}W_r(A/I^s)\}_s\implies \{\TR_{-i-j}^r(X;\bb Z/p^v)\otimes_{W_r(A)}W_r(A/I^s)\}_s.\]

There is also a bounded Zariski descent spectral sequence for $\TR^r$ associated to each scheme $X_s$, for $s\ge1$, and these assemble to a spectral sequence of pro $W_r(A)$-modules \[^\prime F_2^{ij}(\infty)=\{H^i(X_s,\cal\TR_{-j}^r(X_s;\bb Z/p^v))\}_s\implies \{\TR_{-i-j}^r(X_s;\bb Z/p^v)\}_s.\] The $F(\infty)$-spectral sequence maps to the $^\prime F(\infty)$-spectral sequence, and so to complete the proof it is enough to show that the canonical map of pro $W_r(A)$-modules on the second pages \[\{H^i(X,\cal\TR_{-j}^r(X;\bb Z/p^v))\otimes_{W_r(A)}W_r(A/I^s)\}_s\To \{H^i(X_s,\cal\TR_{-j}^r(X_s;\bb Z/p^v))\}_s,\tag{\dag}\] is an isomorphism for all $i\ge0$.

To do this we first transfer the problem to the scheme $W_r(X)$ by rewriting \[H^i(X,\cal\TR^r_{-j}(X;\bb Z/p^v))=H^i(W_r(X),R^{r-1}_*\cal\TR_{-j}^r(X;\bb Z/p^v))\] and \[H^i(X_s,\cal\TR_{-j}^r(X_s;\bb Z/p^v))=H^i(W_r(X_s),R^{r-1}_*\cal\TR_{-j}^r(X_s;\bb Z/p^v))\] since the cohomology of a sheaf is unchanged after pushing forward along a closed embedding \cite[Lem.~III.2.10]{Hartshorne1977}. We can now apply the same argument as in the part (i), by factoring (\dag) as a composition
\begin{align*}
\{H^i(W_r(X),R^{r-1}_*\cal\TR^r_{-j}(X;\bb Z/p^v))\}_s&\\
\To\{H^i(W_r(X), R^{r-1}_*\cal\TR_{-j}^r(X_s&;\bb Z/p^v)\otimes_{W_r(\roi_X)}W_r(\roi_X/I^s\roi_X)\}_s\\
&\To\{H^i(W_r(X_s),R^{r-1}_*\cal\TR_{-j}^r(X_s;\bb Z/p^v))\}_s.
\end{align*}
Since $W_r(X)$ is a proper scheme over $W_r(A)$, we may apply Grothendieck's formal functions theorem to the coherent (by Corollary \ref{corollary_coherence_of_THH_TR}) sheaf $R^{r-1}_*\cal\TR_{-j}^r(X_s;\bb Z/p^v)$ and ideal $W_r(I)\subseteq W_r(A)$ (whose powers are intertwined with $W_r(I^s)$, $s\ge1$, by Lemma \ref{lemma_witt_1}) to deduce that the first arrows is an isomorphism. The second arrows is an isomorphism for the same reason as in part (i): the underlying map of pro sheaves is an isomorphism by Theorem \ref{theorem_continuity}.
\end{proof}

The following is the scheme-theoretic analogue of Corollary \ref{corollary_pre_continuity}:

\begin{corollary}\label{corollary_pre_continuity_proper}
Let $A$ be a Noetherian, F-finite, finite Krull-dimensional $\bb Z_{(p)}$-algebra, $I\subseteq A$ an ideal, and $X$ a proper scheme over $A$. Then all of the following maps (not just the compositions) are isomorphisms for all $n\ge0$ and $v,r\ge1$:
\begin{align*}
&\HH_n(X;\bb Z/p^v)\otimes_A\hat A\To \HH_n(X\times_A\hat A;\bb Z/p^v)\To \projlim_s\HH_n(X_s;\bb Z/p^v)\\
&\TR^r(X;\bb Z/p^v)\otimes_{W_r(A)}W_r(\hat A)\To \TR^r(X\times_A\hat A;\bb Z/p^v)\To\projlim_s\TR^r(X_s;\bb Z/p^v)
\end{align*}
\end{corollary}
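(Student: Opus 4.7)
The plan is to imitate the proof of the affine analogue Corollary \ref{corollary_pre_continuity} verbatim, now with Theorem \ref{theorem_finite_gen_proper} (finite generation for proper schemes) and Theorem \ref{theorem_degree_wise_cont_proper} (degree-wise continuity for proper schemes) playing the roles that Theorem \ref{theorem_finite_gen_of_THH_TR} and Theorem \ref{theorem_continuity} played in the affine case. Concretely, I would factor each of the claimed isomorphisms into two steps by inserting the intermediate term $\projlim_s(\HH_n(X;\bb Z/p^v)\otimes_AA/I^s)$, respectively $\projlim_s(\TR^r_n(X;\bb Z/p^v)\otimes_{W_r(A)}W_r(A/I^s))$.

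For $\HH$ the first step
\[
\HH_n(X;\bb Z/p^v)\otimes_A\hat A\To\projlim_s\bigl(\HH_n(X;\bb Z/p^v)\otimes_AA/I^s\bigr)
\]
is an isomorphism because $\HH_n(X;\bb Z/p^v)$ is a finitely generated module over the Noetherian ring $A$ by Theorem \ref{theorem_finite_gen_proper}, and any such module satisfies $M\otimes_A\hat A\cong\projlim_sM/I^sM$ by standard commutative algebra (e.g.~\cite[Thm.~8.7]{Matsumura1989}). The second step $\projlim_s(\HH_n(X;\bb Z/p^v)\otimes_AA/I^s)\to\projlim_s\HH_n(X_s;\bb Z/p^v)$ is obtained by applying $\projlim_s$ to the pro-isomorphism of Theorem \ref{theorem_degree_wise_cont_proper}(i). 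I would then apply the identical two-step argument to the ring $\hat A$ and the proper $\hat A$-scheme $X\times_A\hat A$, using Lemma \ref{lemma_conditions_for_F-finite}(iii) together with the standard bound $\dim\hat A\le\dim A$ to ensure that $\hat A$ remains Noetherian, F-finite, and of finite Krull dimension, so that Theorems \ref{theorem_finite_gen_proper} and \ref{theorem_degree_wise_cont_proper} apply. Using the identification $(X\times_A\hat A)\times_{\hat A}\hat A/I^s\hat A\cong X_s$ and the triviality of $\HH_n(X\times_A\hat A;\bb Z/p^v)\otimes_{\hat A}\hat A=\HH_n(X\times_A\hat A;\bb Z/p^v)$, the two applications combine to give simultaneously the isomorphisms $\HH_n(X;\bb Z/p^v)\otimes_A\hat A\isoto \HH_n(X\times_A\hat A;\bb Z/p^v)$ and $\HH_n(X\times_A\hat A;\bb Z/p^v)\isoto\projlim_s\HH_n(X_s;\bb Z/p^v)$.

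For $\TR^r$ the argument is structurally identical, but now the module structure is over $W_r(A)$, which is Noetherian by Langer--Zink (Theorem \ref{theorem_Langer_Zink}); Lemma \ref{lemma_W_S_of_completion} together with Lemma \ref{lemma_witt_1}(ii),(iv) identifies $W_r(\hat A)$ with the completion of $W_r(A)$ along an ideal whose powers are intertwined with those of $W_r(I)$, so the commutative-algebra step transfers; and Theorem \ref{theorem_degree_wise_cont_proper}(ii) supplies the pro-isomorphism. The only obstacle is bookkeeping: one must keep track of the four rings $A,\hat A,W_r(A),W_r(\hat A)$ and verify that Noetherianness, F-finiteness, and finite Krull dimensionality pass through both completion and Witt-vector formation; this is handled completely by Lemmas \ref{lemma_conditions_for_F-finite}, \ref{lemma_F-finiteness_of_Witt_ring}, \ref{lemma_W_S_of_completion} and Theorem \ref{theorem_Langer_Zink}. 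No new ingredient beyond the proper-scheme versions of our earlier affine theorems is needed.
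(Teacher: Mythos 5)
Your proposal is correct and follows essentially the same route as the paper: factor each map through $\projlim_s(\HH_n(X;\bb Z/p^v)\otimes_AA/I^s)$ (resp.\ the $W_r$ analogue), use Theorem \ref{theorem_finite_gen_proper} plus standard commutative algebra for the first arrow and Theorem \ref{theorem_degree_wise_cont_proper} for the second, then repeat with $A$ replaced by $\hat A$ and $X$ by $X\times_A\hat A$. The extra bookkeeping you mention (F-finiteness, Noetherianness, finite Krull dimension of $\hat A$, and the Witt-vector identifications via Lemma \ref{lemma_W_S_of_completion} and Theorem \ref{theorem_Langer_Zink}) is exactly what the paper relies on implicitly.
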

\begin{proof}
The proof is almost identical to that of Corollary \ref{corollary_pre_continuity}. Indeed, exactly as in that proof, we consider the canonical maps \[\HH_n(X;\bb Z/p^v)\otimes_A\hat A\To\projlim_s\HH_n(X;\bb Z/p^v)\otimes_AA/I^s\To\projlim_s\HH_n(X_s;\bb Z/p^v),\] where the first arrow is an isomorphism by Theorem \ref{theorem_finite_gen_proper} and standard commutative algebra, and the second arrow is an isomorphism by Theorem \ref{theorem_degree_wise_cont_proper}. Then, replacing $A$ by $\hat A$, we see that the canonical maps \[\HH_n(X\times_A\hat A;\bb Z/p^v)\To\projlim_s\HH_n(X\times_A{\hat A};\bb Z/p^v)\otimes_{\hat A} A/I^sA\To\projlim_s\HH_n(X_s;\bb Z/p^v)\] are also isomorphisms. This proves the claim for $\HH$, and the proof for $\TR^r$ is identical.
\end{proof}

We finally reach the scheme-theoretic analogue of Theorem \ref{theorem_continuity_in_complete_case}, namely spectral continuity of $\THH$, $\TR^r$, etc.:

\begin{theorem}\label{theorem_spectral_continuity_proper}
Let $A$ be a Noetherian, F-finite, finite Krull-dimensional $\bb Z_{(p)}$-algebra, $I\subseteq A$ an ideal, and $X$ be a proper scheme over $A$; assume that $A$ is $I$-adically complete. Then, for all $1\le r\le\infty$, the following canonical maps of spectra are all weak equivalences after $p$-completion:
\begin{align*}
\TR^r(X;p)&\To\holim_s\TR^r(X_s;p)\\
\TC^r(X;p)&\To\holim_s\TC^r(X_s;p)
\end{align*}
\end{theorem}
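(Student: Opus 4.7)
The plan is to transcribe the proof of Theorem \ref{theorem_continuity_in_complete_case} into the scheme-theoretic setting, with Theorem \ref{theorem_degree_wise_cont_proper} and Corollary \ref{corollary_pre_continuity_proper} taking the place of their affine analogues Theorem \ref{theorem_continuity} and Corollary \ref{corollary_pre_continuity}. Via the machinery reviewed in Section \ref{subsection_p_completions}, it suffices to prove that each of the two canonical maps becomes a weak equivalence after smashing with the Moore spectrum $S/p^v$ for every $v\ge1$.

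Fix $1\le r<\infty$. For each $n\ge0$, the Milnor short exact sequence presents $\pi_n(\holim_s\TR^r(X_s;\bb Z/p^v))$ as an extension of $\projlim_s\TR^r_n(X_s;\bb Z/p^v)$ by ${\projlim_s}^1\TR^r_{n+1}(X_s;\bb Z/p^v)$. Theorem \ref{theorem_degree_wise_cont_proper}(ii) identifies the pro $W_r(A)$-module $\{\TR^r_{n+1}(X_s;\bb Z/p^v)\}_s$ with $\{\TR^r_{n+1}(X;\bb Z/p^v)\otimes_{W_r(A)}W_r(A/I^s)\}_s$, whose transition maps are surjective; hence the $\projlim^1$ term vanishes. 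Since $A$ is $I$-adically complete we have $X\times_A\hat A=X$, so Corollary \ref{corollary_pre_continuity_proper} furnishes an isomorphism $\TR^r_n(X;\bb Z/p^v)\isoto\projlim_s\TR^r_n(X_s;\bb Z/p^v)$. Composing, $\TR^r(X;\bb Z/p^v)\to\holim_s\TR^r(X_s;\bb Z/p^v)$ is a $\pi_*$-isomorphism, hence a weak equivalence.

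The corresponding assertion for $\TC^r$, $1\le r<\infty$, then follows from the five-lemma applied to the defining long exact sequences of $\TC^r$ in terms of $\TR^r$ and $\TR^{r-1}$, for $X$ and for the inverse system $\{X_s\}_s$; a $\projlim^1$-vanishing argument as above ensures that taking homotopy groups commutes with the homotopy limit closely enough to yield an honest long exact sequence on the limit side. Finally, the cases $r=\infty$ reduce to the finite levels by commuting homotopy limits: $\holim_s\TR(X_s;p)\simeq\holim_r\holim_s\TR^r(X_s;p)$, and similarly for $\TC$. No genuinely new obstacle arises at this stage: all the substantive content has been absorbed into Theorem \ref{theorem_degree_wise_cont_proper} and Corollary \ref{corollary_pre_continuity_proper}, and what remains is a formal manipulation of Milnor sequences together with the five-lemma, exactly parallel to the affine case.
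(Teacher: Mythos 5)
Your proposal is correct and follows essentially the same route as the paper: the Milnor sequence together with the surjective transition maps of the pro-isomorphic tower supplied by Theorem \ref{theorem_degree_wise_cont_proper} kills the $\projlim^1$ term, and Corollary \ref{corollary_pre_continuity_proper} (using $I$-adic completeness of $A$) identifies $\TR^r_n(X;\bb Z/p^v)$ with the inverse limit, giving the $\pi_*$-isomorphism with $\bb Z/p^v$-coefficients for each $v$. For the $\TC^r$ and $r=\infty$ cases the paper simply invokes commutation of homotopy limits (homotopy fibres and $\holim_s$ commute), which is all that is needed; your five-lemma argument works, but the existence of the long exact sequence on the limit side is this formal commutation, not a $\projlim^1$-vanishing statement.
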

\begin{proof}
The proof is identical to that of Theorem \ref{theorem_continuity_in_complete_case}.  Indeed, fixing $1\le r<\infty $, the homotopy groups of $\holim_s\TR^r(X_s;\bb Z/p^v)$ fit into short exact sequences \[0\to{\projlim_s}^1\TR^r_{n+1}(X_s;\bb Z/p^v)\to\pi_n(\holim_s\TR^r(X_s;\bb Z/p^v))\to \projlim_s\TR^r_n(X_s;\bb Z/p^v)\to 0,\] and Theorem \ref{theorem_degree_wise_cont_proper}(i) implies that ${\projlim_s}^1\TR^r_{n+1}(X_s)\cong{\projlim_s}^1\TR^r_{n+1}(X;\bb Z/p^v)\otimes_AA/I^s$, which vanishes because of the surjectivity of the transition maps in the latter pro abelian group. In conclusion, the natural map \[\pi_n(\holim_s\TR^r(X_s;\bb Z/p^v))\To \projlim_s\TR^r_n(A/I^s;\bb Z/p^v)\] is an isomorphism for all $n\ge0$. But since $A$ is already $I$-adically complete, Corollary \ref{corollary_pre_continuity_proper} states that $\TR^r_n(X;\bb Z/p^v)\to\projlim_s\TR^r_n(X_s;\bb Z/p^v)$ is also an isomorphism for all $n\ge0$. So the map $\TR^r(X;\bb Z/p^v)\to\holim_s\TR^r(X_s;\bb Z/p^v)$ induces an isomorphism on all homotopy groups, as required to prove the first weak equivalence.

The claims for $\TC^r$, $\TR^\infty=\TR$, and $\TC^\infty=\TC$ then follow since homotopy limits commute.
\end{proof}

As in the affine case, we now present corollaries of the previous results in the cases in which $p$ is either nilpotent or the generator of $I$; we begin with the nilpotent case:

\begin{corollary}\label{corollary_p_nilpotent_proper}
Let $A$ be a Noetherian, F-finite $\bb Z_{(p)}$-algebra, $X$ a proper scheme over $A$, and $n\ge0$, $r\ge1$; assume $p$ is nilpotent in $A$. Then:
\begin{enumerate}
\item $\HH_n(X)$ and $\THH_n(X)$ are finitely generated $A$-modules.
\item $\TR_n^r(X;p)$ is a finitely generated $W_r(A)$-module.
\end{enumerate}
Now let $I\subseteq A$ be an ideal. Then all of the following maps are isomorphisms for all $n\ge0$, $r\ge1$:
\begin{itemize}
\item[] $\{\HH_n(X)\otimes_AA/I^s\}_s\To\{\HH_n(X_s)\}_s$,
\item[] $\{\TR_n^r(X;p)\otimes_{W_r(A)}W_r(A/I^s)\}_s\To\{\TR_n^r(X_s;p)\}_s$,
\item[] $\HH_n(X)\otimes_A\hat A\To \HH_n(X\times_A\hat A)\To \projlim_s\HH_n(X_s)$,
\item[] $\TR^r_n(X;p)\otimes_{W_r(A)}W_r(\hat A)\To \TR^r_n(X\times_A\hat A;p)\To\projlim_s\TR^r_n(X_s;p)$.
\end{itemize}
Moreover, the maps of spectra in the statement of Theorem \ref{theorem_spectral_continuity_proper} are weak equivalences without $p$-completing.
\end{corollary}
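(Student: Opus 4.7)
The plan is to deduce every assertion from the earlier results of the section—Corollary \ref{corollary_p_complete_finite_gen_proper}, Theorem \ref{theorem_degree_wise_cont_proper}, Corollary \ref{corollary_pre_continuity_proper}, and Theorem \ref{theorem_spectral_continuity_proper}—in exact parallel with the affine Corollary \ref{corollary_nilpotent_continuity}. The driving observation is that nilpotence of $p$ in $A$ forces every group appearing in the statement to have uniformly bounded $p$-torsion, which trivialises the $p$-completions and the $\bb Z/p^v$-coefficients that appear in those earlier theorems.

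First I would verify the hypotheses of those results. Lemma \ref{lemma_F-finiteness_of_Witt_ring}(ii) shows that $p$ is nilpotent in each $W_r(A)$, so the $A$-modules $\HH_n(X)$, $\THH_n(X)$ and the $W_r(A)$-modules $\TR_n^r(X;p)$—and their analogues for $X_s$ and $X\times_A\hat A$—are all killed by a single fixed power of $p$ (uniformly in $n$ and $s$, once $r$ is fixed). Moreover $A$ automatically has finite Krull dimension: since $p$ is nilpotent, $\dim A=\dim A/pA$, and $A/pA$ is a Noetherian F-finite $\bb F_p$-algebra, which is finite-dimensional by Kunz \cite[Prop.~1.1]{Kunz1976}. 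Thus Corollary \ref{corollary_p_complete_finite_gen_proper}, Theorem \ref{theorem_degree_wise_cont_proper}, Corollary \ref{corollary_pre_continuity_proper}, and Theorem \ref{theorem_spectral_continuity_proper} all apply to $A$ and $X$.

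Next I would prove (i) and (ii): the universal coefficient exact sequence recalled in Section \ref{subsection_p_completions} degenerates for modules of bounded $p$-torsion, giving $\HH_n(X;\bb Z_p)=\HH_n(X)$, $\THH_n(X;\bb Z_p)=\THH_n(X)$, and $\TR_n^r(X;p,\bb Z_p)=\TR_n^r(X;p)$, together with $A_p^\comp=A$ and $W_r(A_p^\comp)=W_r(A)$. The conclusions of Corollary \ref{corollary_p_complete_finite_gen_proper} then translate directly into (i) and (ii). For the four pro-module and completion isomorphisms, I would pick $v\gg0$ so that $p^v$ kills all the relevant modules, invoke the corresponding finite-coefficient isomorphisms from Theorem \ref{theorem_degree_wise_cont_proper} and Corollary \ref{corollary_pre_continuity_proper}, and match them up with the integral versions via the five lemma applied to the two universal coefficient sequences
\[
0\To \pi_n(-)\otimes_{\bb Z}\bb Z/p^v\To\pi_n(-;\bb Z/p^v)\To \pi_{n-1}(-)[p^v]\To 0,
\]
whose outer terms collapse to $\pi_n(-)$ and $\pi_{n-1}(-)$ under the bounded $p$-torsion hypothesis. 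Finally, for the weak-equivalence assertion, bounded $p$-torsion on every homotopy group means each of $\THH(X)$, $\TR^r(X;p)$, $\TC^r(X;p)$ (together with their homotopy limits in $r$ and their $X_s$ analogues) is already $p$-complete, so the $p$-completion hypothesis in Theorem \ref{theorem_spectral_continuity_proper} is vacuous and the weak equivalences hold integrally.

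The only delicate point is bookkeeping: the nilpotence bound must be chosen uniformly in $n$ and $s$ (and, at fixed $r$, across the Witt levels). This is automatic, since $p^N=0$ in $A$ gives $p^N=0$ in every $A/I^s$, and by Lemma \ref{lemma_witt_1}(ii) together with the fact that $W_r(A/pA)$ is a $\bb Z/p^r\bb Z$-algebra, a power of $p$ depending only on $N$ and $r$ kills $W_r(A)$ and each $W_r(A/I^s)$. No argument beyond those already appearing in the affine Corollary \ref{corollary_nilpotent_continuity} is required, and I would not expect any genuine obstacle—this corollary is essentially a formal consequence of the results proved earlier in the section.
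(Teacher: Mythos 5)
Your proposal is correct and follows essentially the same route as the paper: bounded $p$-torsion (via Lemma \ref{lemma_F-finiteness_of_Witt_ring}(ii)) trivialises the $p$-completions and finite coefficients, so everything reduces to Theorem \ref{theorem_finite_gen_proper}/Corollary \ref{corollary_p_complete_finite_gen_proper}, Theorem \ref{theorem_degree_wise_cont_proper}, Corollary \ref{corollary_pre_continuity_proper}, and Theorem \ref{theorem_spectral_continuity_proper}, with finite Krull dimension supplied automatically by Kunz's theorem exactly as the paper remarks. The only cosmetic difference is that you quote the $\bb Z_p$-coefficient finite generation result where the paper quotes the $\bb Z/p^v$-coefficient one, which is immaterial here.
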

\begin{proof}
As in the proof of Corollary \ref{corollary_nilpotent_continuity}, the groups $\HH_n(A)$, $\THH_n(A)$, and $\TR_n^r(A;p)$ are all bounded $p$-torsion. Hence the spectra are $p$-complete, and our finite generation and isomorphism claims follow from the already established versions with finite coefficients, namely Theorems \ref{theorem_finite_gen_proper} and \ref{theorem_degree_wise_cont_proper}, and Corollary \ref{corollary_pre_continuity_proper}.

Note that we have not assumed that $A$ has finite Krull dimension. This is because a Noetherian, F-finite $\bb Z_{(p)}$-algebra in which $p$ is nilpotent automatically has finite Krull dimension: indeed, it suffices to show that $A/pA$ has finite Krull dimension, and this follows from a theorem of E.~Kunz \cite[Prop.~1.1]{Kunz1976}.
\end{proof}

Next we consider the special case $I=pA$:

\begin{corollary}\label{corollary_proper_I_p}
Let $A$ be a Noetherian, F-finite, finite Krull-dimensional $\bb Z_{(p)}$-algebra, $X$ a proper scheme over $A$, and $v\ge1$. Then the following canonical maps are isomorphisms for all $n\ge0$, $r\ge1$:
\begin{itemize}
\item[] $\HH_n(X;\bb Z/p^v)\To\{\HH_n(X\times_AA/p^sA;\bb Z/p^v)\}_s$
\item[] $\TR_n^r(X;\bb Z/p^v)\To\{\TR_n^r(X\times_AA/p^sA;\bb Z/p^v)\}_s$
\item[] $\TC_n^r(X;\bb Z/p^v)\To\{\TC_n^r(X\times_AA/p^sA;\bb Z/p^v)\}_s$ 
\end{itemize}
Moreover, for all $1\le r\le\infty$, all of the following maps (not just the compositions) of spectra are weak equivalences after $p$-completion:
\begin{itemize}
\item[] $\TR^r(X;p)\To\TR^r(X\times_AA_p^\comp;p)\To\holim_s\TR^r(X\times_AA/p^sA;p)$
\item[] $\TC^r(X;p)\To\TC^r(X\times_AA_p^\comp;p)\To\holim_s\TC^r(X\times_AA/p^sA;p)$
\end{itemize}
\end{corollary}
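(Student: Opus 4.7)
The plan is to mirror the proof of Corollary \ref{corollary_p_I} using the scheme-theoretic continuity results of Section \ref{section_proper}. The key observation, as in the affine case, is that both $\HH_n(X;\bb Z/p^v)$ (as an $A$-module) and $\TR_n^r(X;\bb Z/p^v)$ (as a $W_r(A)$-module) are killed by $p^v$, so the natural pro systems obtained by tensoring against $\{A/p^sA\}_s$ and $\{W_r(A/p^sA)\}_s$ are eventually constant; combining this with Theorem \ref{theorem_degree_wise_cont_proper} will yield the first group of isomorphisms.

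Concretely, for $s\ge v$ we have $\HH_n(X;\bb Z/p^v)\otimes_AA/p^sA=\HH_n(X;\bb Z/p^v)$, and Theorem \ref{theorem_degree_wise_cont_proper}(i) identifies this constant pro module with $\{\HH_n(X_s;\bb Z/p^v)\}_s$. For the $\TR^r$ statement, the inclusion $W_r(p^MA)\subseteq p^vW_r(A)$ for $M\gg 0$ established inside the proof of Lemma \ref{lemma_W_r_of_p_completion} shows that $\{\TR_n^r(X;\bb Z/p^v)\otimes_{W_r(A)}W_r(A/p^sA)\}_s$ is likewise eventually constant equal to $\TR_n^r(X;\bb Z/p^v)$, whence Theorem \ref{theorem_degree_wise_cont_proper}(ii) gives the pro $\TR^r$ isomorphism. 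The pro $\TC^r$ isomorphism then follows in the standard way, by applying the five lemma to the long exact sequence relating $\TC^r_n$, $\TR^r_n$, and $\TR^{r-1}_n$ of $X$ and of $X_s$.

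For the spectrum-level statements, Lemma \ref{lemma_conditions_for_F-finite} ensures that $A_p^\comp$ is again a Noetherian, F-finite, finite Krull-dimensional $\bb Z_{(p)}$-algebra, and the isomorphism $A_p^\comp/p^sA_p^\comp\cong A/p^sA$ gives $(X\times_AA_p^\comp)\times_{A_p^\comp}A_p^\comp/p^sA_p^\comp\cong X_s$. Applying the pro $\TR^r$ isomorphism just proved to both $A$ and $A_p^\comp$ yields a chain of identifications \[\TR_n^r(X;\bb Z/p^v)\cong\{\TR_n^r(X_s;\bb Z/p^v)\}_s\cong\TR_n^r(X\times_AA_p^\comp;\bb Z/p^v)\] for all $n,v\ge0$ and $r\ge1$, so the first arrow $\TR^r(X;p)\to\TR^r(X\times_AA_p^\comp;p)$ becomes a weak equivalence after $p$-completion. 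The second arrow, $\TR^r(X\times_AA_p^\comp;p)\to\holim_s\TR^r(X_s;p)$, is a weak equivalence after $p$-completion by Theorem \ref{theorem_spectral_continuity_proper} applied to the $pA_p^\comp$-adically complete ring $A_p^\comp$. The corresponding statements for $\TC^r$, and, by taking homotopy limits in $r$, for $\TR$ and $\TC$, follow formally.

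I do not anticipate a substantive obstacle: the argument is essentially a translation to proper schemes of the affine argument of Corollary \ref{corollary_p_I}, and the only technical input beyond Section \ref{section_proper} is the Witt-vector ideal comparison from Lemma \ref{lemma_W_r_of_p_completion} that permits the absorption of $W_r(p^sA)$ into $p^vW_r(A)$ for $s\gg 0$.
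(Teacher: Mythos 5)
Your proposal is correct and follows essentially the same route as the paper, whose proof of this corollary is precisely "run the argument of Corollary \ref{corollary_p_I} with $I=pA$, using Theorem \ref{theorem_degree_wise_cont_proper} in place of Theorem \ref{theorem_continuity}" — i.e.\ eventual constancy of the pro systems from bounded $p$-torsion and $W_r(p^sA)\subseteq p^vW_r(A)$ for $s\gg0$, the five lemma for $\TC^r$, the comparison of $A$ with $A_p^\comp$, and Theorem \ref{theorem_spectral_continuity_proper} for the second arrows.
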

\begin{proof}
Taking $I=pA$, these claims are proved from Theorem \ref{theorem_degree_wise_cont_proper} by the exact same argument by which Corollary \ref{corollary_p_I} was deduced from Theorem \ref{theorem_continuity}.
\end{proof}

For the sake of reference we also explicitly mention the following corollary of our results, as it will be used in forthcoming applications to the deformation theory of algebraic cycles in characteristic $p$:

\begin{corollary}
Let $A$ be a Noetherian, F-finite, finite Krull-dimensional $\bb Z_{(p)}$-algebra, $I\subseteq A$ an ideal, and $X$ be a proper scheme over $A$; assume that $A$ is $I$-adically complete. The the trace map from $K$-theory to topological cyclic homology yields a homotopy cartesian square of spectra:
\xysquare{\holim_sK(X\times_AA/I^s)_p^\comp}{K(X\times_AA/I)_p^\comp}{\TC(X;p)_p^\comp}{\TC(X\times_AA/I;p)_p^\comp}{->}{->}{->}{->}
\end{corollary}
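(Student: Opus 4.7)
The plan is to combine the Dundas--Goodwillie--McCarthy theorem (\cite{Dundas2013}) with the continuity result of Theorem \ref{theorem_spectral_continuity_proper}. For each $s\ge 1$, write $X_s=X\times_AA/I^s$, so in particular $X_1=X\times_AA/I$. The closed immersion $X_1\hookrightarrow X_s$ is defined by the coherent ideal sheaf $I\roi_{X_s}$, whose $s$-th power vanishes; in other words, $X_1\hookrightarrow X_s$ is a nilpotent thickening.

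The first step is to apply DGM to each $s$. Affine-locally, given $\Spec R\sseq X$, the surjection $R\otimes_AA/I^s\onto R\otimes_AA/I$ has nilpotent kernel, so the affine DGM theorem yields a homotopy cartesian square
\[
\xymatrix@C=10mm{
K(R\otimes_AA/I^s)_p^\comp \ar[r] \ar[d] & K(R\otimes_AA/I)_p^\comp \ar[d] \\
\TC(R\otimes_AA/I^s;p)_p^\comp \ar[r] & \TC(R\otimes_AA/I;p)_p^\comp.
}
\]
Since both $p$-completed $K$-theory and $\TC$ are presheaves of spectra satisfying Zariski descent on quasi-compact, quasi-separated schemes (as recalled at the start of Section \ref{section_proper} for $\TC$, and standard in the Thomason--Trobaugh sense for $K$), this globalises to a homotopy cartesian square of the same shape with $R\otimes_AA/I^\blob$ replaced by $X_\blob$.

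The second step is to take the homotopy limit over $s\ge 1$ of these squares. Homotopy limits preserve homotopy cartesian squares, so the result is again a homotopy cartesian square. The terms in the right-hand column are independent of $s$ and are therefore unchanged. The upper-left corner becomes $\holim_s K(X_s)_p^\comp$ by definition. The lower-left corner becomes $\holim_s\TC(X_s;p)_p^\comp$, which by Theorem \ref{theorem_spectral_continuity_proper} (whose hypotheses are satisfied because $A$ is $I$-adically complete by assumption) is weakly equivalent to $\TC(X;p)_p^\comp$. Substituting this identification gives the desired homotopy cartesian square.

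The main obstacle is the scheme-theoretic form of DGM: one must verify that the cyclotomic trace is compatible with Zariski descent on both sides of the square so that the affine DGM theorem globalises. This is entirely routine in the presheaf-of-spectra formalism used by Geisser--Hesselholt \cite{GeisserHesselholt1999}, but it is the one point where care is required. Everything else is formal: the nilpotence of the thickening, the commutation of $\holim$ with homotopy pullbacks, and the invocation of Theorem \ref{theorem_spectral_continuity_proper} to identify the inverse limit on the $\TC$ side.
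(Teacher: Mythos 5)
Your proposal is correct and follows essentially the same route as the paper: globalise McCarthy's theorem (the nilpotent case of the Dundas--Goodwillie--McCarthy theorem) from the affine setting via Geisser--Hesselholt's Zariski descent to obtain, for each $s$, a homotopy cartesian square after $p$-completion, then take $\holim_s$ and identify the lower-left corner using Theorem \ref{theorem_spectral_continuity_proper}. The paper's proof is exactly this argument, differing only in that it invokes the scheme-theoretic trace-map square directly rather than spelling out the descent step.
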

\begin{proof}
The scheme-theoretic version (which follows from the affine version using Geisser--Hesselholt's Zariski descent \cite{GeisserHesselholt1999}) of R.~McCarthy's trace map isomorphism for nilpotent ideals \cite{McCarthy1997} \cite[Thm.~VII.0.0.2]{Dundas2013} states that, for any $s\ge1$, the following diagram becomes homotopy cartesian after $p$-completion:
\[\xymatrix{
K(X\times_AA/I^s)\ar[r]\ar[d]^{tr} & K(X\times_AA/I)\ar[d]^{tr}\\
\TC(X\times_AA/I^s;p)\ar[r] & \TC(X\times_AA/I;p)
}\] The corollary follows by taking $\op{holim}_s$, and applying Theorem \ref{theorem_spectral_continuity_proper} to the bottom left entry.
\end{proof}

\begin{appendix}
\section{Finite generation of $p$-completions}\label{subsection_p_complete}
\comment{
Given a ring $A$ and module $M$, define $\HH_n(A,M;\bb Z_p)$ to be the homotopy groups of the $p$-completion of the simplicial abelian group defining the (derived) Hochschild homology with coefficients in $M$.

Given another $A$-module $N$, there is a universal coefficient spectral sequence \[\Tor^A_i(N,\HH_j(A,M;\bb Z_p))\implies \HH_{i+j}(A,N\otimes_AM;\bb Z_p).\] At least if $A_p^\comp$ is flat over $A$, e.g. if $A$ is Noetherian, it is nice to note that the lhs is the same as $\Tor_i^{A_p^\comp}(N\otimes_AA_p^\comp,\HH_j(A,M;\bb Z_p))$.

The analogue of the spectral sequence from Proposition \ref{proposition_Bjorns_SS} is \[\HH_i(B,\Tor_j^A(B,M);\bb Z_p)\implies\HH_{i+j}(A,M;\bb Z_p).\]
}

In this appendix we explain how our finite generation results with finite coefficients lead to similar finite generation statements after $p$-completing. We claim no originality for these results, but could not find such algebraic statements summarised in the literature.

Our main goal is the following:

\begin{proposition}\label{proposition_to_prove_finite_gen_p_adically}
Let $A$ be a commutative, Noetherian ring, and $p$ a prime number.
\begin{enumerate}
\item Suppose $R_\blob$ is a simplicial ring and that $A\to \pi_0(R_\blob)$ is a ring homomorphism such that the homotopy groups of $R\dotimes_{\bb Z}\bb Z/p\bb Z$ are finitely generated $A$-modules; then $\pi_n(R_\blob;\bb Z_p)$ is naturally a finitely generated $A_p^\comp$-module for all $n\ge0$.
\item Suppose $X$ is a connective ring spectrum and that $A\to \pi_0(R_\blob)$ is a ring homomorphism such that the homotopy groups of $X\wedge S/p$ are finitely generated $A$-modules; then $\pi_n(M_\blob;\bb Z_p)$ is naturally a finitely generated $A_p^\comp$-module for all $n\ge0$.
\end{enumerate}
\end{proposition}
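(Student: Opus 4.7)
The proof of both (i) and (ii) is essentially identical, so I will outline (i); in (ii) one replaces $R_\blob\dotimes_{\bb Z}(-)$ by $X\wedge(-)$ throughout. The plan is to pass to the derived $p$-completion $T:=\holim_v\,R_\blob\dotimes_{\bb Z}\bb Z/p^v\bb Z$, so that $\pi_n(T)=\pi_n(R_\blob;\bb Z_p)$ as $A_p^\comp$-modules, and then apply a derived Nakayama lemma for derived $p$-complete modules over the Noetherian ring $A_p^\comp$.

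First I would observe that the canonical map $R_\blob\to T$ becomes an equivalence after $-\dotimes_{\bb Z}\bb Z/p\bb Z$, by the defining property of $p$-completion, so that $\pi_n(T;\bb Z/p)\cong\pi_n(R_\blob;\bb Z/p)$ is a finitely generated $A$-module for every $n\ge0$ by hypothesis. Plugging this into the universal coefficient sequence
\[0\To\pi_n(T)/p\pi_n(T)\To\pi_n(T;\bb Z/p)\To\pi_{n-1}(T)[p]\To 0\]
and using that $A$ is Noetherian, one deduces that $\pi_n(T)/p\pi_n(T)$ is finitely generated over $A$, hence over $A/pA\cong A_p^\comp/pA_p^\comp$. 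Next, each $\pi_n(T)$ is a derived $p$-complete $A_p^\comp$-module: the homotopy groups of each $R_\blob\dotimes\bb Z/p^v\bb Z$ are $p^v$-torsion and therefore trivially derived $p$-complete, and the full subcategory of derived $p$-complete abelian groups is closed under kernels, cokernels, extensions and all limits (in particular under both $\projlim$ and $\projlim^1$, which together describe the homotopy of a homotopy limit via the Milnor exact sequence).

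With these two ingredients in place, I would invoke the derived Nakayama lemma: if $B$ is a Noetherian ring and $N$ is a derived $p$-complete $B$-module such that $N/pN$ is finitely generated over $B/pB$, then $N$ is finitely generated over $B$. Applied to $B=A_p^\comp$ (Noetherian since $A$ is) and $N=\pi_n(T)=\pi_n(R_\blob;\bb Z_p)$, this yields the desired conclusion. The only nontrivial input is the lemma itself, which is the main obstacle; its proof goes as follows. Lift generators of $N/pN$ to $n_1,\dots,n_r\in N$ to obtain a map $\phi\colon B^r\to N$ whose cokernel $C$ satisfies $pC=C$ and, being a cokernel of a map between derived $p$-complete modules, is itself derived $p$-complete. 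A derived $p$-complete module $C$ with $pC=C$ must vanish: for any $c_0\in C$, inductively choose $c_v\in C$ with $pc_v=c_{v-1}$, yielding a homomorphism $\bb Z[1/p]\to C$, $p^{-v}\mapsto c_v$, which must be zero by derived $p$-completeness, forcing $c_0=0$. Thus $\phi$ is surjective, completing the proof.
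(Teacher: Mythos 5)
Your argument is correct in substance, but it follows a genuinely different route from the paper. The paper works directly with the short exact sequence $0\to\Ext^1_{\bb Z}(\bb Q_p/\bb Z_p,\pi_n(X))\to\pi_n(X;\bb Z_p)\to\Hom_{\bb Z}(\bb Q_p/\bb Z_p,\pi_{n-1}(X))\to0$, obtains the $A_p^\comp$-structure from the identification $A_p^\comp=\Ext^1_{\bb Z}(\bb Q_p/\bb Z_p,A)\to\Ext^1_{\bb Z}(\bb Q_p/\bb Z_p,\pi_0(X))=\pi_0(X;\bb Z_p)$, and proves two ad hoc lemmas showing that $\Ext^1_{\bb Z}(\bb Q_p/\bb Z_p,M)$ and $\Hom_{\bb Z}(\bb Q_p/\bb Z_p,M)$ are finitely generated over $A_p^\comp$ when $M/pM$, resp.\ $M[p]$, is finitely generated over $A$ (by choosing a finitely generated submodule $N$ with $N/pN\onto M/pM$ and killing the $p$-divisible quotient). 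You instead observe that each $\pi_n$ of the completion is derived $p$-complete (bounded $p$-torsion groups are derived complete, and the class is stable under $\projlim$, $\projlim^1$, kernels, cokernels and extensions, so the Milnor sequence applies), that the completion map is a mod-$p$ equivalence so the mod-$p$ homotopy stays finitely generated, and then conclude by a derived Nakayama argument. Your route is more structural and avoids the Ext/Hom bookkeeping entirely; the paper's route is more elementary and produces the $A_p^\comp$-module structure explicitly along the way.

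Two points deserve attention. First, your derived Nakayama lemma is misstated: for a general Noetherian $B$ it is false (take $B=\bb Z$, $N=\bb Z_p$); you need $B$ to be $p$-adically (equivalently, derived $p$-) complete so that $B^r$ is derived $p$-complete, which is exactly what your own proof of the lemma uses when it declares the cokernel of $B^r\to N$ to be derived complete. Since you only apply it to $B=A_p^\comp$, which is Noetherian and $p$-adically complete, the argument goes through, but the hypothesis should be added. Second, you assert rather than construct the natural $A_p^\comp$-module structure on $\pi_n(R_\blob;\bb Z_p)$, which is part of the statement; it can be obtained in your framework by noting that $\pi_0$ of the completion is a derived $p$-complete ring receiving $A$, so the map $A\to\pi_0(R_\blob)\to\pi_0(T)$ factors through $A_p^\comp$ (classical and derived completion agree for Noetherian $A$), or by quoting the paper's explicit map $A_p^\comp\to\pi_0(T)$. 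With these two repairs the proof is complete.
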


The proof of the proposition will be obtained through several algebraic lemmas. We first recall that that homotopy groups of a $p$-completion fit into a short exact sequence \[0\To\op{Ext}^1_{\bb Z}(\bb Q_p/\bb Z_p,\pi_n(-))\To \pi_n(-;\bb Z_p)\To\Hom_{\bb Z}(\bb Q_p/\bb Z_p,\pi_{n-1}(-)))\To 0,\] where the following identifications are often useful:
\begin{itemize}
\item $\bb Q_p/\bb Z_p=\bb Z[\tfrac{1}{p}]/\bb Z$, often denoted by $\bb Z_{p^\infty}$.
\item For any abelian group $M$, there is a short exact sequence \[0\to{\projlim_r}^1M[p^r]\to \op{Ext}^1_{\bb Z}(\bb Q_p/\bb Z_p,M)\to M_p^\comp\to 0,\] where $M[p^r]$ denotes the $p^r$-torsion of $M$ (the transition maps are multiplication by $p$), and $M_p^\comp=\projlim_r M/p^rM$ denotes its $p$-adic completion
\item For any abelian group $M$, the Tate module $\projlim_rM[p^r]$ is naturally isomorphic to $\Hom_{\bb Z}(\bb Q_p/\bb Z_p,M)$.
\end{itemize}

\begin{lemma}\label{lemma_p_homotopy_1}
Let $A$ be a commutative, Noetherian ring, and $M$ an $A$-module. Consider the statements:
\begin{enumerate}
\item $M$ is flat or finitely generated over $A$.
\item The $p$-power torsion of $M$ is bounded, i.e.~there exists $c\ge1$ such that any $p$-power torsion element of $M$ is killed by $p^c$.
\item The pro $A$-module $\{M[p^r]\}_r$ vanishes.
\item $\projlim_r^1M[p^r]$ and $\Hom_\bb Z(\bb Q_p/\bb Z_p,M)$ are zero.
\end{enumerate}
Then (i)$\implies$(ii)$\implies$(iii)$\implies$(iv).
\end{lemma}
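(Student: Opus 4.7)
The plan is to prove each of the three implications in turn, keeping in mind that the transition maps in the pro system $\{M[p^r]\}_r$ (and in the inverse system whose limit computes $\Hom_\bb Z(\bb Q_p/\bb Z_p,M)$) are given by multiplication $M[p^{r+1}]\xto{p}M[p^r]$. The hardest part is really only the flat case of (i)$\Rightarrow$(ii); the other steps are formal.

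For (i)$\Rightarrow$(ii), I would treat the two cases separately. If $M$ is finitely generated, then $M$ is a Noetherian $A$-module, so the ascending chain of submodules $M[p]\sseq M[p^2]\sseq\cdots$ stabilises at some $M[p^c]$, and this $c$ bounds the $p$-power torsion. If instead $M$ is flat, then tensoring the exact sequence $0\to A[p^r]\to A\xto{p^r}A$ with $M$ identifies $M[p^r]$ with $A[p^r]\otimes_AM$; since $A$ is Noetherian the chain of ideals $A[p^r]$ stabilises at some $A[p^c]$, hence $M[p^r]=M[p^c]$ for all $r\ge c$, giving the bound.

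For (ii)$\Rightarrow$(iii), suppose $c$ bounds the $p$-power torsion, so that $M[p^r]=M[p^c]$ for all $r\ge c$. Then for any $r\ge c$ the transition map $M[p^{r+c}]\to M[p^r]$, which is multiplication by $p^c$, acts on $M[p^{r+c}]=M[p^c]$ as multiplication by $p^c$, which is zero. Thus the pro system satisfies the trivial Mittag-Leffler condition and vanishes in $\op{Pro}A\op{-mod}$, as recalled in the paper's discussion of pro objects.

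Finally, for (iii)$\Rightarrow$(iv), the vanishing of a pro abelian group (equivalently, the trivial Mittag-Leffler condition) immediately implies both $\projlim_r M[p^r]=0$ and $\projlim_r^1 M[p^r]=0$: any cofinal subsystem in which the transition maps are zero has vanishing $\projlim$ and $\projlim^1$. Combined with the standard identification $\Hom_\bb Z(\bb Q_p/\bb Z_p,M)\cong \projlim_r M[p^r]$ recalled just before the lemma, this gives the two vanishings asserted in (iv).
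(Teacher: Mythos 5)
Your proof is correct and follows essentially the same route as the paper: the finitely generated case via the ascending chain $M[p]\subseteq M[p^2]\subseteq\cdots$, the flat case by tensoring the exact sequence $0\to A[p^r]\to A\xto{p^r}A$ with $M$ and using that the ideals $A[p^r]$ stabilise, then bounded torsion forcing the transition maps (multiplication by $p^c$) to vanish, and finally the identification $\Hom_{\bb Z}(\bb Q_p/\bb Z_p,M)\cong\projlim_rM[p^r]$. The only cosmetic difference is that the paper tensors with the stabilised sequence directly to get $p^cM[p^r]=0$, whereas you first identify $M[p^r]\cong A[p^r]\otimes_AM$; the substance is identical.
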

\begin{proof}
Suppose first that $M$ is a finitely generated $A$-module. Then $M$ satisfies the ascending chain condition on submodules, so the chain $M[p]\subseteq M[p^2]\subseteq M[p^3]\subseteq\cdots$ is eventually constant, meaning exactly that all $p$-power torsion in $M$ is killed by $p^c$ for some fixed $c\ge1$. This proves (ii) for finitely generated $M$.

In particular, taking $M=A$, there exists $c\ge1$ such that $A[p^r]=A[p^c]$ for all $r\ge c$; in other words, the sequence \[0\to A[p^c]\to A\xto{\times p^r}A\] is exact. So, if $M$ is a flat $A$-module then tensoring by $M$ reveals that \[0\to A[p^c]\otimes_A M\to M\xto{\times p^r}M\] is also exact, whence $p^cM[p^r]=0$ for all $r\ge c$; this proves (ii) for flat $M$.

(ii)$\Rightarrow$(iii): Let $c\ge1$ be as in part (ii). Then, for each $r\ge1$, the transition map $M[p^{r+c}]\xto{\times p^c}M[p^r]$ is zero, as required to prove the vanishing of $\{M[p^r]\}_r$.

(iii)$\Rightarrow$(iv): This is immediate, using the identification $\Hom_{\bb Z}(\bb Q_p/\bb Z_p,M)=\projlim_rM[p^r]$.
\comment{
(i): The image of a homomorphism $\bb Q_p/\bb Z_p\to M$ lands inside the submodule of infinitely $p$-divisible elements of $M[p^\infty]$; by part (i), this is zero. Alternatively, note that $\Hom_\bb Z(\bb Q_p/\bb Z_p,M)=\projlim_rM[p^r]$ and apply (iii).

The image of a homomorphism $\bb Q_p/\bb Z_p\to M$ lands inside the $A$-module \[N:=\{x\in M:x\in p^rM\mbox{ for all }r\ge1\mbox{, and $x$ is killed by a power of }p\},\] so it is enough to show $N=0$. Suppose $x\in N$. Since $x\in\bigcap_{r\ge1}p^rM$, a consequence of the Artin--Rees lemma states that $\al x=0$ for some $\al\in1+pA$. But $\al^{p^s}\in1+p^{s+1}A$ for all $s\ge1$, and so $\al^{p^s} x=x$ for $s\gg0$ since $x$ is killed by a power of $p$; hence $x=0$, as desired.}
\end{proof}

\comment{
\begin{leftbar} Do we need this?
\begin{corollary}\label{corollary_p_local_groups_in_fin_gen_case}
Let $A$ be a commutative, Noetherian ring, and let $M_\blob$ be a simplicial $A$-module each of whose homotopy groups is either a flat or a finitely generated $A$-module. Then \[\pi_n(M_\blob;\bb Z_p)\cong \pi_n(M_\blob)_p^\comp\] for all $n\ge0$. In the case when all the homotopy groups are finitely generated $A$-modules, it is moreover true that \[\pi_n(M_\blob;\bb Z_p)\cong\pi_n(M_\blob)\otimes_AA_p^\comp\] for all $n\ge0$.
\end{corollary}
\begin{proof}
Apply the previous lemma and the aforementioned short exact sequences describing $\pi_n(M_\blob;\bb Z_p)$ to prove the first claim.

The second claim is then a standard result from commutative algebra: for any ideal $I\subseteq A$ and finitely generated $A$-module $M$, one has $\projlim_rM/I^rM\cong M\otimes_A\hat A$, where $\hat A:=\projlim_rA/I^r$.
\end{proof}
\end{leftbar}
}

\begin{lemma}
Let $A$ be a commutative, Noetherian ring, and $M$ an $A$-module.
\begin{enumerate}
\item If $M/pM$ is a finitely generated $A$-module, then $\Ext_\bb Z^1(\bb Q_p/\bb Z_p,M)$ is a finitely generated $A_p^\comp$-module. 
\item If $M[p]$ is a finitely generated $A$-module, then $\Hom_\bb Z(\bb Q_p/\bb Z_p,M)$ is a finitely generated $A_p^\comp$-module.
\end{enumerate}
\end{lemma}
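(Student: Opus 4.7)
Both parts descend from applying $\Hom_{\bb Z}(-,M)$ to the short exact sequence $0\to\bb Z/p\bb Z\to\bb Q_p/\bb Z_p\xto{p}\bb Q_p/\bb Z_p\to 0$, which yields the six-term exact sequence
\[T(M)\xto{p}T(M)\to M[p]\to E(M)\xto{p}E(M)\to M/pM\to 0,\]
where I write $T(M):=\Hom_{\bb Z}(\bb Q_p/\bb Z_p,M)$ and $E(M):=\Ext^1_{\bb Z}(\bb Q_p/\bb Z_p,M)$. The two immediate consequences are $T(M)/pT(M)\hookrightarrow M[p]$ and $E(M)/pE(M)\cong M/pM$, so under either hypothesis the mod-$p$ reduction of the target module is automatically finitely generated over $A$.

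For (ii) the plan is first to verify that $T(M)=\projlim_nM[p^n]$ is classically $p$-adically complete; a short direct computation identifies $p^nT(M)$ with the kernel of the projection $T(M)\to M[p^n]$, from which $T(M)=\projlim_n T(M)/p^nT(M)$ follows. Lifting a finite generating set of the submodule $T(M)/pT(M)\subseteq M[p]$ (finitely generated thanks to Noetherianity of $A$) to elements of $T(M)$ then produces an $A_p^\comp$-linear map $(A_p^\comp)^k\to T(M)$, whose surjectivity is immediate from the standard $p$-adic successive-approximation argument inside the $p$-adically complete module $T(M)$.

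For (i) the argument is structurally analogous but must be performed in the setting of derived $p$-adic completion, because $E(M)$ is in general only derived, not classically, $p$-complete: the kernel of the canonical surjection $E(M)\twoheadrightarrow M_p^\comp$ is ${\projlim_n}^1M[p^n]$, which can be nonzero. Concretely, $E(M)$ is the zeroth cohomology of the derived $p$-adic completion of $M$ and hence lies in the abelian subcategory of derived $p$-complete $A$-modules, so it carries a canonical $A_p^\comp$-module structure. Lifting generators of $M/pM$ to $x_1,\ldots,x_k\in E(M)$ produces an $A_p^\comp$-linear map $\varphi\colon(A_p^\comp)^k\to E(M)$ whose cokernel $C$ is again derived $p$-complete with $C/pC=0$; derived Nakayama then forces $C=0$, because such a $C$ would be $p$-divisible, while a derived $p$-complete module admits no nonzero $p$-divisible submodule.

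The main obstacle is precisely this failure of classical $p$-adic completeness for $E(M)$, which blocks the elementary Nakayama argument that worked for $T(M)$ from transporting directly; the derived-completeness framework, together with derived Nakayama, is what bridges the gap.
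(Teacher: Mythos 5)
Your proof is correct, but it follows a genuinely different route from the paper's, in both parts. For (i), the paper avoids any derived machinery: it chooses a finitely generated submodule $N\subseteq M$ with $N/pN\onto M/pM$, observes that the quotient $\Lambda=M/N$ is $p$-divisible so that $\Ext^1_{\bb Z}(\bb Q_p/\bb Z_p,\Lambda)=0$, and hence exhibits $\Ext^1_{\bb Z}(\bb Q_p/\bb Z_p,M)$ as a quotient of $\Ext^1_{\bb Z}(\bb Q_p/\bb Z_p,N)=N_p^\comp$ (the identification using the earlier lemma that $\projlim_r^1N[p^r]=0$ for finitely generated $N$), which is visibly finitely generated over $A_p^\comp$. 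Your argument instead works directly with $E(M)=\Ext^1_{\bb Z}(\bb Q_p/\bb Z_p,M)$, using that it is derived $p$-complete, the isomorphism $E(M)/pE(M)\cong M/pM$ from the six-term sequence, and derived Nakayama applied to the cokernel of $(A_p^\comp)^k\to E(M)$; your justification of the Nakayama step (a derived $p$-complete module has no nonzero $p$-divisible submodule, via vanishing of $\Hom_{\bb Z}(\bb Z[\tfrac1p],-)$, applied to the cokernel itself) is sound. This costs you the formalism of derived completion, which the paper never invokes, but it buys a cleaner treatment of a point the paper leaves implicit, namely where the $A_p^\comp$-module structure on $E(M)$ comes from. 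For (ii), the paper bootstraps from part (i) applied to the module $\Hom_{\bb Z}(\bb Q_p/\bb Z_p,M)=\projlim_rM[p^r]$, using that this Tate module is already $p$-adically complete so equals its own $p$-completion; you instead prove completeness directly via $p^nT(M)=\ker(T(M)\to M[p^n])$, note $T(M)/pT(M)\into M[p]$, and conclude by classical successive approximation — a more elementary and self-contained argument that avoids re-invoking (i). Both of your reductions via the sequence obtained from $0\to\bb Z/p\bb Z\to\bb Q_p/\bb Z_p\xto{p}\bb Q_p/\bb Z_p\to0$ are valid, so there is no gap; only minor details (e.g., that $T(M)$ is an $A_p^\comp$-module because each $M[p^n]$ is an $A/p^nA$-module, and that the six-term sequence is $A$-linear) are left tacit.
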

\begin{proof}
(i): There exists a finitely generated $A$-submodule $N\subseteq M$ such that $N/pN\to M/pM$ is surjective; let $\Lambda=M/N$, which is $p$-divisible. The long exact $\op{Ext}_\bb Z(\bb Q_p/\bb Z_p,-)$ sequence for $0\to N\to M\to \Lambda\to 0$ contains \[\op{Ext}_\bb Z^1(\bb Q_p/\bb Z_p,N)\to \op{Ext}_\bb Z^1(\bb Q_p/\bb Z_p,M)\to \op{Ext}_\bb Z^1(\bb Q_p/\bb Z_p,\Lambda),\] and we claim that the final term vanishes. Indeed, it is equivalent to show that both $\projlim_r^1\Lambda[p^r]$ and $\Lambda_p^\comp$ vanish, and this easily follows from the $p$-divisible of $\Lambda$.

Moreover, it follows from Lemma \ref{lemma_p_homotopy_1} that $\projlim_r^1N[p^r]=0$, whence $\op{Ext}_\bb Z^1(\bb Q_p/\bb Z_p,N)=N_p^\comp$, which is a finitely generated $A_p^\comp$-module; since $N_p^\comp$ surjects onto $\op{Ext}_\bb Z^1(\bb Q_p/\bb Z_p,M)$, we deduce that the latter is also finitely generated.

(ii): The ``evaluation at $\tfrac{1}{p}$'' map induces an injection \[\Hom_\bb Z(\bb Q_p/\bb Z_p,M)\otimes_\bb Z\bb Z/p\bb Z\into M[p],\] whence the left side is finitely generated over $A_p^\comp$. Applying part (i) to the module $\Hom_\bb Z(\bb Q_p/\bb Z_p,M)$, we therefore deduce that $\Ext_\bb Z^1(\bb Q_p/\bb Z_p,\Hom_\bb Z(\bb Q_p/\bb Z_p,M))$ is a finitely generated $A_p^\comp$-module, hence that its quotient $\Hom_\bb Z(\bb Q_p/\bb Z_p,M)_p^\comp$ is also a finitely generated $A_p^\comp$-module. But $\Hom_\bb Z(\bb Q_p/\bb Z_p,M)=\projlim_rM[p^r]$ is evidently already $p$-adically complete, so that $\Hom_\bb Z(\bb Q_p/\bb Z_p,M)_p^\comp=\Hom_\bb Z(\bb Q_p/\bb Z_p,M)$.
\end{proof}

\begin{proof}[Proof of Proposition \ref{proposition_to_prove_finite_gen_p_adically}]
It is sufficient to prove (ii), since (i) is then obtained by applying the Eilenberg--Maclane construction.

The $p$-completion $X_p^\comp$ is again a ring spectrum and so the groups $\pi_n(X;\bb Z_p)$ are naturally $\pi_0(X;\bb Z_p)$-modules. Moreover, there is a canonical ring homomorphism \[A_p^\comp=\Ext_\bb Z^1(\bb Q_p/\bb Z_p;A)\To \Ext_\bb Z^1(\bb Q_p/\bb Z_p;\pi_0(X))=\pi_0(X;\bb Z_p),\] where the first equality follows from Lemma \ref{lemma_p_homotopy_1} with $M=A$, and the second equality follows from the fact that $X$ is connected.

Hence $\pi_n(X;\bb Z_p)$ is naturally an $A_p^\comp$-module, and \[0\To\op{Ext}^1_{\bb Z}(\bb Q_p/\bb Z_p,\pi_n(X))\To \pi_n(X;\bb Z_p)\To\Hom_{\bb Z}(\bb Q_p/\bb Z_p,\pi_{n-1}(X)))\To 0\tag{\dag}\] is an exact sequence of $A_p^\comp$-modules. By assumption $\pi_n(X)\otimes_\bb Z\bb Z/p\bb Z$ and $\pi_n(X)[p]$ are finitely generated $A$-modules for all $n\ge0$, so the previous lemma implies that the outer terms of (\dag) are finitely generated $A_p^\comp$-modules; this completes the proof.
\end{proof}

\comment{
(i): Letting $\bb A$ denote the constant simplicial ring obtained from $A$, it is evident that $M_\blob^\comp$ is an $\bb A^\comp$-module, whence the groups $\pi_n(M_\blob;\bb Z_p)$ are naturally $\pi_0(\bb A;\bb Z_p)$-modules. But it follows from Corollary \ref{corollary_p_local_groups_in_fin_gen_case} with $M_\blob=\bb A$ that $\pi_0(\bb A;\bb Z_p)=A_p^\comp$; this describes the natural $A_p^\comp$-module structure on $\pi_n(M_\blob;\bb Z_p)$.

From the usual short exact sequences \[0\to \pi_n(M_\blob)\otimes_\bb Z\bb Z/p\bb Z\to\pi_n(M_\blob\dotimes_\bb Z\bb Z/p\bb Z)\to \pi_{n-1}(M_\blob)[p]\to0\] we deduce that $\pi_n(M_\blob)\otimes_\bb Z\bb Z/p\bb Z$ and $\pi_n(M_\blob)[p]$ are finitely generated $A$-modules for all $n\ge0$. So the claim follows from the short exact sequences \[0\to\op{Ext}^1_{\bb Z}(\bb Q_p/\bb Z_p,\pi_n(M_\blob))\to \pi_n(M_\blob;\bb Z_p)\to\Hom_{\bb Z}(\bb Q_p/\bb Z_p,\pi_{n-1}(M_\blob))\to 0\] and the previous lemma.
}

\end{appendix}

\bibliographystyle{acm}
\bibliography{../Bibliography}

\noindent Bj\o rn Ian Dundas\hfill Matthew Morrow\\
University of Bergen\hfill Mathematisches Institut\\
Johs.~Brunsgt.~12\hfill Universit\"at Bonn\\
N-5008 Bergen\hfill Endenicher Allee 60\\
Norway\hfill 53115 Bonn, Germany\\
\url{dundas@math.uib.no}\hfill{\tt morrow@math.uni-bonn.de} \\
\url{http://www.uib.no/People/nmabd/}\hfill \url{http://www.math.uni-bonn.de/people/morrow/}

\end{document}